\DeclareFontFamily{U}{MnSymbolC}{}
\DeclareSymbolFont{MnSyC}{U}{MnSymbolC}{m}{n}
\DeclareFontShape{U}{MnSymbolC}{m}{n}{
    <-6>  MnSymbolC5
   <6-7>  MnSymbolC6
   <7-8>  MnSymbolC7
   <8-9>  MnSymbolC8
   <9-10> MnSymbolC9
  <10-12> MnSymbolC10
  <12->   MnSymbolC12}{}
\DeclareMathSymbol{\im}{\mathbin}{MnSyC}{'270}
\appto\appendix{\addtocontents{toc}{\protect\setcounter{tocdepth}{1}}}
\appto\listoffigures{\addtocontents{lof}{\protect\setcounter{tocdepth}{1}}}
\appto\listoftables{\addtocontents{lot}{\protect\setcounter{tocdepth}{1}}}
\theoremstyle{plain}
\newtheorem{theorem}{Theorem}[section]
\newtheorem{corollary}[theorem]{Corollary}
\newtheorem{lemma}[theorem]{Lemma}
\newtheorem{proposition}[theorem]{Proposition}
\newtheorem{definition}[theorem]{Definition}
\theoremstyle{remark}
\newtheorem{remark}[theorem]{Remark}
\newtheorem{example}[theorem]{Example}
\newtheorem*{example*}{Example}
\newtheorem{notation}[theorem]{Notation}
\numberwithin{equation}{section}
\definecolor{carmine}{rgb}{0.59, 0.0, 0.09}
\definecolor{mediumpersianblue}{rgb}{0.0, 0.4, 0.65}
\definecolor{persianplum}{rgb}{0.44, 0.11, 0.11}
\newcommand{\g}{\mathfrak{g}}
\newcommand{\p}{\mathfrak{p}}
\newcommand{\q}{\mathfrak{q}}
\newcommand{\mfr}{\mathfrak{r}}
\newcommand{\mfk}{\mathfrak{k}}
\newcommand{\mfl}{\mathfrak{l}}
\newcommand{\Aut}{\mathrm{Aut}}
\newcommand{\pr}{\mathrm{pr}}
\newcommand{\ssA}{\mathsf{B}} 
\newcommand{\ssB}{\mathsf{A}}
\newcommand{\sA}{\mathsf{A}}
\newcommand{\sB}{\mathsf{B}}
\newcommand{\sM}{\mathsf{M}}
\newcommand{\sN}{\mathsf{N}}
\newcommand{\sQ}{\mathsf{Q}}
\newcommand{\sh}{\mathsf{h}}
\newcommand{\sq}{\mathsf{q}}
\newcommand{\sP}{\mathsf{P}}
\newcommand{\sT}{\mathsf{T}}
\newcommand{\sC}{\mathsf{C}}
\newcommand{\sE}{\mathsf{E}}
\newcommand{\sx}{\mathsf{x}}
\newcommand{\sbb}{\mathsf{b}}
\newcommand{\scc}{\mathsf{c}}
\newcommand{\sw}{\mathsf{w}}
\newcommand{\cC}{\mathcal{C}}
\newcommand{\cG}{\mathcal{G}}
\newcommand{\cH}{\mathcal{H}}
\newcommand{\cV}{\mathcal{V}}
\newcommand{\cE}{\mathcal{E}}
\newcommand{\cD}{\mathcal{D}}
\newcommand{\cP}{\mathcal{P}}
\newcommand{\cF}{\mathcal{F}}
\newcommand{\cI}{\mathcal{I}}
\newcommand{\balpha}{\boldsymbol\alpha}
\newcommand{\bbeta}{\boldsymbol\beta}
\newcommand{\beeta}{\boldsymbol\eta}
\newcommand{\bzeta}{\boldsymbol\zeta}
\newcommand{\bnu}{\boldsymbol\nu}
\newcommand{\fso}{\mathfrak{so}}
\newcommand{\fp}{\mathfrak{p}}
\newcommand{\fg}{\mathfrak{g}}
\newcommand{\fe}{\mathfrak{e}}
\newcommand{\fk}{\mathfrak{k}}
\newcommand{\fv}{\mathfrak{v}}
\newcommand{\fy}{\mathfrak{y}}
\newcommand{\fx}{\mathfrak{x}}
\newcommand{\fgl}{\mathfrak{gl}}
\newcommand{\ri}{\mathrm{i}}
\newcommand{\rO}{\mathrm{O}}
\newcommand{\rG}{\mathrm{G}}
\newcommand{\R}{\mathbb{R}}
\newcommand{\RR}{\mathbb{R}}
\newcommand{\PP}{\mathbb{P}}
\newcommand{\HH}{\mathbb{H}}
\newcommand{\SSS}{\mathbb{S}}
\newcommand{\ba}{\mathbf{a}}
\newcommand{\bb}{\mathbf{b}}
\newcommand{\bc}{\mathbf{c}}
\newcommand{\bh}{\mathbf{h}}
\newcommand{\bw}{\mathbf{w}}
\newcommand{\bA}{\mathbf{A}}
\newcommand{\bB}{\mathbf{B}}
\newcommand{\bC}{\mathbf{C}}
\newcommand{\bF}{\mathbf{F}}
\newcommand{\bI}{\mathbf{I}}
\newcommand{\bN}{\mathbf{N}}
\newcommand{\bR}{\mathbf{R}}
\newcommand{\bT}{\mathbf{T}}
\newcommand{\bW}{\mathbf{W}}
\newcommand{\bq}{\mathbf{q}}
\newcommand{\tg}{\tilde{g}}
\newcommand{\tcC}{\tilde\cC}
\newcommand{\tcE}{\tilde\cE}
\newcommand{\tcV}{\tilde\cV}
\newcommand{\tcH}{\tilde\cH}
\newcommand{\tcG}{\tilde\cG}
\newcommand{\tpi}{\tilde\pi}
\newcommand{\tomega}{\tilde\omega}
\newcommand{\ttheta}{\tilde\theta}
\newcommand{\tgamma}{\tilde\gamma}
\newcommand{\tOmega}{\tilde\Omega}
\newcommand{\tTheta}{\tilde\Theta}
\newcommand{\tphi}{\tilde\phi}
\newcommand{\txi}{\tilde\xi}
\newcommand{\tpsi}{\tilde\psi}
\newcommand{\ttau}{\tilde\tau}
\newcommand{\tnu}{\tilde\nu}
\newcommand{\trho}{\tilde\rho}
\newcommand{\tM}{\tilde M}
\newcommand{\tS}{\tilde S}
\newcommand{\tI}{\tilde I}
\newcommand{\tscD}{\cD}
\newcommand{\tE}{\tilde E}
\newcommand{\tV}{\tilde V}
\newcommand{\tbh}{\tilde{\mathbf{h}}}
\newcommand{\tlambda}{\tilde \lambda}
\newcommand{\w}{{\,{\wedge}\;}}
\newcommand{\uo}{\underline 1}
\newcommand{\ud}{\underline 2}
\newcommand{\ut}{\underline 3}
\newcommand{\uz}{\underline 0}
\newcommand{\ub}{\underline b}
\newcommand{\uc}{\underline c}
\newcommand{\bara}{\bar a}
\newcommand{\barb}{\bar b}
\newcommand{\barc}{\bar c}
\newcommand{\bard}{\bar d}
\newcommand{\exd}{\mathrm{d}}
\newcommand{\ts}{\textstyle}
\newcommand{\ve}{\varepsilon}
\newcommand{\tr}{\operatorname{tr}}
\newcommand{\Ker}{\mathrm{ker}}
\newcommand{\biw}{\bigwedge\nolimits}
\newcommand{\half}{\textstyle{\frac 12}}
\renewcommand*{\p@section}{\S\,}
\renewcommand*{\p@subsection}{\S\,}
\renewcommand*{\p@subsubsection}{\S\,}
\begin{document}

\author{Omid Makhmali}\author{Katja Sagerschnig}

\address{\newline    Omid Makhmali (corresponding author)\\\newline
  Department of Mathematics and Natural Sciences, Cardinal Stefan Wyszy\'nski University, ul. Dewajtis 5,  Warszawa, 01-815, Poland \\\newline
\textit{Email address: }{\href{mailto:o.makhmali@uksw.edu.pl}{\texttt{o.makhmali@uksw.edu.pl}}}\\\newline
Departamento de Geometr\'ia y Topolog\'ia and IMAG, Universidad de Granada, Granada 18071, Spain \\\newline
    \textit{Email address: }{\href{mailto:omakhmali@ugr.es}{\texttt{omakhmali@ugr.es}}}\\\newline
Department of Mathematics and Statistics, UiT The Arctic University of Norway, Troms\o\  90-37,Norway \\\newline\newline 
    Katja Sagerschnig\\\newline
Center for Theoretical Physics, PAS, Al. Lotnik\'ow 32\slash46, 02-668 Warszawa, Poland \\\newline
\textit{Email address: }{\href{mailto:katja@cft.edu.pl}{\texttt{katja@cft.edu.pl}}}
 }

\title[]
          {Parabolic quasi-contact cone structures\\ with an infinitesimal symmetry} 
\date{\today}

\begin{abstract} 
  We interpret the property of having an infinitesimal symmetry as a variational property in certain geometric structures. This is achieved by establishing a one-to-one correspondence between a class of \emph{cone structures} with an infinitesimal symmetry and geometric structures arising from certain systems of ODEs  that are variational. Such cone structures include pseudo-Riemannian conformal structures and distributions of growth vector (2,3,5) and (3,6). The correspondence is obtained via symmetry reduction and \emph{quasi-contactification}. Subsequently, for each class of such cone structures we provide invariant conditions that imply more specific properties, such as having a null infinitesimal symmetry, being foliated by null submanifolds,  or having reduced holonomy to the appropriate contact parabolic subgroup. As an application of our results we give an alternative proof of the variationality of chains in CR geometry.     
\end{abstract}

\subjclass{Primary: 53C10, 53D15, 53B15, 53C15,  34A55; Secondary: 58A30, 58A15,  53C29, 53B40, 34C14, 34A34, 53A55}
\keywords{cone structure, symmetry reduction, quasi-contactification,   variational ODEs, Cartan connection, holonomy reduction}

\maketitle
  
\vspace{-.5 cm}

\setcounter{tocdepth}{2}
\tableofcontents

\section{Introduction}
\label{sec:introduction}

The main purpose of this  article is to give a  characterization of certain geometric structures equipped with a choice of infinitesimal symmetry and, conversely, present a construction that gives rise to  all such geometric structures plus a distinguished choice of  infinitesimal symmetry.

A well-known  example of such local characterizations is the one-to-one correspondence between  integrable CR structures with an infinitesimal symmetry and K\"ahler metrics on the (local) leaf space of the infinitesimal symmetry \cite{Webster-CR2, Webster-CR}.
 The CR structure is flat if and only if the K\"ahler metric is Bochner-flat. Bochner-flat K\"ahler metrics \cite{Bochner,Bryant-Bochner} are  a particular case of the so-called \emph{special symplectic connections} studied in \cite{CS-special}, which are induced on the (local) leaf space of an infinitesimal symmetry for  flat contact parabolic structures.
 
More recently, in a series of articles \cite{CS-cont0, CS-cont1,CS-cont2}, the authors  carried out a similar characterization for \emph{contact parabolic structures} with an infinitesimal symmetry and showed that they are in one-to-one  correspondence with what they refer to as \emph{parabolic conformally symplectic} structures (PCS-structures) on the (local) leaf space of the infinitesimal symmetry. Additionally, in \cite{CS-cont3}, the authors used their characterization to explain and generalize the  construction in \cite{EG-BGG} of a family of differential complexes  on the complex projective space.

 As an example that will appear later in this article, consider a Riemannian manifold $M$ with metric $g.$ The Lorentzian conformal  structure $[\tilde g],$ where $\tilde g=g-(\exd t)^2,$ on $M\times\RR$ has a conformal Killing field given by $\frac{\partial}{\partial t}.$ Moreover,  $[\tilde g]$ is conformally flat if and only if $g$ has constant sectional curvature. However, this is not the only way to obtain (flat)  conformal structures with a conformal Killing field. One of the objectives of this paper is to give a local characterization of   all pseudo-Riemannian conformal  structures with a choice of conformal Killing field as a geometric structure on the leaf space of the conformal Killing field. Subsequently, we   also characterize all such conformal structures with additional properties such as having a null conformal Killing field, being foliated by null hypersurfaces, having reduced conformal holonomy, or being flat.

\subsection{Main ingredients}
\label{sec:main-observations}

In this  section we would like to explain the main ingredients and steps in our characterization of parabolic quasi-contact cone structures with an infinitesimal symmetry by taking the case of a pseudo-Riemannian conformal structures on an $(n+1)$-dimensional manifold $\tM$ with a conformal Killing field $\xi$.

The first step is to identify the structure on the \emph{sky bundle} of the conformal structure, i.e. the $2n$-dimensional bundle of projectivized null cones, which is denoted by $\tnu\colon\tcC\to\tM.$ The sky bundle has two  foliations one of which is the lift of null geodesics and the other one is the sky at each point, i.e. the fibers $\tcC_x=\tnu^{-1}(x)$ where $x\in\tM.$ The tangent directions to these foliations give distributions of rank 1 and $n-1$ at each point, denoted by $\tcE$ and $\tcV,$ respectively. The Lie bracket of these distributions is a maximally non-integrable corank 1 distribution $\tcH\subset T\tcC.$ Note that if $(x;[y])=:p\in\tcC\subset\PP T\tM$ then $\tnu_*\tcH_{p}\subset T_x\tM$ is the affine tangent space to the null cone at $x$ along the line $\mathrm{span}\{y\}\subset T_xM.$
 
The second step  is to show that  $\xi$ lifts to an infinitesimal symmetry of  the sky bundle which is  transverse to $\tcH$ almost everywhere.  Thus,  at each point of the local leaf space of the trajectories of  $\xi,$ denoted by $\cC,$ the tangent space can be identifies with $\tcH.$ Moreover, the distributions $\cV:=\pi_*\tcV$ and $\cE:=\pi_*\tcE$ are well-defined on $\cC$. Consequently, one obtains that $\nu\colon\cC\to M$ defines a \emph{path geometry} where $M$ is the $n$-dimensional local leaf space of $\cV.$ Classically, a path geometry on a manifold is a family of paths on a manifold with the property that along each direction in the tangent space of each point there passes a unique path. However, we use a generalized notion of path geometry referred to as \emph{generalized path geometry} (e.g. see \cite[Section 4.4.4]{CS-Parabolic}) in which   $\cC$ can be identified locally as an open subset of $\PP TM$  and is foliated by curves.

The third step is to show that such induced path geometries are \emph{variational}, i.e. the paths are geodesics of a \emph{generalized (pseudo-)Finsler structure.} The main observation involves the existence of a closed 2-form of maximal rank on $\cC.$ The existence of such 2-form defines what we refer to as a  \emph{conformally quasi-symplectic structure}  on $\cC$ whose existence is a ``residue'' of  the exterior derivative of the 1-form on $\tcC$ that annihilates $\tcH.$ On the other hand, the projective second fundamental form of skies $\tcC_x\subset\PP T_x\tM$  descends to $\cC$ and defines a conformal class of bundle metrics $[\bh]\subset\mathrm{Sym}^2\cV^*.$ This motives the definition of an \emph{orthopath structure}, i.e. a path geometry augmented with a non-degenerate symmetric bilinear form  $[\bh]\subset\mathrm{Sym}^2\cV^*.$ Consequently, an orthopath geometry is called  variational if it has a \emph{compatible} conformally quasi-symplectic structure. It turns out that only variational orthopath geometries satisfying certain invariant condition, which is the vanishing of a cubic tensor, can arise from conformal structures with an infinitesimal symmetry.

Lastly, we provide an inverse to this construction, referred to quasi-contactification, and therefore, establish a one-to-one correspondence between pseudo-conformal  structure with an infinitesimal symmetry and variational orthopath geometries for which a fundamental cubic invariant vanishes. We interpret the fundamental cubic invariant as the trace-free part of the \emph{Cartan torsion} for the divergence equivalence class of generalized (pseudo-)Finsler structures which is canonically  associated to a variational orthopath geometry.

Figure \ref{fig:conformal} is a visual attempt to demonstrate the main steps discussed above. Note that the leaf space of $\xi$ can be identified with any hypersurface that is transverse to the trajectories of $\xi.$ As a result, the induced variational orthopath geometry is  defined on all  transverse hypersurfaces.
\begin{center}
\begin{figure}[h]
  \includegraphics[width=.7\linewidth]{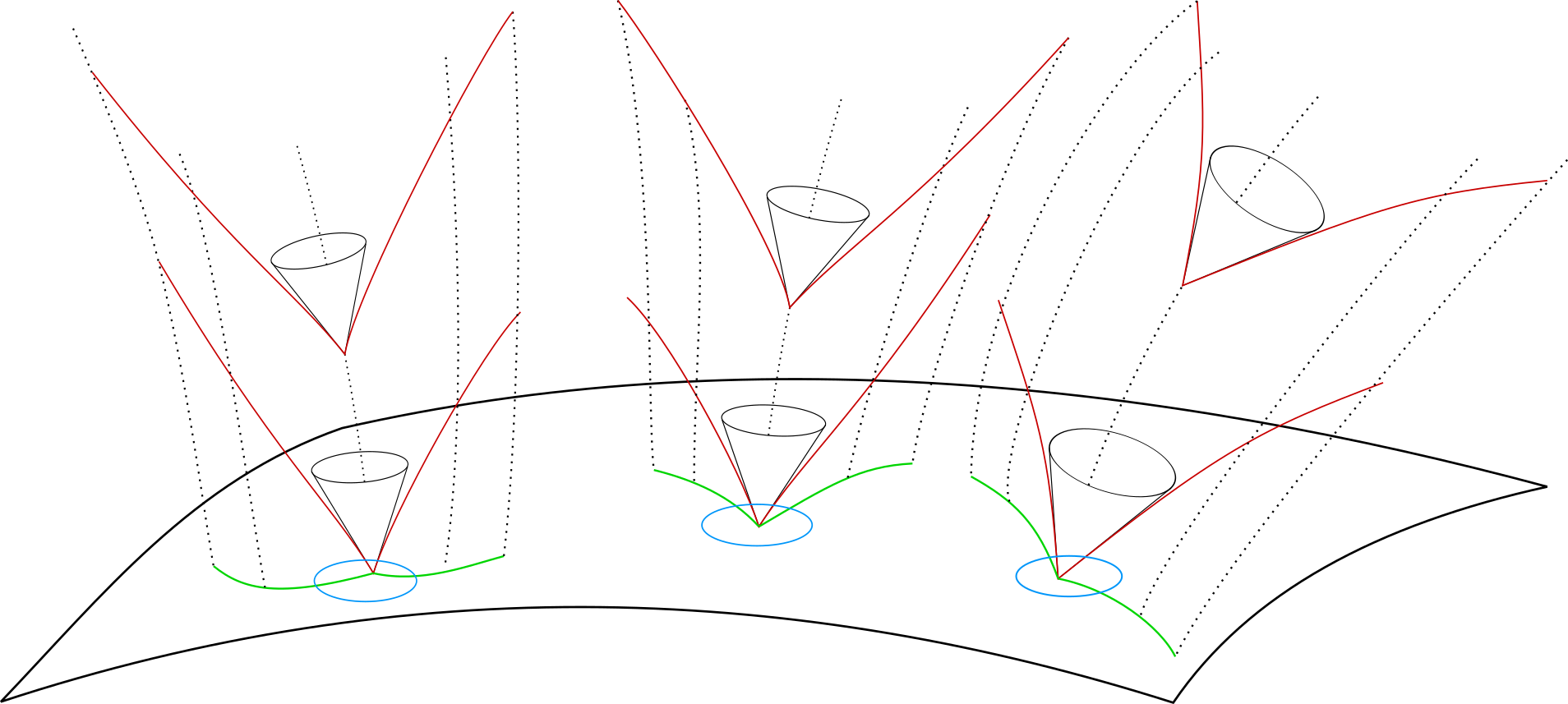}
  \caption{A schematic sketch that shows how null geodesics of a conformal Lorentzian structure with a time-like conformal Killing field $\xi$ induce distinguished paths on any transverse hypersurface. The dotted curves are the trajectories of $\xi.$ The red curves are null geodesics whose projection along the dotted curves give the distinguished  green curves on the hypersurface. The blue closed curves on the hypersurface  represent the unit  vectors of an induced Finsler structure on the hypersurface in the tangent space of that point. The green curves are the geodesics of this Finsler structure. Here only the future null cones and future null geodesics  are depicted. }
  \label{fig:conformal}
\end{figure}
\end{center}

The class of geometric structures for which the main steps discussed above can be carried out 
are referred to as \emph{parabolic quasi-contact cone structures}. As will be discussed, they  share characteristic features of pseudo-Riemannian conformal structures, which allows us to give a universal treatment of the symmetry reduction and quasi-contactification construction outlined above in the case of conformal structures.

Recall that a (smooth) \emph{cone structure} on a manifold $M$ is (locally)  a sub-bundle $\cC\subset\PP TM$ whose fibers, $\cC_x\subset \PP T_xM,$ are  connected and smooth projective submanifolds of the same dimension   e.g. see  \cite{Hwang-Survey, HN-cone}. A cone structure is called \emph{quasi-contact} if the bundle of projectivized cones, $\cC\to M,$ has a natural  \emph{quasi-contact structure}, i.e. a maximally non-integrable hyperplane distribution on an even-dimensional manifold, also known as an \emph{even-contact structure}. The class of \emph{parabolic quasi-contact cone structures} is defined, in Definition \ref{def-parabolicquasicontact}, as those quasi-contact cone structures that correspond to a regular and normal \emph{parabolic geometry}, i.e.   Cartan geometries modeled on homogeneous spaces of the form $G/P$, where $G$ is a semisimple Lie group and $P\subset G$ a parabolic subgroup. The  Cartan geometric description will be essential for the curvature analysis leading to the main results  in \ref{sec:curvatuer-analysis}.

It turns out, in Proposition \ref{prop-qcontactgrading}, that the class of parabolic quasi-contact cone structures is  comprised of a 
generalization of pseudo-Riemannian conformal structures,
called  \emph{causal structures}, where the null cone at each tangent space does not have to  be quadratic but can be instead any cone with Gauss map of maximal rank, see Definition \ref{def-causal},
and  certain types of  non-integrable distributions. The types of distributions are  generic rank 2 distributions in dimension 5, commonly referred to as $(2,3,5)$ distributions, and generic rank 3 distributions in dimension 6, commonly referred to as $(3,6)$ distributions. In both  cases where we start with a distribution $\mathcal{D}$, the sky bundle is replaced by the bundle $\tilde{\cC}:=\mathbb{P}(\mathcal{D})\to\tilde{M}$ whose fiber at each point $x\in \tilde{M}$ is  $\mathbb{P}\mathcal{D}_x$. The role of null geodesics is replaced by so-called \emph{abnormal curves} of the distributions.  Any infinitesimal symmetry of the distribution $\mathcal{D}$ can be naturally lifted to $\tilde{\cC}$. Locally, in a neighborhood of a point where the symmetry is transverse to the quasi-contact distribution on $\tilde{\cC}$, one has a submersion onto a leaf space $\cC$. Starting with a $(2,3,5)$ distribution, this leaf space has the induced structure of a variational scalar 4th order ODE up to contact transformations, and starting with a $(3,6)$ distribution, the induced structure on $\cC$ generalizes that of a variational  pair of 3rd order ODEs up to point transformations. 
Conversely, the inverse construction to the symmetry reduction   shows that any variational  scalar 4th order ODE and variational pair of 3rd order ODEs can be quasi-contactified to a $(2,3,5)$ distribution and $(3,6)$ distribution, respectively.

Although the \emph{ODE geometries} above are easy to describe, in order to capture their essential underlying features   and avoid a purely case-by-case study, we consider \emph{parabolic almost conformally quasi-symplectic structures,} which we refer to as \emph{PACQ structures}. 
 These structures are defined, in Definition \ref{def-PACQ},  in terms of their tangent bundle filtration and their structure group, i.e. the  group of filtration-preserving automorphism. We define their tangent bundle filtration to be that of  parabolic quasi-contact cone structures   truncated in the last step and their structure group to be that of  parabolic quasi-contact cone structures. The name \emph{almost conformally quasi-symplectic structure}, as shown in  Proposition \ref{prop-lineb}, has to do with the fact that  such manifolds are odd-dimensional and  naturally carry  the conformal class of a 2-form of  maximal rank.

\subsection{Outline of the article and main results} 

In \ref{sec:quasi-contact-cone} we introduce the class of parabolic quasi-contact cone structures and their characteristic properties.
The section starts with a brief review of general theory of filtered manifolds, Cartan geometries and  parabolic geometries in \ref{sec:background}. It then provides  definitions of $(2,3,5)$ distributions, $(3,6)$ distributions and causal structures in \ref{sec-defquasi}.
In order to view $(2,3,5)$ distributions, $(3,6)$ distributions,  indefinite conformal structures and more general causal structures in a uniform way,  we lift the geometric structure from the original manifold $\tilde{M}$ to the total space $\tilde{\cC}\subset\mathbb{P}\tilde{M}$ of the  bundle of projectivized cones.  This is done  in \ref{sec-descriptionofquasicontactcone} using the canonical parabolic geometries associated to these structures and the correspondence space construction  \cite{Cap-corr}.
More precisely, Proposition \ref{prop-qcontactcone} shows that $(2,3,5)$ distributions, $(3,6)$ distributions and causal structures are locally equivalent to regular and  normal parabolic geometries $(\tilde{\mathcal{G}}\to\tilde{\mathcal{C}},\tilde{\psi})$ of type $(\g,Q)$,
where $(\g,Q)$ is one of the types $(G_2,P_{12})$, $(B_3,P_{23})$, $(A_3,P_{123})$, $(B_n,P_{12})$  for $n\geq3$, and $(D_n,P_{12})$ for $n\geq 4$. 
Since for these types of parabolic geometries a certain Lie algebra cohomology condition is satisfied, any geometry from this class is completely determined by a bracket-generating distribution $T^{-1}\tilde{\cC}\subset T\tilde{\cC}$. We then define parabolic quasi-contact cone structures in terms of these types of bracket generating distributions, Definition \ref{def-parabolicquasicontact},
 and subsequently characterize the symbol algebras  $\q_{-}=\q_{-k}\oplus\cdots\oplus\q_{-1}$ of these distributions in Proposition \ref{prop-qcontactgrading}.
In particular, this identifies parabolic quasi-contact cone structures with a subclass of the non-rigid parabolic Monge geometries considered in \cite{ANN-Monge}. 

In  \ref{sec:conf-quasi-sympl} we introduce a class of geometric structures that we call \emph{parabolic almost conformally  quasi-symplectic} (PACQ)  structures, following the terminology in \cite{CS-cont1}. The structures are introduced as certain types of filtered $G$-structures in \ref{sec:PACQ-structure-def}. A PACQ structure on an odd dimensional manifold $\cC$ is comprised of a bracket-generating distribution $T^{-1}\cC\subset T\cC$
 together with an  additional reduction of structure group  (see Definition \ref{def-qsymplectic}).
The symbol algebras $\mfk_{-}$ of the distributions $T^{-1}\cC$ are quotients of the symbol algebras  $\q_{-}$ of parabolic quasi-contact structures introduced in \ref{sec:quasi-contact-cone}, namely, one has $\mfk_{-}\cong \q_{-}/\q_{-k}$.
In more concrete terms, the class of PACQ structures includes: 
\begin{enumerate}
\item  geometries of scalar fourth order ODEs up to contact transformations, 
\item pairs of third order ODEs up to point transformations, 
\item \emph{orthopath geometries}, which are defined as   path geometries (equivalently, systems of second order ODEs up to point transformations) together with an additional geometric data that can be  viewed as a conformal class $[\bh]$ of symmetric bundle metrics on the vertical bundle $\mathcal{V}$ of the projection $\mathbb{P}TM\to M$.
\end{enumerate}

Our first theorem,  Theorem \ref{thm-CartanConnection} stated  in \ref{sec:norm-cond}, establishes an equivalence of categories between
 \[\left\{
 \begin{array}{c}
  \mbox{parabolic almost conformally}\\ \mbox{  quasi-symplectic  structures}
  \end{array}
  \right\}\leftrightarrow \left\{ \begin{array}{c}
  \mbox{regular and  normal Cartan geometries}\\
  \mbox{of a particular type $(\mfk,L)$}\end{array}
  \right\},\]
 where the  Lie algebra and Lie group pair $(\mfk,L)$ is given in Lemma \ref{lem-kL}.
 The proof of the theorem uses results on constructions of canonical Cartan connections for filtered $G$-structures (\cite{Morimoto-Filtered}, \cite{Cap-Cartan}) and follows in most parts the construction in  \cite{CDT-ODE} for systems of ODEs, which covers the $(G_2,P_{12})$ and $(B_3,P_{23})$ cases. We derive our  normalization condition (in particular, the inner product used to define it) from the normalization condition for parabolic geometries.

In \ref{sec:spec-conf-quasi} we recall the definition of an almost conformally  quasi-symplectic structure, which is given by  a line bundle $\ell\subset \biw^2T^*\cC$ such that each element $\rho_x\in\ell_x$ has maximal rank, or equivalently, one-dimensional kernel. We show that a PACQ structure may be equivalently described as a bracket generating distribution together with a suitably   compatible almost conformally quasi-symplectic structure.
Next we investigate the condition that the bundle $\ell\subset \biw^2 T^*\cC$ has, locally, closed sections.  PACQ structures with this property will be called \emph{parabolic conformally quasi-symplectic} (PCQ) structures.
Using  general results on the inverse problem of the calculus of variations  for ODEs (\cite{AT-Book, Fels-ODE}), we then show
 that for PACQ structures with underlying geometries of ODEs, the existence of closed $2$-forms belonging to $\ell$ means that the corresponding ODE geometries are variational, i.e. they arise from the Euler-Lagrange equations of a Lagrangian, see Theorem \ref{prop-variational}. 
 
In \ref{sec:quasi-cont-reudct} we discuss the relationship between the two types of geometric structures studied in \ref{sec:quasi-contact-cone} and \ref{sec:conf-quasi-sympl}  via symmetry reduction and quasi-contactification.
We show the following.
\newtheorem*{thm1}{\bf Theorem \ref{thm-quasicont}}
\begin{thm1}
  \textit{
Given a  parabolic quasi-contact cone structure   with a transverse infinitesimal symmetry $\xi\in\mathfrak{X}(\tilde{\mathcal{C}})$, the symmetry determines  a local leaf space $\pi\colon\tilde{\mathcal{C}}\to \mathcal{C}$ and a canonical PCQ structure  on $\cC$ such that the filtration on $T\tilde{\cC}$ projects to the filtration on $T\cC$ and  the quasi-contact structure on $\tilde{\cC}$ induces the canonical conformally quasi-symplectic structure $\ell\subset\biw^2T^*\mathcal{C}$.
 Conversely, any PCQ structure  can be locally realized as the structure induced on the local leaf space of a parabolic quasi-contact cone structure endowed with a   transverse infinitesimal symmetry in this way.}
\end{thm1}

Together with Theorem \ref{prop-variational} this yields the following corollary.

\newtheorem*{cor1}{\bf Corollary \ref{corr-variational}}
\begin{cor1}
  \textit{
ODE geometries arising from parabolic quasi-contact cone structures via symmetry reduction  are variational. Conversely, any variational scalar fourth order ODE, variational pair of third order ODEs, and variational orthopath geometry arises as a symmetry reduction of a quasi-contact cone structure determined by a  $(2,3,5)$ distribution, a $(3,6)$ distribution, and a causal structure, respectively.}
\end{cor1}

Interesting instances of parabolic quasi-contact cone structures with infinitesimal symmetries correspond to certain Cartan holonomy reductions. The general framework and two important types of such holonomy reductions are briefly discussed in \ref{subsec-holonomy}.
In \ref{sec:relate-betw-cartan-geometries} we relate the canonical Cartan geometry of a PCQ structure with that of its quasi-contactification.

In \ref{sec:curvatuer-analysis} we take full advantage of the Cartan geometric nature of  PCQ structures and analyze their curvature and that of their quasi-contactification. As mentioned before, the Cartan curvature of  PCQ structures of type $(\fk,L),$ where $\fk$ is as \eqref{k_g2} and \eqref{k_so34}, have been well-studied although our normalization condition for the Cartan curvature arises from an inner product that is different from previous studies and obtained from the corresponding parabolic quasi-contact cone structure. We give explicit invariant conditions for such structures to have a conformally quasi-symplectic structure and, consequently, be variational in Propositions \ref{prop:variationality-Fels-4th-ODE} and \ref{prop:variationality-pair-3rd-order-ODEs}. A key result in this section is the solution of the equivalence problem for variational orthopath geometries  which is obtained via Cartan's reduction procedure. The theorem is   paraphrased below. 
\newtheorem*{thm2}{\bf Theorem \ref{thm:var-orthopath-equiv-prob}}
\begin{thm2}
  \textit{   Variational orthopath geometries on  $\nu\colon\cC\to M,$ where $\cC$ has   dimension $2n-1\geq 5,$ and $[\bh]\subset \mathrm{Sym^2}(\cV^*)$ has signature $(p,q),$  $p+q=n-1,$ are regular and normal Cartan geometries $(\tau\colon\cG\to\cC,\psi)$ of type $(\mfk,L)$ where  $\mfk=\fp^{op}\slash\fp_{-2},$   $L=B\times O(p,q),$ $B\subset\mathrm{GL}(2,\RR)$ is the Borel subgroup and $\fp^{op}\subset\fso(p+2,q+2)$ is the opposite contact parabolic subalgebra, as given in \eqref{k_sopq}. Their fundamental (relative) invariants are given by a trace-free weighted cubic $\bA,$ a symmetric trace-free weighted bilinear form $\bT,$ a skew-symmetric weighted bilinear form $\bN,$ and a weighted scalar function $\bq$ on $M.$
 } 
\end{thm2}
The proof is an implementation of certain appropriate reductions of path geometries  augmented with a conformal structure of bundle metrics, $[\bh].$ In fact,    every orthopath geometry and its principal bundle $\cG\to \cC$ is obtained via such a reduction as stated in Proposition \ref{prop:path-geom-red-vari-orth-geom}. The invariant $\bT$ is the pull-back of the torsion of the corresponding path geometry to $\cG$  with its indices lowered using the bilinear form $\bh.$ Subsequently, in \ref{sec:div-equiv-pseudo-Finsler-as-var-orthopath} we define the notion of divergence equivalence in (pseudo-)Finsler geometry and show that as geometric structures they are in one-to-one correspondence with  variational orthopath structures.

The main results of this section are Corollaries \ref{cor:235-from-4th-ODE-various-curv-conditions}, \ref{cor:36-from-pair-3rd-ODE-various-curv-conditions}  and \ref{cor:causal-from-orthopath-various-curv-cond}   whose proof is based on   Proposition \ref{prop-relharmonics}. We prove Proposition \ref{prop-relharmonics} in three parts given  in \ref{sec:235-4th-ODE-quasi-cont}, \ref{sec:36-quasi-cont-from-pair-3rd-ODE} and \ref{sec:general-causal-quasi-cont} in which we explicitly relate the Cartan connection of a PCQ structure and its quasi-contactification. Subsequently, we use the explicit relation between Cartan connections to describe several distinguished classes of parabolic quasi-contact cone structures in the form of Corollaries \ref{cor:235-from-4th-ODE-various-curv-conditions}, \ref{cor:36-from-pair-3rd-ODE-various-curv-conditions}  and \ref{cor:causal-from-orthopath-various-curv-cond}. In particular, the following is a short version of Corollary \ref{cor:causal-from-orthopath-various-curv-cond} tailored to the case of pseudo-conformal structures.
\newtheorem*{cor2}{\bf Corollary \ref{cor:causal-from-orthopath-various-curv-cond}}
\begin{cor2}
\textit{There is a one-to-one correspondence between pseudo-Riemannian conformal  structures of signature $(p+1,q+1)$ with a conformal Killing field and  variational orthopath geometries of signature $(p,q)$  satisfying the invariant condition $\bA=0$ on the leaf space of the integral curves of the infinitesimal symmetry. Moreover, one has the following.
  \begin{enumerate}
  \item   The conformal Killing field is null if and only if  $\bA=0$ and $\bq=0$.  
      \item   The conformal Killing field is null and its orthogonal  hyperplane distribution is integrable if and only if  $\bA=0,\bN=0,\bq=0$ hold. In this case, the orthopath geometry defines a variational projective structure   on $M$.
  \item A variational orthopath geometry descends to a Cartan geometry $(\tau_1\colon\cG\to M,\psi)$ of type $(\fk,\fx),$  if and only if  the invariant conditions  $\bA=0,\bN=0,\bq=0,$ and $\sT_{ab;\uc}\ve^{bc}=0$ for all $1\leq a\leq n$ are satisfied. In this case Cartan holonomy of the corresponding pseudo-conformal structure is  a proper subgroup of the  contact parabolic subgroup $P\subset \mathrm{SO}(p+2,q+2).$
       \item Variational orthopath geometries whose quasi-contactification is the flat pseudo-conformal structures in dimension $n+1$ satisfy $\bT=0$ and $\bA=0$. They are in one-to-one correspondence with the $\lfloor\half({n+1})\rfloor$-parameter family of $(2n-2)$-dimensional torsion-free  $\mathrm{SL}(2,\RR)\times \mathrm{CO}(p,q)$-structures. 
  \end{enumerate}}
\end{cor2}
By our discussion in  \ref{sec:div-equiv-pseudo-Finsler-as-var-orthopath}, it follows that the one-to-one correspondence involving variational orthopath geometries   extends the  observation made in \cite{CJS-Finsler} relating Randers metrics to stationary conformal structures of Lorentzian signature where the leaf space $\cC$ is identified with a transverse hypersurface. Analogously, we have extended the result in \cite{DZ-DivEquiv} to the case of (2,3,5)-distributions with an infinitesimal symmetry (see Remark \ref{rmk:235-Monge-Lagrangian}.) 

Examples of PCQ structures with various properties are given in \ref{sec:local-form-invar}, \ref{sec:36-local-form-invar} and \ref{sec:general-causal-local-form-invar} and local generalities of certain classes are found. In particular, in \ref{sec:local-form-invar} we describe all (2,3,5)-distributions with a \emph{null} infinitesimal symmetry as the quasi-contactification of the Euler-Lagrange equations  of a class  of second order Lagrangian  expressed parametrically as \eqref{eq:4th-ODE-Lagrangian-null-symm}. Similarly, in \ref{sec:36-local-form-invar} we show how all (3,6)-distributions with \emph{null} infinitesimal symmetry arise as the quasi-contactification of variational pairs of third order ODEs. Moreover, in    \ref{sec:general-causal-local-form-invar} we show that conformal structures obtained from quasi-contactifying the orthopath geometry of a Riemannian metric $g$ on a manifold $M$ coincides with the  standard product metric of Lorentzian signature on the direct product $M\times \RR,$  as mentioned at the beginning of this section.  More generally, in \ref{sec:general-causal-local-form-invar}, starting from a (pseudo-)Finsler structure with \emph{Cartan torsion} $\bI,$ we find the cubic form $\bA$  for its associated variational orthopath geometry and the cubic form $\bF$ for the causal structure obtained from its quasi-contactification.  A relation between these cubic forms via the affine, centro-affine and projective geometry of hypersurfaces will be given in Remark \ref{rmk:affine-centroaffine-Fubini}. As a byproduct of expressing the invariants of an orthopath structure in terms of the invariants of a (pseudo-)Finsler structure, as done in \ref{sec:general-causal-local-form-invar}, we derive  several curvature conditions for  (pseudo-)Finsler structures that are invariant under divergence equivalence relation.

As an application of our results, we study the variationality of chains in partially integrable almost CR structures in \ref{sec:vari-chains-cr-orthopath}. We show the following theorem which extends the result obtained in \cite{CMMM-CR}.
\newtheorem*{thm3}{\bf Theorem \ref{thm-chains}}
\begin{thm3}
\textit{  The canonical orthopath geometry defined by the chains of a non-degenerate partially integrable almost  CR structure is variational if and only if the CR structure is integrable. Such variational orthopath structures  satisfy the invariant conditions $\bA=0$ and $\bq=0$.}
\end{thm3}
We relate the quasi-contactification of the  orthopath geometry of chains to the well-known Fefferman conformal structure  \cite{Fefferman-CR}. In the spirit of \cite{Graham-Sparling}, it would be interesting to give a characterization of variational orthopath geometries defined by the chains of CR structures.  Finally, in Appendix we provide the full Cartan curvature for PCQ structures.

\subsection{Conventions} 

In this article our consideration will be over smooth real manifolds, although most results remain valid in the complex setting.   We denote a Cartan geometry of type $(\fg,P)$  as $(\cG\to M,\psi)$ where $\cG\to M$ is a principal $P$-bundle and $\psi$ is the corresponding Cartan connection. The two main classes of geometric structures considered in this paper are parabolic quasi-contact cone structures and parabolic conformally quasi-symplectic structures. Since we show a one-to-one correspondence between these two classes,   as Cartan geometries the former is denoted by $(\tcG\to\tcC,\tpsi)$ and the latter by $(\cG\to\cC,\psi).$ 
Parabolic quasi-contact cone structures are equipped with a pair of   distributions given by a line field $\tcE\subset T\tcC$ and an integrable distribution $\tcV\subset T\tcC$ that is transverse to $\tcE$. Consequently, we define (local) leaf spaces $\tnu\colon\tcC\to\tM=\tcC\slash\cI_{\tcV}$  and $\tlambda\colon\tcC\to\tS=\tcC\slash\cI_{\tcE}$ where $\cI_{\tcV}$ and $\cI_{\tcE}$ denote the foliations of $\tcC$ by  the integral manifolds of $\tcV$ and $\tcE,$ respectively. Similarly, any parabolic conformally quasi-symplectic  structure is equipped with a pair of transverse  distributions which are the line field $\cE$ and an integrable distribution $\cV$. We define (local) leaf spaces $\nu\colon\cC\to M=\cC\slash\cI_{\cV}$  and $\lambda\colon\cC\to S=\cC\slash\cI_{\cE}$ where $\cI_{\cV}$ and $\cI_{\cE}$ denote the foliations of $\cC$ by the integral manifolds of $\cV$ and $\cE,$ respectively.  When considering the leaf space of a foliation, we always restrict to sufficiently small open sets where the quotient manifold is smooth and Hausdorff. Since we consider parabolic quasi-contact cone structures with an infinitesimal symmetry, we have the natural projection $\pi\colon\tcC\to\cC$ where $\cC$ is the leaf space of the integral curves of the infinitesimal symmetry.  The following diagram shows the main fibrations appearing in this article.
\begin{equation}
  \label{eq:AllFib-FINAL}
  \begin{gathered}
  \begin{diagram}
    &&         \tcC                   &&    \\
     \tM & \ldTo(2,1)^\tnu    & \dTo(0,2)_\pi  &\rdTo(2,1)^\tlambda & \tS        \\
    &&         \cC                   &&    \\
    M & \ldTo(2,1)^\nu    &   &\rdTo(2,1)^\lambda & S        
            \end{diagram}\\ \\ 
             \textsc{Diagram }\text{1 : Local fibrations for a parabolic quasi-contact cone structure}\\\text{ with an infinitesimal symmetry and its corresponding PCQ structure}
  \end{gathered}         
\end{equation}
Given any PCQ structure $(\cG\to\cC,\psi)$, the semi-basic 1-forms in $\psi$ are always denoted as $\omega^i$ and $\theta^i.$ Given a coframe $(\omega^i,\theta^i)$ the corresponding frame is denoted as $(\tfrac{\partial}{\partial\omega^i},\tfrac{\partial}{\partial\theta^i}).$ The  \emph{coframe derivatives} of a function $f\colon\cG\to\RR$ are defined as
\[f_{;i}=\tfrac{\partial}{\partial\omega^i}\im\exd f,\quad f_{;\underline i}=\tfrac{\partial}{\partial\theta^i}\im\exd f,\quad f_{; \underline i j } = \tfrac{\partial}{\partial\omega^j}\im\exd f_{;\underline i},\quad f_{; \underline{ij} } = \tfrac{\partial}{\partial\theta^j}\im\exd f_{;\underline i},\quad f_{; {ij} } = \tfrac{\partial}{\partial\omega^j}\im\exd f_{;i} \]
where $0\leq i\leq n.$  This notation will be used in \ref{sec:curvatuer-analysis} to give invariant conditions that define certain distinguished classes of PCQ structures.  The structure equations and Cartan curvature for all  PCQ structures considered in the article are given in the Appendix.

When dealing with differential ideals, the algebraic ideal generated by 1-forms $\alpha^1,\cdots,\alpha^k$ will be denoted by $\{\alpha^1,\cdots,\alpha^k\}.$ The span of $v_1,\cdots,v_k$ where $v_i$'s are  tangent vectors or differential 1-forms is denoted by $\langle v_1,\cdots,v_k\rangle.$ Derived systems of a distribution $\cD\subset T\cC$ is the distribution whose sheaf of sections if given by $\Gamma(\cD)+[\Gamma(\cD),\Gamma(\cD)]$ and, by abuse of notation, is denoted as $[\cD,\cD].$ Similarly, given two distributions $\cD_1$ and $\cD_2,$ we denote by $[\cD_1,\cD_2]$ the distribution whose sheaf of sections is   $\Gamma(D_1)+\Gamma(D_2)+[\Gamma(\cD_1),\Gamma(\cD_1)]+[\Gamma(\cD_1),\Gamma(\cD_2)]+[\Gamma(\cD_2),\Gamma(\cD_2)].$ 

We denote by $\Omega^k(\cC)$ and $\Omega^k(\cC,\fg)$ the sheaf of sections of  $\biw^k T^*\cC$ and $\biw^k T^*\cC\otimes\fg,$ respectively. The sheaf of vector fields on $\cC$ is denoted by $\mathfrak X(\cC).$  To denote symmetric product of tensors we use the symbol $\circ$ e.g. for two 1-forms $\alpha$ and $\beta$ we have $\alpha\circ\beta=\half(\alpha\otimes\beta+\beta\otimes\alpha)$ and $\alpha^k$ denotes the $k$th symmetric power of $\alpha.$

To address variationality and give examples we will need to work with various jet spaces. The symbols used as coordinates of jet spaces in \ref{sec:curvatuer-analysis} is different from what is used in \ref{sec:spec-conf-quasi}  so that the parametric expression of invariants do not appear cumbersome.  Furthermore, path geometries appearing in this article are always defined in the generalized sense and therefore we avoid using the term generalized path geometry.

\section{Parabolic quasi-contact cone structures}
\label{sec:quasi-contact-cone}
We start this section with a short review of basic notions and results on filtered $G$-structures and Cartan geometries. Then we introduce  parabolic quasi-contact cone structures. We characterize them as a class of parabolic geometries and as a class of bracket generating distributions with symbol algebras of a certain type. These descriptions reveal common features of parabolic quasi-contact cone structures that will be used in later parts of the paper. 
\subsection{Background on filtered $G$-structures and Cartan  geometries}
\label{sec:background}

A  Lie algebra $\g$ is called \emph{filtered} if it is endowed with a filtration of nested subspaces
$\g^{k}\subset\cdots\subset\g^{-p}=\g$ compatible with the Lie bracket in the sense that $[\g^i,\g^j]\subset\g^{i+j}$, where $\g^l:=\g$ for $l<-p$ and $\g^l:=\{0\}$ for $l>k$. It is called \emph{graded} if it is endowed with a vector space decomposition $\g=\g_{-p}\oplus\cdots\oplus\g_{k}$ compatible with the Lie bracket. The \emph{associated graded} Lie algebra of a filtered Lie algebra $\g$ is $\mathrm{gr}(\g):=\bigoplus_{i=-p}^{k}\g^i/\g^{i+1}$
endowed with the induced Lie bracket. A graded Lie algebra determines a filtration by defining $\g^i=\g_i\oplus\dots\oplus\g_k$.

A \emph{filtered manifold} is a smooth manifold $M$ together with a filtration by sub-bundles
$$T^{-1}M\subset\cdots\subset T^{-p+1}M\subset T^{-p}M=TM$$
 that is compatible with the Lie bracket in the sense that $[\Gamma(T^{-i}M),\Gamma(T^{-j}M)]\subset\Gamma(T^{-i-j}M)$ for all $i,j>0$, where we set $T^{-i}M=TM$ for $i\geq p$.
 Given a distribution $T^{-1}M\subset TM$, its \emph{weak derived flag} is defined iteratively by taking Lie brackets with sections of $T^{-1}M$.  A distribution is called \emph{bracket generating} if  iterated Lie brackets of $T^{-1}M$ span the entire tangent space at each point.  Assuming that all elements in the flag have constant rank, one obtains a filtered manifold.

  Given a filtered manifold, the Lie bracket of vector fields induces a tensorial map on the associated graded vector bundle $\mathrm{gr}(TM)=T^{-p}M/T^{-p+1}M\oplus\dots\oplus T^{-1}M$, called the \emph{Levi bracket} $\mathfrak{L}$. The graded nilpotent Lie algebra $(\mathrm{gr}(T_xM),\mathfrak{L}_x)$  is called the \emph{symbol algebra} of a filtered manifold at a point $x\in M$. 
  If the symbol algebra of a filtered manifold is at each point isomorphic to a fixed graded Lie algebra $\mathfrak{m}$, then it has a natural graded frame bundle with structure group $\Aut_{gr}(\mathfrak{m})$ whose fibers consist of all graded Lie algebra homomorphisms $\mathfrak{m}\to\mathrm{gr}(T_xM)$.

Consider a subgroup $G_0\subset\Aut_{gr}(\mathfrak{m})$. A \emph{filtered $G_0$-structure of type $\mathfrak{m}$} is given by
  \begin{enumerate}
  \item a filtered manifold $(M,T^{-1}M\subset\cdots\subset T^{-k}M)$ whose symbol algebras form a locally trivial bundle modeled on the graded nilpotent Lie algebra $\mathfrak{m}$ and
\item a reduction of structure group of the graded frame bundle of the filtered manifold to a principal $G_0$-bundle $\tau_0\colon\mathcal{G}_0\to M$.
  \end{enumerate}

 Certain classes of filtered $G_0$-structures admit equivalent descriptions as Cartan geometries. Let $G/P$ be a homogeneous space with corresponding Lie algebras $\p\subset\g$.  A \emph{Cartan geometry} $(\mathcal{G}\to M,\psi)$ of type $(\g,P)$  is given by
a (right) principal $P$-bundle $\tau\colon\mathcal{G}\to M$ together with
 a \emph{Cartan connection} $\psi\in\Omega^1(\mathcal{G},\g)$, i.e a $\g$-valued $1$-form  on $\mathcal{G}$ with the following properties:
 \begin{itemize}
 \item $\psi$ is $P$-equivariant, i.e. $r_g^*\psi=\mathrm{Ad}_{g^{-1}}\circ\psi$ for any $g\in P$,
 \item  $\psi$ maps fundamental vector fields to their generators, i.e. $\psi(\zeta_X)=X$ for any $X\in\p$,
 \item  $\psi$ defines an isomorphism $\psi\colon T_u\mathcal{G}\to \g$ for any $u\in\mathcal{G}.$
\end{itemize}
The \emph{curvature} of a Cartan connection $\psi$ is the $2$-form $\Psi\in\Omega^2(\mathcal{G},\g)$ defined as $$\Psi(X,Y)=\mathrm{d}\psi(X,Y)+[\psi(X),\psi(Y)]\quad\mbox{for}\quad X,Y\in\Gamma(T\mathcal{G}).$$ Since it is $P$-equivariant and horizontal, it can be equivalently encoded in the curvature function $\kappa\colon\mathcal{G}\to\biw^2(\g/\p)^*\otimes\g$.

Let $\g=\g_{-p}\oplus\cdots\oplus\g_{k}$ be a graded Lie algebra
and  $\g_{-}$ 
be its negative part. We  assume that $\g_{-1}$ generates $\g_{-}$ and that there is no ideal in $\g$ that is contained in $\g^0$. Let
$$\cdots\xrightarrow{\partial}\biw^{s-1}(\g_{-})^*\otimes\g\xrightarrow{\partial}\biw^s(\g_{-})^*\otimes\g \xrightarrow{\partial}\biw^{s+1}(\g_{-})^*\otimes\g\xrightarrow{\partial}\cdots$$
be the complex that defines Lie algebra cohomology $H^s(\g_{-},\g)$ of $\g_{-}$ with values in $\g$. Explicitly, one has
\begin{equation}
\label{formuladel}
\begin{aligned}
\partial\Psi (X_0,\dots, X_{s})&=\sum_i(-1)^i[X_i,\Psi(X_0,\dots,\hat{X}_i,\dots,X_s)]
\\& 
+\sum_{i<j}(-1)^{i+j}\Psi([X_i,X_j],\dots,\hat{X}_i,\dots,\hat{X}_j,\dots,X_s)
\end{aligned}
\end{equation}
for $X_0,\cdots,X_s\in\g_{-}$, where  hats denote omission. 
Since the differential $\partial$ preserves homogeneity, the grading on $\g$ induces a grading on the cohomology spaces $H^s(\g_{-},\g)=\bigoplus_{l}H^s(\g_{-},\g)_{l}$.

Let  $P\subset \mathrm{Aut}_f(\g)$ be a subgroup of  automorphisms preserving the filtration of $\g$ that has Lie algebra $\p=\g^0=\g_0\oplus\cdots\oplus\g_k$ and assume $P$ is of split exponential type as in  \cite[Definition 4.11]{Cap-Cartan}.
 Suppose further  that for the pair $(\g,P)$ the following holds:
\begin{enumerate}
\item The Lie algebra $\g$ is the full prolongation of  $\g_{-}\oplus\g_0$. Equivalently, this means that  the cohomological condition  $H^1(\mathfrak{g}_{-}, \g)_l=\{0\}$ holds for all $l>0$.
\item  There exists a $P$-invariant subspace $$\mathcal{N}\subset\biw^2(\g/\p)^*\otimes\g$$
 such that for each $l>0$ the induced subspace $\mathrm{gr}_l(\mathcal{N})\subset \mathrm{gr}_{l}(\biw^2(\g/\p)^*\otimes\g)\cong(\biw^2\mathfrak{g}_{-}^*\otimes\g)_{l}$ is a complement to the image of $\partial:(\mathfrak{g}_{-}^*\otimes\g)_l\to (\biw^2\mathfrak{g}_{-}^*\otimes\g)_l.$ 
\end{enumerate}
A choice of subspace $\mathcal{N}$ is called a \emph{normalization condition}.
A Cartan connection of type $(\g,P)$ is  called \emph{normal} if the curvature function takes values in the distinguished subspace $\mathcal{N}$. It is called \emph{regular} if  $\kappa(\g^i,\g^j)\subset\g^{i+j+1}$ for all $i,j$. 
\begin{theorem}(\cite[Theorem 4.12]{Cap-Cartan})
\label{eqcat}
 Suppose that $H^1(\mathfrak{g}_{-}, \g)_l=\{0\}$ for all $l>0$. Then there exists an equivalence of categories between filtered $G_0$-structures of type $\mathfrak{g}_{-}$ and  regular and normal Cartan geometries of type $(\g,P)$.  
\end{theorem}

There is a well-known class of geometric structures, called parabolic geometries, where a natural normalization condition is known to exist.

Let $\g$ be a real semisimple Lie algebra. A \emph{$\vert k\vert$-grading} on $\g$ is a grading of the form
\begin{align}\label{grading}\g=\underbrace{\g_{-k}\oplus\cdots\oplus\g_{-1}}_{\g_{-}}\oplus\; \g_{0}\oplus\underbrace{\g_1\oplus\cdots\oplus\g_k}_{\g_+},  \quad [\g_i,\g_j]\subset\g_{i+j},
\end{align}
where  $\g_{-}$ is generated by $\g_{-1}$ and  no simple ideal of $\g$ is contained in $\g_0$. Then the Killing form defines isomorphisms $\g_{-i}^*\cong\g_i$. The negative part $\g_{-}$ is a nilpotent Lie algebra and the non-negative part $\p=\g^0=\g_0\oplus\g_{+}$ is a parabolic subalgebra of $\g$.  
There is a well-known correspondence between $\vert k\vert$-gradings and subsets $\Sigma\subset\Delta^0$ of simple (restricted) roots, which can be depicted by means of Satake diagrams with crosses. More precisely, a parabolic subalgebra $\p$ is represented by crosses on the nodes of the diagram corresponding to simple roots $\alpha_i$ such that the root space $\g_{\alpha_i}$ is not contained in $\p$.
For details we refer the reader to \cite{CS-Parabolic}.
Let $G$ be a Lie group with Lie algebra $\g$ and $P\subset G$ a closed subgroup  with Lie algebra $\p=\g^0$ consisting of elements that via the adjoint action preserve the filtration on $\g$. Then a Cartan geometry of type $(\g,P)$ is called a \emph{parabolic geometry}.  

To introduce a normalization condition for parabolic geometries, recall that the Killing form induces a $P$-module identification $(\g/\p)^*\cong\g_+$ and thus an isomorphism between $\biw^2(\g/\p)^*\otimes\g$ and $\biw^2\g_{+}\otimes\g$. The latter space is a chain space in the complex
$$\cdots\xrightarrow{\tilde{\partial}^*}\biw^{k+1} \g_+\otimes\g\xrightarrow{\tilde{\partial}^*}\biw^k \g_+\otimes\g \xrightarrow{\tilde{\partial}^*}\biw^{k-1}\g_+\otimes\g \xrightarrow{\tilde{\partial}^*}\cdots$$   using which one defines the Lie algebra homology $H_l(\mathfrak{g}_{+},\g)$ of the nilpotent Lie algebra $\g_{+}$ with values in $\g$. Explicitly, the 
$P$-equivariant boundary operator
   is given by 
   \begin{equation}
   \label{KostantCodif}
\begin{aligned}
\tilde{\partial}^*(Z_1\wedge\cdots\wedge & Z_k \otimes A)=\sum_{i=1}^{k}(-1)^i Z_1\wedge\cdots\wedge \hat{Z}_i\wedge\cdots\wedge Z_k\otimes[Z_i,A]\\&+\sum_{i<j}(-1)^{i+j}[Z_i,Z_j]\wedge Z_1\wedge\cdots\wedge \hat{Z}_i\wedge\cdots\wedge \hat{Z}_j\wedge\cdots\wedge Z_k\otimes A
\end{aligned}
\end{equation}
for $Z_1,\cdots,Z_k\in\g_+$, $A\in \g$,  where  hats denote omission. 
The operator $\tilde{\partial}^*$ is traditionally called the \emph{Kostant codifferential}. 
The kernel
$$\mathcal{N}=\ker(\tilde{\partial}^*)\subset\biw^2(\g/\p)^*\otimes\g$$
provides a normalization condition for parabolic geometries of type $(\g,P)$.

 To relate  $\tilde{\partial}^*$ and the Lie algebra cohomology operator $\partial$, note that as $\g_0$-modules $(\g_{-})^*\cong\g_{+}$ and thus  the Kostant codifferential defines a map
$\tilde{\partial}^*:\biw^k (\g_{-})^*\otimes\g \to\biw^{k-1}(\g_{-})^*\otimes\g .$
As in the proof of  \cite[Proposition 3.3.1]{CS-Parabolic}, one considers a positive definite inner product $\left\langle X, Y\right\rangle=-B(X,\theta(Y))$ on $\g$, where the Cartan involution $\theta:\g\to\g$ has the property that 
$\theta(\g_i)=\g_{-i}$.
It induced an inner product on the cochain spaces $\biw^k(\g_{-})^*\otimes\g$ such that the operators $\partial$ and $\tilde{\partial}^*$ are adjoint, i.e.  
$$\left\langle  \partial\phi,\psi\right\rangle=\left\langle\phi,\tilde{\partial}^*\psi\right\rangle.$$
As a result, one obtains a $\g_0$-invariant Hodge decomposition of the form
\begin{equation}\label{eqHodge}\biw^k(\g_{-})^*\otimes\g\cong\mathrm{im}(\partial)\oplus\ker(\Box)\oplus\mathrm{im}(\tilde{\partial}^*),
\end{equation} where $\Box=\partial\circ\tilde{\partial}^*+\tilde{\partial}^*\circ\partial$, the first two summands $\mathrm{im}(\partial)\oplus\ker(\Box)$ add up to $\ker(\partial)$, and the second two summands $\ker(\Box)\oplus\mathrm{im}(\tilde{\partial}^*)$ add up to $\ker(\tilde{\partial}^*)$. 
The projection  of the curvature function $\kappa$ onto 
the quotient 
\begin{equation}
\label{harmon}
\ker(\tilde{\partial}^*)/\mathrm{im}(\tilde{\partial}^*)\cong 
H^2(\g_{-},\g)\cong\ker(\Box)
\end{equation} is called the \emph{harmonic curvature} and will be denoted by $\kappa_H$. It is a complete obstruction to the local equivalence of a parabolic geometry to its homogeneous model.


\subsection{Definition and descriptions of parabolic quasi-contact cone structures}
\label{sec-descriptionofquasicontactcone}\label{sec-defquasi}

We will now introduce the class of parabolic geometries that will be studied in this article. We start by recalling the definitions of two types of non-integrable distributions on manifolds of dimension $5$ and $6$, and a generalization of pseudo-Riemannian conformal geometry.

\begin{definition}
 A (2,3,5) distribution $\cD\subset T\tilde{M}$ is a rank $2$ distribution on a  smooth $5$-manifold such that $[\cD,\cD]$ is a sub-bundle of rank $3$ and $[\cD,[\cD,\cD]]=T\tilde{M}$.  
\end{definition} 
Let $\mathfrak{g}_2^*$ be the split real form of the  $14$-dimensional exceptional simple Lie algebra and $P_1\subset\mathrm{G}^*_2$ the stabilizer of a highest weight line in the $7$-dimensional fundamental representation. Then $P_1$ is a maximal parabolic subgroup corresponding to the short root $\{\alpha_1\}$.
There is a well-known equivalence of categories between $(2,3,5)$ distributions  and regular and normal parabolic geometries $(\tilde{\mathcal{G}}\to \tilde{M},\tilde{\psi})$ of type $(\mathfrak{g}_2^*,P_1)$, going back to the classical work of Cartan \cite{Cartan-235}. 

\begin{definition}
 A (3,6) distribution $\cD\subset T\tilde{M}$ is a rank $3$ distribution on a  smooth $6$-manifold such that $[\cD,\cD]= T\tilde{M}$.
\end{definition}
 Similarly, there is a well-known equivalence of categories between $(3,6)$ distributions and regular and normal parabolic geometries of type $(\mathfrak{so}(3,4),P_3)$. Here $P_3\subset \mathrm{SO}(3,4)$  is  defined as the stabilizer of a null $3$-plane in $\mathbb{R}^{3,4}$. It  is a maximal parabolic subgroup corresponding to the short root $\{\alpha_3\}$.

To introduce the next class of geometric structures, recall that a pseudo-Riemannian conformal  structure of indefinite signature  $(\tilde{M},[\tilde{g}])$ can be equivalently described in terms of its bundle of projectivized null cones, denoted by  $\tilde{\mathcal{C}}\subset \mathbb{P}T\tilde{M}$, which is also referred to as its \emph{sky bundle}.  
We will consider a generalization of pseudo-Riemannian conformal  structures by relaxing the condition that the fibers of the sky bundle are non-degenerate quadrics. In other words, the cone in each tangent space  can be defined as the null set of a (pseudo-)Finsler norm rather than an inner product. 
\begin{definition}\label{def-causal}
 A causal structure  $(\tilde{M},\tilde{\mathcal{C}})$ of signature $(p+1,q+1)$ is given by a submanifold of the projectivized tangent bundle, $\tilde{\nu}\colon\tilde{\mathcal{C}}\subset\mathbb{P}T\tilde{M}\to \tilde{M}$, with the properties that the projection $\tilde{\nu}$ restricts to a submersion to the base $\tilde{M}$ and the fibers   $\tilde{\mathcal{C}}_x:=\tilde{\nu}^{-1}(x)\subset\mathbb{P}T_x\tilde{M}$ are connected projective hypersurfaces with second fundamental form of signature $(p,q)$.
\end{definition}
 Causal structures have an equivalent description in terms of regular and normal parabolic geometries. 
For $n=p+q+2\geq 5$ they correspond to regular and normal parabolic geometries of type $(\mathfrak{so}(p+2,q+2),P_{12})$, where $P_{12}\subset\mathrm{SO}(p+2,q+2)$ is the stabilizer of a null line contained in a null $2$-plane in $\mathbb{R}^{p+2,q+2}$. For  $n=p+q+2=4$ they correspond to a subclass of regular and normal parabolic geometries of type $(\mathfrak{so}(p+2,q+2),P_{123})$ that can be characterized by the vanishing of one of the harmonic curvature components, where $P_{123}\subset\mathrm{SO}(p+2,q+2)$ is a minimal parabolic subgroup \cite{Omid-Sigma}. 

\begin{remark}\label{rmk:general-defin-causal-str}
One can give a more general definition of causal structures on $\tM^{n+1}$  as a $2n$-dimensional manifold $\tcC$ with an immersion $\iota\colon\tcC\to \PP T\tM,$ such that $\hat\tnu\circ\iota\colon\tcC\to\tM$ is a submersion with connection fibers where $\tnu\colon\PP T\tM\to \tM$ is the projection, and fibers $\tcC_x:=(\tnu\circ\iota)^{-1}(x),$ for each $x\in\tM,$  are immersed via $\iota_x\colon\tcC_x\to \PP T_x\tM$ as hypersurfaces in $\PP T_x\tM$  with second fundamental form of signature $(p,q).$  This definition gives a generalization of causal structures in the same spirit that the definition of Finsler structures has been broadened to   generalized Finsler structures in \cite{Bryant-Finsler}. See Remark \ref{rmk:generalized-causal-geometry-Finsler} for further discussion.
\end{remark}

Note that any $(2,3,5)$ distribution, $(3,6)$ distribution and causal structure on a manifold $\tilde{M}$ defines a submanifold $\tilde{\cC}\subset\mathbb{P}T\tilde{M}$ in the projectivized tangent bundle such that the projection restricts to a submersion over the base $\tilde{M}$.  For causal structures this is part of the definition and for the two types of distributions one defines $\tilde{\cC}=\mathbb{P}\cD$. In other words, the structures can be viewed as special types of so-called $\emph{cone structures}$, \cite{Hwang-Survey} and \cite{HN-cone} for a definition and results on cone structures in the context of certain (holomorphic) parabolic geometries.

As a first step towards the geometric correspondence presented in this article, we will now lift the geometric structures on $\tilde{M}$ to certain geometric structures on the total space of the bundle $\tilde{\mathcal{C}}\to \tilde{M}$ of projectivized cones, which we call \emph{parabolic quasi-contact cone structures}. This is done using their equivalent description as parabolic geometries.

Note that the regular and normal parabolic geometries  $(\tilde{\mathcal{G}}\to \tilde{M},\tilde{\psi})$  associated to $(2,3,5)$ distributions, $(3,6)$ distributions, and pseudo-Riemannian conformal structures are of type  $(\mathfrak{g},R)$, where
 $R$ is a maximal parabolic subgroup  corresponding to a single simple root $\Sigma^R=\{\alpha\}$. We denote  by $\mathfrak{r}\subset\g$ the Lie algebra of $R$.
\begin{notation}
In the following, we will be dealing with a number of different $\vert k\vert$-gradings and corresponding parabolic subalgebras of a given semisimple Lie algebra $\g$. 
We will denote by $\mfr_{-k}\oplus\cdots\oplus\mfr_k$ the grading and by $\mfr^{k}\subset\cdots\subset\mfr^{-k}$ the filtration corresponding to the parabolic subalgebra $\mfr=\mfr^0$ and use analogous notation for other parabolic subalgebras.
\end{notation} 
  Let  $Q\subset R$ be the subgroup stabilizing  the point $[v]\in\mathbb{P}(\mathfrak{r}^{-1}/\mathfrak{r})$ corresponding to the root space for $-\alpha$ and let $R\cdot[v]\subset \mathbb{P}(\mathfrak{r}^{-1}/\mathfrak{r})$ be its $R$-orbit.  Then $Q$ is a parabolic subgroup corresponding to the subset $\Sigma^Q$ given by $\alpha$ and all simple roots connected to $\alpha$ in the Dynkin diagram for $\g$, see  \cite[Lemma 2.14]{HN-cone}. Consequently, we have the following:

 \begin{lemma} 
Let $(\tilde{\mathcal{G}}\to \tilde{M},\tilde{\psi})$ be a regular and normal Cartan geometry of type $(\mathfrak{g},R)$ associated with a $(2,3,5)$ distribution, $(3,6)$ distribution, or  pseudo-Riemannian conformal structure. Let $\tilde{\cC}\subset \mathbb{P}T\tilde{M}$ be the associated submanifold of the projectivized tangent bundle and $Q\subset R$ be the parabolic subgroups introduced above. Then we have a natural identification
$$\tilde{\cC}\cong\tilde{\mathcal{G}}/Q.$$
 
\end{lemma}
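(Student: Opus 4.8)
The plan is to exhibit a natural diffeomorphism between $\tilde{\mathcal{C}}\subset\mathbb{P}T\tilde{M}$ and the orbit space $\tilde{\mathcal{G}}/Q$, starting from the identification $\tilde{M}\cong\tilde{\mathcal{G}}/R$. First I would recall that for a parabolic geometry $(\tilde{\mathcal{G}}\to\tilde{M},\tilde\psi)$ of type $(\g,R)$, the Cartan connection $\tilde\psi$ trivializes $T\tilde{M}\cong\tilde{\mathcal{G}}\times_R(\g/\mfr)$, and this identification is compatible with the $R$-invariant filtration on $\g/\mfr$ coming from the $|k|$-grading; in particular the distribution $T^{-1}\tilde{M}$ corresponds to $\mfr^{-1}/\mfr$. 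Since $R$ is maximal parabolic with $\Sigma^R=\{\alpha\}$, in the three cases at hand $\mfr^{-1}/\mfr$ is either all of $\g/\mfr$ (for causal structures, where $|k|=1$) or a proper subbundle defining the given distribution $\cD$ (for $(2,3,5)$ and $(3,6)$ distributions). In every case, passing to projectivizations, the fiberwise action of $R$ on $\mathbb{P}(\mfr^{-1}/\mfr)$ has $R\cdot[v]$ as an orbit, where $[v]$ is the highest weight line associated to $-\alpha$; this is exactly the statement invoked from \cite[Lemma 2.14]{HN-cone}.

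The second step is the key geometric observation: the submanifold $\tilde{\mathcal{C}}\subset\mathbb{P}T\tilde{M}$ is, fiberwise, precisely the associated bundle $\tilde{\mathcal{G}}\times_R(R\cdot[v])$ sitting inside $\mathbb{P}(\mfr^{-1}/\mfr)\subset\mathbb{P}(\g/\mfr)$. For $(2,3,5)$ and $(3,6)$ distributions this is essentially tautological once one notes $\tilde{\mathcal{C}}:=\mathbb{P}\cD$ and $R$ acts transitively on the projectivized fiber of $\cD$ (the fiber being an irreducible $R$-representation up to the relevant subquotient, so its projectivization has a unique closed orbit which is all of it for these low-dimensional models — here I would simply point to the explicit descriptions of $\cD$ as an associated bundle). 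For causal structures I would argue that the fiber $\tilde{\mathcal{C}}_x\subset\mathbb{P}T_x\tilde{M}$, being the projectivized null cone of $[\tilde g]$, corresponds under the Cartan-connection trivialization to the $R$-orbit of the null line spanned by a lowest root vector, which is exactly $R\cdot[v]$; this uses the standard identification of the conformal sphere $\mathrm{SO}(p+2,q+2)/P_1$ with the projectivized null cone and the fact that $P_1$ acts on it with the single orbit $R\cdot[v]$ being the complement of a hyperplane section, matching the connected projective quadric hypersurface in each fiber.

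The third step is then formal: a point of $\tilde{\mathcal{C}}$ over $x=u\cdot R$ is a pair $(x,[w])$ with $[w]\in\tilde{\mathcal{G}}\times_R(R\cdot[v])$, so it can be written as $(u,[v])$ modulo the diagonal $R$-action, and the stabilizer in $R$ of the class $[v]$ is by definition $Q$; hence the map $u\mapsto(x,[u\cdot v])$ descends to a well-defined bijection $\tilde{\mathcal{G}}/Q\to\tilde{\mathcal{C}}$. Checking that this is a diffeomorphism is routine: it intertwines the two submersions to $\tilde{M}=\tilde{\mathcal{G}}/R$, and on fibers it is the identification $R/Q\cong R\cdot[v]$, which is a diffeomorphism of homogeneous spaces; smoothness of the inverse follows from local triviality of $\tilde{\mathcal{G}}\to\tilde{M}$ and the smooth slice for the $R$-action on $R\cdot[v]$.

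I expect the main obstacle to be the verification, for causal structures in the $n=4$ case and more importantly for all $n$, that the \emph{given} submanifold $\tilde{\mathcal{C}}$ of Definition~\ref{def-causal} — a priori only required to be a connected hypersurface with second fundamental form of signature $(p,q)$ — coincides fiberwise with the \emph{homogeneous model orbit} $R\cdot[v]$ rather than merely being abstractly diffeomorphic to it. The point is that the parabolic-geometry description of causal structures (invoked just before Definition~\ref{def-causal}) already encodes this: the normal Cartan geometry of type $(\mathfrak{so}(p+2,q+2),P_{12})$ reconstructs $\tilde{\mathcal{C}}$ as $\tilde{\mathcal{G}}/P_{12}$, and one must match $P_{12}$ with $Q$ by checking that the subset of simple roots defining $P_{12}$ is $\{\alpha_1\}$ together with $\alpha_2$, the unique simple root adjacent to $\alpha_1$ in the $B_n$ (resp.\ $D_n$, $A_3$) diagram — which is exactly the recipe for $\Sigma^Q$ from \cite[Lemma 2.14]{HN-cone}. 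So the argument reduces to this combinatorial check on Dynkin diagrams plus the already-cited equivalences of categories; once those are in place the lemma follows without further hard analysis.
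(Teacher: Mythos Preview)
Your approach is correct and matches the paper's: identify $\tilde{\mathcal{G}}/Q\cong\tilde{\mathcal{G}}\times_R R/Q\cong\tilde{\mathcal{G}}\times_R(R\cdot[v])$ and recognize the latter as $\mathbb{P}\cD$ in the distribution cases and as the sky bundle in the conformal case. Your final paragraph overcomplicates matters, however: the lemma is stated for \emph{pseudo-Riemannian conformal structures}, not general causal structures, so $\tilde{\cC}_x$ is by definition the projectivized null quadric, on which $R=P_1$ acts transitively via its Levi factor $\mathrm{CO}(p+1,q+1)$ --- no detour through the causal-structure equivalence of categories or any Dynkin-diagram check is required here (that matching of $Q$ with $P_{12}$ is relevant for Proposition~\ref{prop-qcontactcone}, not for this lemma).
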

\begin{proof}
The orbit space $\tilde{\mathcal{G}}/Q$ can be identified with $\tilde{\mathcal{G}}\times_{R}R/Q$. On the other hand we have an isomorphism $R/Q\cong R\cdot[v]\subset \mathbb{P}(\mathfrak{r}^{-1}/\mathfrak{r})$. Using this, one obtains an identification of $\tilde{\mathcal{G}}/Q$ with $\mathbb{P}\mathcal{D}$ in the distribution cases and with the sky bundle in the case of pseudo-Riemannian conformal structures.
\end{proof}

  Moreover, 
$(\tilde{\mathcal{G}}\to\tilde{\cC}=\tilde{\mathcal{G}}/Q,\tilde{\psi})$ is a Cartan geometry of type $(\g,Q)$, which is referred to as the \emph{correspondence space} of $(\tilde{\mathcal{G}}\to \tilde{M}=\tilde{\mathcal{G}}/R,\tilde{\psi})$ with respect to the inclusion $Q\subset R$. If $(\tilde{\mathcal{G}}\to \tilde{M},\tilde{\psi})$ is normal, then the correspondence space is  normal as well \cite{Cap-corr}. However, regularity is in general not preserved by the correspondence space construction.
Using these general facts we now show the following.
\begin{proposition}
\label{prop-qcontactcone}
Let  $(\tilde{\mathcal{G}}\to \tilde{M},\tilde{\psi})$ be a  regular and normal parabolic geometry associated with a $(2,3,5)$ distribution, $(3,6)$ distribution or causal structure. Then  $(\tilde{\mathcal{G}}\to\tilde{\mathcal{C}},\tilde{\psi})$ is a  regular and normal parabolic geometry of type $(\g,Q)$, where $(\g,Q)$ is one of the following pairs.
\begin{enumerate} 
\item 
$(\g_2^*,P_{12})$, where $P_{12}$ is the Borel subalgebra of $\mathrm{G}_2^*$,
\item 
 $(\mathfrak{so}(3,4),P_{23})$, where $P_{23}\subset\mathrm{SO}(3,4)$ is the
 stabilizer of a filtration consisting of a null $2$-plane inside a null $3$-plane in $\mathbb{R}^{3,4}$,
\item 
  $(\mathfrak{so}(p+2,q+2), P_{12})$ for $n=p+q+2\geq 5$, 
and  $(\mathfrak{so}(p+2,q+2),P_{123})$ for $n=p+q+2=4$,
 where the parabolic subgroups are the ones corresponding to causal structures as defined in Definition \ref{def-causal}.
\end{enumerate}

Conversely, any regular and normal parabolic geometry of the types (1) and (2) is locally equivalent to the correspondence space of a regular and normal parabolic geometry determined by a $(2,3,5)$ and $(3,6)$ distribution with respect to the inclusions $P_{12}\subset P_1$ and $P_{23}\subset P_3,$ respectively.

 A regular and normal parabolic geometry of type (3) is locally equivalent to a causal structure for $n\geq 5$;  for $n=4$ this holds if and only if one of the harmonic curvature components of homogeneity one vanishes. It is locally  the correspondence space of a normal pseudo-conformal  structure with respect to $P_{12}\subset P_{1}$ and  $P_{123}\subset P_1$ when $n\geq 5$ and $n=4,$ respectively,  if and only if in addition the remaining harmonic curvature component $\bF$ \eqref{eq:harmonic-inv-causal-represented} of homogeneity one vanishes.
\end{proposition}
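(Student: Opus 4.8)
The plan is to leverage the correspondence space machinery recalled in \S\ref{sec-descriptionofquasicontactcone} together with the Dynkin-diagrammatic description of $Q\subset R$. First I would establish the ``forward'' direction: given a parabolic quasi-contact cone structure with its associated regular and normal parabolic geometry $(\tilde{\mathcal G}\to\tilde M,\tilde\psi)$ of type $(\g,R)$, the Lemma above identifies $\tilde{\mathcal C}\cong\tilde{\mathcal G}/Q$, and $(\tilde{\mathcal G}\to\tilde{\mathcal C},\tilde\psi)$ is its correspondence space with respect to $Q\subset R$. Normality is automatic by \cite{Cap-corr}. To pin down $Q$ in each case one uses the description of $\Sigma^Q$ as $\{\alpha\}$ together with all simple roots adjacent to $\alpha$ in the Dynkin diagram: for $(2,3,5)$ one has $\g=\g_2^*$, $\Sigma^R=\{\alpha_1\}$ the short root, which is connected to $\alpha_2$, so $\Sigma^Q=\{\alpha_1,\alpha_2\}$, i.e. $Q=P_{12}$ is the Borel; for $(3,6)$ one has $\g=\fso(3,4)$, $\Sigma^R=\{\alpha_3\}$, connected only to $\alpha_2$, giving $Q=P_{23}$; for conformal structures $\g=\fso(p+2,q+2)$, $\Sigma^R=\{\alpha_1\}$, connected to $\alpha_2$ (and, when $n=4$ so that $D_3=A_3$, to $\alpha_3$ as well due to the fork), giving $Q=P_{12}$ for $n\geq5$ and $Q=P_{123}$ for $n=4$. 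This reads off the three pairs $(\g,Q)$ in the statement. Regularity of the correspondence space is the one nontrivial point and must be checked separately, since it is not automatic: I would argue it via the weak-derived-flag / symbol-algebra description of $T^{-1}\tilde{\mathcal C}$, invoking Proposition \ref{prop-qcontactgrading}, which identifies the symbol algebras $\q_-$; regularity of the Cartan geometry is equivalent to the harmonic curvature having strictly positive homogeneities, and for a correspondence space this reduces to a direct homogeneity count on $H^2(\q_-,\g)$, or alternatively one checks directly that the bracket of the bracket-generating distribution reproduces the correct filtration.

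Next I would address the ``converse'' for types (1) and (2). Here the point is that $\g_2^*$ and $\fso(3,4)$ have the feature (used again later in the paper) that, for these parabolics, $H^2(\q_-,\g)$ is concentrated in a single component sitting in the summand dual to a $Q$-but-not-$P_1$ (resp. $P_3$) root direction. Concretely, a regular and normal parabolic geometry of type $(\g_2^*,P_{12})$ is automatically a correspondence space: one shows the vertical subbundle for the putative projection $\tilde{\mathcal C}\to\tilde M$ is determined canonically (it is the line field $\tilde{\mathcal E}$), and the obstruction to descending the Cartan geometry along $P_{12}\subset P_1$ — which by the general theory of correspondence spaces is the vanishing of the appropriate component of the harmonic curvature — vanishes identically because there is no such component (it would have to live in a cohomology degree that is empty, or more precisely any curvature in that slot is killed by regularity + normality). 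This is \cite{Cap-corr}'s characterization of correspondence spaces applied in these two cases, and the Lie-algebra-cohomology input is the list of $H^2(\q_-,\g)$ components, which is classical (Kostant / Yamaguchi). The same argument, but with the subtlety that the relevant harmonic component need not vanish a priori, governs type (3): for $n\geq5$ one must check that a regular normal geometry of type $(\fso(p+2,q+2),P_{12})$ has its harmonic curvature supported in components that are all pullbacks from the $P_1$-geometry — equivalently, the ``vertical'' harmonic components vanish automatically — so every such geometry is a correspondence space; for $n=4$ the exceptional coincidence $D_3\cong A_3$ forces one extra harmonic component of homogeneity one to appear, and descent to a causal structure along $P_{123}\subset P_1$ requires that component to vanish, while descent all the way to a \emph{pseudo-conformal} structure (i.e. realizing $\tilde M$ genuinely, not merely as a cone structure) additionally requires the remaining homogeneity-one component $\bF$ from \eqref{eq:harmonic-inv-causal-represented} to vanish — these are exactly the two harmonic curvature components of homogeneity one, and the correspondence-space criterion of \cite{Cap-corr} states precisely that a normal parabolic geometry of type $(\g,Q)$ with $Q\subset R$ is (locally) a correspondence space iff the harmonic curvature is annihilated by the insertion of vectors tangent to the fibers of $Q\to R$, which in homogeneity-one terms is the vanishing of $\bF$.

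The main obstacle I expect is the bookkeeping of \emph{which} harmonic curvature components obstruct \emph{which} descent, and matching this precisely to the statement's phrasing (``one of the harmonic curvature components of homogeneity one vanishes'' for realizability as a causal structure when $n=4$, versus ``the remaining harmonic curvature component $\bF$ of homogeneity one vanishes'' for being a correspondence space of a pseudo-conformal structure). Concretely: the correspondence-space criterion I would invoke is that $(\tilde{\mathcal G}\to\tilde{\mathcal C},\tilde\psi)$ of type $(\g,Q)$ comes from type $(\g,R)$ iff the curvature $\tilde\Psi$ vanishes upon insertion of any vector in $\ker(T\tilde\pi)$ where $\tilde\pi\colon\tilde{\mathcal C}\to\tilde M$ — but since the geometry is normal, it suffices to check this on the harmonic curvature $\kappa_H$, and that check is a finite representation-theoretic computation: decompose $H^2(\q_-,\g)$ into $\mfr_0$-irreducibles, identify which summands are ``horizontal'' (pull back from $H^2(\mfr_-,\g)$) and which are not, and observe that regularity already kills all the non-horizontal ones except, in the $n=4$ conformal case, the two homogeneity-one ones. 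I would organize this as a short case-by-case table referencing Proposition \ref{prop-qcontactgrading} and standard Kostant-theorem output, and for the $n=4$ subtlety cite \cite{Omid-Sigma} where the relevant harmonic curvature component of a minimal-parabolic $A_3$ geometry characterizing causal structures is identified. The remaining verifications — equivariance, that the identified vertical bundle is genuinely integrable with the right leaf space, smoothness of the local leaf space — are routine given the correspondence-space formalism and I would dispatch them in a sentence.
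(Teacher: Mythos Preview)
Your strategy for types (1) and (2) --- Dynkin-diagram identification of $Q$, normality via \cite{Cap-corr}, and the converse via Kostant plus the correspondence-space criterion of \cite{Cap-corr} --- matches the paper. One difference worth noting: for regularity the paper does not argue via symbol algebras or Proposition~\ref{prop-qcontactgrading}, but simply observes that the normal Cartan connections for $(2,3,5)$ and $(3,6)$ distributions are \emph{torsion-free} (curvature valued in $\mfr$) and that $\mfr\subset\q^{-1}$ in the $Q$-filtration; $Q$-regularity then follows at once. Your proposed route via ``checking the filtration directly'' does not by itself address curvature homogeneity.

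There is a genuine error in your treatment of type (3). For $n\geq 5$ you assert that ``every such geometry is a correspondence space'', but this is false: a regular normal $(\fso(p+2,q+2),P_{12})$ geometry has two harmonic invariants $\bF$ and $\bW$ as in \eqref{eq:harmonic-inv-causal-represented}, and the homogeneity-one component $\bF$ --- which is supported on $\tcV$-directions --- does \emph{not} vanish automatically. It is precisely the obstruction to descent along $P_{12}\subset P_1$. What holds automatically for $n\geq 5$ is that such a geometry \emph{is} a causal structure (essentially by definition, cf.\ \ref{sec-defquasi}); being the correspondence space of a pseudo-conformal structure is the additional condition $\bF=0$. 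Your $n=4$ discussion repeats the confusion: a causal structure there is a $(\fso(p+2,q+2),P_{123})$ geometry on $\tcC$ with one homogeneity-one harmonic component vanishing (this is what forces $\tcV$ to be integrable), not something obtained by ``descent to a causal structure along $P_{123}\subset P_1$''; the descent to $P_1$ yields a pseudo-conformal structure and requires the further vanishing of $\bF$. The paper dispatches case (3) entirely by citing \cite{Omid-Sigma}.
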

\begin{proof}
Let $(\tilde{\mathcal{G}}\to \tilde{M},\tilde{\psi})$ be the regular and normal Cartan geometry of type $(\mathfrak{g},R)$. In case this Cartan geometry is associated to a $(2,3,5)$ and $(3,6)$ distribution, $R$ is the parabolic subgroup corresponding to $\{\alpha_1\}$ and $\{\alpha_3\}$, respectively. Therefore, $Q$ is the parabolic subgroup corresponding to $\{\alpha_1,\alpha_2\}$ and $\{\alpha_2,\alpha_3\}$, and we obtain parabolic geometries of types $(\g_2^*,P_{12})$ and $(\mathfrak{so}(3,4),P_{23})$ on $\tilde{\mathcal{C}}$, respectively. Since the correspondence spaces are automatically normal, it remains to verify that they are also regular.
 This can be easily seen if one uses the facts that any regular and normal Cartan connection of type $(\mathfrak{g}_2^*,P_1)$ and $(\mathfrak{so}(3,4),P_3)$ is torsion-free, i.e. the curvature function takes values in $\mathfrak{r}$, and that  $\mathfrak{r}\subset \q^{-1}$.

 Conversely, suppose we are given a regular and normal parabolic geometry of type (1) or (2). Using Kostant's theorem one algorithmically determines a highest weight vector in the harmonic curvature representations  $H^2(\q_{-},\q)$. In both cases there is exactly one irreducible component in positive homogeneity, which is  of homogeneity $4$ for geometries of type $(\g_2^*,P_{12})$
 and  of homogeneity $3$ for geometries of type $(\mathfrak{so}(3,4),P_{23})$. The structure of the harmonic curvature spaces further shows that the harmonic curvature function satisfies $\tilde{\kappa}_{H}(X,Y)=0$ if one of the entries $X$, $Y$ lies in $\mathfrak{r}/\q$. Then using \cite[Theorems 2.7 and 3.3]{Cap-corr}, one obtains the  local equivalence of regular and normal parabolic geometry of type $(\g,Q)$ with the correspondence space of a normal parabolic geometry of type  $(\g,R)$. It is a matter of checking that this geometry is regular as well.

 The statements about case (3) are shown in \cite{Omid-Sigma}.
\end{proof}

Now consider a $\vert k\vert$-graded Lie algebra
 \begin{align}\label{gradingq}
\g=\underbrace{\q_{-k}\oplus\dots\oplus\q_{-1}}_{\q_{-}}\oplus\underbrace{\q_0\oplus\q_1\oplus\dots\oplus\q_k}_{\q}
\end{align}  with its non-negative part being 
one of the  parabolic subalgebras $\q$  from Proposition \ref{prop-qcontactcone}. It is known that for most gradings of simple Lie algebras and, in particular, for the graded Lie algebras \eqref{gradingq} the grading component $\q_0$ is isomorphic to the Lie algebra of  derivations of $\q_{-}$ that preserve the grading, see  \cite{Yamaguchi-Simple} or \cite[Proposition 4.3.1]{CS-Parabolic}. Defining $Q=\Aut_f(\g)$ to be the subgroup of all automorphisms of $\mathfrak{g}$ that preserve the filtration corresponding to $\q$, it then follows that  regular and normal parabolic geometries of type $(\g,Q)$  are equivalent to filtered manifolds
\begin{equation}T^{-1}\tilde{\mathcal{C}}\subset T^{-2}\tilde{\mathcal{C}}\subset\dots \subset T^{-k+1}\tilde{\mathcal{C}}\subset T^{-k}\tilde{\mathcal{C}}=T\tilde{\mathcal{C}}
\end{equation}
 whose symbol algebras form a locally trivial bundle modeled on  $\q_{-}$, see \cite[Proposition 4.3.1]{CS-Parabolic}. Such filtered manifolds are generated by certain types of bracket generating distributions.  
 This leads us to define parabolic quasi-contact cone structures as the bracket generating distributions corresponding to regular and normal parabolic geometries of type $(\g,Q)$.
 
 \begin{definition}\label{def-parabolicquasicontact}
A parabolic quasi-contact cone structure of type $\q_{-}$ on an even-dimensional manifold $\tilde{\mathcal{C}}$ is a bracket-generating distribution $T^{-1}\tilde{\mathcal{C}}\subset T\tilde{\mathcal{C}}$ whose symbol algebras form a locally trivial bundle modeled on the  negative part $\q_{-}$  in a grading \eqref{gradingq}. 
\end{definition}
We then have the following immediate corollary of Proposition \ref{prop-qcontactcone}.

\begin{corollary}
\label{prop-filt}
 Any $(2,3,5)$ distribution, $(3,6)$ distribution and conformal structure determines a canonical parabolic quasi-contact cone structure on the correspondence space $\tilde{\mathcal{C}}$.
 
  Conversely, for gradings corresponding to $(\g_2^*,\{\alpha_1,\alpha_2\})$ and
 $(\mathfrak{so}(3,4),\{\alpha_2,\alpha_3\})$ any parabolic quasi-contact cone structure of type $\q_{-}$ is locally equivalent to one determined by a $(2,3,5)$ distribution and $(3,6)$ distribution, respectively. For gradings corresponding to  $(\mathfrak{so}(p+2,q+2), \{\alpha_1,\alpha_2\})$ when $n=p+q+2\geq 5$ 
and  $(\mathfrak{so}(p+2,q+2),\{\alpha_1,\alpha_2,\alpha_3\})$ when $n=p+q+2=4$,  a parabolic quasi-contact cone structure of type $\q_{-}$ is locally equivalent to one determined by a conformal structure of signature $(p+1,q+1)$ if and only if its harmonic curvature of homogeneity one vanishes.
 \end{corollary}
 The harmonic  curvature condition in the above corollary is equivalent to vanishing of $\bF$ \eqref{eq:harmonic-inv-causal-represented} and integrability of the distribution $\tilde{\mathcal{V}}$ as in  \eqref{dec-T^-1}, which then corresponds to the vertical bundle of the sky-bundle $\tilde{\cC}\to\tilde{M}$.

 \begin{remark}
\label{rem-correspondence}
For  geometries arising as correspondence spaces the various sub-bundles in the above filtration can be geometrically described as follows. First note that $(2,3,5)$  and $(3,6)$ distributions determine canonical conformal structures of split signature  with respect to which the distributions $\cD=T^{-1}\tilde{M}$ are totally null \cite{Nurowski-G2,Bryant-36}. 
  Now let $\tilde{\nu}\colon\tilde{\mathcal{C}}\to \tilde{M}$ be the correspondence space so that a point $p\in\tilde{\mathcal{C}}$ corresponds to a null line $l\subset T^{-1}_{\pi(p)}\tilde{M}$. 
Then one has 
\begin{equation}
\label{eq-T-1}
T^{-1}_p\tilde{\mathcal{C}}=\{\tilde{X}_p\in T_p\tilde{\cC}:\tilde{\nu}_*\tilde{X}_p\in l\}.
\end{equation} The distribution $T^{-1}\tilde{\mathcal{C}}$ decomposes into two sub-bundles, the vertical bundle $\tilde{\mathcal{V}}\subset T^{-1}\tilde{\mathcal{C}}$ and a line bundle $\tilde{\mathcal{E}}\subset T^{-1}\tilde{\mathcal{C}}$ that can be understood as a generalized null geodesic flow. More precisely, in the case of $(2,3,5)$  distributions, the integral curves of $\tcE$ are the abnormal curves of the distribution \cite{Zelenko}. Moreover,  the fiber of the corank one distribution $T^{-k+1}\tilde{\cC}$ is given by 
\begin{equation}
\label{eq-quascond}
T^{-k+1}_p\tilde{\cC}=\{\tilde{X}_p\in T_p\tilde{\cC}:\tilde{\nu}_*\tilde{X}_p\in l^{\perp}\},
\end{equation}
 where the orthogonal is taken with respect to the conformal structure on $\tilde{M}$. We will see that $T^{-k+1}\tilde{\mathcal{C}}$ defines a quasi-contact structure,  see Definition \ref{def-qcontact}.  The remaining filtration components are preimages under the tangent map of $\tilde{\nu}$ of the filtration components on $T\tilde{M}$.

Arguing  along these lines should lead to a direct proof of Proposition \ref{prop-filt}, i.e. one that does not use Cartan connections.
Using the equivalent description as parabolic geometries not only clarifies matters,  but also    will be  essential for the curvature analysis in \ref{sec:curvatuer-analysis}.
\end{remark}

To give a uniform description of parabolic quasi-contact cone structures as filtered manifolds, it remains to characterize the graded Lie algebras \eqref{gradingq}. At first sight they seem to be fairly diverse, in particular the gradings have different lengths.
 However, note that on the level of the homogeneous models we have  double fibrations 
\begin{center}
\begin{tikzcd}[row sep=0em,column sep=1em,
cells={nodes={anchor=center}}]
& G/Q \arrow{dr} \arrow{dl} & \\
G/R &
& G/P 
\end{tikzcd}
\end{center}
where $R$, $P$ and $Q=R\cap P$ are the following parabolic subgroups.
\begin{enumerate}
  \item $G_2$: $R=P_1$, $Q=P_{12}$, $P=P_2$\ \begin{tikzcd}[row sep=0em,column sep=1em,
cells={nodes={anchor=center}}]
& \dynkin[reverse arrows, root
radius=.06cm] G{xx}\arrow{dr} \arrow{dl} & \\
\dynkin[reverse arrows, root
radius=.06cm] G{xo} &
& \dynkin[reverse arrows, root
radius=.06cm] G{ox}  \\
\end{tikzcd}
\item $B_3$: $R=P_3$, $Q=P_{23}$, $P=P_2$\ \begin{tikzcd}[row sep=0em,column sep=1em,
cells={nodes={anchor=center}}]
& \dynkin[root
radius=.06cm] B{oxx} \arrow{dr} \arrow{dl} & \\
\dynkin[root
radius=.06cm] B{oox}  &
& \dynkin[root
radius=.06cm] B{oxo}  \\
\end{tikzcd}
\item $B_n$, $n\geq 2$: $R=P_1$, $Q=P_{12}$, $P=P_2$\
\begin{tikzcd}[row sep=0em,column sep=1em,
cells={nodes={anchor=center}}]
& \dynkin[root
radius=.06cm] B{xxooo} \arrow{dr} \arrow{dl} & \\
\dynkin[root
radius=.06cm] B{xoooo}  &
& \dynkin[root
radius=.06cm] B{oxooo} \\
\end{tikzcd}
\newline
$A_3=D_3$: $R=P_2$, $Q=P_{123}$, $P=P_{13}$\
\begin{tikzcd}
[row sep=0em,column sep=1em,
cells={nodes={anchor=center}}]
& \dynkin[root
radius=.06cm] A{xxx} \arrow{dr} \arrow{dl} & \\
\dynkin[root
radius=.06cm] A{oxo}  &
& \dynkin[root
radius=.06cm] A{xox} \\
\end{tikzcd}
\newline
$D_n$, $n\geq 4$: $R=P_1$, $Q=P_{12}$, $P=P_2$\
\begin{tikzcd}
[row sep=0em,column sep=1em,
cells={nodes={anchor=center}}]
& \dynkin[root
radius=.06cm] D{xxoooo} \arrow{dr} \arrow{dl} & \\
\dynkin[root
radius=.06cm] D{xooooo}  &
& \dynkin[root
radius=.06cm] D{oxoooo} \\
\end{tikzcd}
\end{enumerate}    
\textsc{Diagram 2:} {\text Crossed Dynkin diagrams depicting the parabolic subalgebras corresponding to $(2,3,5)$ distributions, $(3,6)$ distributions and pseudo-Riemannian conformal structures (on the left), their quasi-contact correspondence spaces and causal structures (in the middle), and the corresponding parabolic contact structures (on the right).}

\vspace{.5cm}

  The base space $G/P$ in  the above fibration $G/Q\to G/P$  is a  homogeneous contact manifold, and the one-dimensional fiber over $eP$ is $P/Q$.  
 In particular, we have the  algebraic set-up of two nested parabolic subalgebras $\mathfrak{q}\subset\mathfrak{p}\subset\g$, $\mathrm{dim}(\p/\q)=1$, where 
 $\p$ is the parabolic corresponding to a parabolic contact structure.
This means that $\p$ is the non-negative part in a \emph{contact grading}, i.e. a  $\vert 2 \vert$-grading 
\begin{align}\label{contactgrading}\g=\p_{-2}\oplus\p_{-1}\oplus\p_{0}\oplus\p_1\oplus\p_2,  \quad \mbox{dim}(\p_{\pm 2})=1,
\end{align}
  where $[,]:\biw^2\p_{-1}\to\p_{-2}$ is non-degenerate as a bilinear form.   It is well-known that contact gradings exist on all simple complex Lie algebras and most real forms and are unique up to conjugation. The full list of complex contact gradings can be found e.g. in \cite{Yamaguchi-Simple}.

  \begin{proposition}\label{prop-qcontactgrading}
Let
  \begin{equation}\label{quasicontactgrading}
\g=\q_{-k}\oplus\dots\oplus
\q_0\oplus
\dots\oplus\q_k
\end{equation}
be a  $\vert k\vert$-graded simple Lie algebra corresponding to a subset $\Sigma^Q$ of simple restricted roots, where $(\g,\Sigma^Q)$ is $(\g_2^*,\{\alpha_1,\alpha_2\})$,
 $(\mathfrak{so}(3,4),\{\alpha_2,\alpha_3\})$, 
  $(\mathfrak{so}(p+2,q+2), \{\alpha_1,\alpha_2\})$ for $n=p+q+2\geq 5$, 
or  $(\mathfrak{so}(p+2,q+2),\{\alpha_1,\alpha_2,\alpha_3\})$ for $n=p+q+2=4$, as in Proposition \ref{prop-qcontactcone}.
Then the graded Lie algebra $\g$ has the following properties:
\begin{enumerate}
\item $\mathrm{dim}(\q_{- k})=1,$ $\mathrm{dim}(\q_{ -k+1}\oplus\cdots\oplus\q_{-1})$ is odd, and  
\begin{equation}\label{qsymplecticbracket}\pr_{-k}\circ[,]:\biw^2(\q_{-1}\oplus\cdots\oplus\q_{-k+1})\to\q_{-k}
\end{equation}
when viewed as a $2$-form  has one-dimensional kernel $\tilde{\mathfrak{e}}\subset\q_{-1}$. 
\item Let $\tilde{\mathfrak{v}}$ be the $\q_0$-invariant complement to $\tilde{\mathfrak{e}}$ in $\q_{-1}$ so that $$\q_{-1}=\tilde{\mathfrak{e}}\oplus\tilde{\mathfrak{v}}.$$
Then the only other non-trivial components of the Lie bracket on $\q_{-}$ are  $[,]:\tilde{\mathfrak{e}}\otimes\tilde{\mathfrak{v}}\to\q_{-2}$ and $[,]:\tilde{\mathfrak{e}}\otimes\mathfrak{q}_{i}\to\q_{i-1}$ for $i=-2,\dots -k+2$, which are isomorphisms. In particular, $\tilde{\mathfrak{v}}$ is abelian.
\end{enumerate}
Conversely, any $\vert k \vert$-graded simple Lie algebra of depth $k\geq 3$ satisfying these conditions is of one of the above types $(\g,\Sigma^Q)$.
  \end{proposition}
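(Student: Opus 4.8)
The plan is to build on the description established above. Associated with $\q$ there is a contact parabolic $\p$ with $\q\subset\p\subset\g$ and $\mathrm{dim}(\p/\q)=1$, so that $\g=\p_{-2}\oplus\p_{-1}\oplus\p_0\oplus\p_1\oplus\p_2$ is a contact grading with $\mathrm{dim}(\p_{\pm2})=1$ and $[\,\cdot\,,\cdot\,]\colon\biw^2\p_{-1}\to\p_{-2}$ non-degenerate. As Diagram 2 shows, in every case $\q$ is obtained from $\p$ by crossing exactly one further simple root $\alpha$, which is a neighbour of the contact node(s) in the Dynkin diagram; equivalently, the $\q$-grading is the refinement of the contact grading by the secondary grading element $E_1$ attached to $\alpha$ (with $\g_{\pm\alpha}\subset\p_0$, so $E_1\in\p_0$), i.e. $\q_m=\bigoplus_{i+j=m}\bigl(\p_i\cap\g^{E_1}_j\bigr)$ where $\g^{E_1}_j$ is the $j$-eigenspace of $\ad(E_1)$. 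So the first step is to record $\q_-$ in these terms: one finds $\q_-=\p_{-2}\oplus\p_{-1}\oplus(\p_0)_{<0}$, where $(\p_0)_{<0}$ is the strictly-negative $E_1$-weight part of $\p_0$.

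For the forward direction I would then note that $\p_{-2}$, being one-dimensional and $\ad(E_1)$-stable, lies in a single grading piece $\q_m$; an inspection of the explicit gradings of Proposition~\ref{prop-qcontactcone}, or equivalently a short check that the highest root $\theta$ (spanning $\p_2$) has strictly larger $E_1$-weight than every root of contact-degree $\le 1$, shows that $\p_{-2}=\q_{-k}$ with $k=2+n_\alpha(\theta)\ge3$, hence $\mathrm{dim}(\q_{-k})=1$. Setting $\tilde{\mathfrak{e}}:=\g_{-\alpha}\subset\q_{-1}$ and letting $\tilde{\mathfrak{v}}\subset\q_{-1}$ be the $E_1$-weight-zero ($\q_0$-invariant) complement, one has $\q_{-1}=\tilde{\mathfrak{e}}\oplus\tilde{\mathfrak{v}}$ and $\q_{-1}\oplus\dots\oplus\q_{-k+1}=\p_{-1}\oplus(\p_0)_{<0}$. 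Restricting the non-degenerate form $\biw^2\p_{-1}\to\p_{-2}$ and using only root-addition rules, one sees that $\tilde{\mathfrak{e}}$ is exactly the radical of $\pr_{-k}\circ[\,\cdot\,,\cdot\,]$ — the key point being that $\ad(\tilde{\mathfrak{e}})$ lowers $\q$-degree by one, so $\ad(\tilde{\mathfrak{e}})^{k-2}\colon\tilde{\mathfrak{v}}\to\q_{-k}$ is an isomorphism — which yields (1), with oddness of $\mathrm{dim}(\q_{-1}\oplus\dots\oplus\q_{-k+1})$ automatic from the existence of a one-dimensional radical; and that the only remaining bracket components on $\q_-$ are $[\tilde{\mathfrak{e}},\tilde{\mathfrak{v}}]\to\q_{-2}$ and the shifts $[\tilde{\mathfrak{e}},\q_i]\to\q_{i-1}$ for $-k+2\le i\le-2$, all isomorphisms, with $\tilde{\mathfrak{v}}$ abelian, which is (2). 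The routine-but-unavoidable step here is checking that these maps are bijective rather than merely non-zero, done from the explicit root data (five families: $G_2$, $B_3$, $B_n$, $A_3$, $D_n$).

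For the converse, suppose $\g=\q_{-k}\oplus\dots\oplus\q_k$ is a $|k|$-graded simple Lie algebra of depth $k\ge3$ satisfying (1) and (2). The idea is to recover a contact grading and then invoke classification. From (1) take $\tilde{\mathfrak{e}}\subset\q_{-1}$ to be the radical; pairing it (via the Killing form) with a suitable $f\in\q_1$ produces an $\mathfrak{sl}_2$-triple, and a rescaling of its semisimple element serves as the secondary grading element $E_1$ — assigning $\tilde{\mathfrak{e}}$ weight $-1$ and $\tilde{\mathfrak{v}}$ weight $0$ — whence $E_\p:=E_\q-E_1$ defines a $|2|$-grading $\g=\p_{-2}\oplus\dots\oplus\p_2$. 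Conditions (1)–(2) then force $\p_{\pm2}=\q_{\mp k}$ to be one-dimensional and the induced bracket $\biw^2\p_{-1}\to\p_{-2}$ to be non-degenerate, so $\p$ is a contact parabolic and $\q$ arises from it by crossing the single extra node $\alpha$ attached to $E_1$, adjacent to the contact node(s) (depth $\ge3$ forcing $n_\alpha(\theta)\ge1$, i.e. $\alpha$ to contribute). The classification of contact gradings of real simple Lie algebras, combined with the short list of admissible one-node refinements such that $\q_0=\mathrm{der}_{gr}(\q_-)$, then leaves exactly $(\g_2^*,P_{12})$, $(\mathfrak{so}(3,4),P_{23})$, $(\mathfrak{so}(p+2,q+2),P_{12})$ for $n=p+q+2\ge5$, and $(\mathfrak{so}(p+2,q+2),P_{123})$ for $n=4$, and comparison with Proposition~\ref{prop-qcontactcone} identifies these with the parabolic quasi-contact cone structures. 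I expect the main obstacle to be this converse — specifically, verifying cleanly that (1)–(2) upgrade the secondary $\mathfrak{sl}_2$ to a refinement of a genuine contact grading (not just an arbitrary $|2|$-grading) and that no stray cases survive the $\q_0=\mathrm{der}_{gr}(\q_-)$ constraint; if a uniform argument proves awkward, I would fall back on checking the finitely many real forms of $G_2$, $B_\ell$ and $D_\ell$ directly.
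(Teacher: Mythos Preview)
Your forward direction is essentially the paper's argument, phrased through the secondary grading element $E_1$ rather than through intersections $\q_{-1}\cap\p_0$ and $\q_{-1}\cap\p_{-1}$; these are equivalent descriptions of the same refinement, and the paper also notes (in the remark following) that $\tilde{\mathfrak{e}}=\g_{-\alpha}$.

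Your converse, however, takes a genuinely different route and introduces an issue. The paper bypasses the $\mathfrak{sl}_2$-triple entirely: it defines the contact grading directly by setting $\p_{-2}:=\q_{-k}$, $\p_{-1}:=\q_{-k+1}\oplus\cdots\oplus\tilde{\mathfrak{v}}$, $\p_0:=\tilde{\mathfrak{e}}\oplus\q_0\oplus\tilde{\mathfrak{v}}^{o}$, $\p_1:=\tilde{\mathfrak{e}}^{o}\oplus\cdots\oplus\q_{k-1}$, $\p_2:=\q_k$, where $\tilde{\mathfrak{e}}^{o},\tilde{\mathfrak{v}}^{o}\subset\q_1$ are the Killing-form annihilators of $\tilde{\mathfrak{e}},\tilde{\mathfrak{v}}$. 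Bracket compatibility is then checked via invariance of the Killing form and non-degeneracy on $\q_{-i}\times\q_i$, and condition~(1) gives the contact property. This avoids exactly the obstacle you flag --- whether the $\mathfrak{sl}_2$-triple yields a genuine grading refinement --- since the grading is written down explicitly rather than derived from a semisimple element whose compatibility with the existing $\q$-grading would need separate justification.

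More importantly, your final filtering step invokes the condition $\q_0=\mathrm{der}_{gr}(\q_-)$, but this is \emph{not} a hypothesis of the proposition; only (1), (2), and depth $k\ge3$ are assumed. That $\q_0$ equals the full graded derivation algebra is a property these structures happen to enjoy (noted before Proposition~\ref{prop-filt}), not an input to the converse. The paper's inspection at the end runs over the list of contact gradings and asks which one-node refinements satisfy (1) and (2) --- nothing more --- and this already cuts the list down to the stated families. You should remove the extraneous constraint and carry out the inspection using (1) and (2) alone.
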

  
 \begin{proof} Let $\Sigma^Q$ be the subset of simple roots corresponding to the grading \eqref{quasicontactgrading}. Let $\Sigma^P=\Sigma^Q\setminus\{\alpha\}$ be the subset of simple roots associated with the corresponding contact grading 
 \eqref{contactgrading}. Then one has  $\q_0\subset\p_0$ and   the following $\q_0$-invariant refinement of the grading \eqref{quasicontactgrading}:
\begin{equation*} 
\g=\underbrace{\q_{-k}}_{\mathfrak{p}_{-2}}\oplus\underbrace{\q_{-k+1}\oplus\dots\oplus(\q_{-1}\cap\p_{-1})}_{\mathfrak{p}_{-1}}\oplus\underbrace{(\q_{-1}\cap\p_{0})\oplus\q_0\oplus(\p_{0}\cap\q_{1})}_{\p_0}\oplus\underbrace{(\p_{1}\cap\q_{1})\oplus\dots\oplus\q_{k+1}}_{\p_1}\oplus\underbrace{\q_k}_{\p_2}.
\end{equation*}
Let us now define the subspaces \begin{equation}
\label{eq-tildeeandf}
\tilde{\mathfrak{e}}:=(\q_{-1}\cap\p_0)\quad\mbox{and}\quad  \tilde{\mathfrak{v}}:=(\q_{-1}\cap\p_{-1}).
\end{equation} Then $\tilde{\mathfrak{e}}$ is one-dimensional since $\mathrm{dim}(\p/\q)=1$, and  $\tilde{\mathfrak{v}}$ is abelian since $[\p_{-1},\p_{-1}]\subset\p_{-2}=\q_{-k}$ and $[\q_{-1},\q_{-1}]\subset\q_{-2}$.  Moreover, (1)  follows from the properties of a contact grading and the fact that $[\p_{-1},\p_{0}]\subset\p_{-1}$. Since $[\p_{-1},\p_{-1}]\subset\p_{-2}$, the only non-trivial brackets on $\q_{-}$ other than \eqref{qsymplecticbracket} have to involve $\tilde{\mathfrak{e}}$, and one verifies in each case that they define isomorphisms as in (2).

Conversely, starting with a $\vert k \vert$-graded simple Lie algebra of depth $k\geq 3$ satisfying properties (1) and (2), one can construct a contact grading as follows. Recall that the Killing form induces an isomorphism $(\q_{-i})^*\cong\q_i$. Define $\tilde{\mathfrak{e}}^o$ and $\tilde{\mathfrak{v}}^o$ to be the subspaces in $\q_{1}$ annihilating $\tilde{\mathfrak{e}}$ and $\tilde{\mathfrak{v}}$, respectively. Now set the $-2$ grading component to be $\q_{-k}$, the $-1$ grading component to be $\q_{-k+1}\oplus\cdots\oplus\tilde{\mathfrak{v}}$, the $0$ grading component to be  $\tilde{\mathfrak{e}}\oplus\q_0\oplus\tilde{\mathfrak{v}}^o$, the $+1$ grading component  to be $\tilde{\mathfrak{e}}^o\oplus\cdots\oplus\q_{k-1}$, and the $+2$ grading component to be $\q_k$. Using invariance of the Killing form and the fact that it restricts to a non-degenerate pairing on $\q_{-i}\times\q_i$,  one verifies that the decomposition is indeed compatible with the Lie bracket, and by (1) it defines a contact grading. Finally, inspecting the list of all contact gradings, one concludes that the only $\vert k\vert$-graded Lie algebras that are related to contact gradings in this way are the Lie algebras associated with the three families of parabolic quasi-contact cone structures. 
 \end{proof}

Henceforth, we  refer to the gradings in Proposition \ref{prop-qcontactgrading}  as  \emph{quasi-contact gradings}.

\begin{remark}
In \cite{ANN-Monge}  a class of parabolic geometries called parabolic geometries of Monge type was introduced. These geometries are distinguished by the property that their flat models can be realized by systems of underdetermined ODEs. 
They generalize the classical description of $(2,3,5)$ distributions in terms of the so-called Monge equations \cite{Cartan-235}.
The corresponding $\vert k\vert$-gradings are characterized by the property that $\g_{-1}$ contains a non-zero abelian codimension one subalgebra $\tilde{\mathfrak{v}}$ with $\mathrm{dim}(\tilde{\mathfrak{v}})=\mathrm{dim}(\g_{-2})$.  The authors give a  classification of all $\vert k \vert$-graded Lie algebras of Monge type. Moreover, in Theorem B of \cite{ANN-Monge}  the authors list all Monge type geometries  that are non-rigid, i.e. have non-zero Lie algebra cohomology $H^2(\mathfrak{g}_{-},\mathfrak{g})$
in positive homogeneity.
Conditions (2) implies that quasi-contact gradings  are in particular of Monge type. More precisely, by inspection, one finds that they are exactly the non-rigid gradings of Monge type that   satisfy condition (1) of Proposition \ref{prop-qcontactgrading}. 
\end{remark}

\begin{remark}
Let $\Sigma^Q$ be the subset of simple roots associated with the $\vert k \vert$-graded Lie algebra $\g$ satisfying properties (1) and (2) as in Proposition \ref{prop-qcontactgrading}. Then it is easy to see that $\tilde{\mathfrak{e}}=\g_{-\alpha}$ for a simple root $\alpha$ (see the proof of  Proposition 2.3, [iii], in \cite{ANN-Monge}). 
 The contact grading corresponds to the subset $\Sigma^P=\Sigma^Q\setminus \{\alpha\}$. In terms of Dynkin diagram notation, the contact gradings of Lie algebras that admit a corresponding quasi-contact  grading have a single uncrossed node that is not connected to any other uncrossed node. To pass from the contact grading to the quasi-contact  grading 
 we cross that node. 
 \end{remark} 
 
 \begin{remark}
The class of conformal geometries, $(2,3,5)$ and $(3,6)$ distributions also appears in \cite{Biquard-G2} as the class of non-rigid parabolic geometries that admit a generalized geodesic flow, which is used to construct quaternionic K\"{a}hler metrics.
\end{remark}

The  properties of quasi-contact gradings reflect the properties of parabolic quasi-contact cone structures.
Recall the definition of a quasi-contact structure (also called even contact structure):
\begin{definition}
  \label{def-qcontact}
On a  $2n$-dimensional manifold $M$ we define the following.
  \begin{enumerate}
\item
A $1$-form $\alpha$ on $M$  is called a \emph{quasi-contact form} if the  restriction of  $\mathrm{d}\alpha$ to $\ker(\alpha)$ has maximal rank, equivalently,   $\alpha\wedge(\mathrm{d}\alpha)^{n-1}\neq 0$. 
\item
A \emph{quasi-contact structure} $(M,\mathcal{H})$ is given by a 
corank one distribution $\mathcal{H}\subset TM$  such that for each point $p\in M$ there exists  an open neighborhood $U$ where  $\mathcal{H}$ can be written as the kernel  of a quasi-contact form. 
\end{enumerate}
\end{definition}
The maximal rank condition means that the restriction of $\mathrm{d}\alpha$ to $\ker(\alpha)$ has a one-dimensional kernel at each point which defines the characteristic line field of the quasi-contact structure, denoted by $\tilde{\mathcal{E}}$. The integral curves of $\tilde{\mathcal{E}}$ are referred to as the characteristic curves of the quasi-contact structure.

For a parabolic quasi-contact cone structure the  corank one distribution $T^{-k+1}\tilde{\mathcal{C}}\subset T\tilde{\mathcal{C}}$  is 
  a quasi-contact structure.
The $\q_0$ invariant decomposition $\q_{-1}=\tilde{\mathfrak{e}}\oplus\tilde{\mathfrak{v}}$ corresponds to the decomposition 
 \begin{equation}\label{dec-T^-1}
 T^{-1}\tilde{\mathcal{C}}=\tilde{\mathcal{E}}\oplus \tilde{\mathcal{V}}
 \end{equation}  into the characteristic line field and a complementary distribution $\tilde{\mathcal{V}}$.
 From the structure of the torsion of  the parabolic geometries from Proposition \ref{prop-qcontactcone}  it follows that $\tilde{\mathcal{V}}$ is integrable in almost all cases. The only  exception are parabolic quasi-contact cone structures of type $D_3=A_3$, where integrability of $\tilde{\mathcal{V}}$ is ensured by the requirement that one of the harmonic curvature components with homogeneity one of the geometry vanish.

 \begin{remark}
 The local leaf space of a parabolic quasi-contact cone structure by its characteristic  curves has an induced contact structure. However, in general one  does not obtain a parabolic geometry on the leaf space. In fact, other than the flat models, the only geometries for which the leaf space is  equipped with a parabolic geometry are  causal structures with vanishing harmonic curvature $\bW$ in \eqref{eq:harmonic-inv-causal-represented}, for which the induced structures are known as \emph{Lie contact structures}.
 \end{remark}

\section{Parabolic almost conformally quasi-symplectic structures}
\label{sec:conf-quasi-sympl}
In this section we will introduce and study the class of geometric structures that are naturally induced on the leaf space of the integral curves of an infinitesimal symmetry for  parabolic quasi-contact cone structures. We will first consider a larger category of structures that we call \emph{parabolic almost conformally quasi-symplectic} (PACQ) structures. We find a natural normalization condition that allows one to associate a canonical Cartan connection to any PACQ structure. Then we identify the sub-class of \emph{parabolic conformally quasi-symplectic} (PCQ) structures, which are distinguished by the property that they admit local closed $2$-forms of a particular type. For PACQ structures corresponding to geometries of ODEs we show that PCQ structures correspond to \emph{variational} ODEs.

\subsection{Definition and descriptions of PACQ structures}
\label{sec:PACQ-structure-def}
We start by introducing parabolic almost conformally quasi-symplectic structures; these are filtered $G$-structures whose symbol algebras are quotients of the symbol algebras of parabolic quasi-contact cone structures.

\begin{lemma} \label{lem-s-}
Let $\g$ be  
 equipped with a quasi-contact grading $\g=\q_{-k}\oplus\dots\oplus\q_0\oplus\dots\oplus\q_{k}$ as in \eqref{quasicontactgrading}. 
Define
\begin{align}\label{def-k-}
\mfk_{-}=\mfk_{-k+1}\oplus\cdots\oplus\mfk_{-1}:=(\q_{-k}\oplus\dots\oplus\q_{-1})/\mathfrak{q}_{-k}
\end{align}
 to be the quotient of the graded nilpotent Lie algebra $\q_{-}$ by the last grading component $\q_{-k}$, which is an ideal in $\q_{-}$. Then $\mfk_{-}$ is also a graded nilpotent Lie algebra.
 \end{lemma}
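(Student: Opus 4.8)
The statement asserts two things: that $\q_{-k}$ is an ideal in the graded nilpotent Lie algebra $\q_{-}$, and that the quotient $\mfk_{-} = \q_{-}/\q_{-k}$ inherits the structure of a graded nilpotent Lie algebra. The plan is to verify both assertions using only the structural information about quasi-contact gradings established in Proposition \ref{prop-qcontactgrading}, in particular the explicit list of non-trivial bracket components on $\q_{-}$.

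First I would observe that $\q_{-k}$ is automatically an ideal in $\q_{-}$ for purely grading-theoretic reasons: since $[\q_i,\q_j]\subset\q_{i+j}$ and $\q_l=\{0\}$ for $l<-k$, any bracket $[\q_i,\q_{-k}]$ with $i\leq -1$ lands in $\q_{i-k}\subset\q_{<-k}=\{0\}$. Hence $[\q_{-},\q_{-k}]=\{0\}\subset\q_{-k}$, so $\q_{-k}$ is even a central ideal. (This also matches the explicit bracket description in Proposition \ref{prop-qcontactgrading}(2): the last grading component $\q_{-k}$ appears only as a \emph{target} of brackets, never as a source, since the listed brackets $[\tilde{\mathfrak e},\q_i]\to\q_{i-1}$ run for $i=-2,\dots,-k+2$ and $[\tilde{\mathfrak e},\tilde{\mathfrak v}]\to\q_{-2}$, none of which take an argument in $\q_{-k}$.) Therefore the quotient $\q_{-}/\q_{-k}$ is a well-defined Lie algebra.

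Next I would equip $\mfk_{-}$ with its grading. Since $\q_{-k}$ is a homogeneous subspace (it equals the single grading component of degree $-k$), the quotient inherits the vector space decomposition
\[
\mfk_{-} = \mfk_{-k+1}\oplus\cdots\oplus\mfk_{-1},\qquad \mfk_{i} := \q_{i}\ \text{ for } -k+1\leq i\leq -1,
\]
and the induced bracket satisfies $[\mfk_i,\mfk_j]\subset\mfk_{i+j}$ because the bracket on $\q_{-}$ is graded and the quotient map is degree-preserving; here one understands $\mfk_l=\{0\}$ for $l\leq -k$, which is consistent since any bracket on $\q_{-}$ landing in $\q_{-k}$ is killed in the quotient. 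Thus $\mfk_{-}$ is a graded Lie algebra generated in degree $-1$ (as $\q_{-1}$ surjects onto $\mfk_{-1}$ and generates). Finally, nilpotency is immediate: any iterated bracket of length $m$ lands in $\mfk_{-m}$ (or its image), which vanishes once $m\geq k$, so the lower central series terminates; alternatively, $\mfk_{-}$ is a quotient of the nilpotent Lie algebra $\q_{-}$, and quotients of nilpotent Lie algebras are nilpotent.

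\textbf{Main obstacle.} There is essentially no obstacle here — the lemma is a bookkeeping statement, and the only point requiring a moment's care is confirming that $\q_{-k}$ genuinely is an \emph{ideal} (not merely a subspace) of $\q_{-}$, which as noted follows either from the degree count $i-k<-k$ for $i<0$ or, more concretely, by reading off the bracket relations in Proposition \ref{prop-qcontactgrading}(2) and checking that $\q_{-k}$ never occurs as an input. Everything else is the standard fact that a homogeneous ideal in a graded nilpotent Lie algebra yields a graded nilpotent quotient.
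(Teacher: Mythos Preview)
Your proof is correct. The paper does not give a formal proof of this lemma at all---it states the result and immediately moves on to the remark that $\mfk_{-}$ can be identified with the subspace $\q_{-k+1}\oplus\cdots\oplus\q_{-1}$ equipped with the modified bracket $[X,Y]_{\mfk}=[X,Y]-\mathrm{pr}_{\q_{-k}}[X,Y]$; your argument supplies exactly the routine verification the paper takes for granted.
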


The graded Lie algebra $\mfk_{-}$ can be identified with the graded subspace
$\mfk_{-}\cong\q_{-k+1}\oplus\cdots\oplus\q_{-1} \subset \q_{-}$
equipped with the Lie bracket 
 $[X,Y]_{\mfk_-}:=[X,Y]-\mathrm{pr}_{\q_{-k}}[X,Y],$
 where $\mathrm{pr}_{\q_{-k}}:\mathfrak{g}\to\q_{-k}$ denotes the projection onto the $\q_{-k}$-component.

 Let $Q_0:=\mathrm{Aut}_{gr}(\q_{-})$ be the group of  Lie algebra automorphisms of $\q_{-}$ preserving the grading. Since $\q_{-}$ is generated by $\q_{-1}$ as a Lie algebra, every element in $Q_0$ is completely determined by its restriction to $\q_{-1}$. In particular, we have an inclusion  
 \begin{equation} \label{inclQ0}
 Q_0\subset\mathrm{Aut}_{gr}(\mfk_{-}).
 \end{equation}

\begin{definition}
\label{def-PACQ}
 Let $\mfk_{-}$ be defined as in Lemma \ref{lem-s-} and $ Q_0\subset\mathrm{Aut}_{gr}(\mfk_{-})$ as in \eqref{inclQ0}.
A \emph{parabolic almost conformally quasi-symplectic} (PACQ) structure of type $(\mfk_{-},Q_0)$ is a filtered $Q_0$-structure of type $\mfk_{-}=\q_{-}/\q_{-k}$. 
More precisely, it is given by 
\begin{itemize}
\item a bracket generating distribution $T^{-1}\mathcal{C}$ with weak derived flag
$$T^{-1}\mathcal{C}\subset T^{-2}\mathcal{C}\subset\cdots\subset T^{-k+1}\mathcal{C}=T\mathcal{C}$$
whose symbol algebras form a locally trivial bundle modeled on the graded Lie algebra $\mfk_{-}$ and
\item  a reduction of structure group of the graded frame bundle of the filtered manifold with respect to the inclusion $Q_0\subset \mathrm{Aut}_{gr}(\mfk_{-})$. 
\end{itemize}
\end{definition}
Note that by definition of $\mfk_{-}$ and $Q_0$ and Proposition \ref{prop-qcontactgrading}, we have a $Q_0$-invariant decomposition
$$\mfk_{-1}=\mathfrak{e}\oplus\mathfrak{v}.$$ 
Consequently, for any PACQ structure the distribution 
$$T^{-1}\mathcal{C}=\mathcal{E}\oplus \mathcal{V}$$ splits into a line bundle $\mathcal{E}$ and a distribution $\mathcal{V}$. 
Moreover, it follows from the bracket relations on $\mfk_{-}$ that
$$[\mathcal{E},\mathcal{V}]
=T^{-2}\mathcal{C},\quad\mbox{and}\quad [\mathcal{E},T^{-i}\mathcal{C}]
=T^{-i-1}\mathcal{C}.$$

In what follows, we describe  classes of PACQ structures associated with (systems of) ODEs.
 Note that in some cases these filtered $G$-structures are more general than the respective ODE geometries, which always contain integrable sub-distributions coming from jet space fibrations.
 
\subsubsection{Geometry of 4th order scalar ODEs  up to contact transformations}\label{sec-4thorder}
Starting with $\g=\g_2^*$ and the grading corresponding to the Borel subalgebra $\q=\p_{12}$,  the Lie algebra $\mfk_{-}=\q_{-}/\q_{-5}$ has a grading of the form $\mfk_{-4}\oplus\cdots\oplus\mfk_{-1}$, where $\mathrm{dim}(\mfk_{-1})=2$ and $\mathrm{dim}(\mfk_{-i})=1$ for $i=2,3,4$.  The Lie group $Q_0\cong(\mathbb{R}^*)^2$ is the diagonal subgroup in $\mathrm{GL}(\mfk_{-1})$ preserving a decomposition  $\mfk_{-1}=\mathfrak{e}\oplus\mathfrak{v}.$

Filtered $Q_0$-structures of this type arise from  $4$th order ODEs considered modulo contact transformations.
Geometrically, a $4$th order ODE $$y_4=f(x,y,y_1,y_2,y_3)$$ defines a submanifold
$\mathcal{C}\subset J^4(\mathbb{R},\mathbb{R})$  locally
diffeomorphic to $J^3(\mathbb{R},\mathbb{R})$. The canonical rank $2$ distribution on $J^3(\mathbb{R},\mathbb{R})$ corresponds to a rank $2$ distribution $T^{-1}\mathcal{C}$, which splits into
the vertical bundle $\mathcal{V}$ of the projection $\mathcal{C}\to J^2(\mathbb{R},\mathbb{R})$, spanned by $\partial_{y_3}$, and the line bundle $\mathcal{E}$ corresponding to the foliation by prolongations of solutions of the ODE, spanned by the total derivative $\frac{\exd}{\exd x}=\partial_x+y_1\partial_y+y_2\partial_{y
_1}+y_3\partial_{y_2}+f\partial_{y_3}$. In fact, locally all PACQ structures of this algebraic type 
arise from $4$th order ODEs.

\begin{center}
\begin{figure}[h]
  \begin{tikzpicture}[scale=0.85,baseline=-5pt]
 
    \draw ( 0  , 1.732) -- (0,0); 
   
    \filldraw ( 0  ,  1.732) circle (0.05);
    \draw (0, 0) -- (1.5,  0.866); 
    \filldraw ( 1.5,  0.866) circle (0.05);
    \draw (0, 0) -- (0.5,  0.866); 
    \filldraw ( 0.5,  0.866) circle (0.05);
    \draw (0, 0) -- (-0.5,  0.866); 
    \filldraw ( -0.5,  0.866) circle (0.05);
    \draw (0, 0) -- (-1.5,  0.866);
    \filldraw ( -1.5,  0.866) circle (0.05);
    \draw (0, 0) -- (0.866,  0); 
    \filldraw (0.866,  0) circle (0.05);
    \draw (0, 0) -- (1.5,  -0.866); 
   
    \filldraw ( 1.5,  -0.866) circle (0.05);
    \draw ( 0  , -1.732) -- (0,0);
    \filldraw ( 0  ,  -1.732) circle (0.05);
    \draw (0, 0) -- (-1.5,  -0.866); 
    \filldraw ( -1.5,  -0.866) circle (0.05);
    \draw (0, 0) -- (-0.5,  -0.866); 
    \filldraw ( -0.5,  -0.866) circle (0.05);
    \draw (0, 0) -- (0.5,  -0.866); 
    \filldraw ( 0.5,  -0.866) circle (0.05);
     \node at  ( 3.8  , -0.866){} ;
    \node at  ( 2.4  , -1.29) {$\left\langle \partial_{y_3}\right\rangle=\mathcal{V}$};
    \node at  ( 0.7  , -1.29) {$\left\langle \partial_{y_2}\right\rangle$};
     \node at  ( -0.7  , -1.29) {$\left\langle \partial_{y_1}\right\rangle$};
      \node at  ( -1.85  , -1.29) {$\left\langle \partial_{y}\right\rangle$};
    \node at  ( 1.8  , -0.85) {-$1$};
    \node at  ( 0.8  , -0.85) {-$2$};
    \node at  ( -0.8  , -0.85) {-$3$};
    \node at  ( -1.8 , -0.85) {-$4$};
    \draw (0, 0) -- (-0.866,  0); 
     \node at  ( -2.9  , 0.00) {$\mathcal{E}=\left\langle \frac{\exd}{\exd x} \right\rangle $};
     \node at  ( -1.2  , 0) {-$1$};
      \node at  ( 4  , 0){} ;
    \filldraw (-0.866,  0) circle (0.05);   
    \filldraw (0,0) circle (0.1); 
    
 \end{tikzpicture}
  \caption{The figure shows the root diagram for $G_2$. The roots whose corresponding root spaces form the grading components $\q_{i}$   are  labeled by $i$, for $i=-1,\dots,-4$. Since $\mfk_{-}=\q_{-}/\q_{-5}$, we can use the diagram to compare the filtration on $\mfk_{-}$  with the filtration on $T\cC$ determined by a $4$th order ODE.}     
\end{figure}
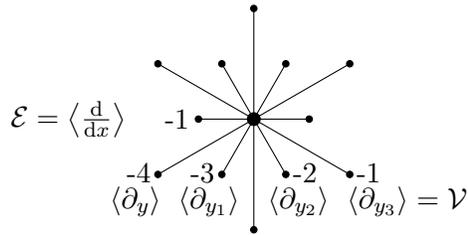
\end{center}

\subsubsection{Geometry of pairs of $3$rd order ODEs   up to point transformations}\label{sec-pairsthirdorder}
Starting with $\g=\mathfrak{so}(3,4)$ and the grading corresponding to  $\q=\p_{23}$, we obtain a graded Lie algebra $\mfk_{-}=\mfk_{-3}\oplus\cdots\oplus\mfk_{-1}$, where $\mathrm{dim}(\mfk_{-1})=3$ and $\mathrm{dim}(\mfk_{-i})=2$ for $i=2,3$.  The Lie group $Q_0\cong\mathbb{R}^*\times\mathrm{GL}(2,\mathbb{R})$ is the subgroup in $\mathrm{GL}(\mfk_{-1})$ preserving the decomposition $\mfk_{-1}=\mathfrak{e}\oplus\mathfrak{v}$. 

A sub-class of PACQ structures of this type arises from  pairs of third order ODEs
$$y^i_3=f^i\left(x,y^1,y^2,y^1_1,y^2_1,y^1_2,y^2_2\right), \quad i\in\{1,2\}.$$ Such a pair of ODEs defines a submanifold $\mathcal{C}\subset J^3(\mathbb{R},\mathbb{R}^2)$ locally diffeomorphic to $J^2(\mathbb{R},\mathbb{R}^2)$. The canonical rank three distribution on $J^2(\mathbb{R},\mathbb{R}^2)$ corresponds via this diffeomorphism to $T^{-1}\mathcal{C}$, which decomposes as $T^{-1}\mathcal{C}=\mathcal{E}\oplus\mathcal{V}$ into the vertical bundle $\mathcal{V}$ of the projection $\mathcal{C}\to J^1(\mathbb{R},\mathbb{R}^2)$, spanned by $\partial_{y^1_2}$ and $\partial_{y^2_2}$, and the line distribution $\mathcal{E}$, which corresponds to the foliation  by prolongations of solutions of the ODE. 
However, not all PACQ structures of this algebraic type locally arise this way, which will be discussed in \ref{sec:pairs-third-order}.

\subsubsection{Orthopath geometry} Before describing the last class of PACQ structures, we  need to recall the definition of a path geometry.
A classical path geometry on a $n$-dimensional manifold $M$ is given by a family of unparametrized curves (immersed one-dimensional submanifolds) on $M$ such that for each point $x\in M$ and each  direction $l_x\subset T_xM$ one has a unique curve from the family  through $x$ in direction $l_x$. Such a structure determines a line bundle $\mathcal{E}$ on the  projectivized tangent bundle $\mathbb{P}TM$. 
More generally, one defines a path geometry (sometimes also called generalized path geometry) as follows: 
\begin{definition}
\label{def-pathgeometry}
An  $n$-dimensional path geometry is given by a triple $(\cC,\cE, \cV)$, where $\cC$ is a manifold  of dimension $2n-1$, $\mathcal{E}\subset T\cC$ is a line field  and $\mathcal{V}\subset T\cC$ is a distribution of rank $n-1$ such that $\cE$ and $\cV$ have trivial intersection,  $[\cV,\cV]\subset \cE\oplus\cV=:T^{-1}\cC$ 
and the Levi bracket $\mathfrak{L}:\mathcal{E}\otimes\mathcal{V}\cong T\cC/T^{-1}\cC$, given by mapping $X\in\Gamma(\mathcal{E})$ and $Y\in \Gamma(\mathcal{V})$ to the projection of  $[X,Y]$ onto  $T\cC/T^{-1}\cC$, is an isomorphism.
\end{definition}
It turns out that for $n= 2$ and $n\geq 4$ the distribution $\mathcal{V}$ is automatically integrable. For $n=3$ integrability of $\mathcal{V}$ is equivalent to  vanishing of the lowest homogeneous component of the harmonic curvature of the path geometry, see \cite{CS-Parabolic}. In this case, in a neighborhood of each point, one can form a local leaf space $M$ for the foliation determined by $\mathcal{V}$. Projecting leaves of $\mathcal{E}$ endows  $M$ with a family of unparametrized curves. In fact, it can be shown that any generalized path geometry $(\cC,\cE, \cV)$ is locally equivalent 
 to   the one defined on an open subset of $\mathbb{P}TM$ determined by the family of projected curves.
 Finally we point out that  any path geometry, in the generalized sense, can be locally realized as a system of second order ODEs up to point transformations, such that the paths correspond to graphs of solutions of the ODE system. 
 We refer the reader to \cite[Section 4.4.3]{CS-Parabolic}  for more details.
 
Now, starting with the Lie algebra $\g=\mathfrak{so}(p+2,q+2)$ and the grading corresponding to the parabolic subalgebra $\p_{12}$,
we obtain a graded Lie algebra $\mfk=\mfk_{-2}\oplus\mfk_{-1}$ that is isomorphic to the symbol algebra of a  path geometry.  
The reduction of structure group to 
$Q_0\cong \mathbb{R}^*\times\mathrm{CO}(p,q) $ amounts to augmenting the path geometry with an additional geometric structure that can be viewed as a conformal class of bundle metrics on $\mathcal{V}$. In particular, a PACQ structure of this type is equivalent to an orthopath geometry as defined below.

\begin{definition}\label{def-orthopath}
An orthopath geometry of signature $(p,q)$ on a manifold $\mathcal{C}$ of dimension $\geq 5$ is a  path geometry together with a conformal class $[\bh]$ of non-degenerate bundle metrics $\bh\in\Gamma(\mathrm{Sym}^2\mathcal{V}^*)$ of signature $(p,q)$.
\end{definition}

\subsection{Canonical Cartan connection for PACQ structures}
\label{sec:norm-cond}
The aim of this section is to associate a canonical Cartan geometry to any parabolic almost conformally quasi-symplectic structure.
We  start by describing the Lie algebraic data needed to express the type of the Cartan geometries associated to PACQ  structures.
 
Let $\g$ be one of the Lie algebras $\g_2^*$ or $\mathfrak{so}(p+2,q+2)$  from Proposition \ref{prop-qcontactcone},  $\g=\q_{-k}\oplus\cdots\oplus\q_0\oplus\cdots\oplus\q_k$  a grading as in \eqref{gradingq}  corresponding to a subset $\Sigma^Q$ and  $\g=\p_{-2}\oplus\p_{-1}\oplus\p_0\oplus\p_1\oplus\p_2$  the contact grading as in \eqref{contactgrading} corresponding to $\Sigma^P=\Sigma^Q\setminus\{\alpha\}$. Let $$\p^{op}=\p_{-2}\oplus\p_{-1}\oplus\p_0$$ denote the \emph{opposite parabolic subalgebra}.
We  define the Lie algebra
\begin{equation}
\label{Liealgs}
\mfk=\p^{op}/\p_{-2}=\p^{op}/\q_{-k}
\end{equation}
and endow it with the following grading
\begin{equation}
\label{Liealgsgrading}
\mfk=\mfk_{-k+1}\oplus\cdots\oplus\mfk_0\oplus\mfk_1:=\left(\q_{-k}\oplus\q_{-k+1}\oplus\cdots\oplus\q_0\oplus (\p_0\cap\q_+)\right)/\q_{-k}.
\end{equation}
Let $\mfl=\mfk^0=\mfk_0\oplus\mfk_1$ be its non-negative part. The negative part of $\mfk$ is naturally identified with $\mfk_{-}$ introduced in \eqref{def-k-}. Note that we can identify $\p_0$ with a subalgebra in $\mfk$ and similarly we have an  identification 
$$\mfl\cong\p_0\cap\q=\q_0\oplus (\p_0\cap\q_+)\subset\p_0.$$
We  further consider the decomposition   
 \begin{equation}\label{aplusp0}
 \mfk=\mathfrak{a}\oplus\p_0 = \mathfrak{a}\oplus \mathfrak{e}\oplus\mfl,
 \end{equation}
 where 
 $\mathfrak{e}\cong (\p_0\cap\q_{-})$ and
  $\mathfrak{a}\subset\mfk$ is an abelian ideal which is isomorphic to $\p_{-1}$ as a $\p_0$-module. 
  \begin{lemma}
  \label{lem-kL}
  For the individual Lie algebras from Proposition \ref{prop-qcontactcone} equipped with gradings \eqref{gradingq}  the Lie algebras $\mfk$ and $\mfl$ are given as follows:
\begin{enumerate}
\item For $\g=\g_2^*$ and  $\Sigma^Q=\{\alpha_1,\alpha_2\}$ we have 
$\p_0=\mathfrak{gl}(2,\mathbb{R})$ and  $\mathfrak{a}\cong \mathrm{Sym}^3\mathbb{R}^2$ as a $\p_0$-module, hence
\begin{equation}
\label{k_g2}
\mfk\cong \mathfrak{gl}(2,\mathbb{R})\ltimes \mathrm{Sym}^3\mathbb{R}^2.
\end{equation} The Lie algebra $\mfl=\mathfrak{b}\subset\p_0\cong \mathfrak{gl}(2,\mathbb{R})$ is the Borel subalgebra stabilizing a line $l\subset\mathbb{R}^2$.

 \item For $\g=\mathfrak{so}(3,4)$ and  $\Sigma^Q=\{\alpha_2,\alpha_3\}$ we have 
  $\p_0= \mathfrak{sl}(2,\mathbb{R})\oplus\mathfrak{gl}(2,\mathbb{R})$ and  $\mathfrak{a}\cong \mathbb{R}^2\otimes \mathrm{Sym}^2\mathbb{R}^2$, hence
  \begin{equation}\label{k_so34}
  \mfk\cong \mathfrak{sl}(2,\mathbb{R})\oplus\mathfrak{gl}(2,\mathbb{R})\ltimes\mathbb{R}^2\otimes \mathrm{Sym}^2\mathbb{R}^2.
  \end{equation} We have $\mfl\cong \mathfrak{sl}(2,\mathbb{R})\oplus \mathfrak{b}$, where $\mathfrak{b}\subset\mathfrak{gl}(2,\mathbb{R})$ is the stabilizer of a line $l\subset\mathbb{R}^2$. 
 
\item For $\g=\mathfrak{so}(p+2,q+2)$ and the subsets $\Sigma^Q=\{\alpha_1,\alpha_2\}$ if $p+q>2$ and $\Sigma^Q=\{\alpha_1,\alpha_2,\alpha_3\}$ if $p+q=2$  we have
 $\p_0\cong\mathfrak{sl}(2,\mathbb{R})\oplus\mathfrak{co}(p,q)$ and  $\mathfrak{a}\cong\mathbb{R}^2\otimes\mathbb{R}^{p+q}$, hence
 \begin{equation}
 \label{k_sopq}
 \mfk\cong \mathfrak{sl}(2,\mathbb{R})\oplus\mathfrak{co}(p,q)\ltimes \mathbb{R}^2\otimes\mathbb{R}^{p+q}.
 \end{equation}
  We have $\mfl\cong \mathfrak{b}\oplus\mathfrak{co}(p,q)$, where $\mathfrak{b}\subset\mathfrak{sl}(2,\mathbb{R})$ is the stabilizer of a line $l\subset\mathbb{R}^2$.
\end{enumerate}
Let $P_0$ be a Lie group with Lie algebra $\p_0$ and $Q$ a parabolic subgroup with Lie algebra $\q$, then
$$L=P_0\cap Q$$
is a Lie group with Lie algebra $\mfl$.
In particular,  the following  Lie groups 
\begin{enumerate}
\item[(a)] $L=B\subset P_0=\mathrm{GL}(2,\mathbb{R})$,
\item[(b)]  $L=\mathrm{SL}(2,\mathbb{R})\times B\subset P_0=\mathrm{SL}(2,\mathbb{R})\times \mathrm{GL}(2,\mathbb{R})$, 
\item[(c)] $L=B\times \mathrm{CO}(p,q)\subset P_0=\mathrm{SL}(2,\mathbb{R})\times \mathrm{CO}(p,q)$,
\end{enumerate}
where  $B$ denotes the Borel subgroup defined as the stabilizer of a line $l\subset\mathbb{R}^2$, are Lie groups with Lie algebra $\mfl$.
\end{lemma}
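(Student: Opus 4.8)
The plan is to verify Lemma \ref{lem-kL} by a case-by-case analysis of the three families of quasi-contact gradings, working through the explicit root-space decompositions. In each case I would start by fixing the contact grading $\g=\p_{-2}\oplus\p_{-1}\oplus\p_0\oplus\p_1\oplus\p_2$ attached to $\Sigma^P=\Sigma^Q\setminus\{\alpha\}$, recall the well-known description of its reductive Levi factor $\p_0$ (for $\g_2^*$ this is $\fgl(2,\RR)$; for $\fso(3,4)$ it is $\fsl(2,\RR)\oplus\fgl(2,\RR)$; for $\fso(p+2,q+2)$ it is $\fsl(2,\RR)\oplus\fco(p,q)$ — all standard from the classification of contact gradings, e.g. in \cite{Yamaguchi-Simple}), and then identify the $\p_0$-module structure of $\p_{-1}$. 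Since by \eqref{aplusp0} we have $\mfk=\mathfrak{a}\oplus\p_0$ with $\mathfrak{a}\cong\p_{-1}$ as a $\p_0$-module and $\mathfrak{a}$ an abelian ideal (the bracket $\p_{-1}\wedge\p_{-1}\to\p_{-2}$ is killed in the quotient by $\q_{-k}=\p_{-2}$), the isomorphism types \eqref{k_g2}, \eqref{k_so34}, \eqref{k_sopq} follow once I pin down $\p_{-1}$ as a $\p_0$-representation: this is $\mathrm{Sym}^3\RR^2$ in the $G_2$ case (the contact grading of $\g_2^*$ has $\p_{-1}$ four-dimensional, the unique irreducible $\fgl(2,\RR)$-module of that dimension with the correct central character), $\RR^2\otimes\mathrm{Sym}^2\RR^2$ in the $B_3$ case, and $\RR^2\otimes\RR^{p+q}$ in the $\fso(p+2,q+2)$ case (these are again forced by dimension count and the standard description of the contact grading for the $B$- and $D$-series).

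Next I would identify the subalgebra $\mfl=\mfk^0=\mfk_0\oplus\mfk_1$ and show it equals $\p_0\cap\q$. The key point is the $\q_0$-invariant refinement of the contact grading exhibited in the proof of Proposition \ref{prop-qcontactgrading}: there $\p_0$ decomposes as $(\q_{-1}\cap\p_0)\oplus\q_0\oplus(\p_0\cap\q_+)=\tilde{\mathfrak{e}}\oplus\q_0\oplus(\p_0\cap\q_+)$, and passing to the quotient $\mfk=\p^{op}/\q_{-k}$ this becomes $\mathfrak{e}\oplus\mfl$ with $\mfl=\q_0\oplus(\p_0\cap\q_+)$. So $\mfl$ is precisely the parabolic subalgebra of the reductive Lie algebra $\p_0$ cut out by the single extra crossed node $\alpha$; since crossing exactly one node of a reductive Lie algebra gives a (minimal in the relevant simple factor) parabolic, I read off that $\mfl$ is the Borel $\mathfrak{b}\subset\fgl(2,\RR)$ in case (1), $\fsl(2,\RR)\oplus\mathfrak{b}$ in case (2), and $\mathfrak{b}\oplus\fco(p,q)$ in case (3), where $\mathfrak{b}$ stabilizes the line $l\subset\RR^2$ corresponding to the root space $\g_{-\alpha}=\tilde{\mathfrak{e}}$. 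The abelianness of $\mathfrak{a}$ and the module identification $\mathfrak{a}\cong\p_{-1}$ I would justify uniformly as above rather than case by case.

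Finally, for the group-level statement I would take $P_0$ with Lie algebra $\p_0$ and a parabolic $Q$ with Lie algebra $\q$, set $L=P_0\cap Q$, and note that $\mathrm{Lie}(L)=\p_0\cap\q=\mfl$ by the computation just done; then choosing the standard models $P_0=\mathrm{GL}(2,\RR)$, $\mathrm{SL}(2,\RR)\times\mathrm{GL}(2,\RR)$, $\mathrm{SL}(2,\RR)\times\mathrm{CO}(p,q)$, the intersection $L$ is the stabilizer of the line $l\subset\RR^2$ in the appropriate factor, which is exactly the Borel subgroup $B$ (resp. $B$ times the untouched factor), giving (a), (b), (c). I expect the main obstacle to be not any single hard argument but the bookkeeping: correctly matching the abstract grading components $\q_i$ and $\p_i$ with concrete root spaces in each Dynkin type (using Diagram 2 and the root diagram for $G_2$), verifying the central characters so that e.g. $\mathrm{Sym}^3\RR^2$ rather than some twist is the right $\fgl(2,\RR)$-module, and making sure the "$+1$" grading piece $\mfk_1=(\p_0\cap\q_+)/\q_{-k}$ is correctly accounted for in $\mfl$. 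None of this is deep, but it must be done carefully and consistently across all five Dynkin types appearing in Proposition \ref{prop-qcontactcone}.
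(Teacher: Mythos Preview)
Your approach is correct. The paper actually states this lemma without proof, treating the identifications of $\p_0$, $\p_{-1}$, and $\mfl=\p_0\cap\q$ as standard structural facts about the contact gradings of $\g_2^*$, $\fso(3,4)$, and $\fso(p+2,q+2)$; your case-by-case verification via root-space decompositions and the $\q_0$-invariant refinement from Proposition~\ref{prop-qcontactgrading} is exactly how one would fill in the details, and your plan covers all the needed points.
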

\begin{remark}
Let $\mathfrak{a}\subset\mfk$ be the abelian ideal as in  decomposition  \eqref{aplusp0} of $\mfk$ and $P_0$  one of the groups  $\mathrm{GL}(2,\mathbb{R})$, $\mathrm{SL}(2,\mathbb{R})\times \mathrm{GL}(2,\mathbb{R})$, $\mathrm{SL}(2,\mathbb{R})\times \mathrm{CO}(p,q)$  with Lie algebra $\p_0$ as in Lemma \ref{lem-kL}, (a)-(c). Define the Lie group $K$ with Lie algebra $\mfk$ to be the semidirect product $$K=P_0\ltimes \mathfrak{a}.$$ Then the homogeneous space $K/L$ fibers over   $K/P_0$ and 
 the $1$-dimensional fiber over the identity coset is $P_0/L\cong \mathbb{P}^1$. The homogeneous spaces $K/P_0$ are  homogeneous models for certain \emph{parabolic almost conformally symplectic (PACS) structures}  as introduced in \cite{CS-cont1}, namely for so-called $\mathrm{GL}(2,\mathbb{R})$-structures in dimension $4$, Segr\'e structures in dimension $6$, and PACS structures of Grassmannian type, respectively.

\end{remark}

The next  step towards the construction of a canonical Cartan connection for a PCAQ structure is to find a suitable normalization condition, which we derive from the canonical normalization conditions for the corresponding parabolic geometry. 
Recall that on the semisimple Lie algebra $\g$ we have a  positive definite inner product $\left\langle X,Y\right\rangle=-B(X,\theta(Y))$  for which  $\partial:\Lambda^{s-1}(\q_{-})^*\otimes\g\to\Lambda^s(\q_{-})^*\otimes\g$ as in \eqref{formuladel} and the map $\tilde{\partial}^*:\Lambda^{s}(\q_{-})^*\otimes\g\to\Lambda^{s-1}(\q_{-})^*\otimes\g$ induced by the Kostant codifferential \eqref{KostantCodif} are adjoint, wherein $\q_-=\g_-$(see the proof of  \cite[Proposition 3.3.1]{CS-Parabolic}). 
 The grading components of $\g$ are mutually orthogonal with respect to this inner product. Now we restrict the inner product to  $\q_{-k+1}\oplus\cdots\oplus\q_0\oplus (\p_0\cap\q_+)$, which we identify   with $\mfk$ as a vector space. The isomorphism induces an inner product on  $\mfk$. 
 The grading on $\mfk$ is then orthogonal with respect to the induced inner product $\left\langle , \right\rangle$.
 Using the invariance of the Killing form $B$ and the fact that the  Cartan involution $\theta$ is a Lie algebra automorphism satisfying $\theta^2=\mathrm{id}$, one verifies that $\left\langle [A, X],Y\right\rangle=-\left\langle X,[\theta(A), Y]\right\rangle$ holds for any $A\in\p_0$.

Having introduced an inner product on $\mfk$, we now proceed with the construction of a codifferential, which is derived analogously to the one  for (systems of) ODEs as given in \cite{CDT-ODE}. 
 Consider the Lie algebra cohomology differential  
\begin{equation}
\label{dels}
\partial_{\mfk}:\biw^s\mfk^*\otimes\mfk\to\biw^{s+1}\mfk^*\otimes\mfk
\end{equation} in the standard complex that is used to define Lie algebra cohomology of $\mfk$ with values in $\mfk$. Let $$\partial_{\mfk}^*:\biw^{s+1}\mfk^*\otimes\mfk\to\biw^{s}\mfk^*\otimes\mfk$$  be the adjoint of $\partial_{\mfk}$ with respect to the induced inner product on the spaces $\biw^{s}\mfk^*\otimes\mfk$.

Since $\partial_{\mfk}$ is $\mfk$-equivariant and the inner product satisfies $\left\langle A\cdot X,Y\right\rangle=-\left\langle\phi,\theta(A)\cdot\psi\right\rangle$ for any $A\in\p_0$, we have
\begin{equation}
\begin{aligned}
\label{equivariance}
\left\langle A\cdot \partial_{\mfk}^*(\phi),\psi\right\rangle&=-\left\langle \partial_{\mfk}^*(\phi),\theta(A)\cdot\psi\right\rangle=-\left\langle\phi,\partial_{\mfk}(\theta(A)\cdot\psi)\right\rangle\\&=-\left\langle\phi,\theta(A)\cdot\partial_{\mfk}(\psi)\right\rangle=\left\langle A\cdot\phi,\partial_{\mfk}(\psi)\right\rangle=\left\langle \partial_{\mfk}^*(A\cdot\phi),\psi\right\rangle .
\end{aligned}
\end{equation}
It follows that 
$\partial_{\mfk}^*:\biw^{s+1}\mfk^*\otimes\mfk\to\biw^{s}\mfk^*\otimes\mfk$
is $\p_0$-equivariant.

Now define the space
$$\biw^{s}(\mfk/\mfl)^*\otimes\mfk=\left\{\Phi\in \biw^s\mfk^*\otimes\mfk=L(\biw^s\mfk,\mfk):\,A\im\Phi=0\ \forall A\in\mfl\right\}.$$
One verifies that
$\partial_{\mfk}^*$  
 restricts to a map
\begin{equation}\label{Codifferential}\partial^*:\biw^{s+1}(\mfk/\mfl)^*\otimes\mfk\to\biw^{s}(\mfk/\mfl)^*\otimes\mfk
\end{equation}
which is $L$-equivariant for the natural $L$-action.

Moreover,  $\partial^*$ can be shown to define a codifferential in the sense of  \cite[Definition 3.9]{Cap-Cartan}, which then immediately implies that  $\ker(\partial^*)$ is a normalization condition by  \cite[Proposition 3.10]{Cap-Cartan}.
In particular, this requires 
considering the associated graded vector space of $\biw^{s}(\mfk/\mfl)^*\otimes\mfk$, which can be identified with $\biw^s(\mfk_{-})^*\otimes\mfk$.  
The inner product $\left\langle , \right\rangle$ induces an inner product on $\biw^s(\mfk_{-})^*\otimes\mfk$, and one verifies that the induced map  
$\partial_{\mfk_{-}}^*:\biw^s(\mfk_{-})^*\otimes\mfk\to\biw^{s-1}(\mfk_{-})^*\otimes\mfk$ and the Lie algebra cohomology differential $\partial_{\mfk_{-}}:\biw^{s-1}(\mfk_{-})^*\otimes\mfk\to\biw^{s}(\mfk_{-})^*\otimes\mfk$ are adjoint with respect to this induced inner product. The details and  verifications of the remaining properties of a codifferential are left to the reader since they are  analogous to the ones in the proofs of  \cite[Lemma 3.2 and Proposition 3.3]{CDT-ODE}.

Summarizing, we have the following lemma.
\begin{lemma}\label{prop-codif}
The  adjoint  of $\partial_{\mfk}$ restricts to  a $L$-equivariant map
\begin{equation}\partial^*:\biw^{s+1}(\mfk/\mfl)^*\otimes\mfk\to\biw^{s}(\mfk/\mfl)^*\otimes\mfk
\end{equation}
which defines a codifferential.  In particular, its kernel
$$\ker(\partial^*)\subset \biw^2(\mfk/\mfl)^*\otimes\mfk$$
is a normalization condition for Cartan geometries of type $(\mfk,L)$.
\end{lemma}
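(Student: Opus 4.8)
The plan is to verify directly that $\partial^{*}$ meets the axioms of a codifferential in the sense of \cite[Definition 3.9]{Cap-Cartan}, since once this is done the fact that $\ker(\partial^{*})$ is a normalization condition is immediate from \cite[Proposition 3.10]{Cap-Cartan}. The argument proceeds exactly in parallel to \cite[Lemma 3.2 and Proposition 3.3]{CDT-ODE}, with the only genuine input being that our inner product on $\mfk$ is the one \emph{induced from} the canonical inner product $\left\langle X,Y\right\rangle=-B(X,\theta(Y))$ on $\g$ via the vector space identification $\mfk\cong \q_{-k+1}\oplus\cdots\oplus\q_0\oplus(\p_0\cap\q_+)$, so that the adjointness relation $\left\langle [A,X],Y\right\rangle=-\left\langle X,[\theta(A),Y]\right\rangle$ holds for $A\in\p_0$, as already recorded above.

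First I would check that $\partial_{\mfk}^{*}$, the adjoint of the Chevalley--Eilenberg differential $\partial_{\mfk}$ on $\biw^{\bullet}\mfk^{*}\otimes\mfk$, restricts to the subspaces $\biw^{s}(\mfk/\mfl)^{*}\otimes\mfk$. This follows from the $\p_0$-equivariance of $\partial_{\mfk}^{*}$ established in the displayed computation \eqref{equivariance}: the subspace $\biw^{s}(\mfk/\mfl)^{*}\otimes\mfk=\{\Phi:\,A\im\Phi=0\ \forall A\in\mfl\}$ is exactly the $\mfl$-invariant part, and since $\mfl\subset\p_0$ is $\theta$-stable as a subalgebra of $\g$ (indeed $\theta(\q_0)=\q_0$ and one checks $\theta(\mfl)=\mfl$ using that $\mfl\cong\p_0\cap\q$ and $\theta$ exchanges $\q_{i}$ with $\q_{-i}$), the operator $\partial_{\mfk}^{*}$ preserves $\mfl$-invariance and $L$-equivariance. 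This gives the map \eqref{Codifferential}.

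Next I would verify the codifferential axioms. The key one is that $\partial^{*}$ be homogeneous of degree $-1$ (raises homogeneity by a fixed amount after passing to associated graded), which one sees by noting that $\partial_{\mfk}$ is homogeneous of degree $\geq 1$ with respect to the grading on $\mfk$ and that the grading components of $\g$, hence of $\mfk$, are mutually orthogonal; so $\partial_{\mfk}^{*}$ lowers homogeneity. Then one must check that the induced map on the associated graded, identified with $\partial_{\mfk_{-}}^{*}:\biw^{s}(\mfk_{-})^{*}\otimes\mfk\to\biw^{s-1}(\mfk_{-})^{*}\otimes\mfk$, is the adjoint of $\partial_{\mfk_{-}}$ with respect to the induced inner product on $\biw^{s}(\mfk_{-})^{*}\otimes\mfk$, and that the resulting $\Box_{\mfk_{-}}=\partial_{\mfk_{-}}\partial_{\mfk_{-}}^{*}+\partial_{\mfk_{-}}^{*}\partial_{\mfk_{-}}$ is injective in the relevant bidegrees. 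Both facts are standard Hodge-theoretic consequences of adjointness plus the decomposition $\biw^{s}(\mfk_{-})^{*}\otimes\mfk=\operatorname{im}(\partial_{\mfk_{-}})\oplus\ker(\Box_{\mfk_{-}})\oplus\operatorname{im}(\partial_{\mfk_{-}}^{*})$.

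The main obstacle is bookkeeping rather than conceptual: one has to be careful that the inner product obtained by restricting $\left\langle\,,\,\right\rangle$ from $\g$ to the subspace representing $\mfk$ really does make $\partial_{\mfk}$ and $\partial_{\mfk}^{*}$ adjoint with respect to the \emph{bracket of $\mfk$}, which differs from the bracket of $\g$ by the projection $\mathrm{pr}_{\q_{-k}}$ that is killed in the quotient. The point is that the corrections $\mathrm{pr}_{\q_{-k}}[\cdot,\cdot]$ land in the component that has been quotiented out, so on $\mfk$ the structure constants coincide with those of $\g$ in the relevant slots, and the adjointness relation $\left\langle[A,X],Y\right\rangle_{\mfk}=-\left\langle X,[\theta(A),Y]\right\rangle_{\mfk}$ for $A\in\p_0$ is inherited. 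Granting this, the verification is entirely analogous to \cite[proof of Lemma 3.2 and Proposition 3.3]{CDT-ODE}, and as stated in the excerpt these routine details are left to the reader; the conclusion that $\ker(\partial^{*})\subset\biw^{2}(\mfk/\mfl)^{*}\otimes\mfk$ is a normalization condition for Cartan geometries of type $(\mfk,L)$ then follows from \cite[Proposition 3.10]{Cap-Cartan}.
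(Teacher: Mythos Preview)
Your overall plan coincides with the paper's: verify the axioms of a codifferential in the sense of \cite[Definition 3.9]{Cap-Cartan}, appeal to \cite[Proposition 3.10]{Cap-Cartan} for the normalization condition, and model the verifications on \cite[Lemma 3.2 and Proposition 3.3]{CDT-ODE}. The paper likewise passes to the associated graded, checks adjointness of $\partial_{\mfk_-}$ and $\partial_{\mfk_-}^*$ for the induced inner product, and leaves the remaining routine checks to the reader.

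However, your argument for why $\partial_{\mfk}^*$ restricts to the horizontal subspaces $\biw^s(\mfk/\mfl)^*\otimes\mfk$ contains two genuine errors. First, the subspace $\{\Phi: A\im\Phi=0\ \forall A\in\mfl\}$ is the space of \emph{horizontal} cochains, not the $\mfl$-invariants; the $\p_0$-equivariance \eqref{equivariance} gives preservation of invariants, not of horizontality, so that line of reasoning does not apply. Second, your claim that $\theta(\mfl)=\mfl$ is false: since $\mfl=\q_0\oplus(\p_0\cap\q_+)$ and $\theta(\q_i)=\q_{-i}$, one gets $\theta(\p_0\cap\q_+)=\p_0\cap\q_-=\mathfrak{e}\neq\mfk_1$, so $\mfl$ is not $\theta$-stable. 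The restriction to horizontal cochains therefore needs a direct verification using the explicit form of $\partial_{\mfk}^*$ (equivalently, checking that $\partial_{\mfk}$ preserves the ideal $\mfl^*\wedge\biw^{\bullet}\mfk^*\otimes\mfk$), exactly as in the proofs of \cite[Lemma 3.2 and Proposition 3.3]{CDT-ODE}; this is what the paper has in mind when it says ``one verifies'' and refers to those results. A minor further point: $\partial_{\mfk}$ is homogeneous of degree $0$ with respect to the grading on $\mfk$, not degree $\geq 1$, so $\partial_{\mfk}^*$ preserves homogeneity rather than lowering it.
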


As a result, normality can  be defined as follows.
\begin{definition}
A Cartan geometry of type $(\mfk,L)$ is called \emph{normal} if its curvature function takes values in the $L$-module $\mathcal{N}=\ker\partial^*$ from Proposition \ref{prop-codif}.
\end{definition}
We can now formulate the main theorem of this section. Let $(\mfk_{-},Q_0)$ be defined as in \eqref{def-k-} and \eqref{inclQ0} and $(\mfk, L)$ 
as in Lemma  \ref{lem-kL}.

\begin{theorem}
\label{thm-CartanConnection}
There is an equivalence of categories between parabolic almost conformally quasi-symplectic structures of type $(\mfk_{-},Q_0)$ and regular and normal Cartan geometries of type $(\mfk,L)$. 
\end{theorem}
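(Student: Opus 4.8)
The strategy is to realize Theorem \ref{thm-CartanConnection} as an instance of the general equivalence-of-categories result for filtered $G$-structures, namely Theorem \ref{eqcat} (\cite[Theorem 4.12]{Cap-Cartan}), applied to the pair $(\mfk, L)$. The first task is to check the two hypotheses of that theorem. First, one must verify the cohomological prolongation condition $H^1(\mfk_{-},\mfk)_l = 0$ for all $l>0$; this guarantees that $\mfk$ is the full Tanaka prolongation of the pair $(\mfk_{-},\mfl_0)$ and that $L$ acts on the frame bundle as the appropriate structure group. Here I would argue case by case using the three presentations \eqref{k_g2}, \eqref{k_so34}, \eqref{k_sopq} of $\mfk$; the key point is that $\mfk_{-}$ is a quotient of the quasi-contact symbol $\q_{-}$ by its top graded piece, and the prolongation can be computed either directly via Tanaka's algorithm or, more efficiently, by relating it to the known prolongation structure of the ambient parabolic data, using the fact (cited from \cite{Yamaguchi-Simple} in \ref{sec-descriptionofquasicontactcone}) that $\q_0$ is exactly the grading-preserving derivations of $\q_{-}$. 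In fact, for the three relevant $\g$'s this is precisely the computation underlying the ODE constructions in \cite{CDT-ODE}, which cover the $(\g_2^*,\p_{12})$ and $(\fso(3,4),\p_{23})$ cases verbatim, so for those two cases I would simply invoke \cite[Lemma 3.2, Proposition 3.3]{CDT-ODE}; the orthopath case $\mfk \cong \mathfrak{sl}(2,\RR)\oplus\mathfrak{co}(p,q)\ltimes \RR^2\otimes\RR^{p+q}$ requires the analogous but new verification.

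The second hypothesis of Theorem \ref{eqcat} is the existence of a normalization condition, i.e.\ a $P$-invariant (here $L$-invariant) complement to the image of $\partial$ in degree two. This is exactly what Lemma \ref{prop-codif} supplies: the map $\partial^*\colon \biw^2(\mfk/\mfl)^*\otimes\mfk \to \mfk^*\otimes\mfk$ is an $L$-equivariant codifferential in the sense of \cite[Definition 3.9]{Cap-Cartan}, so by \cite[Proposition 3.10]{Cap-Cartan} its kernel $\mathcal{N}=\ker\partial^*$ is a normalization condition, and the associated-graded statement — that $\mathrm{gr}_l(\mathcal N)$ is a complement to $\mathrm{im}(\partial)$ in degree $l>0$ — follows from the Hodge-type decomposition induced by the inner product $\langle\,,\,\rangle$ on $\mfk$ restricted from $\g$. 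The nontrivial verifications of the codifferential axioms are deferred (as stated in the excerpt) to the reader by analogy with \cite{CDT-ODE}. Once both hypotheses are in place, Theorem \ref{eqcat} directly yields an equivalence of categories between filtered $Q_0$-structures of type $\mfk_{-}$ — which by Definition \ref{def-PACQ} are precisely PACQ structures of type $(\mfk_{-},Q_0)$ — and regular, normal Cartan geometries of type $(\mfk,L)$. One last compatibility point: the structure group on the frame-bundle side produced by Theorem \ref{eqcat} is $L$ with Lie algebra $\mfl = \mfk^0$, and one must match this with the reduction group $Q_0$ of Definition \ref{def-PACQ} together with the prolongation data; this is the content of the identification $\mfl\cong \p_0\cap\q$ and $L = P_0\cap Q$ established in Lemma \ref{lem-kL}, so no new work is needed.

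\textbf{Main obstacle.} The genuinely new technical content — and hence the main obstacle — is the prolongation computation $H^1(\mfk_{-},\mfk)_l = 0$ for $l>0$ in the orthopath case, together with the verification that the candidate $L$ of Lemma \ref{lem-kL}(c) is the full automorphism group of the graded pair (rather than a proper subgroup, which would force additional prolongation). The subtlety is that $\mfk_{-} = \mfk_{-2}\oplus\mfk_{-1}$ here is just the symbol algebra of a path geometry, whose Tanaka prolongation is the parabolic $\fso(p+q+2)$-type algebra and is \emph{larger} than $\mfk$; it is only after imposing the $\mathrm{CO}(p,q)$-reduction on $\mathcal V$ (i.e.\ restricting $\mfl_0$ from $\q_0$ to $\p_0\cap\q$) that the prolongation truncates to $\mfk$. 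Establishing that this truncation is exact — that no unexpected prolongation in degrees $\geq 1$ survives — is where the argument must be carried out carefully, and I would do it by a direct degree-by-degree computation of cocycles, or by citing the parallel computation for the $\mathrm{GL}(2,\RR)$- and Segr\'e-structure cases in \cite{CS-cont1,CDT-ODE} and adapting it. The remaining steps — invariance of $\mathcal N$, the Hodge decomposition, matching of structure groups — are routine given the inner-product setup already spelled out in the excerpt.
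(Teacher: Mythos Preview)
Your overall strategy is correct and matches the paper's: both reduce to Theorem \ref{eqcat}, with the normalization condition supplied by Lemma \ref{prop-codif}, so that the only substantive ingredient is the vanishing $H^1(\mfk_{-},\mfk)_l=0$ for $l>0$.

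Where you diverge is in how that vanishing is established. You propose a case-by-case verification, treating the orthopath case as the genuine obstacle requiring a direct Tanaka computation. The paper instead gives a \emph{uniform} argument (Proposition \ref{prop-prol}) that handles all three types simultaneously. It exploits the decomposition $\mfk=\mathfrak{a}\oplus\p_0$ of \eqref{aplusp0} together with the $\mathfrak{sl}_2$-triple $X\in\mathfrak{e}$, $H\in\mfk_0$, $Y\in\mfk_1$ inside $\p_0$. Writing a positive-homogeneity cocycle $\Phi\in(\mfk_{-})^*\otimes\mfk$ in block form as in \eqref{decomp}, the formula \eqref{delformula} reduces the cocycle condition to $\partial_{\mathfrak{a}}\phi_2=0$ and $\partial_{\mathfrak{a}}\phi_1=X\cdot\phi_2$. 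The one external input is the injectivity of the Spencer differential $\partial_{\mathfrak{a}}|_{\mathfrak{a}^*\otimes\p_0}$, cited from \cite[Theorem 4.2]{CS-cont1}; this forces $\phi_2\in\mathfrak{a}^*\otimes\mathfrak{a}$, and then the $\mathfrak{sl}_2$-module structure on $\mathfrak{a}^*\otimes\mathfrak{a}$ combined with positive homogeneity pins $\phi_2$ down to $\lambda\,\mathrm{ad}(Y)|_{\mathfrak{a}}$, yielding $\Phi=\partial_{\mfk_{-}}(-\lambda Y)$. This dissolves your ``main obstacle'' without any separate orthopath computation; the path-geometry prolongation never enters because the argument works directly with $\mfk_{-}\oplus\mfk_0$ rather than comparing to a larger ambient prolongation.

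A minor citation slip: \cite[Lemma 3.2, Proposition 3.3]{CDT-ODE} concern the codifferential construction (and are what the paper invokes for Lemma \ref{prop-codif}), not the prolongation statement.
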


In order to prove the theorem we first need to show that $\mfk$ is the full prolongation of its non-positive part $\mfk_{-}\oplus\mfk_0$. This is the content of Proposition \ref{prop-prol} below.

Recall the  decomposition 
\begin{equation}\label{apluspluseplusl}
\mfk=\mathfrak{a}\oplus\p_0=\mathfrak{a}\oplus\mathfrak{e}\oplus\mfl,\quad\mbox{where}\quad\mfl=\mfk_0\oplus \mfk_1,
\end{equation}
from \eqref{aplusp0}.
Let $X$ and $Y$ be a generator of $\mathfrak{e}$ and $\mfk_1$, respectively. Choose $H\in\mfk_0$ such that $X,Y,H$ form a $\mathfrak{sl}(2,\mathbb{R})$ triple.
Denote by $\omega^X$  the functional that maps $X$ to one and vanishes on  $\mathfrak{a}$.
Then we can uniquely decompose  any element  $\Phi\in\biw^s(\mfk/\mfl)^*\otimes\mfk$ as
\begin{equation}\label{decomp}\Phi=\omega^X\wedge\phi_1+\phi_2=\begin{pmatrix} \phi_1\\ \phi_2\end{pmatrix},\end{equation}
where $\phi_1:=X\im\Phi$ and $\phi_2:=\Phi-\omega^X\wedge\phi_1$ both vanish upon insertion of $X$ and thus correspond to elements $\phi_1\in \biw^{s-1}\mathfrak{a}^*\otimes\mfk$ and $\phi_2\in \biw^s\mathfrak{a}^*\otimes\mfk$, respectively.

Let $\partial_{\mathfrak{a}}:\biw^{s}\mathfrak{a}^*\otimes\mfk\to\biw^{s+1}\mathfrak{a}^*\otimes\mfk\,$ be the differential in the complex defining Lie algebra cohomology of the abelian Lie algebra $\mathfrak{a}$ with values in $\mfk$. Explicitly, it is given by
\begin{equation}\label{partiala}
\partial_{\mathfrak{a}}\phi(V_0,\cdots,V_s)=\sum_{i}(-1)^i[V_i,\phi(V_0,\dots,\hat{V}_i,\dots,V_s)].
\end{equation} 
Using the formulas for $\partial_{\mfk_{-}}:\biw^{s}(\mfk_{-})^*\otimes\mfk\to\biw^{s+1}(\mfk_{-})^*\otimes\mfk$ and $\partial_{\mathfrak{a}}$, one verifies  that
\begin{align}\label{delformula}
\partial_{\mfk_{-}}\begin{pmatrix}\phi_1\\ \phi_2\end{pmatrix}=\begin{pmatrix}-\partial_{\mathfrak{a}}\phi_1+ X\cdot\phi_2\\ \partial_{\mathfrak{a}}\phi_2\end{pmatrix},
\end{align}
where 
$$(X\cdot\phi_2)(V_1,\cdots,V_s)=[X,\phi_2(V_1,\cdots,V_s)]+\sum_{j=1}^{s}(-1)^j\phi_2([X,V_j],V_1,\cdots,\hat{V}_j,\cdots,V_s).$$

\begin{proposition}\label{prop-prol}
Let $\mfk$ be a graded Lie algebra  as  in \eqref{Liealgs} and \eqref{Liealgsgrading}. Then  $H^1(\mfk_{-},\mfk)$ is contained in non-positive homogeneous degrees, or equivalently, $\mfk$ is the full prolongation of its non-positive part $\mfk_{-}\oplus\mfk_0$. 
\end{proposition}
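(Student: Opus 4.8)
The plan is to compute $H^1(\mfk_{-},\mfk)$ in positive homogeneity directly, exploiting the semidirect product structure $\mfk = \mathfrak{a}\rtimes\p_0$ with $\mathfrak{a}$ abelian, together with the decomposition \eqref{decomp} of cochains into a pair $(\phi_1,\phi_2)$ and the explicit formula \eqref{delformula} for $\partial_{\mfk_{-}}$. Recall that a cochain $\Phi\in(\mfk_{-})^*\otimes\mfk = \mathfrak{a}^*\otimes\mfk$ (after identifying $\mfk_{-}\cong\mathfrak{a}/(\ker\text{ of the projection})$, i.e. using the graded identification $\mfk_{-}\cong\q_{-k+1}\oplus\cdots\oplus\q_{-1}$) splits as $\Phi = \omega^X\wedge\phi_1 + \phi_2$ with $\phi_1\in\mfk$, $\phi_2\in\mathfrak{a}^*\otimes\mfk$; since $\mathfrak{e} = \langle X\rangle$, the space $\mfk_{-}$ is $\mathfrak{a}$ together with the one extra line spanned by $X$ in degree $-1$. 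I would first reduce the cocycle equation $\partial_{\mfk_{-}}\Phi = 0$ to a system in $\phi_1,\phi_2$ using \eqref{delformula}, namely $\partial_{\mathfrak{a}}\phi_2 = 0$ and $\partial_{\mathfrak{a}}\phi_1 = X\cdot\phi_2$.

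The second step is to analyze $\partial_{\mathfrak{a}}$-cohomology. Since $\mathfrak{a}$ is an abelian ideal, $\partial_{\mathfrak{a}}$ on $\biw^\bullet\mathfrak{a}^*\otimes\mfk$ is just the Chevalley--Eilenberg differential of the abelian Lie algebra $\mathfrak{a}$ with coefficients in the $\mathfrak{a}$-module $\mfk$. The relevant pieces are $H^0(\mathfrak{a},\mfk) = \mfk^{\mathfrak{a}} = \mathfrak{a}$ (the $\mathfrak{a}$-invariants, since $\mathfrak{a}$ acts on $\p_0$ nontrivially into $\mathfrak{a}$ and trivially on $\mathfrak{a}$ itself) and $H^1(\mathfrak{a},\mfk)$. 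By standard Koszul/Kostant-type computations — or by inspecting the low rank cases \eqref{k_g2}, \eqref{k_so34}, \eqref{k_sopq} individually — one identifies $H^1(\mathfrak{a},\mfk)$ explicitly as a $\p_0$-module; in each case it is concentrated in a single homogeneity and one checks the degrees. Given $\partial_{\mathfrak{a}}\phi_2=0$, write $[\phi_2] \in H^1(\mathfrak{a},\mfk)$ if $s=2$, or observe $\phi_2$ is a coboundary $\phi_2 = \partial_{\mathfrak{a}}\psi$ modulo $H^1$; then the second equation $\partial_{\mathfrak{a}}\phi_1 = X\cdot\phi_2$ becomes a compatibility/solvability condition, and its homogeneity bookkeeping forces $[\Phi]$ to vanish whenever the homogeneous degree is positive.

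A cleaner route, which I would actually prefer to present, is to use the Hochschild--Serre spectral sequence for the ideal $\mathfrak{a}\subset\mfk_{-}$, with quotient $\mfk_{-}/\mathfrak{a}\cong\mathfrak{e}$ one-dimensional. This gives $E_2^{p,q} = H^p(\mathfrak{e}, H^q(\mathfrak{a},\mfk)) \Rightarrow H^{p+q}(\mfk_{-},\mfk)$, so $H^1(\mfk_{-},\mfk)$ is built from $H^0(\mathfrak{e}, H^1(\mathfrak{a},\mfk))$ and $H^1(\mathfrak{e}, H^0(\mathfrak{a},\mfk)) = H^1(\mathfrak{e},\mathfrak{a})$. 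Since $\mathfrak{e}$ is one-dimensional, $H^1(\mathfrak{e}, -)$ is the cokernel of the action of a generator of $\mathfrak{e}$; tracking the grading (the generator of $\mathfrak{e}$ has degree $-1$, so it lowers homogeneity), one sees both contributing pieces live in non-positive homogeneous degree. Concretely $H^1(\mathfrak{e},\mathfrak{a})$ is the cokernel of $\ad_X\colon\mathfrak{a}\to\mathfrak{a}$, and by $\mathfrak{sl}(2)$-representation theory applied to the $X,Y,H$ triple and the module structure of $\mathfrak{a}$ (namely $\mathrm{Sym}^3\RR^2$, $\RR^2\otimes\mathrm{Sym}^2\RR^2$, or $\RR^2\otimes\RR^{p+q}$), this cokernel sits in the top weight space, which after the homogeneity reindexing is in degree $0$; similarly $H^1(\mathfrak{a},\mfk)$ is checked case-by-case to have its $\mathfrak{e}$-invariant part in non-positive degree.

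The main obstacle is the honest computation of $H^1(\mathfrak{a},\mfk)$ (or equivalently the off-diagonal $\partial_{\mathfrak{a}}$-cohomology) as a graded $\p_0$-module in each of the three families, and keeping the two gradings — the internal $\mfk$-grading versus the homogeneity grading on cochains — carefully aligned, since an element of $\mathfrak{a}^*\otimes\mfk$ has homogeneity equal to (degree of the $\mfk$-value) minus (degree of the $\mathfrak{a}$-slot). I expect this to be routine but bookkeeping-heavy; since the three cases are small, I would simply verify each directly, which also cross-checks against the known prolongation results for $4$th order ODEs and for path geometries in \cite{CDT-ODE} and \cite{CS-Parabolic}. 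Equivalently, and most economically, one may note that $\mfk$ is by construction obtained from the $\vert k\vert$-graded parabolic $(\g,\q)$, for which $H^1(\q_{-},\g)_l = 0$ for $l>0$ by Proposition \ref{prop-qcontactcone} and the general theory; the passage $\mfk = \p^{op}/\p_{-2}$ and $\mfk_{-} = \q_{-}/\q_{-k}$ then lets one transfer the vanishing, with the only subtlety being the contributions supported on the quotiented-out piece $\q_{-k}$, which a direct check shows do not produce positive-homogeneity $1$-cohomology for $\mfk$. I would conclude by invoking the standard equivalence (the Tanaka--Weisfeiler prolongation statement) that $H^1(\mfk_{-},\mfk)$ concentrated in non-positive degrees is equivalent to $\mfk$ being the full prolongation of $\mfk_{-}\oplus\mfk_0$.
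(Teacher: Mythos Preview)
Your setup in the first paragraph is exactly the paper's: decompose $\Phi=(\phi_1,\phi_2)$ via \eqref{decomp} and use \eqref{delformula} to reduce the cocycle condition to $\partial_{\mathfrak{a}}\phi_2=0$ and $\partial_{\mathfrak{a}}\phi_1=X\cdot\phi_2$. Where you diverge is in how to finish. You propose computing $H^1(\mathfrak{a},\mfk)$ case by case (directly or via Hochschild--Serre), or transferring the vanishing from $H^1(\q_{-},\g)$. The paper instead uses a single uniform observation: the restriction $\partial_{\mathfrak{a}}\vert_{\mathfrak{a}^*\otimes\p_0}\colon\mathfrak{a}^*\otimes\p_0\to\biw^2\mathfrak{a}^*\otimes\mathfrak{a}$ is exactly the Spencer differential for $\p_0\subset\mathfrak{a}^*\otimes\mathfrak{a}$, and this is \emph{injective} by \cite[Theorem~4.2]{CS-cont1}. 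So $\partial_{\mathfrak{a}}\phi_2=0$ forces $\phi_2\in\mathfrak{a}^*\otimes\mathfrak{a}$ immediately, without any cohomology computation. Then, since $X\cdot\phi_2=\partial_{\mathfrak{a}}\phi_1=-\mathrm{ad}(\phi_1)\vert_{\mathfrak{a}}\in\p_0\subset\mathfrak{a}^*\otimes\mathfrak{a}$ and $X$ preserves the $\p_0$-invariant splitting $\mathfrak{a}^*\otimes\mathfrak{a}=\p_0\oplus\p_0^{\perp}$, the $\p_0^{\perp}$-component of $\phi_2$ lies in $\ker X$; but on $\mathfrak{a}^*\otimes\mathfrak{a}$ the $H$-weight equals twice the homogeneity, so $\ker X$ sits in non-positive homogeneity and vanishes under the positivity hypothesis. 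The only positive-homogeneity element of $\p_0$ is $\lambda Y$, and then $\phi_1=-\lambda H$ and $\Phi=\partial_{\mfk_{-}}(-\lambda Y)$.

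Your Hochschild--Serre route and your direct case analysis would both work, but they are more laborious and still require you to identify $H^1(\mathfrak{a},\mfk)$ (or its $\mathfrak{e}$-invariants) in each of the three families, which you defer. The paper's Spencer-differential step is precisely what replaces that computation with a one-line citation. Your third route, transferring the vanishing from $H^1(\q_{-},\g)$, is not obviously correct as stated: the passage from $(\q_{-},\g)$ to $(\mfk_{-},\mfk)$ involves both a quotient of the algebra and of the coefficient module, and ``a direct check shows do not produce positive-homogeneity $1$-cohomology'' hides exactly the work you are trying to avoid. I would drop that route.
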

\begin{proof}
We show that the cohomology $H^1(\mfk_{-},\mfk)$ is contained in non-positive homogeneous degrees. This means that for any $\Phi\in (\mfk_{-})^*\otimes\mfk$ of positive homogeneity and contained in the kernel of $\partial_{\mfk_{-}}$, we need to show that there exists $A\in\mfk$ such that $\partial_{\mfk_{-}} A =\Phi.$

We decompose $\Phi$ as in \eqref{decomp} with $\phi_1\in\mfk$ and $\phi_2\in\mathfrak{a}^*\otimes\mfk$. Then by \eqref{delformula}, we have $\partial_{\mfk_{-}}\Phi=0$ if and only if $\partial_{\mathfrak{a}}\phi_2=0$ and $\partial_{a}\phi_1=X\cdot \phi_2$.  
We further decompose  $\phi_2$ according to its values in $\mfk=\mathfrak{a}\oplus\p_0$. The restriction $$\partial_{\mathfrak{a}}\vert_{\mathfrak{a}^*\otimes\mathfrak{p}_0}:\mathfrak{a}^*\otimes\mathfrak{p}_0\to\biw^2\mathfrak{a}^*\otimes\mathfrak{a}$$ of $\partial_{\mathfrak{a}}$ as defined in \eqref{partiala} is  the Spencer differential corresponding to $\p_{0}\subset\mathfrak{a}^*\otimes\mathfrak{a}$. This has been shown to be injective in   \cite[Theorem 4.2]{CS-cont1}. In particular, $\partial_{\mathfrak{a}}\phi_2=0$ implies that $\phi_2\in\mathfrak{a}^*\otimes\mathfrak{a}$.

Now  we decompose $\mathfrak{a}^*\otimes\mathfrak{a}$ as a $\p_0$-representation into $\p_{0}\subset\mathfrak{a}^*\otimes\mathfrak{a}$ and a direct sum of complementary  irreducible components.
Recall that we have $\mathfrak{sl}(2,\mathbb{R})\subset\p_0$  spanned by $X\in\mathfrak{e}$, which is of negative homogeneity, $Y\in\mfk_1$, which is of positive homogeneity, and $H\in\mfk_0$, which is of homogeneity zero. 
Since $X\in\p_0$, the action of $X$ respects the decomposition of $\mathfrak{a}^*\otimes\mathfrak{a}$ introduced above.
 Using this, we conclude from
 \begin{equation}
 \label{eq-prol1}
 X\cdot\phi_2=\partial_{\mathfrak{a}}(\phi_1)=-\mathrm{ad}(\phi_1)\vert_{\mathfrak{a}}
 \end{equation}
  that $\phi_2$ has to be the sum of an element in $\p_0$ and  an element in the complement of $\p_{0}\subset\mathfrak{a}^*\otimes\mathfrak{a}$ that is annihilated by the action of $X$. But then the assumption that $\phi_2$ is of positive homogeneity implies that  $\phi_2=\lambda\, \mathrm{ad}(Y)\vert_{\mathfrak{a}}\subset\mathfrak{a}^*\otimes\mathfrak{a}$. It follows that 
 $X\cdot\phi_2=\lambda\, \mathrm{ad}(H)\vert_{\mathfrak{a}},$
  which combined with \eqref{eq-prol1} gives $\phi_1=-\lambda H$. 
Thus, we obtain
$$\Phi=\begin{pmatrix} \phi_1\\ \phi_2\end{pmatrix}=\begin{pmatrix} X\cdot(-\lambda Y)\\ \partial_{\mathfrak{a}}(-\lambda Y)\end{pmatrix}=\partial_{\mfk_{-}}(-\lambda Y).$$
\end{proof}

Now we can prove Theorem \ref{thm-CartanConnection}.
\begin{proof}[Proof of Theorem \ref{thm-CartanConnection}]
Since $H^1(\mfk_{-}, \mfk)_l=\{0\}$ for all $l>0$ by Proposition \ref{prop-prol},  the result follows from Theorem \ref{eqcat}.
\end{proof}

\subsection{Conformally quasi-symplectic condition and variational ODEs}
\label{sec:spec-conf-quasi}
In the following, we  introduce  PCQ structures as a  subclass of PACQ structures. To do so, we first recall the following terminology.
\begin{definition}
\label{def-qsymplectic}
Let $\cC$ be a smooth manifold of odd dimension.
\begin{itemize}
\item A  $2$-form $\rho$ on $\cC$ is called  \emph{quasi-symplectic} if it is closed and has 
maximal rank. 
\item An \emph{almost  conformally quasi-symplectic structure} $(\ell, \cC)$  is given by a line sub-bundle $\ell\subset\biw^2T^*\cC$ such that for each $x\in \cC$ any non-zero element of $\ell_x$ has
maximal rank. 
\item The structure $(\ell, \cC)$ is called \emph{conformally quasi-symplectic} if in a neighborhood of  each point $x\in \cC$ there exists a nowhere vanishing smooth closed section $\rho\in\Gamma(\ell\vert_{U})$.
\end{itemize}
\end{definition}
Since the dimension of $\cC$ is odd, the condition that $\rho$ has maximal rank means that, when viewed as a map $T\cC\to T^*\cC$, it has one-dimensional kernel. We refer to  $\mathrm{ker}(\rho)$ as the \emph{characteristic} of $\rho$ or $\ell$ interchangeably.

The following proposition explains the name parabolic almost conformally quasi-symplectic (PACQ) structures for the   structures introduced in Definition \ref{def-PACQ}.
\begin{proposition}
\label{prop-lineb}
Every  PACQ
 structure  determines a canonical line bundle  $\ell\subset\biw^2T^*\cC$ with the property that for each $x\in \cC$ any non-zero element of $\ell_x$ has
maximal rank.
In other words, it has a canonical almost conformally quasi-symplectic structure.
\end{proposition}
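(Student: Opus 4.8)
The plan is to construct the line bundle $\ell$ directly from the Cartan-geometric data provided by Theorem \ref{thm-CartanConnection}, by identifying an appropriate $L$-invariant line in $\biw^2(\mfk/\mfl)^*$ and transporting it to $\cC$ via the Cartan connection $\psi$. First I would recall the decomposition $\mfk=\mathfrak{a}\oplus\mathfrak{e}\oplus\mfl$ from \eqref{aplusp0} and note that $\mfk_-\cong\mathfrak{a}\oplus\mathfrak{e}$ as an $L$-module, with $T^{-1}\cC\cong\mathcal{E}\oplus\mathcal{V}$ corresponding to $\mathfrak{e}\oplus(\mathfrak{a}\cap\mfk_{-1})$. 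The key observation is that the last grading component $\q_{-k}$ of the ambient quasi-contact grading, although quotiented out in passing to $\mfk_-$, survives as a canonical $L$-invariant functional: precisely, the bracket $\pr_{-k}\circ[\,,\,]\colon\biw^2(\q_{-1}\oplus\dots\oplus\q_{-k+1})\to\q_{-k}$ from \eqref{qsymplecticbracket} descends, after the appropriate identifications, to a nonzero element $\rho_0\in\biw^2\mfk_{-1}^*$ (equivalently in $\biw^2(\mfk/\mfl)^*$, since it annihilates $\mfl$ and factors through $\mfk_{-1}$) whose span is a line. Since $\q_{-k}$ is one-dimensional and $L$ acts on it by a character (it is a subgroup of $Q$, which acts on the grading components), the line $\ell_0:=\langle\rho_0\rangle\subset\biw^2(\mfk/\mfl)^*$ is $L$-invariant.

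Next I would verify the maximal-rank property at the algebraic level: by Proposition \ref{prop-qcontactgrading}(1), the $2$-form $\rho_0$ has exactly one-dimensional kernel, namely $\tilde{\mathfrak{e}}$, which under the identification corresponds to $\mathfrak{e}$. Thus every nonzero element of $\ell_0$ has one-dimensional kernel inside $\mfk_{-1}$, and extending by zero on $\mfk_0\oplus\mfk_1$ (which is legitimate since $\rho_0$ lives in $\biw^2(\mfk/\mfl)^*$) the kernel in all of $\mfk/\mfl$ is still one-dimensional, equal to $\mathfrak{e}$. Because $\dim\cC=\dim(\mfk/\mfl)$ is odd, one-dimensional kernel is exactly the maximal-rank condition. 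Then, given a PACQ structure with canonical Cartan geometry $(\cG\to\cC,\psi)$, I would set $\ell\subset\biw^2 T^*\cC$ to be the subbundle associated to the $L$-submodule $\ell_0\subset\biw^2(\mfk/\mfl)^*$ via $\psi$: concretely, $\psi$ induces an isomorphism $T\cC\cong\cG\times_L(\mfk/\mfl)$, hence $\biw^2T^*\cC\cong\cG\times_L\biw^2(\mfk/\mfl)^*$, and $\ell$ is the image of $\cG\times_L\ell_0$. The $L$-invariance of $\ell_0$ guarantees this is well-defined, and the maximal-rank property is inherited pointwise from the algebraic statement. One should also check that $\ell$ depends only on the PACQ structure and not on auxiliary choices, which follows from the uniqueness (up to isomorphism) of the regular normal Cartan connection in Theorem \ref{thm-CartanConnection} together with naturality of the associated-bundle construction.

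The main obstacle I anticipate is the bookkeeping in the identification of $\biw^2(\mfk/\mfl)^*$ with the relevant subspace of $\biw^2\q_-^*$ and checking that $\rho_0$ is genuinely $L$-invariant and nonzero in each of the three families — one must be careful that $\mfl$ inserts trivially into $\rho_0$ (so that it really descends to $\biw^2(\mfk/\mfl)^*$ and not merely $\biw^2\mfk^*$) and that quotienting $\q_-$ by $\q_{-k}$ does not kill the form: it does not, precisely because $\rho_0$ is built from $\q_{-1}\oplus\dots\oplus\q_{-k+1}$, which is a set of representatives for $\mfk_-$, and the ambiguity in lifting lands in $\q_{-k}$, on which the bracket into $\q_{-k}$ vanishes by Jacobi/grading reasons. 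A clean way to organize this is to observe that $\rho_0$ is nothing but the curvature-type $2$-form dual to the quasi-contact form on $\tcC$ restricted to $\q_{-1}\oplus\dots\oplus\q_{-k+1}$, which is already known to have the stated kernel; alternatively one can simply quote Proposition \ref{prop-qcontactgrading} and Lemma \ref{lem-kL} and read off invariance from the explicit models $\mfk\cong\p_0\ltimes\mathfrak{a}$, where $\rho_0$ is visibly the $P_0$-invariant (up to scale) symplectic form on $\mathfrak{a}$ paired appropriately — this makes $L$-invariance, indeed $P_0$-conformal-invariance, manifest. Finally, a short remark identifying $\mathrm{ker}(\ell)$ with the line field $\mathcal{E}$ is worth including, as it will be used later.
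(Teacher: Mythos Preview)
Your approach is essentially the same as the paper's: both construct $\ell$ as the associated bundle to an $L$-invariant line in $\biw^2(\mfk/\mfl)^*$ coming from the Lie bracket into the one-dimensional top component, then transfer it to $\cC$ via the Cartan connection from Theorem~\ref{thm-CartanConnection}. The paper phrases this through the contact grading, taking $[\,,\,]\colon\biw^2\p_{-1}\to\p_{-2}$ and the $L$-module identification $\mfk/\mfl\cong\p_{-1}\oplus\p_0/\mfl$, which makes $P_0$-invariance of the line and the kernel $\p_0/\mfl\cong\mathfrak{e}$ immediate; you phrase it through the quasi-contact grading and invoke Proposition~\ref{prop-qcontactgrading}(1) for the kernel. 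These are the same map under the identification $\p_{-1}=\q_{-k+1}\oplus\cdots\oplus(\q_{-1}\cap\p_{-1})$, $\p_{-2}=\q_{-k}$.

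One correction: your write-up repeatedly says $\rho_0\in\biw^2\mfk_{-1}^*$ and speaks of ``extending by zero on $\mfk_0\oplus\mfk_1$'', but the form lives on all of $\mfk_-\cong\q_{-k+1}\oplus\cdots\oplus\q_{-1}\cong\mfk/\mfl$, not just on $\mfk_{-1}$, and there is nothing to extend since $\mfk/\mfl$ is already exhausted by $\mfk_-$. This is a notational slip rather than a mathematical error, but you should fix it.
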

\begin{proof}
By Theorem \ref{thm-CartanConnection}, a parabolic almost quasi-symplectic structure on a smooth manifold $\mathcal{C}$ determines a canonical Cartan geometry $(\mathcal{G}\to \mathcal{C},\psi)$ of type $(\mfk,L)$ and thus an identification
\begin{equation}\label{eq-ident2form}
\biw^2 T^*\mathcal{C}\cong\mathcal{G}\times_{L}\biw^2(\mfk/\mfl)^*.
\end{equation}
Considering the  contact grading \eqref{contactgrading} on $\g$, the Lie bracket defines a map $[,]:\biw^2\mathfrak{p}_{-1}\to\mathfrak{\p}_{-2}$. Via any identification $\p_{-2}\cong \mathbb{R}$, this map defines a non-degenerate $2$-form on $\p_{-1}$. In particular, it determines a  one-dimensional $\p_0$-invariant 
subspace in $\biw^2\mathfrak{p}_{-1}^*$ whose non-zero elements are non-degenerate $2$-forms.

Using the decomposition \eqref{aplusp0}, where $\mathfrak{a}\cong\p_{-1}$, we can identify $\mfk/\mfl\cong  \mathfrak{p}_{-1}\oplus \p_0/\mfl$ as  $L$-modules. Consequently, the map $[,]$  determines a one-dimensional $L$-invariant subspace in $\biw^2(\mfk/\mfl)^*$  comprised of 2-forms with one-dimensional kernel corresponding to $\p_0/\mfl$. 
Via the identification \eqref{eq-ident2form}, this subspace gives rise to an almost conformally quasi-symplectic structure on $\mathcal{C}$.
\end{proof}
The canonical almost conformally quasi-symplectic structure determined by a PACQ structure is \emph{compatible} with its underlying  filtration in the following way.

\begin{definition}
\label{def_compatible}
Let 
$$T^{-1}\mathcal{C}=\mathcal{E}\oplus\mathcal{V}\subset\cdots\subset T^{-k+1}\mathcal{C}=T\mathcal{C}$$
 be the filtration plus splitting (pseudo-product structure) corresponding to a PACQ structure.
 We say that  an almost conformally quasi-symplectic structure $\ell\subset\biw^2T^*\mathcal{C}$ is \emph{compatible}  if the following conditions hold:
\begin{enumerate}
\item The line bundle $\mathcal{E}$ is the characteristic of $\ell$.
\item  For any $\rho\in\Gamma(\ell)$ one has 
$$\rho\vert_{T^{-i}\mathcal{C}\times T^{-j}\mathcal{C}}\equiv 0\quad \mbox{if}\quad i+j<k .$$ In particular, any $T^{-i}\mathcal{C}$ with $i\leq (k-1)/2$ is isotropic.
\item For  $E\in\Gamma(\mathcal{E})$,  $X\in\Gamma(T^{-i}\cC)$ and  $Y\in\Gamma(T^{-j}\cC)$, where $i+j<k$, one has
\begin{equation}\label{eq-Ecomp}
\rho([E,X],Y)=-\rho(X,[E,Y]),
\end{equation}
or equivalently, 
$$\mathcal{L}_{E}\rho\vert_{T^{-i}\mathcal{C}\times T^{-j}\cC}\equiv 0\quad \mbox{if}\quad i+j<k .$$
\end{enumerate}
\end{definition}

The properties of the Lie bracket imply compatibility of the canonical almost conformally quasi-symplectic structure from Proposition \ref{prop-lineb}:
\begin{proposition}
\label{prop_filtPACQ}
The canonical almost conformally quasi-symplectic structure $\ell\subset\biw^2T^*\mathcal{C}$ determined by a PACQ structure 
is compatible with the underlying filtration and splitting corresponding to the PACQ structure.
\end{proposition}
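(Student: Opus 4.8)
The plan is to verify conditions (1)--(3) of Definition \ref{def_compatible} by translating each of them into a purely algebraic statement about the contact grading $\g=\p_{-2}\oplus\p_{-1}\oplus\p_0\oplus\p_1\oplus\p_2$ and the subalgebra $\mfk=\mathfrak{a}\oplus\mathfrak{e}\oplus\mfl$, and then checking those statements. By Theorem \ref{thm-CartanConnection} the PACQ structure is equivalent to a regular normal Cartan geometry $(\cG\to\cC,\psi)$ of type $(\mfk,L)$, and under the identification \eqref{eq-ident2form} the line bundle $\ell$ corresponds to the one-dimensional $L$-invariant subspace $\ell_0\subset\biw^2(\mfk/\mfl)^*$ singled out in the proof of Proposition \ref{prop-lineb}, namely the pullback of the non-degenerate form $[\,,\,]\colon\biw^2\p_{-1}\to\p_{-2}\cong\RR$ under $\mfk/\mfl\cong\p_{-1}\oplus\p_0/\mfl$. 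The filtration components $T^{-i}\cC$ correspond to the $L$-invariant subspaces $\mfk^{-i}/\mfl\subset\mfk/\mfl$, and $\mathcal{E}$ corresponds to $\mathfrak{e}$, $\mathcal{V}$ to (the image of) $\mathfrak{v}$. So it suffices to prove the three conditions at the level of the associated graded, i.e.\ for the $L$-module $\mfk/\mfl$ with its invariant $2$-form, and then use that the Cartan connection $\psi$ trivializes the relevant bundles and that Lie brackets of sections are computed, modulo lower-order terms, by the algebraic bracket on $\mfk_{-}$ (the regularity of $\psi$ guarantees that the ``lower-order'' corrections land in the right filtration slots).

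The key steps are then: (i) Condition (1): the characteristic of $\ell_0$ is, by construction in Proposition \ref{prop-lineb}, exactly the line $\p_0/\mfl\cong\mathfrak{e}$, since $[\,,\,]$ is non-degenerate on $\p_{-1}=\mathfrak{a}$ and $\ell_0$ is its extension by zero along $\p_0/\mfl$; this is immediate. (ii) Condition (2): I must show $\rho|_{T^{-i}\cC\times T^{-j}\cC}\equiv 0$ for $i+j<k$. Algebraically this says that $\mfk^{-i}/\mfl$ and $\mfk^{-j}/\mfl$ pair trivially under the form whenever $i+j<k=k_{\text{depth}}$. Writing $\mfk^{-i}/\mfl = \q_{-i}\oplus\cdots\oplus(\text{part in }\q_{-1})$ inside $\p_{-1}\oplus\p_0/\mfl$, one checks using the refined grading from the proof of Proposition \ref{prop-qcontactgrading} that $\q_{-a}$ and $\q_{-b}$ pair nontrivially under $[\,,\,]\colon\biw^2\p_{-1}\to\p_{-2}=\q_{-k}$ only when $a+b=k$; everything with $a+b<k$ is forced into $\p_{-1}$ paired against something that brackets into $\q_{-(a+b)}\neq\q_{-k}$, hence is annihilated by the projection $\pr_{\p_{-2}}$ that defines $\ell_0$. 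This is the homogeneity bookkeeping that makes the whole statement work. (iii) Condition (3): the invariance $\rho([E,X],Y)=-\rho(X,[E,Y])$ for $E\in\Gamma(\mathcal{E})$, in algebraic form, asserts that $\mathrm{ad}(e)$ for a generator $e\in\mathfrak{e}$ is skew with respect to the $2$-form $\ell_0$ restricted to the relevant filtration pieces. Since $\mathfrak{e}\subset\p_0$ and $[\,,\,]\colon\biw^2\p_{-1}\to\p_{-2}$ is $\p_0$-invariant (the bracket is a $\g$-module map and $\p_0$ acts on $\p_{-2}$ by a character), $\mathrm{ad}(e)$ is automatically skew for this pairing on $\p_{-1}$; the contribution from the $\p_0/\mfl$-direction drops out by Condition (2) since $i+j<k$. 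Translating back to the manifold, $[E,X]$ differs from the algebraic bracket by a section of $T^{-i-1}\cC$ and similarly for $[E,Y]$, and by Condition (2) these correction terms do not affect $\rho$ because $(i+1)+j\le k$ forces the extra term to pair trivially unless it hits the top slot, which is exactly the case excluded by the strict inequality $i+j<k$; a short case analysis handles the boundary.

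The main obstacle I anticipate is the careful homogeneity bookkeeping in step (ii)--(iii): one has to keep track simultaneously of the quasi-contact grading on $\q$, the contact grading on $\p$, and the induced grading on $\mfk$, and verify that the projection $\pr_{\p_{-2}}$ used to define $\ell_0$ kills all the ``wrong-weight'' pairings. This is not deep, but it is where a sign or a weight can easily go astray; the cleanest way to organize it is to fix, once and for all, the refined $\q_0$-invariant decomposition
\[
\g=\underbrace{\q_{-k}}_{\p_{-2}}\oplus\underbrace{\q_{-k+1}\oplus\cdots\oplus(\q_{-1}\cap\p_{-1})}_{\p_{-1}}\oplus\underbrace{(\q_{-1}\cap\p_0)\oplus\q_0\oplus(\p_0\cap\q_+)}_{\p_0}\oplus\cdots
\]
from the proof of Proposition \ref{prop-qcontactgrading}, read off the homogeneities of $\mathfrak{a}=\p_{-1}$ and $\mathfrak{e}=\q_{-1}\cap\p_0$ with respect to the quasi-contact grading, and observe that $[\,,\,]\colon\biw^2\p_{-1}\to\p_{-2}$ pairs $\q_{-a}$-part of $\p_{-1}$ with $\q_{-b}$-part nontrivially iff $a+b=k$. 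Everything else is then forced, and the second-order-term estimates on $\cC$ follow from regularity of the normal Cartan connection exactly as in the corresponding arguments of \cite{CS-cont1}.
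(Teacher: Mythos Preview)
Your approach is essentially the paper's: reduce to the algebraic picture via the Cartan connection and verify (1)--(3) using the interplay between the quasi-contact grading $\q_*$ and the contact grading $\p_*$. Conditions (1) and (2) are handled correctly, and your reduction of (3) to a symbol-level statement is fine.

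For condition (3), however, there is a small but genuine gap. You claim that since $\mathfrak{e}\subset\p_0$ and $\p_0$ acts on $\p_{-2}$ by a character, $\ad(e)$ is ``automatically skew'' for the pairing. This does not follow: $\p_0$-equivariance of the bracket $\biw^2\p_{-1}\to\p_{-2}$ gives, via Jacobi,
\[
\rho_0([e,X],Y)+\rho_0(X,[e,Y]) = \lambda(e)\,\rho_0(X,Y),
\]
where $\lambda$ is the character of $\p_0$ on $\p_{-2}$. Skewness requires $\lambda(e)=0$, i.e.\ that $\mathfrak{e}$ acts \emph{trivially} on $\p_{-2}$, not merely by a character. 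This is true---since $\mathfrak{e}=\q_{-1}\cap\p_0$ one has $[\mathfrak{e},\p_{-2}]=[\q_{-1},\q_{-k}]\subset\q_{-(k+1)}=0$---but it is the actual content of the step and it is missing from your argument.

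The paper's proof handles this point by a direct Jacobi computation: for $X\in\q_{-i}$, $Y\in\q_{-j}$, $E\in\tilde{\mathfrak{e}}$ with $i+j+1=k$, one has $[X,Y]\in\q_{-(k-1)}\subset\p_{-1}$ and $E\in\p_0$, hence $[[X,Y],E]\in\p_{-1}\cap\q_{-k}=\p_{-1}\cap\p_{-2}=0$. Jacobi then gives $[[E,X],Y]=-[X,[E,Y]]$ in $\g$, and projecting to $\q_{-k}$ yields condition (3). Your framework is correct; just replace ``acts by a character'' with ``acts trivially, because $[\q_{-1},\q_{-k}]=0$''.
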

\begin{proof}
The statements follow from the fact that $\rho$ is induced from the Lie bracket component $\biw^2\p_{-1}\to\p_{-2}$ of the  contact grading \eqref{contactgrading}, or equivalently  $\biw^2(\q_{-1}\oplus\cdots\oplus\q_{-k+1})\to\q_{-k}$ of the corresponding quasi-contact grading \eqref{qsymplecticbracket}. To check condition (1) in Definition \ref{def_compatible} note that $\mathcal{E}$ corresponds to $\mathfrak{e}\cong\p_0/\mfl$. For condition (2) in Definition \ref{def_compatible} one uses that the Lie bracket is compatible with the grading and thus for $X\in\q^{-i},$ $Y\in\q^{-j}$ one has $[X,Y]\in\q^{-i-j}$, which has no $\q_{-k}$ component if $i+j<k$. Finally, one computes
$$[[E,X],Y]=-[[X,Y],E]-[[Y,E],X]=-[X,[E,Y]]$$
for $X\in\q_{-i}$, $Y\in\q_{-j}$, $E\in\tilde{\mathfrak{e}}\subset\q_{-1}$ with $i+j+1=k$.
Taking into account condition (2) this implies condition (3) in Definition \ref{def_compatible}.
\end{proof}

For  any choice of $\rho\in\Gamma(\ell)$   and $E\in\Gamma(\mathcal{E})$ one can define a tensor field  $\bh\in\Gamma(\mathcal{V}^*\otimes\mathcal{V}^*)$ via
$\mathbf{h}(X,Y)=\rho(\mathcal{L}_E^{k-2}X,Y)$. It follows from properties (2) and (3) in Definition \ref{def_compatible} that $\bh$ is non-degenerate, and symmetric for $k$ odd and skew-symmetric for $k$ even. 
In particular, we have the following.

\begin{lemma}\label{corr-orthopath}
A path geometry $T^{-1}\cC=\mathcal{E}\oplus\mathcal{V}\subset T^{-2}\cC=T\cC$  together with a compatible almost conformally quasi-symplectic structure $\ell\subset\biw^2T^*\mathcal{C}$ determines  the structure of an orthopath geometry on $\cC$. 
\end{lemma}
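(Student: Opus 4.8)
The statement asks that a path geometry together with a compatible almost conformally quasi-symplectic structure determines an orthopath geometry, i.e.\ a path geometry together with a conformal class $[\bh]\subset\Gamma(\mathrm{Sym}^2\cV^*)$ of non-degenerate bundle metrics on $\cV$. Since we are in the path-geometry case, the depth of the filtration is $k=2$, so the filtration is simply $T^{-1}\cC=\cE\oplus\cV\subset T^{-2}\cC=T\cC$. The whole content is therefore to produce a well-defined conformal class of non-degenerate symmetric bilinear forms on $\cV$ out of the data $(\cE,\cV,\ell)$.

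\smallskip

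First I would apply the construction sketched just before the lemma: for a choice $\rho\in\Gamma(\ell)$ and $E\in\Gamma(\cE)$, define $\bh\in\Gamma(\cV^*\otimes\cV^*)$ by $\bh(X,Y)=\rho(\cL_E^{k-2}X,Y)$. For $k=2$ the operator $\cL_E^{k-2}$ is the identity, so this is just $\bh(X,Y)=\rho(X,Y)$, the restriction $\rho|_{\cV\times\cV}$. Next I would verify the three claimed properties. \emph{Well-definedness as a section of $\cV^*\otimes\cV^*$:} this is automatic since $\rho$ is a $2$-form and we restrict it to the sub-bundle $\cV$. \emph{Non-degeneracy:} by compatibility condition (1) in Definition \ref{def_compatible}, the characteristic $\ker(\rho)$ equals $\cE$; since $\cE\oplus\cV=T^{-1}\cC$ and $\rho$ has one-dimensional kernel exactly $\cE$, the restriction of $\rho$ to the complement $\cV$ of $\cE$ inside $T^{-1}\cC$ is non-degenerate. (Here one uses that $\rho$ restricted to $T^{-1}\cC$, which is $2n-1$-dimensional in the path case — actually $\cV$ has even dimension $n-1$ and $\cE$ dimension $1$ — has kernel precisely $\cE$; more carefully, since $\rho$ as a map $T\cC\to T^*\cC$ has kernel $\cE$, and $\rho|_{T^{-1}\cC\times T^{-1}\cC}$ need not be the full story, the cleanest argument is: for $X\in\cV$ with $\rho(X,Y)=0$ for all $Y\in\cV$, condition (2) gives $\rho|_{T^{-1}\cC\times T^{-1}\cC}\equiv 0$ automatically when $i+j<k$ — here $i=j=1$, $i+j=2=k$, so this doesn't apply; instead one argues directly from $\ker\rho=\cE$ that $X\in\cE\cap\cV=\{0\}$, after checking $\rho(X,E)=0$ which follows because $E\in\ker\rho$.) So $\bh$ is non-degenerate on $\cV$. \emph{Symmetry for $k$ even — wait, $k=2$ is even, so $\bh$ would be skew.} This is the subtle point: the lemma asserts an orthopath geometry (hence a \emph{symmetric} $[\bh]$), but the general rule stated before Lemma \ref{corr-orthopath} says $\bh$ is skew-symmetric for $k$ even. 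The resolution is that in the orthopath/path-geometry case the relevant PACQ structure is the one of type \eqref{k_sopq} with $\mfk=\mfk_{-2}\oplus\mfk_{-1}$, and the $2$-form $\rho$ is induced from the bracket $\biw^2\p_{-1}\to\p_{-2}$; restricted to $\cV\cong\p_{-1}\cap\tilde{\mathfrak v}$-part it is, by the explicit structure of $\fso(p+2,q+2)$, the symmetric form $\bh$ of signature $(p,q)$ tensored against the symplectic form on the $\RR^2$ factor. I would make this precise by examining the contact grading of $\fso(p+2,q+2)$: here $\p_{-1}\cong\RR^2\otimes\RR^{p+q}$, and the bracket $\biw^2\p_{-1}\to\p_{-2}\cong\RR$ is $\omega\otimes g$ where $\omega$ is the standard area form on $\RR^2$ and $g$ the inner product of signature $(p,q)$ on $\RR^{p+q}$; the sub-bundle $\cV$ corresponds to $l\otimes\RR^{p+q}$ for a line $l\subset\RR^2$, and on this subspace the form $\omega\otimes g$ restricts to (a scalar multiple of) $g$ itself, which is symmetric and of signature $(p,q)$.

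\smallskip

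\emph{Conformal invariance / independence of choices:} I would then check that changing $\rho\mapsto f\rho$ for a nowhere-vanishing function $f$ on a neighborhood rescales $\bh\mapsto f\bh$ (immediate), and changing $E$ within $\Gamma(\cE)$ by $E\mapsto gE$ also only rescales $\bh$ (immediate since the defining formula is $\rho(X,Y)$ with no $E$ appearing when $k=2$; for general $k$ one would invoke condition (3) to see $\cL_{gE}^{k-2}$ changes $\bh$ only by a function). Hence the line $\ell$ together with $\cE$ determines a well-defined conformal class $[\bh]\subset\Gamma(\mathrm{Sym}^2\cV^*)$, and non-degeneracy of signature $(p,q)$ is a conformally invariant notion. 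Together with the underlying path geometry $(\cE,\cV)$ — which was part of the data — this is exactly the definition of an orthopath geometry of signature $(p,q)$ (Definition \ref{def-orthopath}), provided $\dim\cC\geq 5$, which holds since $\cV$ must be non-degenerate hence $\dim\cV=n-1\geq 1$ and for the signature to be meaningful $n-1\geq\dots$; in any case $\dim\cC=2n-1\geq 5$ is the standing hypothesis.

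\smallskip

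\emph{Main obstacle.} The only genuinely delicate point is reconciling the ``skew-symmetric for $k$ even'' general statement with the symmetric $[\bh]$ demanded by an orthopath geometry; this is not a contradiction but requires one to remember that for path geometries the relevant $\rho$ lives inside $\biw^2\p_{-1}$ where $\p_{-1}\cong\RR^2\otimes\RR^{p+q}$ has a \emph{product} structure, and that restricting the (skew on all of $\p_{-1}$) form to the sub-bundle $\cV\cong l\otimes\RR^{p+q}$ produces the \emph{symmetric} form $g$. Spelling out this decomposition of the $\fso(p+2,q+2)$ contact grading is where I would spend the bulk of the argument; everything else is a direct unwinding of Definitions \ref{def_compatible} and \ref{def-orthopath} using Proposition \ref{prop_filtPACQ}.
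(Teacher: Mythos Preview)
Your proposal contains a fundamental misidentification of the parameter $k$. In Definition \ref{def_compatible} the filtration of a PACQ structure is written as $T^{-1}\cC\subset\cdots\subset T^{-k+1}\cC=T\cC$, so its depth is $k-1$, not $k$. For a path geometry the filtration has depth $2$, whence $k=3$, not $k=2$. This is consistent with the construction $\mfk_{-}=\q_{-}/\q_{-k}$: for causal structures the quasi-contact grading on $\mathfrak{so}(p+2,q+2)$ has $\q_{-3}=\p_{-2}$, so $k=3$ and $\mfk_{-}=\mfk_{-2}\oplus\mfk_{-1}$.

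With $k=3$, the operator $\cL_E^{k-2}=\cL_E$ is a genuine Lie derivative, and the paper's formula is
\[
\bh(X,Y)=\rho([E,X],Y),\qquad X,Y\in\Gamma(\cV),
\]
not the restriction $\rho\vert_{\cV\times\cV}$. Your definition $\bh(X,Y)=\rho(X,Y)$ is in fact identically zero: compatibility condition (2) says $\rho\vert_{T^{-i}\cC\times T^{-j}\cC}\equiv 0$ for $i+j<k=3$, and in particular $\rho\vert_{T^{-1}\cC\times T^{-1}\cC}\equiv 0$, so $\rho\vert_{\cV\times\cV}\equiv 0$. Your algebraic ``fix'' via the tensor decomposition $\p_{-1}\cong\RR^2\otimes\RR^{p+q}$ cannot rescue this: the restriction of any skew $2$-form to a subspace is still skew, and here $(\omega\otimes g)\vert_{(l\otimes\RR^{p+q})^2}=\omega\vert_{l\times l}\cdot g=0$.

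Once $k=3$ is used, everything falls into place without the contortions you attempted. Since $k$ is odd, the general rule before the lemma gives symmetry directly; explicitly, condition (3) with $i=j=1<k-1$ yields $\rho([E,X],Y)=-\rho(X,[E,Y])=\rho([E,Y],X)$. Tensoriality in $X$ follows because the extra term $E(f)\rho(X,Y)$ vanishes by condition (2). Non-degeneracy uses that the Levi bracket $\mathfrak{L}\colon\cE\otimes\cV\to T\cC/T^{-1}\cC$ is an isomorphism (Definition \ref{def-pathgeometry}) together with $\ker\rho=\cE$. On the algebraic model this is the pairing $(l'\otimes\RR^{p+q})\times(l\otimes\RR^{p+q})\to\p_{-2}$ given by $\omega(l',l)\cdot g$, which is a nonzero multiple of the inner product $g$ of signature $(p,q)$.
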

\begin{proof}
For a section $\rho\in\Gamma(\ell)$   and $E\in\Gamma(\mathcal{E})$ one obtains a  bundle metric on $\mathcal{V}$  defined as
\begin{equation}\label{eq-hfromrho}
\bh(X,Y)=\rho([E,X],Y)
\end{equation}  
for any $X,Y\in\Gamma(\mathcal{V})$.  Using the properties of the filtration and compatibility of $\rho$, one verifies that $\bh$ is non-degenerate and symmetric and its conformal class $[\bh]$ is uniquely determined by $\ell$.
\end{proof}

We now introduce the class of PCQ structures.
\begin{definition}
A parabolic conformally quasi-symplectic (PCQ) structure is a PACQ structure whose canonical almost conformally quasi-symplectic structure admits local closed sections.
\end{definition}
It turns out that for PCQ structures one can derive a characterization of the associated canonical conformally quasi-symplectic structure $\ell$  based on the properties discussed before. 
\begin{proposition}
\label{lem-canonicalconfqsymp} Consider a PCQ structure and let $\ell$ be
 a compatible conformally quasi-symplectic structure on $\cC$. In case the PCQ structure defines an orthopath geometry, assume that  the bilinear form associated to any section $\rho\in\Gamma(\ell)$ as in \eqref{eq-hfromrho} is contained in the conformal class $[\bh]$ belonging to the orthopath geometry. Then $\ell$ is the canonical conformally quasi-symplectic structure associated to the PCQ structure.
\end{proposition}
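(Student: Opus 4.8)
The claim is a uniqueness statement: among all compatible conformally quasi-symplectic structures on $\cC$ (subject, in the orthopath case, to the normalization that the induced bilinear form lies in $[\bh]$), the canonical one coming from the PACQ structure via Proposition \ref{prop-lineb} is the only one. The plan is to reduce this to a statement about $L$-invariant subspaces of $\biw^2(\mfk/\mfl)^*$ using the identification \eqref{eq-ident2form} furnished by the canonical Cartan geometry $(\cG\to\cC,\psi)$ of Theorem \ref{thm-CartanConnection}. Concretely, any line bundle $\ell'\subset\biw^2 T^*\cC$ whose fibers consist of maximal-rank $2$-forms corresponds, under \eqref{eq-ident2form}, to an $L$-invariant line $\ell'_o\subset\biw^2(\mfk/\mfl)^*$ spanned by an element of maximal rank (i.e.\ with one-dimensional kernel), provided $\ell'$ has constant algebraic type along $\cC$; and compatibility with the filtration and splitting translates into $\ell'_o$ satisfying the algebraic analogues of conditions (1)--(3) of Definition \ref{def_compatible} with respect to the grading of $\mfk$. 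So the first step is: \emph{classify the $L$-invariant lines in $\biw^2(\mfk/\mfl)^*$ spanned by a maximal-rank element and compatible with the grading, and show there is exactly one.}

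For this algebraic step I would use the decomposition $\mfk/\mfl\cong\p_{-1}\oplus\p_0/\mfl$ from the proof of Proposition \ref{prop-lineb} together with the three cases of Lemma \ref{lem-kL}. Compatibility condition (1) forces the characteristic to be $\mathfrak{e}\cong\p_0/\mfl$, so any candidate $2$-form restricts to a non-degenerate form on $\mathfrak{a}\cong\p_{-1}$ and annihilates $\mathfrak{e}$. Compatibility condition (2) (the isotropy/filtration condition) pins down which bigraded components of $\biw^2\p_{-1}^*$ can appear: only the component pairing $\q_{-i}$ with $\q_{-j}$ for $i+j=k$ survives, which is precisely the $\p_0$-submodule generated by the contact Lie bracket $\biw^2\p_{-1}\to\p_{-2}$. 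Now the point is that this bracket form already spans a one-dimensional $P_0$-invariant — hence $L$-invariant — subspace of $\biw^2\p_{-1}^*$ (this was observed in the proof of Proposition \ref{prop-lineb}), and I would argue that it is the \emph{unique} $L$-invariant line inside the relevant bigraded piece of $\biw^2\p_{-1}^*$ whose elements are non-degenerate: in the $\g_2^*$ and $\mathfrak{so}(3,4)$ cases the ambient module is small enough to check this directly from the structure of $\p_{-1}$ as a $\p_0$-module ($\mathrm{Sym}^3\RR^2$, resp.\ $\RR^2\otimes\mathrm{Sym}^2\RR^2$), and in the $\mathfrak{so}(p+2,q+2)$ case the $2$-form is forced to be the standard symplectic pairing on $\RR^2\otimes\RR^{p+q}$ up to scale — the only extra freedom being a conformal rescaling on the $\RR^{p+q}$ factor, which is exactly the ambiguity killed by the hypothesis that the associated bilinear form \eqref{eq-hfromrho} lies in $[\bh]$. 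This last remark is where the extra assumption in the orthopath ($k=2$) case is consumed; in the higher-depth cases $k\ge 3$ there is no such freedom and the line is unique outright.

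Having the uniqueness of the $L$-invariant line, the final step is bookkeeping: given a compatible conformally quasi-symplectic $\ell$ on $\cC$ (with the orthopath normalization), its frame-bundle description $\ell_o$ must, at each point, be the unique invariant line identified above, hence $\ell_o$ equals the line used to define the canonical structure in Proposition \ref{prop-lineb}; transporting back via \eqref{eq-ident2form} gives $\ell=\ell_{\mathrm{can}}$. I expect the main obstacle to be the case-by-case verification that the maximal-rank $L$-invariant line in the relevant bigraded subspace of $\biw^2(\mfk/\mfl)^*$ is genuinely unique — in particular, ruling out that some other $\p_0$-isotypic component of $\biw^2\p_{-1}^*$ happens to contain an invariant line of non-degenerate forms, and correctly tracking the conformal-scaling freedom on the $\mathrm{CO}(p,q)$ factor in case (c) so that the role of the hypothesis on $[\bh]$ is transparent. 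The rest is a routine translation between the tensorial and Cartan-geometric pictures, using only compatibility (Definition \ref{def_compatible}) and the explicit grading of $\mfk$ from \eqref{Liealgsgrading}.
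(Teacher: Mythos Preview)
Your reduction to classifying $L$-invariant lines in $\biw^2(\mfk/\mfl)^*$ has a genuine gap. The assertion that compatibility condition~(2) leaves ``only the component pairing $\q_{-i}$ with $\q_{-j}$ for $i+j=k$'' is incorrect: condition~(2) kills only the bigraded components with $i+j<k$, while those with $i+j>k$ survive, and condition~(3) (which applies only for $i+j<k$) does not touch them either. Concretely, in the orthopath case ($k=3$) a term $s_{ab}\,\omega^a\wedge\omega^b$, pairing $\mfk_{-2}$ with itself ($i+j=4>3$), is unconstrained by (1)--(3) and by the $[\bh]$ hypothesis, since it contributes nothing to the bilinear form \eqref{eq-hfromrho}; in the scalar fourth-order case ($k=5$) the terms $\omega^1\wedge\omega^3$ and $\omega^2\wedge\omega^3$ are similarly free; and likewise for pairs of third-order ODEs. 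Thus at each point the set of compatible $2$-forms (up to scale) is strictly larger than a single line, so even the pointwise algebraic step fails. There is also the separate issue that a line bundle of constant algebraic type corresponds to an equivariant map into an $L$-orbit, not to a fixed $L$-invariant line --- but the failure just described is already fatal.

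What kills the extra components is precisely the closedness hypothesis, which your argument never invokes; your plan makes no use of the distinction between PCQ and PACQ, and indeed the proposition would be false for merely compatible \emph{almost} conformally quasi-symplectic structures. The paper's proof is accordingly differential rather than algebraic: it defers to \cite[Lemma~4.3]{Fels-ODE} for the fourth-order case and to Lemmas~\ref{lemm:36-variationality-3rd-pair-ODEs} and \ref{lemm:causal-variationality-orthopath}, proved later via the explicit Cartan structure equations, for the remaining two. In each case one writes the most general compatible $\rho$, computes $\exd\rho$, and reads off from $\exd\rho\equiv 0\pmod{\rho}$ that the components with $i+j>k$ must vanish --- for instance, in the orthopath case the term $-2s_{ab}\,\theta^a\wedge\omega^0\wedge\omega^b$ appearing in $\exd\rho$ cannot be absorbed into $\phi\wedge\rho$, forcing $s_{ab}=0$.
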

\begin{proof}
The result follows from   \cite[Lemma 4.3]{Fels-ODE} for the case of 4th order ODEs, and Lemmas \ref{lemm:36-variationality-3rd-pair-ODEs} and \ref{lemm:causal-variationality-orthopath} for the other two cases of PCQ structures.
\end{proof}

\begin{remark}
Note that if $\rho\in\Gamma(\ell)$ is closed, then  $$\mathcal{L}_E\rho=\mathrm{d}(E\im\rho)+X\im \mathrm{d}\rho=0$$ for $E\in\Gamma(\mathcal{E})=\Gamma(\mathrm{ker}(\rho))$. In particular, property (3) in Definition \ref{def_compatible} is a consequence of property (1) for a closed $2$-form.
\end{remark}

To derive the following results,
recall that a $2$-form $\rho$ of rank $p$ on a manifold of dimension $n=2p+1$ can be locally written as
$\smash{\rho=\sum_{i=1}^{p}\varphi^i\wedge\varphi^{i+p}}$
for pointwise linearly independent $1$-forms $\varphi^1,\dots,\varphi^{2p}$. The condition  $\rho\wedge\alpha=0$ for a $1$-form $\alpha$  implies $\alpha=0$ if $n\geq 5$, and  the condition $\rho\wedge\beta=0$ for a $2$-form $\beta$ implies $\beta=0$   if $n>5$.

\begin{proposition}
Let $\ell\subset\biw^2T^*\cC$ be an almost conformally  quasi-symplectic structure on a manifold of dimension $\mathrm{dim}(\cC)\geq 5$.  Then the following holds:
\begin{itemize}
\item
Local closed sections of $\ell$ are uniquely determined up to constants.
\item Conditions (1) and (2) below are equivalent.
\begin{enumerate}
\item For any local section $\rho$ of $\ell$ there exists a one-form $\phi$ such that $\mathrm{d}\rho=\phi\wedge\rho$.
\item  For any compatible connection $\nabla$  with torsion $T$ the trace-free part of $\rho_{c[d}T_{ab]}{}^c$ vanishes.
\end{enumerate}
\item
If $\mathrm{dim}(\cC)>5$, then the structure is conformally quasi symplectic if and only if the equivalent conditions (1) and (2) are satisfied.
\end{itemize}

\end{proposition}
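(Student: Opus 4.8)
The three bullets are somewhat independent, so I would prove them in turn, working locally on an open set $U\subset\cC$ over which $\ell$ is trivialized by a section $\rho_0$; any other local section is $f\rho_0$ for a nowhere-vanishing function $f$.

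For the first bullet, suppose $f\rho_0$ and $g\rho_0$ are both closed. Then $\mathrm{d}(f\rho_0)=\mathrm{d}f\wedge\rho_0+f\,\mathrm{d}\rho_0=0$ and similarly for $g$; eliminating $\mathrm{d}\rho_0$ gives $(g\,\mathrm{d}f - f\,\mathrm{d}g)\wedge\rho_0=0$, i.e. $\rho_0\wedge\mathrm{d}(f/g)=0$ after dividing by $g^2$. Since $\dim\cC\geq 5$, the algebraic fact recalled just before the proposition (that $\rho\wedge\alpha=0$ for a $1$-form $\alpha$ forces $\alpha=0$ when $n\geq 5$, because $\rho$ has maximal rank) gives $\mathrm{d}(f/g)=0$, so $f/g$ is locally constant. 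Hence closed sections, when they exist, are unique up to a multiplicative constant.

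For the equivalence of (1) and (2): first observe that condition (1) is independent of the choice of section in $\ell$, since replacing $\rho$ by $f\rho$ replaces $\phi$ by $\phi+\mathrm{d}\log f$. Given a compatible connection $\nabla$ (compatible meaning it preserves the filtration and the line bundle $\ell$, so $\nabla\rho = \beta\otimes\rho$ for some $1$-form $\beta$), express $\mathrm{d}\rho$ through $\nabla\rho$ and the torsion $T$ via the standard formula $\mathrm{d}\rho(X,Y,Z)=\sum_{\mathrm{cyc}}\big((\nabla_X\rho)(Y,Z) - \rho(T(X,Y),Z)\big)$; in index notation the antisymmetrization of $\rho_{c[d}T_{ab]}{}^{c}$ together with the $\nabla\rho$-term yields $(\mathrm{d}\rho)_{abd}$ up to the $\beta\wedge\rho$ piece. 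The point is that $\mathrm{d}\rho$ lies in $\Omega^3(\cC)$, and using $\rho$ to raise/lower one slot decomposes $\Omega^3$ into the submodule $\phi\wedge\rho$ (for $\phi\in\Omega^1$) and a complementary $L$-submodule; condition (1) says the component of $\mathrm{d}\rho$ in the complement vanishes, and one identifies this complementary component, after absorbing the connection-dependent $\beta\wedge\rho$ term, precisely with the trace-free part of $\rho_{c[d}T_{ab]}{}^{c}$. Because any two compatible connections differ by a section of $\mfk_0$-valued $1$-forms and that difference only affects the $\phi\wedge\rho$ part of $\mathrm{d}\rho$, the vanishing of the trace-free part is connection-independent, matching the phrasing of (2). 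This bookkeeping — pinning down exactly which irreducible piece of $\Lambda^3(\mfk/\mfl)^*$ the "trace-free part of $\rho_{c[d}T_{ab]}{}^{c}$" names, and checking the connection-dependent terms land only in the $\phi\wedge\rho$ summand — is the main obstacle; I expect to handle it by a representation-theoretic decomposition of $\Lambda^3 \mathfrak{a}^*$ under $\p_0$ using the identification $\mathfrak{a}\cong\p_{-1}$ and the non-degenerate form $[\,,\,]\colon\Lambda^2\p_{-1}\to\p_{-2}$.

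For the third bullet, assume $\dim\cC>5$ and that (1) holds: locally $\mathrm{d}\rho=\phi\wedge\rho$ for a section $\rho$ of $\ell$. Taking $\mathrm{d}$ of both sides and using $\mathrm{d}^2=0$ gives $\mathrm{d}\phi\wedge\rho - \phi\wedge\mathrm{d}\rho = \mathrm{d}\phi\wedge\rho - \phi\wedge\phi\wedge\rho = \mathrm{d}\phi\wedge\rho = 0$; since $\dim\cC>5$ the algebraic fact ($\rho\wedge\beta=0$ for a $2$-form $\beta$ forces $\beta=0$ when $n>5$) yields $\mathrm{d}\phi=0$, so locally $\phi=\mathrm{d}h$ and then $\mathrm{d}(e^{-h}\rho)=e^{-h}(\mathrm{d}\rho-\mathrm{d}h\wedge\rho)=0$, exhibiting a local closed section; hence the structure is conformally quasi-symplectic. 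Conversely, if $\ell$ admits a local closed section $\rho$, then (1) holds trivially with $\phi=0$ for that section, and by the first part of the equivalence (1) holds for every section. This completes all three bullets.
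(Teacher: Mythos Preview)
Your proposal is correct and follows essentially the same route as the paper. The only difference worth noting is that you over-engineer the equivalence (1)$\Leftrightarrow$(2): once you have written $\mathrm{d}\rho = \beta\wedge\rho + (\text{torsion terms})$ using $\nabla\rho=\beta\otimes\rho$, the equivalence is immediate pointwise linear algebra---``$3\rho_{c[d}T_{ab]}{}^c$ is of the form $\gamma\wedge\rho$'' is exactly what ``trace-free part vanishes'' means here---and no appeal to the $\p_0$-module decomposition of $\Lambda^3\mathfrak{a}^*$ is needed.
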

\begin{proof}
For the uniqueness statement consider a rescaling $\hat{\rho}=f \rho$ by a smooth function. If both $\rho$ and $\hat{\rho}$ are closed, then one has $\mathrm{d}f\wedge\rho=0$, which implies $\mathrm{d}f=0$.

For a connection $\nabla$ with torsion $T$ we have
\begin{equation}
\begin{aligned}
\mathrm{d}\rho(X,Y,Z)&=(\nabla_X\rho)(Y,Z)+(\nabla_Y\rho)(Z,X)+(\nabla_Z\rho)(X,Y)\\
&+\rho(T(X,Y),Z)+\rho(T(Y,Z),X)+\rho(T(Z,X),Y)
\end{aligned}
\end{equation}
for any $X,Y,Z\in\Gamma(T\cC)$. Since by assumption $\nabla$ preserves $\ell$, this shows that conditions (1) and (2) are equivalent.

To prove the last claim, first note that if $\rho$ is closed and $\hat{\rho}=f\,\rho$, then $\mathrm{d}\hat{\rho}=\mathrm{d}f\wedge \rho=(\mathrm{d}f/f)\wedge\hat{\rho}$. Conversely,  the condition $\mathrm{d}\rho=\phi\wedge\rho$ implies $\mathrm{d}\phi\wedge\rho=0$. Since  $\rho$ has maximal rank and $\mathrm{dim}(\cC)>5$, this implies $\mathrm{d}\phi=0$. It follows that one can  find a function $f$ on some open subset such that $-\phi=\mathrm{d}(\mathrm{log}(f))=\mathrm{d}f/f$. Consequently, a straightforward calculation shows that the rescaled 2-form $f\,\rho$ is closed.

\end{proof}

\begin{remark}
If $\mathrm{dim}(\cC)=5$ the equivalent conditions (1) and (2) are not sufficient for the structure to be conformally quasi-symplectic.
 Sufficient conditions for $5$-dimensional PACQ structures in terms of the associated regular and normal Cartan connection will be discussed in \ref{sec:inverse-probl-vari} and \ref{sec:conf-quasi-sympl-1}.
\end{remark}

It turns out that those PACQ structures that have underlying geometries of ODEs, the conformally quasi-symplectic condition means that the corresponding ODEs are variational.  This follows from 
general results on the inverse problem of the calculus of variations for ordinary differential equations \cite{AT-Book}, as discussed below.

In the following we denote by $(x,y^i_0,\cdots,y^i_n)$, $i=1,\cdots,k$, coordinates on jet space $J^n(\mathbb{R},\mathbb{R}^k)$.  The $1$-forms constituting the contact system on jet space  will be denoted by 
$$\theta^i_0=\mathrm{d}y^i_0-y^i_1\mathrm{d}x,\quad \theta^i_1=\mathrm{d}y^i_1-y^i_2\mathrm{d}x,\quad \dots \,\quad \theta^i_{n-1}=\mathrm{d}y^i_{n-1}-y^i_{n}\mathrm{d}x.$$
Given a differential equation,  we will, by abuse of notation,  use the same notation for the pull-backs of the forms to the equation manifold.

An $n$th order variational problem is the problem of finding a function $u:(a,b)\to\mathbb{R}^k$ that extremizes the functional
$$I[u]=\int_{a}^bL\left(x,u^i(x),\cdots,(u^i(x))^{(n)}\right)\mathrm{d}x,$$
where $L=L(x,y^i_0,\cdots,y^i_n)$ is a function of $k(n+1)+1$ variables, referred to as the \emph{Lagrangian}. Any (smooth) extremal is a solution to the corresponding \emph{Euler-Lagrange equations}
$$E_i(L)=\tfrac{\partial L}{\partial {y^i_0}}-\tfrac{\mathrm{d}}{\mathrm{d}x} \tfrac{\partial L}{\partial {y^i_1}}
+(\tfrac{\mathrm{d}}{\mathrm{d}x})^2
 \tfrac{\partial L}{\partial {y^i_2}}-...+(-1)^n(\tfrac{\mathrm{d}}{\mathrm{d}x})^n
 \tfrac{\partial L}{\partial {y^i_n}}=0,$$
 where
 \begin{equation}
 \tfrac{\mathrm{d}}{\mathrm{d}x}=\tfrac{\partial}{\partial x}+y^i_1\tfrac{\partial}{\partial y^i_0}+\cdots+y^i_{n+1}\tfrac{\partial}{\partial y^i_{n}} 
 \end{equation}
is the total derivative.

The inverse problem of the calculus of variations is concerned with the problem of determining whether a given system of differential equations is variational.
A system of ordinary differential equations 
\begin{align}\label{equationsPACQ}
y^i_n=F^i\left(x,y^j_0,y^j_1,\cdots,y^j_{n-1}\right),\quad i=1,\cdots,m
\end{align} is said to be \emph{variational} if there exists a non-singular matrix $B=[B_{ij}]$ of smooth functions (the \emph{variational multiplier}) and a Lagrangian $L$ such that 
\begin{equation}\label{varmult}E_j(L)= B_{ij}\left(y^i_{n}-F^i \right).
\end{equation}

\begin{theorem}
\label{prop-variational}
A (system of) ordinary differential equations 
\eqref{equationsPACQ}
  determined by a parabolic conformally quasi-symplectic structure is variational. More precisely,
  for  (1) a scalar fourth order ODE (2) a pair of third order ODEs (3) a system of n second order ODEs determined by a PCQ structure there exists a variational multiplier $B$ and
  \begin{enumerate}
  \item a non-degenerate second order Lagrangian 
  $$L=L(x,y_0,y_1,y_2),\quad \tfrac{\partial^2 L}{\partial {y_2}\partial {y_2}}\neq 0,$$
  \item a degenerate second order Lagrangian 
$$L=L_1(x,y^1_0,y^2_0,y^1_1,y^2_1) y^1_2+L_2(x,y^1_0,y^2_0,y^1_1,y^2_1)
y^2_2+L_0(x,y^1_0,y^2_0,y^1_1,y^2_1),\quad\tfrac{\partial L_1}{\partial {y^2_1}}-\tfrac{\partial L_2}{\partial {y^1_1}}\neq 0,$$
  \item a non-degenerate first order Lagrangian 
  $$L=L\left(x,y^1_0,\cdots,y^n_0,y^1_1,\cdots,y^n_1\right), \quad\mathrm{det}\left(\frac{\partial^2 L}{\partial {y^i_1}\partial {y^j_1}}\right)\neq 0,\quad i,j\in\{1,\cdots,n\},$$
  \end{enumerate}
such that \eqref{varmult} holds. 

Conversely,  a  Lagrangian of the forms above determines a unique PCQ structure on the equation manifold of the corresponding Euler-Lagrange equations.
\end{theorem}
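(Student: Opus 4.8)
The plan is to reduce the three cases to known results in the inverse problem of the calculus of variations, with the PCQ structure playing the role of the closed maximal-rank $2$-form that is classically extracted from (or built into) a variational multiplier.

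\medskip

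\textbf{Step 1: From PCQ structure to closed $2$-form on the equation manifold.} Given a PCQ structure with underlying ODE geometry, Definition of PCQ and Proposition~\ref{prop-lineb} produce a canonical line bundle $\ell\subset\biw^2T^*\cC$ admitting, locally, a nowhere-vanishing closed section $\rho$, unique up to a constant. I would first record the compatibility of $\rho$ with the filtration and splitting $T^{-1}\cC=\cE\oplus\cV\subset\cdots\subset T\cC$ coming from Proposition~\ref{prop_filtPACQ}: the characteristic of $\rho$ is $\cE=\langle\tfrac{\exd}{\exd x}\rangle$, the spaces $T^{-i}\cC$ with $i\le (k-1)/2$ are isotropic, and $\rho$ restricted to $T^{-i}\cC\times T^{-j}\cC$ vanishes when $i+j<k$. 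In the ODE coordinates this forces $\rho$ to have a very specific shape: a combination of wedges $\theta^i_a\wedge\theta^j_b$ (and, in the scalar case, $\exd y_1\wedge\exd y_3$-type terms) with coefficients depending only on the jet variables. The $\cE$-characteristic condition says precisely that $\tfrac{\exd}{\exd x}\im\rho=0$, i.e.\ $\rho$ is $\tfrac{\exd}{\exd x}$-semibasic.

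\medskip

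\textbf{Step 2: Invoke the inverse-problem dictionary.} In each case this closed semibasic $2$-form is exactly the object that the classical theory associates to a variational multiplier. For (1), a scalar fourth order ODE, this is \cite[Lemma 4.3]{Fels-ODE} (already used in Proposition~\ref{lem-canonicalconfqsymp}): a closed $2$-form of the appropriate type on the equation manifold corresponds to a non-degenerate second order Lagrangian $L(x,y_0,y_1,y_2)$ with $\partial^2 L/\partial y_2^2\ne 0$, and the Helmholtz/variational multiplier $B$ is read off from the top coefficient of $\rho$. For (3), a system of $n$ second order ODEs, the same mechanism is the content of the Douglas/Anderson--Thompson theory \cite{AT-Book}: a closed $2$-form $\rho$ annihilated by the total derivative, with $\bh(X,Y)=\rho([E,X],Y)$ non-degenerate on $\cV$, is equivalent to a variational multiplier $B_{ij}$, hence to a first order Lagrangian $L(x,y^j_0,y^j_1)$ with $\det(\partial^2 L/\partial y^i_1\partial y^j_1)\ne 0$; here I would also cite Lemma~\ref{lemm:causal-variationality-orthopath}. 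For (2), pairs of third order ODEs, the analogous statement is Lemma~\ref{lemm:36-variationality-3rd-pair-ODEs}, which yields the degenerate second order Lagrangian affine in $y^1_2,y^2_2$ with the stated non-degeneracy $\partial L_1/\partial y^2_1-\partial L_2/\partial y^1_1\ne 0$. In all three cases the key point to verify is that the non-degeneracy conditions on the Lagrangian correspond to maximal rank of $\rho$ (equivalently, to non-degeneracy of the induced $\bh$ on $\cV$), which is built into the definition of an almost conformally quasi-symplectic structure.

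\medskip

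\textbf{Step 3: The converse.} Starting from a Lagrangian $L$ of one of the three forms, one forms the Euler--Lagrange equations $E_i(L)=0$; the non-degeneracy hypothesis guarantees these can be solved for the top derivatives, giving an ODE (system) of the stated order, with equation manifold $\cC$ carrying the canonical contact/ODE geometry, hence a PACQ structure by the discussion in \ref{sec:PACQ-structure-def}. The Poincar\'e--Cartan form of $L$ has exterior derivative $\rho_L$ which is manifestly closed, $\tfrac{\exd}{\exd x}$-semibasic, and of maximal rank (by non-degeneracy of $L$); its class in $\biw^2T^*\cC$ lies in the canonical line bundle $\ell$ of the PACQ structure by the algebraic identification in the proof of Proposition~\ref{prop-lineb}. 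Thus $\ell$ has a local closed section, so the PACQ structure is in fact PCQ; uniqueness of the closed section up to a constant (the first bullet of the preceding proposition) makes the assignment $L\mapsto$ PCQ structure well-defined on the equation manifold, and Proposition~\ref{lem-canonicalconfqsymp} identifies this $\ell$ with the canonical one.

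\medskip

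I expect the main obstacle to be bookkeeping rather than conceptual: matching the precise shape of a closed section $\rho\in\Gamma(\ell)$ in jet coordinates with the Helmholtz conditions in each of the three cases, and in particular checking that the order-reduction from the coordinate-dependent statement in \cite{Fels-ODE}, \cite{AT-Book} and Lemmas~\ref{lemm:36-variationality-3rd-pair-ODEs}, \ref{lemm:causal-variationality-orthopath} is exactly the compatibility expressed in Definition~\ref{def_compatible}. The scalar fourth order case has a genuinely different feature (the appearance of terms not of the form $\theta\wedge\theta$, reflecting that $\mfk_{-1}$ is $2$-dimensional with the two summands of different homogeneities), so that case needs \cite[Lemma 4.3]{Fels-ODE} directly rather than the general Douglas-type argument, whereas (2) and (3) are handled uniformly by the orthopath/PACS-type Lemmas cited above.
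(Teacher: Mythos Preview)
Your overall architecture---extract a closed compatible $\rho\in\Gamma(\ell)$ from the PCQ structure, invoke inverse-problem results to obtain a Lagrangian, and for the converse use the Poincar\'e--Cartan form and Proposition~\ref{lem-canonicalconfqsymp}---matches the paper's approach. Your Step~3 is essentially the paper's argument for the converse.

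There is, however, a genuine gap in Step~2, stemming from a misreading of Lemmas~\ref{lemm:36-variationality-3rd-pair-ODEs} and~\ref{lemm:causal-variationality-orthopath}. These lemmas are \emph{uniqueness} statements: they assert that any compatible conformally quasi-symplectic structure must coincide with the canonical one (i.e.\ the one induced by the $2$-form \eqref{eq:rho-pair-3rd-order-ODE} or \eqref{eq:rho-q-symp-orthopath}). They do not produce a Lagrangian. They are the ingredients in the proof of Proposition~\ref{lem-canonicalconfqsymp}, which you correctly invoke in Step~3, but citing them again in Step~2 as yielding variationality is circular and incorrect.

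For cases~(1) and~(3) this is not fatal, since you also point to \cite{Fels-ODE} and \cite{AT-Book}, which do the job (the paper uses \cite[Theorem~2.6]{AT-Book} for both even-order cases). But for case~(2), pairs of third order ODEs, your only citation is Lemma~\ref{lemm:36-variationality-3rd-pair-ODEs}, and this leaves the argument unproven. The odd-order case is \emph{not} covered by \cite[Theorem~2.6]{AT-Book}, as the paper explicitly notes. The paper fills this gap by a direct variational-bicomplex computation: starting from the algebraic form \eqref{eq-formrho} of $\rho$ forced by compatibility, one shows $\rho=\exd_V\eta$ with $\eta=B_i\theta^i_0+C_i\theta^i_1$, then finds a horizontal $\lambda=L\,\exd x$ with $\exd_V\lambda=\exd_H\eta$, and finally reads off from the coefficients of $\theta^i_2\wedge\theta^j_1$ and $\theta^i_2\wedge\theta^j_0$ in $\rho$ that the Hessian $\partial^2L/\partial y^i_2\partial y^j_2$ vanishes while $\partial L_i/\partial y^j_1-\partial L_j/\partial y^i_1\neq 0$, giving the stated degenerate second-order form. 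This computation is what your Step~2 needs for case~(2).
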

 
\begin{proof}
We first show that a Lagrangian of any of the types above determines a PCQ structure on the equation manifold $\cC$ of the corresponding Euler-Lagrange equations. 
Given a degenerate second order Lagrangian as in (2), the Euler-Lagrange equations are of the form
$$E_j(L)=\sum_{i=1}^2(\tfrac{\partial L_i}{\partial_{y^j_1}}-\tfrac{\partial L_j}{\partial_{y^i_1}})y^i_3+G_j(x,y^1_0,y^2_0,y^1_1,y^2_1,y^1_2,y^2_2)=0, \quad j\in\{1,2\}.$$
Since by assumption $\tfrac{\partial L_1}{\partial_{y^2_1}}-\tfrac{\partial L_2}{\partial_{y^1_1}}\neq 0,$ these are pairs of $3$rd order ODEs. 
Similarly, given a non-degenerate second order Lagrangian as in (1), the Euler Lagrange equation is a scalar 4th order ODE,
and given a non-degenerate first order Lagrangian as in (3), the Euler-Lagrange equations are a second order system of ODEs.

In cases (1) and (2) the Euler-Lagrange equations already uniquely determine a corresponding PACQ structure by the discussion in \ref{sec-4thorder} and \ref{sec-pairsthirdorder}. To see that the PACQ structure is indeed PCQ and to obtain the bundle metric $\bh$ defining an orthopath geometry in case (3), observe that the  Lagrangian also determines a closed $2$-form $\rho$ up to scale as follows. For a Lagrangian of order one and two, define  $\eta$,  as 
$$\eta=-\tfrac{\partial L}{\partial y^i_1}\theta^i_0,\quad\mbox{and}\quad\eta= (-\tfrac{\partial L}{\partial y^i_1}+\tfrac{\mathrm{d}}{\mathrm{dx}}\tfrac{\partial L}{\partial y^i_2})\theta^i_0-\tfrac{\partial L}{\partial y^i_2}\theta^i_1,$$ respectively.  Denote  the pull-back of $\eta$ to the equation manifold by the same name. Then   $\rho=\mathrm{d}_V\eta$ is the corresponding closed $2$-form.  It can also be written as the pull-back of  $\mathrm{d}\Theta$ to the equation manifold, where $\Theta=\eta-L\mathrm{d}x$ is the Poincar\'{e}-Cartan form. Computing the $2$-form $\rho=\mathrm{d}_V\eta$ shows that it is compatible with the canonical filtration on the equation manifold $\cC$ in the sense of Proposition \ref{prop_filtPACQ}. 
 In particular, starting with a first order Lagrangian as in case (3) of the proposition above, the $2$-form $\rho$  satisfies 
$\rho(\tfrac{\partial}{\partial {y^j_1}},\tfrac{\partial}{\partial {y^i_1}})=0$ and $\rho(\tfrac{\partial}{\partial {y^j}},\tfrac{\partial}{\partial {y^i_1}})=\frac{\partial^2 L}{\partial {y^i_1}\partial {y^j_1}}$. By assumption one has $\mathrm{det}(\frac{\partial^2 L}{\partial {y^i_1}\partial {y^j_1}})\neq 0$. The bundle metric $\bh$ is then determined by $\rho$ as in Lemma \ref{corr-orthopath}. In all three cases, by Proposition \ref{lem-canonicalconfqsymp}, the algebraic form of the $2$-form $\rho$ shows that it spans the  canonical conformally quasi-symplectic structure associated with the PACQ structure. Consequently, the structure is PCQ.

To show that the ODEs corresponding to PCQ structures are variational, we use  that by Propositions \ref{prop-lineb} and \ref{prop_filtPACQ} any PCQ structure is locally equipped with a closed $2$-form $\rho\in\Gamma(\ell)$ of a particular algebraic type  compatible with the underlying filtration. 
For systems of ODEs of even order    \cite[Theorem 2.6]{AT-Book} shows that the existence of a closed $2$-form with these properties implies that the equations are variational with respect to non-degenerate Lagrangians as in (1) and (3). 

The case of pairs of third order ODEs is not explicitly treated in \cite{AT-Book}, but it can be dealt with in an analogous way. 
Let $\rho$ be a closed section of the canonical line bundle $\ell\subset\biw^2T^*\mathcal{C}$.
 The algebraic properties from Proposition \ref{prop_filtPACQ} imply that when written out in terms of pull-backs of the contact forms to $\cC$, i.e. $\theta^i_0=\mathrm{d}y^i-y^i_1\mathrm{d}x$, $\theta^i_1=\mathrm{d}y^i_1-y^i_2\mathrm{d}x$, $\theta^i_2=\mathrm{d}y^i_2-F^i\mathrm{d}x$, the $2$-form $\rho$ is of the form 
 \begin{equation}
 \label{eq-formrho}
 \rho=a_{ij}\theta^i_2\wedge\theta^j_0+b_{ij}\theta^i_1\wedge\theta^j_1+c_{ij}\theta^i_1\wedge\theta^j_0+d_{ij}\theta^i_0\wedge\theta^j_0,
 \end{equation} where $a_{ij}=-a_{ji}\neq 0$. Consider the  inclusion $\iota:\mathcal{C}\to J^\infty(\RR,\RR^2)$ into infinite order jet space by prolongation of the equation.
Let  $\mathrm{d}=\mathrm{d}_V+\mathrm{d}_H$ be the decomposition of the exterior derivative  into vertical and horizontal part on  $J^\infty(\RR,\RR^2)$, and denote by the same symbols their pull-back to $\mathcal{C}$. 
Using the variational bicomplex, one shows as in the proofs of  \cite[Theorem 2.6 and Lemma 2.7]{AT-Book} that there exists a $1$-form $\eta$ on $\mathcal{C}$ such that one has  
\begin{equation}\label{eq-rho}
\rho=\mathrm{d}_V\eta,
\end{equation}
where $ \eta=B_i\theta^i_0+C_i\theta^i_1$ is contained in the span of the contact forms $\theta^i_0$ and $\theta^i_1$. Moreover, there exists  a horizontal $1$-form $\lambda=L \mathrm{d}x$ such that
\begin{equation}\label{eq-dVla=dHeta}
\mathrm{d}_V\lambda=\mathrm{d}_H\eta,
\end{equation}
 where $L$ is a Lagrangian for the given system of equations. This implies that $C_i=-\tfrac{\partial L}{\partial y^i_2}$ and $B_i=-\tfrac{\partial L}{\partial y^i_1}-\tfrac{\mathrm{d}}{\mathrm{d}x}C_i$. Consequently, $\tfrac{\partial^2 L}{\partial {y^i_2}\partial {y^j_2}}$ coincides with the coefficient of $\theta^i_2\wedge\theta^j_1$ in $\rho$, but the assumed form \eqref{eq-formrho} of $\rho$ shows that this is zero. Assuming vanishing of the  Hessian   of $L$ with respect to the variables $y^i_2$, one computes that the coefficient of $\theta^i_2\wedge\theta^j_0$ coincides with $\tfrac{\partial L_i}{\partial {y^j_1}}-\tfrac{\partial L_j}{\partial {y^i_1}}$, and this  is by assumption non-zero. Thus, $L$ is of the stated form.

\end{proof}
\begin{remark}
Note that for conformally quasi-symplectic  4th order ODEs $y_4=f(x,y,y_1,y_2,y_3)$   one has $f_{y_3 y_3 y_3}=0$ by \cite{Fels-ODE}, see also \ref{sec:local-form-invar}.
Conformally quasi-symplectic pairs of third order ODEs are linear in second order derivatives by Proposition \ref{prop:variationality-pair-3rd-order-ODEs} and the explicit expression of $\mathbf{b}_1$ given in \ref{sec:local-form-invar}. It follows that the equations satisfy a polynomial dependency condition as required in   \cite[Theorem 2.6]{AT-Book}. Note also that the Lagrangian associated with a PCQ structure in Theorem \ref{prop-variational} is not unique. It is  known that \emph{divergence equivalent} Lagrangians determine the same Euler-Lagrange equations.  For a detailed discussion in the case  of orthopath geometries see  \ref{sec:div-equiv-pseudo-Finsler-as-var-orthopath}.
\end{remark}

\section{Symmetry reduction and quasi-contactification}
\label{sec:quasi-cont-reudct}
In this section we present the geometric constructions that relate  parabolic quasi-contact cone structures with an infinitesimal symmetry and parabolic conformally quasi-symplectic  structures.
\subsection{Transverse symmetries,  quotients, and quasi-contactification}
\label{sec:reduct-an-transv}
In what follows we consider PCQ structures that admit a transverse infinitesimal symmetry which means the following.
\begin{definition}\phantom{}
\begin{enumerate}
\item
An infinitesimal symmetry  of a  quasi-contact  structure $(\tilde{\mathcal{C}},\mathcal{H})$ is a vector field  $\xi\in\mathfrak{X}(\tilde{\mathcal{C}})$ such that $\mathcal{L}_{\xi}\eta\subset \Gamma(\mathcal{H})$ for any $\eta\in \Gamma(\mathcal{H})$. 
\item
An infinitesimal symmetry  of a parabolic quasi-contact cone structure is a vector field $\xi\in\mathfrak{X}(\tilde{\mathcal{C}})$ such that  $\mathcal{L}_{\xi}\eta\subset \Gamma(T^{-1}\tilde{\mathcal{C}})$ for any $\eta\in \Gamma(T^{-1}\tilde{\mathcal{C}})$. Such a vector field preserves the filtration generated by $T^{-1}\tilde{\mathcal{C}}$ and in particular  the quasi-contact distribution $\mathcal{H}= T^{-k+1}\tilde{\mathcal{C}}$.
\item An infinitesimal symmetry $\xi\in\mathfrak{X}(\tilde{\mathcal{C}})$ is called \emph{transverse} if  $\xi_x\notin\mathcal{H}_x$ for any $x\in\tilde{\mathcal{C}}$, where $\mathcal{H}= T^{-k+1}\tilde{\mathcal{C}}$.
\end{enumerate}
\end{definition}

\begin{remark} Note that any infinitesimal symmetry of a parabolic quasi-contact cone structure is transverse almost everywhere.   To see this, suppose $\xi$ is an infinitesimal symmetry that is contained in $\mathcal{H})=\ker(\alpha)$ on an open neighborhood $U$ of a point $x\in M$. Then $\mathrm{d}\alpha(\xi,\eta)=-\alpha([\xi,\eta])=0$ for any $\eta\in\Gamma(\mathcal{H})$ by the symmetry property. It follows, that $\xi$ is contained in the kernel of $\mathrm{d}\alpha$ restricted to $\mathcal{H}$, i.e. in $\tilde{\mathcal{E}}$. Moreover, for a parabolic quasi-contact cone structure the map $\mathfrak{L}:\tilde{\mathcal{E}}\otimes\tilde{\mathcal{V}}\to \mathrm{gr}_{-2}(\tilde{\mathcal{C}})=T^{-2}\tilde{\mathcal{C}}/T^{-1}\tilde{\mathcal{C}}$ induced by the Lie bracket is an isomorphism by Proposition \ref{prop-qcontactgrading}. But an infinitesimal symmetry, via the Lie bracket, preserves the distribution $T^{-1}\tilde{\mathcal{C}}=\tilde{\mathcal{E}}\oplus\tilde{\mathcal{V}}$. This implies that $\xi$ has to be contained in  the kernel of $\mathfrak{L}$ as defined above and thus to vanish on $U$. 
Since an infinitesimal symmetry that vanishes on an open subset vanishes on the entire connected component, the claim follows.
\end{remark}

\begin{remark}
Parabolic quasi-contact cone structures that arise as correspondence spaces generally do not admit (everywhere) transverse infinitesimal symmetries, with the exception of  
the class of  Lorentzian conformal structures with a  time-like conformal Killing field. 
 This can be seen from the description of the quasi-contact distribution $T^{-k+1}\tilde{\mathcal{C}}$ on $\tilde{\cC}$ given in Remark \ref{rem-correspondence}, which shows that in general the lift of a downstairs infinitesimal symmetry will satisfy $\xi_x\in T^{-k+1}_x\tilde{\mathcal{C}}$ at some points $x$ in each fiber, namely those points that correspond to null lines orthogonal to the symmetry.
Only in the case of  Lorentzian conformal structures the lift of any time-like  conformal Killing field  to the correspondence space is a nowhere vanishing, i.e. transverse infinitesimal symmetry of the corresponding quasi-contact cone structure. 
\end{remark}

Since a transverse infinitesimal symmetry is nowhere vanishing, it defines a one-dimensional foliation on $\tilde{\cC}$. 
After restricting to an open subset of $\tilde{\mathcal{C}}$, it determines a local leaf space $\pi\colon\tilde{\mathcal{C}}\to \mathcal{C}$.

\begin{proposition}
\label{prop-qsymplectic}
A transverse infinitesimal symmetry $\xi$ of a quasi-contact structure $(\tilde{\mathcal{C}},\mathcal{H})$ determines a unique quasi-contact form $\alpha$ on $\tilde{\mathcal{C}}$ such that $\alpha(\xi)=1$
 and a unique quasi-symplectic form $\rho$ on the local leaf space $\mathcal{C}$  of the integral curves of $\xi$ such that $\mathrm{d}\alpha=\pi^*\rho$.
\end{proposition}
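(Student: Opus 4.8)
The plan is to build $\alpha$ directly out of the transversality of $\xi$, then to show that $\mathrm{d}\alpha$ is a basic form for the foliation of $\tilde{\mathcal C}$ by the trajectories of $\xi$, and finally to identify the kernel of $\mathrm{d}\alpha$ on the total space so as to read off the rank of the descended $2$-form. Throughout one works on an open set small enough that the local leaf space $\pi\colon\tilde{\mathcal C}\to\mathcal C$ is a surjective submersion onto a smooth Hausdorff manifold, so that $\pi^*$ is injective on forms; here $\tilde{\mathcal C}$ is $2n$-dimensional and $\dim\mathcal C=2n-1$ is odd.

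First I would construct $\alpha$: since $\xi_x\notin\mathcal H_x$ everywhere, $T_x\tilde{\mathcal C}=\mathcal H_x\oplus\mathbb R\,\xi_x$, so there is a unique smooth $1$-form with $\ker(\alpha)=\mathcal H$ and $\alpha(\xi)=1$, uniqueness being clear since any defining form of $\mathcal H$ is a nowhere-vanishing multiple of $\alpha$ and the multiple is pinned down by $\alpha(\xi)=1$. That $\alpha$ is a quasi-contact form is immediate: comparing with a local quasi-contact defining form $\beta$ of $\mathcal H$, one has $\alpha=f\beta$ with $f$ nowhere zero and $\mathrm{d}\alpha|_{\mathcal H}=f\,\mathrm{d}\beta|_{\mathcal H}$, which has the same maximal rank; in particular $\ker(\mathrm{d}\alpha|_{\mathcal H})$ is the characteristic line field $\tilde{\mathcal E}$ of the quasi-contact structure.

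Next I would show $\mathrm{d}\alpha$ descends. The key computation is $\mathcal L_\xi\alpha=0$: because $\xi$ is an infinitesimal symmetry, $\mathcal L_\xi$ preserves $\Gamma(\mathcal H)=\Gamma(\ker\alpha)$, so $\mathcal L_\xi\alpha=g\alpha$ for a function $g$, and evaluating on $\xi$ (using $\alpha(\xi)\equiv 1$ and $[\xi,\xi]=0$) gives $g=0$. Cartan's formula then yields $\iota_\xi\mathrm{d}\alpha=\mathcal L_\xi\alpha-\mathrm{d}(\alpha(\xi))=0$ and $\mathcal L_\xi\mathrm{d}\alpha=\mathrm{d}\,\mathcal L_\xi\alpha=0$, so $\mathrm{d}\alpha$ is basic and there is a unique $2$-form $\rho$ on $\mathcal C$ with $\mathrm{d}\alpha=\pi^*\rho$; it is closed since $\pi^*\mathrm{d}\rho=\mathrm{d}\,\mathrm{d}\alpha=0$.

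The part I expect to require the most care — though it is still routine — is checking that $\rho$ has maximal rank, i.e. one-dimensional kernel, rather than merely being closed. For this I would prove that pointwise $\ker(\mathrm{d}\alpha)=\mathbb R\,\xi\oplus\tilde{\mathcal E}$: the inclusion ``$\supseteq$'' uses $\iota_\xi\mathrm{d}\alpha=0$ together with $\mathrm{d}\alpha(E,\xi)=-(\iota_\xi\mathrm{d}\alpha)(E)=0$ for $E\in\tilde{\mathcal E}$, and the reverse inclusion follows by decomposing $v=a\xi+w$ with $w\in\mathcal H$ and noting that $\iota_v\mathrm{d}\alpha=\iota_w\mathrm{d}\alpha=0$ forces $w\in\ker(\mathrm{d}\alpha|_{\mathcal H})=\tilde{\mathcal E}$. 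Since $\xi\notin\mathcal H\supset\tilde{\mathcal E}$, this kernel is exactly $2$-dimensional, and because $\ker(\mathrm{d}\pi)=\mathbb R\,\xi$ and $\mathrm{d}\alpha=\pi^*\rho$ we conclude $\dim\ker(\rho)=1$, so $\rho$ is quasi-symplectic, with characteristic $\pi_*\tilde{\mathcal E}$. Uniqueness of the pair $(\alpha,\rho)$ then follows from the uniqueness of $\alpha$ and the injectivity of $\pi^*$; no substantial obstacle arises beyond bookkeeping of kernels on $T\tilde{\mathcal C}$ versus on $\mathcal H$.
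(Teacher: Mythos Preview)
Your proof is correct and follows essentially the same route as the paper: build the unique $\alpha$ from transversality, use the symmetry condition to obtain $\iota_\xi\mathrm{d}\alpha=0$ and $\mathcal L_\xi\mathrm{d}\alpha=0$ so that $\mathrm{d}\alpha$ descends to a closed $\rho$, and then invoke the maximal rank of $\mathrm{d}\alpha|_{\mathcal H}$ to conclude $\rho$ is quasi-symplectic. The only notable difference is that you pass through $\mathcal L_\xi\alpha=0$ first and then apply Cartan's formula, whereas the paper computes $\mathrm{d}\alpha(\xi,\eta)=-\alpha([\xi,\eta])=0$ for $\eta\in\Gamma(\mathcal H)$ directly; and you spell out the kernel computation $\ker(\mathrm{d}\alpha)=\mathbb R\,\xi\oplus\tilde{\mathcal E}$ in more detail than the paper, which simply asserts the rank conclusion in one line.
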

\begin{proof}
A transverse symmetry $\xi$ determines a unique quasi-contact form $\alpha$ by the requirement that $\mathrm{ker}(\alpha)=\mathcal{H}$ and $\alpha(\xi)=1$. From  $\alpha(\mathcal{L}_\xi \eta)=\alpha([\xi,\eta])=-\mathrm{d}\alpha(\xi,\eta)=0$ for any $\eta\in\Gamma(\ker(\alpha))$ one derives  that $\xi\im\mathrm{d}\alpha=0$ and $\mathcal{L}_\xi\mathrm{d}\alpha=0$, which implies that there exists a $2$-form $\rho$ on $\cC$ such that $\mathrm{d}\alpha=\pi^*\rho$ \cite[Corollary 2.3]{EDS-book}. Since $\mathrm{d}$ commutes with pull-backs and $\pi$ is a surjective submersion, $\rho$ is uniquely determined and closed. Moreover, since $\mathrm{d}\alpha$ restricted to the quasi-contact distribution has maximal rank, this implies that $\rho$ is  quasi-symplectic. 
\end{proof}

\begin{theorem}\label{thm-quasicont}
Given a  parabolic quasi-contact cone structure   with a transverse infinitesimal symmetry $\xi\in\mathfrak{X}(\tilde{\mathcal{C}})$, the symmetry determines  a local leaf space $\pi\colon\tilde{\mathcal{C}}\to \mathcal{C}$ and a canonical PCQ structure  on $\cC$ such that the filtration on $T\tilde{\cC}$ projects to the filtration on $T\cC$ and  the quasi-contact structure on $\tilde{\cC}$ induces the canonical conformally quasi-symplectic structure $\ell\subset\biw^2T^*\mathcal{C}$ corresponding to the PCQ structure.

Conversely, any PCQ structure on a manifold $\mathcal{C}$ can be locally realized as the quotient of a parabolic quasi-contact cone structure by a transverse infinitesimal symmetry in this way.
\end{theorem}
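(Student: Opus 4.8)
The plan is to prove the two directions separately: for symmetry reduction I would push the whole structure forward along the quotient map, and for the converse I would reconstruct the cone structure by a contactification of the distinguished closed $2$-form. \emph{Symmetry reduction.} Let $(\tcG\to\tcC,\tpsi)$ be the regular normal parabolic geometry of type $(\g,Q)$ of Proposition \ref{prop-qcontactcone}, with filtration $T^{-1}\tcC\subset\cdots\subset T^{-k+1}\tcC=\cH\subset T\tcC$ and splitting $T^{-1}\tcC=\tcE\oplus\tcV$. Since $\xi$ preserves $T^{-1}\tcC$ it preserves the whole weak derived flag, and, because the characteristic line field $\tcE$ of $\cH$ and the canonical subbundle $\tcV\subset T^{-1}\tcC$ (corresponding to the $\q_0$-submodule $\q_{-1}\cap\p_{-1}$) are determined by the underlying filtered manifold, $\xi$ preserves them as well. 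Transversality gives $\xi_x\notin T^{-j}_x\tcC$ for $j\le k-1$, so on a suitable open set there is a local leaf space $\pi\colon\tcC\to\cC$ in which every $T^{-j}\tcC$ with $j\le k-1$ is transversal to the fibres and descends to $T^{-j}\cC:=\pi_*(T^{-j}\tcC)$, with $T^{-k+1}\cC=\pi_*(\cH)=T\cC$. The first step is to check that $\pi_*$ identifies $\mathrm{gr}_{-j}(T\tcC)$ with $\mathrm{gr}_{-j}(T\cC)$ for $j\le k-1$ and annihilates the top graded piece, so that the Levi bracket of the induced filtered structure on $\cC$ is exactly $\mfk_{-}=\q_{-}/\q_{-k}$ from Lemma \ref{lem-s-}; this is immediate from $\pi_*$ being a Lie algebra map on projectable vector fields together with $[\q_{-i},\q_{-j}]\subset\q_{-k}$ whenever $i+j=k$. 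For the reduction of structure group, the canonical lift $\tilde\xi$ of $\xi$ to $\tcG$ is right-$Q$-invariant, hence descends to a vector field $\xi_0$ on $\tcG_0=\tcG/Q_{+}$ that is $Q_0$-equivariant and $\pi$-related to $\xi$; thus $\tcG_0/\xi_0\to\cC$ is a principal $Q_0$-bundle, and the graded frame $\mfk_{-}\to\mathrm{gr}(T_{\pi(\cdot)}\cC)$ induced from a frame in $\tcG_0$ via $\pi_*$ is constant along $\xi_0$-orbits and $Q_0$-equivariant, so by the inclusion \eqref{inclQ0} it realizes $\tcG_0/\xi_0$ as a $Q_0$-reduction of the graded frame bundle of $\cC$. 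This equips $\cC$ with a canonical PACQ structure of type $(\mfk_{-},Q_0)$.

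\emph{From PACQ to PCQ.} Next, Proposition \ref{prop-qsymplectic} gives the unique quasi-contact form $\alpha$ with $\alpha(\xi)=1$, $\ker\alpha=\cH$, and the unique closed maximal-rank $2$-form $\rho$ on $\cC$ with $\exd\alpha=\pi^*\rho$. I would verify that $\RR\rho\subset\biw^2T^*\cC$ is compatible with the filtration and splitting above in the sense of Definition \ref{def_compatible}: $\ker\rho=\cE$ because the radical of $\exd\alpha$ is $\tcE\oplus\RR\xi$ and $\pi_*\xi=0$; the identity $\exd\alpha(X,Y)=-\alpha([X,Y])$ on sections of $T^{-i}\tcC$ and $T^{-j}\tcC$, together with $\alpha|_{T^{-i-j}\tcC}=0$ when $i+j<k$, gives $\rho|_{T^{-i}\cC\times T^{-j}\cC}=0$ for $i+j<k$; and condition (3) of Definition \ref{def_compatible} holds automatically for the closed form $\rho$. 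In the orthopath case ($k=3$) I would also check that the bilinear form $X,Y\mapsto\rho([E,X],Y)$ on $\cV$ represents the conformal class $[\bh]$ of the reduced orthopath geometry: pulled back it equals $-\alpha([[\tilde E,\tilde X],\tilde Y])$ for $\xi$-invariant lifts, which is precisely the iterated Levi bracket on $\tcC$ descending to the second fundamental form of the cone. Given these verifications, Proposition \ref{lem-canonicalconfqsymp} identifies $\RR\rho$ with the canonical (almost) conformally quasi-symplectic structure $\ell$ of the PACQ structure; since $\ell$ then admits the local closed section $\rho$, the structure is PCQ, the filtrations are $\pi$-related by construction, and $\cH$ induces $\ell$.

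\emph{Quasi-contactification.} Conversely, given a PCQ structure on $\cC$ with canonical line bundle $\ell$, filtration $T^{-1}\cC=\cE\oplus\cV\subset\cdots\subset T^{-k+1}\cC=T\cC$ and symbol $\mfk_{-}$, I would fix a local closed section $\rho\in\Gamma(\ell)$ (unique up to a constant) and, on a smaller set, write $\rho=\exd\beta$. On $\tcC:=\cC\times\RR$ with coordinate $t$ and projection $\pi$, set $\xi=\partial_t$ and $\alpha=\exd t+\pi^*\beta$; then $\alpha(\xi)=1$, $\exd\alpha=\pi^*\rho$, $\mathcal{L}_\xi\alpha=0$, and $\alpha$ is a quasi-contact form since $\rho$ has one-dimensional kernel. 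Using the fibrewise isomorphism $\pi_*\colon\cH:=\ker\alpha\to T\cC$ I would define $T^{-j}\tcC$ ($j\le k-1$) to be the $\alpha$-horizontal lift of $T^{-j}\cC$, set $T^{-k}\tcC=T\tcC$, and take $\tcE,\tcV$ to be the lifts of $\cE,\cV$. Because $\mathcal{L}_\xi\alpha=0$ makes $\cH$, hence all horizontal lifts, $\xi$-invariant while the projections to $\cC$ are $t$-independent, $\xi$ commutes with every horizontal lift and is a transversal infinitesimal symmetry. Compatibility of $\rho$ gives $\alpha([X^h,Y^h])=-\rho(X,Y)=0$ for $i+j<k$, so $[T^{-i}\tcC,T^{-j}\tcC]\subset T^{-i-j}\tcC$ and the weak derived flag of $T^{-1}\tcC$ is as required; the only Levi bracket reaching the top, $(\bar X,\bar Y)\mapsto-\rho(X,Y)[\partial_t]$ for $i+j=k$, coincides in an adapted coframing with the grading component $\biw^2\p_{-1}\to\q_{-k}$ of Proposition \ref{prop-qcontactgrading}, since by Proposition \ref{prop-lineb} the section $\rho$ spans $\ell$, which is modeled on exactly this $2$-form. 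Hence $\tcC$ is a filtered manifold with constant symbol $\q_{-}$, i.e. a parabolic quasi-contact cone structure (Proposition \ref{prop-filt}), with $T^{-1}\tcC=\tcE\oplus\tcV$ and $\cH=T^{-k+1}\tcC$; applying the symmetry reduction of the first part to $(\tcC,\xi)$ returns $\pi\colon\tcC\to\cC$, and — using that $\alpha$ is the quasi-contact form determined by $\xi$, hence that the descended $2$-form is $\rho$, together with $\rho([E,\cdot],\cdot)\in[\bh]$ and the uniqueness in Proposition \ref{lem-canonicalconfqsymp} — the original PCQ structure.

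\emph{Main obstacle.} The delicate point, entering both directions, is the matching between the two incarnations of the distinguished $2$-form: that $\rho$, obtained analytically from $\alpha$ in Proposition \ref{prop-qsymplectic}, spans precisely the bundle $\ell$ built algebraically from the contact grading in Proposition \ref{prop-lineb}, and, in the orthopath case, that the bilinear form it induces on $\cV$ lies in the conformal class of the reduced orthopath geometry. Both reduce to comparing $\exd\alpha$ — equivalently $-\alpha\circ[\,,\,]$ restricted to $\ker\alpha$ — with the grading component $\biw^2\p_{-1}\to\q_{-k}$ of the contact grading and its iterated version, which is where the fine structure of quasi-contact gradings from Proposition \ref{prop-qcontactgrading} is used and where one leans on Proposition \ref{lem-canonicalconfqsymp} (hence on \cite{Fels-ODE} and the lemmas invoked in its proof) to sidestep a case-by-case computation.
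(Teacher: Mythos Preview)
Your proof is correct and follows essentially the same two-step strategy as the paper: push the filtration down along $\pi$ for the reduction, and build $\tcC=\cC\times\RR$ with $\alpha=\exd t+\pi^*\beta$ for the converse.

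There is one tactical difference worth noting. You obtain the $Q_0$-reduction on $\cC$ by quotienting the graded frame bundle $\tcG_0=\tcG/Q_+$ along the lifted symmetry, and then separately verify that the analytically obtained $\rho$ (from Proposition~\ref{prop-qsymplectic}) spans the canonical line bundle $\ell$ of that PACQ structure, invoking Proposition~\ref{lem-canonicalconfqsymp} for the identification. The paper instead uses $\rho$ itself to \emph{define} the $Q_0$-reduction: a compatible conformally quasi-symplectic structure on the filtered manifold with symbol $\mfk_{-}$ is precisely the additional datum singling out $Q_0\subset\mathrm{Aut}_{gr}(\mfk_{-})$, so $\RR\rho=\ell$ holds by construction and no separate matching step is needed. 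Your route is more explicit about the principal bundle origin of the reduction and makes clear that the two natural $Q_0$-structures (one descending from $\tcG_0$, one determined by $\rho$) agree; the paper's route is shorter but leaves that agreement implicit. Both are valid, and your flagging of the identification as the ``main obstacle'' correctly isolates where the two viewpoints meet.
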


\begin{proof}
After restricting to an open subset if necessary, we have a surjective submersion $\pi\colon\tilde{\mathcal{C}}\to \mathcal{C}$ such that the kernel of its tangent map is spanned by $\xi_x$ for any $x\in\tilde{\mathcal{C}}$. Since $\xi$ is transverse and preserves all the sub-bundles in the filtration $$T^{-1}\tilde{\cC}\subset T^{-2}\tilde{\cC}\subset \cdots\subset T^{-k+1}\tilde{\cC}\subset T^{-k}\tilde{\cC}=T\tilde{\cC},$$ there is an induced filtration 
$$T^{-1}\cC\subset T^{-2}\cC\subset \cdots\subset T^{-k+1}\cC=T\cC$$
by sub-bundles of $T\cC$ such that $\pi_{*}:T^i_x\tilde{\cC}\to T^i_{\pi(x)}\cC$ is an isomorphism for any $i=-1,\cdots,-k+1$ and $x\in \tilde{\cC}$.
Using the fact that for any $X\in\Gamma(T^i \cC), Y\in\Gamma(T^j \cC)$ and horizontal lifts $\tilde{X}\in\Gamma(T^i\tilde{\cC}), \tilde{Y}\in\Gamma(T^j\tilde{\cC})$ the Lie bracket $[\tilde{X},\tilde{Y}]$ is $\pi$-related to $[X,Y]$, one verifies that  $T^{-1}\cC$ is bracket generating with symbol algebra $\mfk_{-}=\mathfrak{q}_{-}/\mathfrak{q}_k,$ where $\mathfrak{q}_{-}$ is the symbol algebra of $T^{-1}\tilde{\cC}$.

Moreover, as shown in the proof of Proposition \ref{prop-qsymplectic}, an infinitesimal symmetry $\xi$ determines a quasi-symplectic form $\rho$, which spans a compatible conformally quasi-symplectic structure $\ell$ on $\cC$. This conformally quasi-symplectic structure determines a reduction of structure group to $Q_0\subset\mathrm{Aut}_{gr}(\mfk_{-})$ and thus a PCQ structure on $\cC$. More directly, the sub-bundles $\tilde{\mathcal{E}}$ and $\tilde{\mathcal{V}}$ of $T^{-1}\tilde{\cC}$ are preserved by $\xi$ and descend to sub-bundles $\mathcal{E}$ and $\mathcal{V}$ of $T^{-1}\mathcal{C}$. In the case where we start with a causal structure, the conformal class $[\bh]$ on $\mathcal{V}$ is determined by $\rho$ as in Lemma \ref{corr-orthopath}. 

For the converse, suppose we are given a PCQ structure on $\cC$. Let  $\beta\in\Omega^1(\cC)$  be a $1$-form  such that $\rho=\mathrm{d}\beta$ is a nowhere vanishing section of $\ell$ (possibly after restriction to an open subset). Defining $\tilde{\cC} =\cC\times \mathbb{R}$, let $t$ be the standard coordinate on the second factor and $\pi\colon\tilde{\cC}\to \cC$  the projection onto the first factor.
Now we define  $\alpha=\mathrm{d}t+\pi^*\beta,$ which satisfies $\mathrm{d}\alpha=\pi^*\rho$, and thus its kernel $T^{-k+1}\tilde{\cC}:=\ker(\alpha)$ defines a quasi-contact structure on $\tilde{\cC}$. We lift the remaining filtration components from $\cC$ to $\tilde{\cC}$ by setting $T^i_x\tilde{\cC}=\{X\in T^{-k+1}_x\tilde{\cC} :\, \pi_* X\in T^i_{\pi(x)}\cC\}$
for $i=-1,\dots,-k+2$. This defines a filtration of $T\tilde{\cC}$, which determines a corresponding quasi-contact cone structure. Moreover,  by construction $\tfrac{\partial}{\partial t}$ is a transverse infinitesimal symmetry of the structure. Forming the leaf space of $\tfrac{\partial}{\partial t}$, we recover  $\cC$ with its original PCQ structure. 
\end{proof}

As a consequence of Theorem \ref{prop-variational} and Theorem \ref{thm-quasicont} we have the following.
\begin{corollary}
\label{corr-variational}
ODE geometries arising from parabolic quasi-contact cone structures via symmetry reduction  are variational.
Conversely, any variational scalar 4th order ODE, variational pair of third order ODEs, and variational orthopath geometry arises as a symmetry reduction of a quasi-contact cone structure determined by a  $(2,3,5)$ distribution,  $(3,6)$ distribution, and causal structure, respectively.
\end{corollary}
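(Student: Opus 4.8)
\textbf{Proof plan for Corollary \ref{corr-variational}.}
The plan is to obtain both directions as immediate consequences of the two main theorems already at our disposal: Theorem \ref{thm-quasicont}, which sets up the one-to-one correspondence between parabolic quasi-contact cone structures with a transversal infinitesimal symmetry and PCQ structures on the leaf space, and Theorem \ref{prop-variational}, which identifies PCQ structures whose underlying geometry is a system of ODEs with the variational ones. The only genuine content of the corollary beyond concatenating these two statements is keeping careful track of which ODE geometry (or orthopath geometry) sits under which of the three families of parabolic quasi-contact cone structures, and checking that ``variational'' in the sense of Theorem \ref{prop-variational} is exactly the property claimed here. So the first step is to recall, from \ref{sec-4thorder}, \ref{sec-pairsthirdorder} and the orthopath discussion in \ref{sec:conf-quasi-sympl}, the correspondence: a $(2,3,5)$ distribution yields a PACQ structure of type $\g_2^*/\p_{12}$, whose underlying geometry is a scalar fourth order ODE up to contact transformations; a $(3,6)$ distribution yields the $\mathfrak{so}(3,4)/\p_{23}$ type, whose underlying geometry is (a class generalizing) a pair of third order ODEs up to point transformations; and a causal structure yields the $\mathfrak{so}(p+2,q+2)/\p_{12}$ type, whose underlying geometry is an orthopath geometry, equivalently a system of second order ODEs up to point transformations together with a conformal bundle metric on $\mathcal V$.

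For the forward direction, I would argue as follows. Suppose we are given a parabolic quasi-contact cone structure on $\tcC$ with a transversal infinitesimal symmetry $\xi$. By Theorem \ref{thm-quasicont}, the local leaf space $\pi\colon\tcC\to\cC$ carries a canonical PCQ structure, and in particular a compatible closed $2$-form $\rho\in\Gamma(\ell)$ of the algebraic type described in Propositions \ref{prop-lineb} and \ref{prop_filtPACQ}. Since the underlying filtered $G$-structure on $\cC$ is precisely the ODE geometry (resp.\ orthopath geometry) associated with the given type of quasi-contact cone structure, Theorem \ref{prop-variational} applies verbatim: the existence of such a closed compatible $2$-form forces the corresponding scalar fourth order ODE, pair of third order ODEs, or system of second order ODEs to be variational with respect to a Lagrangian of the stated order and non-degeneracy type. (In the orthopath case one additionally notes that the bundle metric $[\bh]$ defining the orthopath structure is exactly the one recovered from $\rho$ as in Lemma \ref{corr-orthopath}, so the Finsler-type Lagrangian and the orthopath structure are compatible — this is already contained in the proof of Theorem \ref{prop-variational}, case (3).)

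For the converse, start with a variational scalar fourth order ODE, variational pair of third order ODEs, or variational orthopath geometry. By the converse part of Theorem \ref{prop-variational}, the associated Lagrangian determines a PCQ structure on the equation manifold $\cC$ (for the orthopath case, on the manifold $\cC$ underlying the orthopath geometry). Now apply the converse part of Theorem \ref{thm-quasicont}: this PCQ structure is locally realized as the quotient of a parabolic quasi-contact cone structure on some $\tcC$ by a transversal infinitesimal symmetry. By the matching of types recorded in the first step, the resulting quasi-contact cone structure is determined by a $(2,3,5)$ distribution, a $(3,6)$ distribution, or a causal structure, respectively. This is precisely the claimed realization.

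I do not anticipate a serious obstacle here; the corollary is genuinely a formal consequence of the two theorems. The one point requiring a little care — and the thing I would make explicit in the write-up — is the bookkeeping of the three cases: one must confirm that the PACQ structure underlying each of the three families of parabolic quasi-contact cone structures is exactly the ODE/orthopath geometry for which Theorem \ref{prop-variational} was proved, and, in the converse direction, that the PCQ structure produced from a given variational problem by Theorem \ref{prop-variational} is of the algebraic type to which the converse of Theorem \ref{thm-quasicont} applies. Both facts are already established in \ref{sec:conf-quasi-sympl} and in the case-by-case descriptions in \ref{sec-4thorder}, \ref{sec-pairsthirdorder}, so the proof reduces to citing them in the right order.
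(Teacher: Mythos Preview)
Your proposal is correct and matches the paper's approach exactly: the paper simply states the corollary as an immediate consequence of Theorem \ref{prop-variational} and Theorem \ref{thm-quasicont}, without writing out a separate proof. Your write-up just makes explicit the case-by-case bookkeeping that the paper leaves implicit.
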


A realization of a PCQ structure as a local quotient of a parabolic quasi-contact cone structure, as in Theorem \ref{thm-quasicont},  is referred to as a \emph{parabolic quasi-contactification}. Parabolic quasi-contactifications are locally unique in the following sense.

\begin{proposition}
Consider two PCQ structures of the same type, realized as quotients of parabolic quasi-contact cone structures by transverse infinitesimal symmetries. Then locally any morphism of the PCQ structures lifts to a morphism of the quasi-contact cone structures which is compatible with the infinitesimal symmetries up to a constant. 
\end{proposition}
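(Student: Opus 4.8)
The statement is essentially a rigidity/uniqueness assertion for the quasi-contactification construction of Theorem \ref{thm-quasicont}. I would prove it by exploiting the canonical data produced by that theorem together with the equivalences of categories already established (Proposition \ref{prop-qcontactcone} for the parabolic quasi-contact cone structures, Theorem \ref{thm-CartanConnection} for the PCQ side), reducing the lifting of a morphism to a statement about the product $\tilde{\cC}=\cC\times\mathbb{R}$.

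\textbf{Step 1: Normalize the quasi-contactifications.} Let $(\tilde{\cC}_i,\xi_i)$, $i=1,2$, be the two parabolic quasi-contact cone structures with transversal infinitesimal symmetries, with leaf spaces $\pi_i\colon\tilde{\cC}_i\to\cC_i$ carrying the given PCQ structures. By Proposition \ref{prop-qsymplectic} each $\xi_i$ determines a \emph{unique} quasi-contact form $\alpha_i$ on $\tilde{\cC}_i$ with $\alpha_i(\xi_i)=1$ and $\ker(\alpha_i)=T^{-k+1}\tilde{\cC}_i$, together with the unique quasi-symplectic form $\rho_i$ on $\cC_i$ with $\mathrm{d}\alpha_i=\pi_i^*\rho_i$. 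Near any point, choose $\beta_i\in\Omega^1(\cC_i)$ with $\mathrm{d}\beta_i=\rho_i$; then $\alpha_i-\pi_i^*\beta_i$ is a closed $1$-form that is $1$ on $\xi_i$ and annihilates a complement to $\xi_i$, so locally it equals $\mathrm{d}t_i$ for a fibre coordinate $t_i$. Thus, after shrinking, every quasi-contactification is \emph{locally isomorphic, as a parabolic quasi-contact cone structure with transversal symmetry, to the model }$(\cC_i\times\mathbb{R},\partial_{t_i})$\emph{ built in the converse part of Theorem \ref{thm-quasicont}}. This is the key reduction: it suffices to lift a morphism between the PCQ structures to this product model.

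\textbf{Step 2: Lift the morphism on the products.} Let $f\colon\cC_1\to\cC_2$ be a (local) morphism of PCQ structures. By definition $f$ preserves the filtrations $T^{-i}\cC_j$, the splittings $\cE_j\oplus\cV_j$, and (up to scale, and in the orthopath case up to the conformal class $[\bh]$) the conformally quasi-symplectic line bundle $\ell_j$. Since local closed sections of $\ell_j$ are unique up to a multiplicative constant (the uniqueness statement in the proposition preceding Theorem \ref{prop-variational}), we get $f^*\rho_2 = c\,\rho_1$ for a nonzero constant $c$. Then $f^*\beta_2 - c\,\beta_1$ is closed, hence locally $= \mathrm{d}h$ for a function $h$ on $\cC_1$. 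Define $F\colon\cC_1\times\mathbb{R}\to\cC_2\times\mathbb{R}$ by $F(x,t)=(f(x),\, c\,t + h(x))$. A direct computation gives $F^*\alpha_2 = F^*(\mathrm{d}t_2+\pi_2^*\beta_2) = c\,\mathrm{d}t_1 + \mathrm{d}h + f^*\beta_2\circ\pi_1 = c\,(\mathrm{d}t_1+\pi_1^*\beta_1) = c\,\alpha_1$, so $F$ pulls back $\ker(\alpha_2)$ to $\ker(\alpha_1)$, i.e. preserves the quasi-contact distributions $T^{-k+1}$. Because $F$ covers $f$ and $f$ preserves the lower filtration components on the base, and because the higher filtration components on $\tilde{\cC}_j$ are by construction the preimages under $(\pi_j)_*$ of those on $\cC_j$ intersected with $T^{-k+1}\tilde{\cC}_j$, the map $F$ preserves the \emph{entire} filtration; hence $F$ is a morphism of the underlying filtered manifolds, and by the equivalence of categories (Proposition \ref{prop-filt}, i.e. the fact that these parabolic geometries are determined by their bracket-generating distribution) it is a morphism of parabolic quasi-contact cone structures. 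Finally $F^*\partial_{t_2} = c^{-1}\partial_{t_1}$ away from where we must instead observe $F_*\partial_{t_1} = c\,\partial_{t_2}$, so $F$ intertwines the infinitesimal symmetries up to the constant $c$, and $\pi_2\circ F = f\circ\pi_1$ by construction.

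\textbf{Step 3: Splice back and handle uniqueness of the lift.} Transporting $F$ through the local isomorphisms of Step 1 gives the desired lift over the original $\tilde{\cC}_i$. One should also remark (this is immediate) that the lift is itself unique up to the choice of the constant $c$ and an additive constant in $h$, i.e. up to a translation along $\xi_2$; this is the precise sense of ``compatible with the infinitesimal symmetries up to a constant.''

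\textbf{Expected main obstacle.} The only genuinely delicate point is verifying that $F$ preserves \emph{all} filtration components and the splitting $\cE\oplus\cV$ (and, for orthopath geometries, the induced conformal bundle metric $[\bh]$), rather than just $T^{-1}$ and $T^{-k+1}$ — one must chase through the inductive description $T^i_p\tilde{\cC}=\{X\in T^{-k+1}_p\tilde{\cC} : \pi_*X\in T^i_{\pi(p)}\cC\}$ and the formula $\bh(X,Y)=\rho([E,X],Y)$ from Lemma \ref{corr-orthopath}, using that $f$ is a PCQ morphism and $F^*\alpha_2=c\,\alpha_1$, $F^*\rho_2 = c\,\rho_1$. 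All of this is formal once Step 1 puts us in the product model; the substantive content is the rigidity of the closed section $\rho$ up to a constant, which has already been established before Theorem \ref{prop-variational}.
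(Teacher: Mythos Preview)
Your approach is essentially the same as the paper's: the paper defers the lifting step to the classical contactification argument in \cite{CS-cont0} and then observes that such a lift automatically respects the lifted filtrations, which is precisely what your Steps~1--3 spell out explicitly. One minor slip: with your convention $f^*\beta_2 - c\beta_1 = \mathrm{d}h$, the correct lift is $F(x,t)=(f(x),\,ct-h(x))$; with your sign on $h$ the computation actually gives $F^*\alpha_2 = c\alpha_1 + 2\pi_1^*\mathrm{d}h$, not $c\alpha_1$. This is a harmless sign error and the argument is otherwise sound.
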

\begin{proof}
 A morphism  between two conformally quasi-symplectic structures lifts to a morphism of the quasi-contactifications that respects the infinitesimal symmetries up to a constant. This can be shown exactly as in the classical case of  contactifications of conformally symplectic structures (see  the proof of   \cite[Proposition 3.1]{CS-cont0}). The proposition follows by noting that  such a lift automatically respects the lifted filtrations, i.e. it is an automorphism of the parabolic quasi-contact cone structures. 

\end{proof}

\subsection{Cartan holonomy reductions  associated with infinitesimal symmetries} 
\label{subsec-holonomy}
Interesting classes of parabolic quasi-contact cone structures with  an infinitesimal symmetry correspond to certain types of Cartan holonomy reductions. The aim of this section is to provide some background on how such holonomy reductions are related with symmetries and then to discuss two examples that are of interest in this context.

Let $(\tilde{\mathcal{G}}\to \tilde{\cC},\tilde{\psi})$ be a regular and normal parabolic geometry of  type $(\g,Q)$ for some parabolic subgroup $Q\subset G$ (not necessarily of quasi-contact type). The  associated vector bundle $\tilde{\mathcal{A}}=\tilde{\mathcal{G}}\times_Q\g$ is called the \emph{adjoint tractor bundle}. It is equipped with  the connection $\nabla^{\tilde{\mathcal{A}}}$ induced by the Cartan connection $\tilde{\psi}$, the so-called \emph{adjoint tractor connection}. We denote by $\Pi\colon\tilde{\mathcal{A}}\to \tilde{\cC}$ the natural projection corresponding to  $\g\to\g/\q$. 
There is a bijective correspondence between $Q$-invariant vector fields $\xi\in\mathfrak{X}(\tilde{\mathcal{G}})$  and sections of the  adjoint tractor bundle $\tilde{\mathcal{A}}$. The correspondence is given by mapping $\xi$ to the $Q$-equivariant function $\tilde{\psi}(\xi):\tilde{\mathcal{G}}\to\g$, which defines the section $s\in\Gamma(\tilde{\mathcal{A}})$.  Infinitesimal symmetries of the parabolic geometry correspond to  $Q$-invariant vector fields $\xi$ satisfying $\mathcal{L}_{\xi}\tilde{\psi}=0$, and via the above correspondence to sections $s\in\Gamma(\tilde{\mathcal{A}})$ satisfying $\nabla^{\mathrm{inf}}s:=\nabla^{\tilde{\mathcal{A}}}s+{\Pi(s)}\im\tilde{\Psi}=0$; here the curvature $\tilde{\Psi}$ of $\tilde{\psi}$ is viewed as a $2$-form on $\tilde{\cC}$ with values in $\tilde{\mathcal{A}}$.

By \cite[Corollary 3.5]{Cap-deform}, it follows that in torsion-free normal parabolic geometries, satisfying  a certain cohomological condition,
any  section $s\in\Gamma(\tilde{\mathcal{A}})$ that satisfies $\nabla^{\tilde{\mathcal{A}}}s=0$ automatically satisfies $\Pi(s)\im\tilde{\Psi}=0$ and thus $\nabla^{\mathrm{inf}}s=0$.  In particular, the result  applies  to conformal geometries, $(2,3,5)$ and $(3,6)$ distributions, and also their (quasi-contact) correspondence spaces. 
Infinitesimal automorphisms corresponding to parallel sections  for the normal tractor connection $\nabla^{\tilde{\mathcal{A}}}$ are called \emph{normal}. 
They give rise to holonomy reductions of the Cartan geometry $(\tilde{\mathcal{G}}\to \tilde{\cC},\tilde{\psi})$ of type $\mathcal{O}$, where $\mathcal{O}\subset\g$ is some $G$-orbit in $\g$, called the $G$-type of $s\in\Gamma(\tilde{\mathcal{A}})$, see \cite{CHG-holonomy} for details.
Two examples of such $G$-types are discussed below.

 \begin{example}\label{ex-Feff}
Consider a normal conformal parabolic geometry of signature $(2p+1,2q+1)$. Let $\g=\mathfrak{so}(2p+2,2q+2)\cong \biw^2\mathbb{R}^{2(p+q)+4}$ be the corresponding orthogonal Lie algebra  of   skew-symmetric endomorphisms with respect to a symmetric bilinear form of signature $(2p+2,2q+2)$ on $\mathbb{R}^{2(p+q)+4}$.  
Let $\mathbb{J}\in\g\cong\biw^2\mathbb{R}^{2(p+q)+4}$ be a skew-symmetric endomorphism satisfying $\mathbb{J}\circ\mathbb{J}=-\mathrm{id}$. Consider the corresponding $G$-orbit  $\mathcal{O}=G\cdot\mathbb{J}\subset\g$. For a conformal structure admitting a parallel adjoint tractor field $s\in\Gamma(\tilde{\mathcal{A}})$ of this $G$-type, the conformal holonomy  is contained in $\mathrm{U}(p+1,q+1)\subset \mathrm{SO}(2p+2,2q+2)$; in fact, it turns out that it further reduces to $\mathrm{SU}(p+1,q+1)$. 

The local leaf space  determined by the  (null) normal conformal  Killing field corresponding to a parallel adjoint tractor $s$ of the $G$-type above has an induced CR structure of hypersurface type. Conversely, the Fefferman construction associates to any non-degenerate CR structure of hypersurface type a canonical conformal structure on a circle bundle over the CR manifold that admits a holonomy reductions of this type. Fefferman conformal structures can be characterized either in terms of their conformal holonomy or in terms of the corresponding (null) normal conformal Killing field  \cite{Graham-Sparling}.  
Note that, in particular, Fefferman conformal structures have an associated canonical variational orthopath geometry.
This orthopath geometry, that we refer to as the orthopath geometry of chains, will be studied in   \ref{sec:vari-chains-cr-orthopath}.

\end{example}

\begin{example} Let $\g=\p_{-2}\oplus\p_{-1}\oplus\p_0\oplus\p_1\oplus\p_2$ be a contact grading of $\g$ as in \eqref{contactgrading}. Let $\mathcal{O}=G\cdot V\subset\g$ be the orbit of a root vector $V\in\p_{-2}$ under the adjoint representation.
  Then the stabilizer $H$ of $V$ is contained in the parabolic subgroup $P^{op}$ stabilizing the line $\p_{-2}\subset\mathbb{P}(\g)$. In particular, if a parabolic quasi-contact cone structure admits a parallel adjoint tractor field $s\in\Gamma(\tilde{\mathcal{A}})$ of this $G$-type $\mathcal{O}$, then the holonomy of the Cartan connection is a proper subgroup of the contact parabolic subgroup. In \ref{sec:curvatuer-analysis} we will construct, via  quasi-contactification, parabolic quasi-contact cone structures that admit holonomy reductions of this type (Corollaries \ref{cor:235-from-4th-ODE-various-curv-conditions}, \ref{cor:36-from-pair-3rd-ODE-various-curv-conditions}, and \ref{cor:causal-from-orthopath-various-curv-cond}). 
 \end{example}

\subsection{Relation between Cartan geometries and  curvatures}
\label{sec:relate-betw-cartan-geometries}

We have seen that PCQ structures as well as parabolic quasi-contact cone structures have associated canonical Cartan connections. Our next goal is to describe the relationship between these Cartan connections via parabolic quasi-contactifications.

 Let $\pi\colon\tilde{\cC}\to\cC$ be a local quotient of a quasi-contact cone structure by an infinitesimal symmetry $\xi$. Let $\alpha$ be the quasi-contact form on $\tilde{\cC}$ such that $\alpha(\xi)=1$ and $\rho\in\Gamma(\ell)$  be the closed section of  $\ell\subset\biw^2T^*\cC$ such that $\mathrm{d}\alpha=\pi^*\rho$. Denote by $(\tau\colon\mathcal{G}\to \cC,\psi)$  the regular and normal Cartan geometry of type $(\mathfrak{k},L)$ associated with the PCQ structure on $\cC$ and let $\Psi$ denote its Cartan curvature.

To construct the Cartan geometry associated to the quasi-contact cone structure, we use the principal bundle $\tau\colon\mathcal{G}\to \cC$ and projection  $\pi\colon\tilde{\cC}\to \cC$ to define the pull-back bundle $\hat{\tau}\colon\hat{\mathcal{G}}\to \tilde{\cC}$ 
  where
   \begin{align*} 
\hat{\mathcal{G}}:=\pi^*\mathcal{G}=\{(u,\tilde{x})\in\mathcal{G}\times\tilde{\cC}: \tau(u)=\pi(\tilde{x}) \}\quad\mbox{and}\quad\hat{\tau}\left( (u,\tilde{x})\right)=\tilde{x},
  \end{align*}
which is a $L$-principal bundle with the  $L$-action given by $r^g(u,\tilde{x})=(r^g(u),\tilde{x})$ for any $g\in L$. Projection onto the first factor defines $\hat{\pi}\colon\hat{\mathcal{G}}\to\mathcal{G}$.

Recall that $\mathfrak{k}$ was defined as a Lie algebra quotient $\p^{op}/\q_{-k}\cong\p^{op}/\p_{-2}$ in \eqref{Liealgs}. The quotient map 
restricts to a linear isomorphism between $\p_{-1}\oplus\p_0\subset\p^{op}$ and $\mfk$, whose  inverse will be denoted by \begin{align}
i:\mathfrak{k}\cong\p_{-1}\oplus\p_0=\q_{-k+1}\oplus\cdots\oplus\q_0\oplus(\p_0\cap\q_{+})\subset\g.
 \end{align}
Hence $i\circ\hat{\pi}^*\psi$ defines a $1$-form on $\hat{\mathcal{G}}$ with values in  $i(\mathfrak{k})\subset\g$.   
   One can  introduce an additional  $1$-form $\hat{\psi}_{-k}\in\Omega^1(\hat{\mathcal{G}},\mathfrak{q}_{-k})$ with values in the kernel of the quotient map as follows.
 For any $u\in\mathcal{G}$ with $\tau(u)=x$ there exists a unique $f_u:\mathfrak{q}_{-k}\to\mathbb{R}$ such that 
  $$\rho_x(\tau_* X_u,\tau_* Y_u)=f_u\left(\mathrm{pr}_{-k}([i\circ\psi(u)(X_u),i\circ\psi(u)(Y_u)])\right),
  $$
for any $X_u,Y_u\in T_u\mathcal{G}$,  where $\mathrm{pr}_{-k}$ denotes the projection from $\p^{op}$ onto $\q_{-k}\cong \p_{-2}$. This determines a smooth and $L$-equivariant function $f:\mathcal{G}\to L(\q_{-k},\mathbb{R})$, where $L(\q_{-k},\mathbb{R})$ denotes the space of linear maps from $\q_{-k}$ to $\mathbb{R}$. 
We now define  $$\hat{\psi}_{-k}(u,\tilde{x})=-f_u^{-1}\circ \hat{\tau}^*\alpha (u,\tilde{x})\quad \mbox{and}\quad\hat{\psi}=\hat{\psi}_{-k}+i\circ\hat{\pi}^*\psi\in\Omega^1(\hat{\mathcal{G}},\p^{op}).$$ 
 The 1-form $\hat{\psi}$ defined above is $L$-equivariant, $\hat{\psi}(u):T\hat{\mathcal{G}}\to\p^{op}$ defines an isomorphism for each $u\in\hat{\mathcal{G}}$,  and $\hat{\psi}$ reproduces generators of fundamental vector fields since $\psi$ does and $\hat{\psi}_{-k}(\zeta_X)=0$ for any fundamental vector field $\zeta_X$.  
Consequently, $\hat{\psi}$ defines a Cartan connection of type $(\p^{op},L)$ on $\hat{\cG}\to\tcC$.

Let $\hat{X},\hat{Y}$ be two vector fields on $\hat{\mathcal{G}}$ in the kernel of $\hat{\psi}_{-k}$.
One computes that
\begin{equation}
  \label{eqdnewform}
  \begin{aligned}
  \mathrm{d}\hat{\psi}_{-k}(u,\tilde{x})(\hat{X},\hat{Y})&=-\hat{\psi}_{-k}(u,\tilde{x})([\hat{X},\hat{Y}])
 =f_u^{-1}\left( \hat{\tau}^*\alpha (u,\tilde{x})([\hat{X},\hat{Y}])\right)\\&=-f_u^{-1}\left( \hat{\tau}^*\pi^*\rho (u,\tilde{x})(\hat{X},\hat{Y})\right)
 \\&=-\mathrm{pr}_{-k}([i(\hat{\pi}^*\psi(u,\tilde{x}) (\hat{X})),i(\hat{\pi}^*\psi(u,\tilde{x}) (\hat{Y}))]).
  \end{aligned}
  \end{equation}
Using this,  we get for the curvatures
 \begin{equation} 
 \label{curv1}
 \begin{aligned}
 \hat{\Psi}(\hat{X},\hat{Y})=&\mathrm{d}\hat{\psi}(\hat{X},\hat{Y})+[\hat{\psi}(\hat{X}),\hat{\psi}(\hat{Y})]\\=& i \left(\hat{\pi}^*\mathrm{d}\psi( \hat{X}, \hat{Y})\right)+\mathrm{d}\hat{\psi}_{-k}(\hat{X},\hat{Y})+[i(\hat{\pi}^*\psi( \hat{X})),i(\hat{\pi}^*\psi( \hat{Y}))]\\
 =&i\left(\hat{\pi}^*\Psi( \hat{X}, \hat{Y})\right).
  \end{aligned}
  \end{equation}

 Next consider the extended  $Q$-principal bundle $\tilde{\mathcal{G}}=\hat{\mathcal{G}}\times_{L}Q$ with respect to $j:L\to Q$ and denote by $\iota:\hat{\mathcal{G}}\to\tilde{\mathcal{G}}$  the natural inclusion.
 There is a unique equivariant extension of $\hat{\psi}$ to a Cartan connection on $\tilde{\mathcal{G}}$ with values in $\g$ which, by abuse notation, we shall denote by $\hat{\psi}\in\Omega^1(\tilde{\mathcal{G}},\g)$ as well.
 By regularity of $\psi$ and \eqref{eqdnewform}, $\hat{\psi}$ is a regular Cartan connection  on $\tilde{\cC}$. However, in general it is not normal.  Let $\tilde{\psi}=\hat{\psi}+\phi\in\Omega^1(\tilde{\mathcal{G}},\g)$ be the regular and normal Cartan connection inducing the filtration corresponding to the parabolic quasi-contact cone structure, which we know exists by the general theory. Then $\phi\in\Omega^1(\tilde{\mathcal{G}},\g)$ is a  horizontal and  $Q$-equivariant  1-form of homogeneity $\geq l>0$ (see \cite[ Proposition 3.1.10]{CS-Parabolic}).  The curvatures are related by
\begin{equation}
\label{curv2}
\begin{aligned}
\tilde{\Psi}(\tilde{X},\tilde{Y})=&\mathrm{d}\hat{\psi}(\tilde{X},\tilde{Y})+\mathrm{d}\phi(\tilde{X},\tilde{Y})+[\hat{\psi}(\tilde{X})+\phi(\tilde{X}),\hat{\psi}(\tilde{Y})+\phi(\tilde{Y})]\\
=& \hat{\Psi}(\tilde{X},\tilde{Y})+\mathrm{d}^{\hat{\psi}}\phi(\tilde{X},\tilde{Y})+[\phi(\tilde{X}),\phi(\tilde{Y})]
\end{aligned}
\end{equation}
for $\tilde{X}, \tilde{Y}\in\mathfrak{X}(\tilde{\mathcal{G}}).$
Here $\mathrm{d}^{\hat{\psi}}\phi$ is of homogeneity $\geq l$ and the last term is of homogeneity $\geq 2l\geq l+1$. 
 Moreover, let $h:\tilde{\mathcal{G}}\to(\g/\q)^*\otimes\g$ be the function corresponding to $\phi$, and let $g:\tilde{\mathcal{G}}\to\biw^2(\g/\q)^*\otimes\g$ be the function corresponding to  $\mathrm{d}^{\hat{\psi}}\phi$.  
 Then 
 \begin{equation}
 \label{curv3}
\mathrm{gr}_l\circ g=\partial_{\q_{-}}\circ\mathrm{gr}_l\circ h, 
 \end{equation}
 where $\mathrm{gr}_l$ denotes the projection of an element of homogeneity $\geq l$ to the component  $(\biw^i\q_{-}^*\otimes\g)_l=(\biw^i(\g/\q)^*\otimes\g)^{l}/(\biw^i(\g/\q)^*\otimes\g)^{l+1}$ of homogeneity $l$ in the associated graded space with respect to the filtration on $\biw^i(\g/\q)^*\otimes\g$, and $\partial_{\q_{-}}\colon(\q_{-})^*\otimes\g\to\biw^2(\q_{-})^*\otimes\g$ denotes the operator \eqref{formuladel}. See \cite[Proposition 4.3 and Theorem 4.4]{Cap-Cartan} for details.
 
 Viewing $\mathfrak{k}$ as a subspace in $\g$, we have an orthogonal $L$-invariant decomposition $\g=\mfk\oplus\mfk^{\perp}$ with respect to the inner product introduced in \ref{sec:norm-cond}.   
 Then $(\biw^2\mfk_{-}^*\otimes\mfk)_l$ can be identified with a subspace in $ (\biw^2\q_{-}^*\otimes\g)_l$ comprised of those maps that have values in $\mfk\subset\g$ and that vanish upon insertion of any element belonging to $\q_{-k}$. Let
 $\ker(\Box)\cong\ker(\tilde{\partial}^*)/\mathrm{im}(\tilde{\partial}^*)=H^2(\q_{-},\g)$ be the harmonic curvature space \eqref{harmon}. Using Kostant's theorem,  e.g. see  \cite[Theorem 3.3.5]{CS-Parabolic}, it is straightforward to determine the structure of this space. In particular, for quasi-contact cone structures the lowest homogeneous component of $\ker(\Box)$, denoted by
 $\ker(\Box)_l$, is contained in $(\biw^2\mfk_{-}^*\otimes\mfk)_l\subset (\biw^2\q_{-}^*\otimes\g)_l$.
\begin{proposition}
Let  $\mathrm{gr}_l(\kappa)$ be the component of lowest homogeneity of the  curvature function of the regular and normal Cartan connection $\psi\in \Omega^1(\mathcal{G},\mfk)$ and let $\mathrm{gr}_l(\tilde{\kappa})$ be the component of lowest homogeneity of the  curvature function of the regular and normal Cartan connection $\tilde{\psi}\in \Omega^1(\tilde{\mathcal{G}},\g)$. Denote by $\iota:\hat{\mathcal{G}}\to\tilde{\mathcal{G}}$ the inclusion and by $\hat{\pi}:\hat{\mathcal{G}}\to\mathcal{G}$ the projection introduced above.
Then $\iota^*\mathrm{gr}_l(\tilde{\kappa})$  coincides with the pull-back $\hat{\pi}^*(\mathrm{pr}_{\ker(\Box)_l}\mathrm{gr}_l(\kappa))$ 
of the part of $\mathrm{gr}_l(\kappa)$ that has values in $\ker(\Box)_l\subset (\biw^i\mathfrak{k}_{-}^*\otimes\mathfrak{k})_l$. 
\end{proposition}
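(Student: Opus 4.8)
The proof builds on the explicit relation between the Cartan connections of the PCQ structure and its quasi-contactification established in the computations \eqref{eqdnewform}--\eqref{curv3}. The key idea is to compare curvatures homogeneity component by homogeneity component, using that the ambient normalization condition $\ker(\tilde\partial^*)$ restricts compatibly to the one for the PCQ structure.

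First, I would pull back everything to $\hat{\mathcal{G}}$ via $\iota$ and $\hat\pi$. By \eqref{curv1}, the curvature of the intermediate (regular but not necessarily normal) connection $\hat\psi$ on $\tilde{\cC}$ satisfies $\iota^*\hat\Psi = i\circ\hat\pi^*\Psi$, so at the level of curvature functions, $\iota^*\hat\kappa = \hat\pi^*(i_*\kappa)$ where $i_*\kappa$ regards $\kappa$ as taking values in $(\biw^2\mfk_-^*\otimes\mfk)\subset(\biw^2\q_-^*\otimes\g)$ via the inclusion $\mfk\hookrightarrow\g$ and the annihilation property on $\q_{-k}$. In particular $\iota^*\mathrm{gr}_l(\hat\kappa) = \hat\pi^*\mathrm{gr}_l(\kappa)$, where $l$ is the lowest homogeneity in which curvature can be nonzero for the PCQ structure. (One should check this lowest homogeneity for the Cartan geometry of type $(\g,Q)$ coincides with that of the PCQ geometry of type $(\mfk,L)$; this follows from the structure of $H^2(\q_-,\g)$ versus $H^2(\mfk_-,\mfk)$ via Kostant, which is the same degree $l$ in all three families — homogeneity $4$ in the $G_2$ case, homogeneity $3$ in the $B_3$ case, homogeneity $2$ in the general $B_n/D_n$ case.)

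Second, I would use the passage from $\hat\psi$ to the normal connection $\tilde\psi = \hat\psi + \phi$. Since $\phi$ has homogeneity $\geq l>0$, formula \eqref{curv2} gives $\tilde\Psi = \hat\Psi + \exd^{\hat\psi}\phi + [\phi,\phi]$, and in lowest homogeneity $\mathrm{gr}_l(\tilde\kappa) = \mathrm{gr}_l(\hat\kappa) + \mathrm{gr}_l(g)$ with $\mathrm{gr}_l(g) = \partial_{\q_-}(\mathrm{gr}_l(h))$ by \eqref{curv3}. Thus $\iota^*\mathrm{gr}_l(\tilde\kappa) = \hat\pi^*\mathrm{gr}_l(\kappa) + \partial_{\q_-}(\iota^*\mathrm{gr}_l(h))$, i.e. $\iota^*\mathrm{gr}_l(\tilde\kappa)$ is congruent to $\hat\pi^*\mathrm{gr}_l(\kappa)$ modulo $\mathrm{im}(\partial_{\q_-})$ in homogeneity $l$. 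Now normality of $\tilde\psi$ says $\mathrm{gr}_l(\tilde\kappa)$ lies in $\ker(\tilde\partial^*)$; together with the Hodge decomposition \eqref{eqHodge} for the pair $(\g,Q)$ — where $\ker(\tilde\partial^*)$ decomposes as $\ker(\Box)\oplus\mathrm{im}(\tilde\partial^*)$ and $\mathrm{im}(\partial)$ is a complement to $\ker(\tilde\partial^*)$ — one concludes that $\mathrm{gr}_l(\tilde\kappa)$ is precisely the harmonic projection (onto $\ker(\Box)_l$) of any representative of its $\mathrm{im}(\partial_{\q_-})$-coset. Applying this to the representative $\hat\pi^*\mathrm{gr}_l(\kappa)$ gives $\iota^*\mathrm{gr}_l(\tilde\kappa) = \hat\pi^*(\mathrm{pr}_{\ker(\Box)_l}\mathrm{gr}_l(\kappa))$, which is the claim.

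**Main obstacle.** The delicate point is the last step: one must know that $\mathrm{pr}_{\ker(\Box)_l}$, computed in the cochain complex for $(\g,Q)$, when applied to an element of $(\biw^2\mfk_-^*\otimes\mfk)_l$ lands in $(\biw^2\mfk_-^*\otimes\mfk)_l$ again (so that $\hat\pi^*(\mathrm{pr}_{\ker(\Box)_l}\mathrm{gr}_l(\kappa))$ makes sense on $\hat{\mathcal{G}}$ and the statement is even well-posed), and that this projection agrees with the corresponding $\mfk$-harmonic projection. This requires the $L$-invariant orthogonal splitting $\g = \mfk\oplus\mfk^\perp$ to be compatible with $\partial$, $\tilde\partial^*$ and hence $\Box$ in the relevant homogeneity — equivalently, that the inclusion of cochain complexes $(\biw^\bullet\mfk_-^*\otimes\mfk)_l\hookrightarrow(\biw^\bullet\q_-^*\otimes\g)_l$ is a split injection of complexes in degree $\leq 2$ in homogeneity $l$. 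This is exactly where the specific algebraic structure of quasi-contact gradings (Proposition \ref{prop-qcontactgrading}) and the fact that the inner product on $\mfk$ was induced by restriction from $\g$ (as set up in \ref{sec:norm-cond}) are used; I would verify it by a direct check that $\partial_{\q_-}$ and $\tilde\partial^*$ preserve the subspace and its orthocomplement in the range of homogeneities at issue, invoking that in homogeneity $l$ no ``mixing'' with the $\q_{-k}$-directions can occur for degree reasons — the $\q_{-k}$-slot raises homogeneity by $k\geq 2$, pushing contributions beyond the lowest degree. Once this compatibility is in hand, the identification of $\mathrm{pr}_{\ker(\Box)_l}$ for the two complexes is automatic and the proof concludes as above.
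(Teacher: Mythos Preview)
Your approach is essentially the paper's: use \eqref{curv1}--\eqref{curv3} to see that $\iota^*\mathrm{gr}_l(\tilde\kappa)$ and $\hat\pi^*\mathrm{gr}_l(\kappa)$ differ by an element of $\mathrm{im}(\partial_{\q_-})$, then project onto $\ker(\Box)_l$ via the Hodge decomposition. Two points are worth flagging.

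First, there is a small gap in your normalisation step. From normality you only get $\mathrm{gr}_l(\tilde\kappa)\in\ker(\tilde\partial^*)_l=\ker(\Box)_l\oplus\mathrm{im}(\tilde\partial^*)_l$, so a priori $\mathrm{gr}_l(\tilde\kappa)$ could have a nonzero $\mathrm{im}(\tilde\partial^*)$-component and would then \emph{not} equal the harmonic projection of its $\mathrm{im}(\partial_{\q_-})$-coset. What forces $\mathrm{gr}_l(\tilde\kappa)\in\ker(\Box)_l$ is the Bianchi identity, which makes the lowest homogeneous curvature component $\partial_{\q_-}$-closed; combined with normality this gives harmonicity. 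The paper invokes exactly this standard fact (citing \cite[Theorem 3.1.12]{CS-Parabolic}) rather than deriving it from the Hodge decomposition alone.

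Second, your ``Main obstacle'' is genuine but its resolution is simpler than the compatibility check you propose. The paper observes, immediately before the proposition and using Kostant's theorem, that for each of the quasi-contact gradings the lowest homogeneous harmonic module already sits inside the subspace, i.e.\ $\ker(\Box)_l\subset(\biw^2\mfk_-^*\otimes\mfk)_l\subset(\biw^2\q_-^*\otimes\g)_l$. Hence the orthogonal projection $\mathrm{pr}_{\ker(\Box)_l}$ automatically lands in $(\biw^2\mfk_-^*\otimes\mfk)_l$ regardless of where it is applied, and there is no need to verify that $\partial_{\q_-}$ and $\tilde\partial^*$ preserve the splitting $\g=\mfk\oplus\mfk^\perp$ in the relevant degrees.
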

\begin{proof}
 The lowest homogeneous component $\mathrm{gr}_l(\tilde{\kappa})$ of a regular and normal parabolic geometry has values in $\ker(\Box)_l\subset(\biw^i\q_{-}^*\otimes\g)_l$, see  \cite[Theorem 3.1.12]{CS-Parabolic}. Moreover, $\ker(\Box)_l$ can be identified with a subspace of $(\biw^i\mathfrak{k}_{-}^*\otimes\mathfrak{k})_l$ by the discussion above.
It follows from  \eqref{curv1}, \eqref{curv2} and \eqref{curv3}  that both $\kappa$ and $\tilde{\kappa}$ are of homogeneity $\geq l$ for the same $l$ and that the difference between $\iota^*\mathrm{gr}_l(\tilde{\kappa})$ and  $\hat{\pi}^*\mathrm{gr}_l(\kappa)$ is contained in $\mathrm{im}(\partial_{\q_{-}})$. Using the fact that $\mathrm{im}(\partial_{\q_{-}})$ is contained in the orthogonal complement to $\ker(\Box)_l\subset(\biw^i\q_{-}^*\otimes\g)_l$ by the Hodge-decomposition \eqref{eqHodge}, the result follows. 

\end{proof}
A more detailed relationship of the Cartan connections and  curvatures of the PCQ structures and the associated parabolic quasi-contact cone structures will be obtained in \ref{sec:curvatuer-analysis} by explicit computations for the individual geometries. In particular, we will prove the following proposition in three parts in \ref{sec:235-4th-ODE-quasi-cont}, \ref{sec:36-quasi-cont-from-pair-3rd-ODE} and \ref{sec:general-causal-quasi-cont}.
\begin{proposition}
\label{prop-relharmonics}
Let $(\tau:\mathcal{G}\to \cC,\psi)$ be the regular and normal Cartan geometry associated with a parabolic conformally quasi-symplectic structure and $(\tilde{\tau}:\tilde{\mathcal{G}}\to \tilde{\cC},\tilde{\psi})$ be the regular and normal Cartan geometry associated with the induced parabolic quasi-contact cone structure as in Theorem \ref{thm-quasicont}. Let $\xi$ denote the infinitesimal symmetry on $\tilde{\cC}$, $\alpha$ the quasi-contact form such that $\alpha(\xi)=1$, and $\hat{\xi}$ the lift of the symmetry  to $\hat{\mathcal{G}}=\pi^*\mathcal{G}$. Denote by $\pi\colon\tilde{\cC}\to\cC$, $\hat{\pi}\colon\hat{\mathcal{G}}\to\mathcal{G}$ and $\hat{\tau}:\hat{\mathcal{G}}\to\tilde{\cC}$ the projections and by $\iota:\hat{\mathcal{G}}\to\tilde{\mathcal{G}}$ the natural inclusion. Then one has 
\begin{equation}
  \label{eq:Relation-Cartan-Connections}
  \mathrm{pr}_{\mfk}\circ\iota^*\tilde{\psi}=\hat{\pi}^*\psi+\mathrm{pr}_{\mfk}\circ\iota^*\tilde{\psi}(\hat{\xi})\,\hat{\tau}^*\alpha,
  \end{equation}
  where $\mathrm{pr}_{\mfk}$ denotes the orthogonal projection onto $\mfk\cong\p_{-1}\oplus\p_{0}\subset\g$.
\end{proposition}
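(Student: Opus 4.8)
The plan is to compare the regular normal Cartan connection $\tilde\psi$ of the quasi-contact cone structure with the explicitly constructed (regular but possibly non-normal) connection $\hat\psi$ on $\hat{\mathcal G}=\pi^*\mathcal G$, and then to show that the normalization correction $\phi=\tilde\psi-\hat\psi$ does not affect the identity \eqref{eq:Relation-Cartan-Connections} at the level of the $\mfk$-component along $\hat{\mathcal G}$. First I would recall from the construction preceding the proposition that $\hat\psi=\hat\psi_{-k}+i\circ\hat\pi^*\psi$ with $\hat\psi_{-k}\in\Omega^1(\hat{\mathcal G},\q_{-k})$ proportional to $\hat\tau^*\alpha$. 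Since $i(\mfk)\cong\p_{-1}\oplus\p_0$ and $\q_{-k}\cong\p_{-2}$ are complementary inside $\g$, applying $\mathrm{pr}_{\mfk}$ (the orthogonal projection onto $\p_{-1}\oplus\p_0$ with respect to the inner product of \ref{sec:norm-cond}) to $\iota^*\hat\psi$ kills $\hat\psi_{-k}$ exactly when $\hat\psi_{-k}(\hat\xi)=0$ — but it does not vanish on $\hat\xi$, so instead I get $\mathrm{pr}_{\mfk}\circ\iota^*\hat\psi=\hat\pi^*\psi$ and, evaluating on $\hat\xi$, that $\mathrm{pr}_{\mfk}\circ\iota^*\hat\psi(\hat\xi)=\hat\pi^*\psi(\hat\xi)$. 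Here I would use that $\hat\xi$ is $\hat\pi$-related to the image of $\xi$ and that $\psi$, being a Cartan connection on $\mathcal G\to\mathcal C$, reproduces the characteristic direction; the point is that $\hat\pi^*\psi(\hat\xi)$ has no component that would obstruct rewriting things in the stated form, and in fact the term $\mathrm{pr}_{\mfk}\circ\iota^*\hat\psi(\hat\xi)\,\hat\tau^*\alpha$ coincides with the contribution one has to add so that \eqref{eq:Relation-Cartan-Connections} holds already for $\hat\psi$ in place of $\tilde\psi$.

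The second and main step is to control the correction term $\phi=\tilde\psi-\hat\psi$. By \cite[Proposition 3.1.10]{CS-Parabolic}, $\phi$ is horizontal, $Q$-equivariant and of homogeneity $\geq l\geq 1$; in particular $\phi$ is a one-form with values in $\q_{+}=\q_1\oplus\cdots\oplus\q_k$ modulo higher-homogeneity corrections — more precisely, along $\hat{\mathcal G}$, $\iota^*\phi$ takes values in the part of $\g$ of homogeneity $\geq 1$. The key observation is that $\mathrm{pr}_{\mfk}$ projects onto $\p_{-1}\oplus\p_0$; inspecting the $\q_0$-refinement of the grading from the proof of Proposition \ref{prop-qcontactgrading}, one has $\mfk\cong\p_{-1}\oplus\p_0\subset\q_{-k+1}\oplus\cdots\oplus\q_0$, i.e. $\mfk$ sits entirely in non-positive homogeneity with respect to the $\q$-grading. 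Hence any one-form of homogeneity $\geq 1$ — in particular $\iota^*\phi$, being horizontal it has values in $\q^1\subset\q_{+}$ when restricted to $T^{-1}$, and more generally lands in strictly positive $\q$-filtration degree — is annihilated by $\mathrm{pr}_{\mfk}$. Thus $\mathrm{pr}_{\mfk}\circ\iota^*\phi=0$, which gives $\mathrm{pr}_{\mfk}\circ\iota^*\tilde\psi=\mathrm{pr}_{\mfk}\circ\iota^*\hat\psi$, and evaluating at $\hat\xi$ likewise $\mathrm{pr}_{\mfk}\circ\iota^*\tilde\psi(\hat\xi)=\mathrm{pr}_{\mfk}\circ\iota^*\hat\psi(\hat\xi)$. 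Combining with the identity for $\hat\psi$ from the first step yields \eqref{eq:Relation-Cartan-Connections}.

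I expect the main obstacle to be the homogeneity bookkeeping in the second step: one must be careful that $\phi$, although horizontal and of positive homogeneity with respect to the $\q$-grading, could a priori have a component in $\p_0\subset\mfk$ (since $\p_0$ contains pieces of $\q$ of positive homogeneity, namely $\p_0\cap\q_+$, as well as of homogeneity zero $\q_0$ and negative homogeneity $\mathfrak e$). So the correct statement is not ``$\mfk$ lives in non-positive homogeneity'' but rather that $\mathrm{pr}_{\mfk}$ is an orthogonal projection and $\phi$ has homogeneity $\geq 1$ in the $\q$-grading; I would therefore argue instead that the orthogonal complement $\mfk^{\perp}$ in $\g$ (with respect to $\langle\,,\,\rangle$) contains the image of all horizontal one-forms of homogeneity $\geq l$, using that $\mfk^{\perp}\cong\mathfrak e^\perp\oplus\mfk_1^{\perp}$ absorbs exactly the $\q_{-k}$-part together with the remaining positive-homogeneity pieces, as in the decomposition $\g=\mfk\oplus\mfk^\perp$ recalled just before the proposition. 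One then checks, homogeneity component by homogeneity component and using $Q$-equivariance, that $\iota^*\phi$ is $\mfk^\perp$-valued. A clean alternative, which I would pursue if the direct argument gets delicate, is to appeal to \cite[Proposition 4.3 and Theorem 4.4]{Cap-Cartan} and equation \eqref{curv3}: the correction $\phi$ and its homogeneity-$l$ leading term are governed by $\partial_{\q_-}$-exactness, and $\mathrm{im}(\partial_{\q_-})\perp\ker(\Box)$, but since $\mfk$-valued forms that are $\mathrm{pr}_{\mfk}$-relevant pair trivially with $\mathrm{im}(\partial_{\q_-})$ on the nose in the relevant bidegree, the correction drops out of $\mathrm{pr}_{\mfk}\circ\iota^*\tilde\psi$. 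Either way the conclusion is the asserted formula, with the final additive term simply recording the value of the connection on the lifted symmetry.
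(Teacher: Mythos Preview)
Your proposal has a genuine gap at the central step. You aim to show that the normalization correction $\phi=\tilde\psi-\hat\psi$ satisfies $\mathrm{pr}_{\mfk}\circ\iota^*\phi=0$, but this is false. Look at the paper's explicit formulas, e.g.\ \eqref{eq:Q-contactification-235} in the $G_2$ case: one has $\tilde\omega^0=\omega^0+\tfrac12\scc_0\,\tilde\omega^4$, $\tilde\phi_0=\phi_0-\tfrac14\scc_{0;0}\,\tilde\omega^4$, $\tilde\xi_0=\xi_0+(\cdots)\tilde\omega^4$, and analogous formulas \eqref{eq:Cartan-conn-modif-36-pair-3rd-order-ODE}, \eqref{eq:Cartan-conn-modif-orthopath-to-causal} in the other two cases. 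Since $\omega^0$ lives in $\mathfrak e\subset\p_0\subset\mfk$ and $\phi_0,\xi_0$ live in $\q_0,\mfk_1\subset\mfk$, the correction $\phi$ has nonzero $\mfk$-components. What the identity \eqref{eq:Relation-Cartan-Connections} actually asserts is the weaker statement that $\mathrm{pr}_{\mfk}\circ\iota^*\phi$ is a scalar multiple of $\hat\tau^*\alpha$; the coefficient is then automatically $\mathrm{pr}_{\mfk}\circ\iota^*\tilde\psi(\hat\xi)$ since $\hat\pi^*\psi(\hat\xi)=0$. Your first attempted reason (``$\mfk$ sits in non-positive $\q$-homogeneity'') fails because $\mfk_1=\p_0\cap\q_1\subset\q_1$, as you yourself note; your amended claim (``$\iota^*\phi$ is $\mfk^\perp$-valued'') is precisely what the explicit formulas contradict; and the Hodge-theoretic fallback is a confusion of levels: the orthogonality $\mathrm{im}(\partial_{\q_-})\perp\ker(\Box)$ lives in $\biw^2\q_-^*\otimes\g$ and controls curvatures, not the values of the one-form $\phi$ itself.

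The paper does not give an abstract proof. As stated just before the proposition, it is proved ``in three parts'' in \ref{sec:235-4th-ODE-quasi-cont}, \ref{sec:36-quasi-cont-from-pair-3rd-ODE}, \ref{sec:general-causal-quasi-cont} by direct computation: in each case one writes out the matrix form of $\hat\psi$, imposes the parabolic normality conditions to pass to $\tilde\psi$, and reads off that the $\mfk$-entries of $\iota^*\tilde\psi$ differ from those of $\hat\pi^*\psi$ only by multiples of the quasi-contact form $\tilde\omega^4\propto\hat\tau^*\alpha$. If you want an abstract argument, the task is not to kill $\mathrm{pr}_{\mfk}\circ\iota^*\phi$ but to show it factors through $\hat\tau^*\alpha$; this would require analyzing how the inductive normalization (adding $\partial_{\q_-}$-exact terms homogeneity by homogeneity) interacts with insertion of vectors in $T^{-k+1}\tilde\cC=\ker\alpha$, which your proposal does not address.
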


\section{Curvature analysis of PCQ structures and their quasi-contactification} 
\label{sec:curvatuer-analysis}

In this section we analyze the curvature of parabolic conformally quasi-symplectic  structures. We express the Cartan connection for the quasi-contact cone structures that arise from quasi-contactifying PCQ structures and relate their fundamental invariants. Using the constructed Cartan connection, we identify several distinguished  curvature conditions which can be interpreted as  certain integrability conditions. We give explicit examples and provide parametric expressions for invariants in some local coordinate system. 

\subsection{(2,3,5)-distributions from variational  scalar 4th order ODEs}
\label{sec:2-3-5}
In this section we analyze  the one-to-one correspondence between (2,3,5)-distributions with an infinitesimal symmetry and variational 4th order scalar ODEs. 
\subsubsection{Scalar fourth order ODEs}
\label{sec:fourth-order-scalar}

It was shown in  Theorem \ref{thm-CartanConnection} that a class of PACQ structures is equivalent to regular and normal  Cartan geometries   $(\tau\colon\cG\to\cC,\psi)$ of type $(\mfk,L),$ where $\mfk=\fp^{op}\slash\fp_{-2}\cong\fgl(2,\RR)\ltimes\mathrm{Sym}^3\RR^2,$ in which $\fp^{op}\subset\fg_2^*$ is the opposite contact parabolic subalgebra with the grading $\fp^{op}=\fp_{-2}\oplus\fp_{-1}\oplus\fp_0$ and $L=B\subset\mathrm{GL}(2,\RR)$ is the Borel subgroup (see \eqref{Liealgs} and \eqref{k_g2}).  The Cartan connection for such geometries  can be expressed as
\begin{equation}
   \label{eq:4-ODE-Conn2}
    \def\arraystretch{1.1}
   \psi=\textstyle{
\begin{pmatrix}
  \textstyle{}\phi_0+\mu& \xi_0& 0& 0& 0 \\
\omega^0 & \textstyle{}\phi_1-\phi_0+\mu& 0 & 0&  0\\
\omega^1& \theta^1 & \textstyle{2}\phi_0-\phi_1+\mu & \xi_0 & 0  \\
\omega^2 & -2\omega^1 & 2\omega^0 & \mu & -\xi_0 \\
\omega^3 & \omega^2 & 0 & -2\omega^0 & \phi_1-2\phi_0+\mu\\
\end{pmatrix}}
 \end{equation}
 where $\mu=-\textstyle{\frac 15}\phi_1$. The tangent bundle of $\cC$ has a 4-step filtration
 \begin{equation}   \label{eq:filtration-C-4th-order-ODE}
       T^{-1}\cC\subset\cdots\subset T^{-4}\cC=T\cC
     \end{equation}
     where $T^{-1}\cC=\cV\oplus \cE$     and 
 \begin{equation}
   \begin{gathered}
     \label{eq:filtraction-4th-order-ODE-each-step}
\cE=\tau_*\left(\ker\{\omega^3,\omega^2,\omega^1,\theta^1\}\right)=\tau_*\langle\tfrac{\partial}{\partial\omega^0}\rangle,\quad \cV=\tau_*\left(\ker\{\omega^3,\omega^2,\omega^1,\omega^0\}\right)=\tau_*\langle\tfrac{\partial}{\partial\theta^1}\rangle, \\
     T^{-2}\cC=[T^{-2}\cC,\cV]=[\cE,\cV]=\tau_*\left(\ker\{\omega^3,\omega^2\}\right),\\ T^{-3}\cC=[\cE,T^{-2}\cC]=[T^{-2}\cC,T^{-2}\cC]=\tau_*\left(\ker\{\omega^3\}\right),\quad T\cC=T^{-4}\cC=[\cE,T^{-3}\cC].
      \end{gathered}
 \end{equation}
Assuming regularity and normality, the structure equations for such geometries are as in \eqref{eq:curv-matrix-4-ODE-Conn2} and \eqref{eq:curv-2-forms-4-ODE}. Taking a section $s\colon\cC\to\cG,$ the fundamental invariants, represented as sections of line bundles $(\cV)^{k_1}\otimes(\cE)^{k_2},$  are given as  
\begin{equation}
  \label{eq:fund-inv-4-ODE}
  \begin{gathered}
    \ts \bc_0=s^*\scc_0(\frac{\partial}{\partial s^*\omega^0})^{-2}\otimes(\frac{\partial}{\partial s^*\theta^1})^{-2}, \,\qquad \bc_1=s^*\scc_1(\frac{\partial}{\partial s^*\omega^0})^{-1}\otimes(\frac{\partial}{\partial s^*\theta^1})^{-2},\\
 \ts\bw_0=  s^*\sw_0(\frac{\partial}{\partial s^*\omega^0})^{-4},\quad \bw_1= s^*\sw_1(\frac{\partial}{\partial s^*\omega^0})^{-3},
  \end{gathered}
\end{equation}
where $\frac{\partial}{\partial s^*\omega^0}$ and $\frac{\partial}{\partial s^*\theta^1}$ are vector fields dual to  $s^*\omega^0$ and $s^*\theta^1$ in the coframe $(s^*\omega^0,\cdots,s^*\omega^3,s^*\theta^1)$ for  $\cC.$
As was mentioned in \ref{sec-4thorder}, The splitting of $T^{-1}\cC$ together with the bracket relations
 \eqref{eq:filtraction-4th-order-ODE-each-step} imply that such geometries are locally realizable as   scalar 4th order ODEs under contact equivalence (see \cite{Yamaguchi-Geometrization,Krynski-ODE} for more detail). In \ref{sec:local-form-invar} we give a parametric expression for the fundamental invariants starting from a 4th order ODE.

Lastly, we describe the  canonical conformally almost quasi-symplectic structure on $\cC$. By inspection one can show that there is a unique  2-form $\rho\in \Omega^2(\cG)$ of maximal rank that is semi-basic with respect to the fibration $\tau\colon\cG\to\cC,$   given by
\begin{equation}
  \label{eq:rho-4th-order-ODE-almost-q-symp}
  \rho=\theta^1\w\omega^3-3\omega^1\w\omega^2.
\end{equation}
Consequently, taking any section  $s\colon\cC\to\cG,$  the  conformal class of $\rho$ is well-defined on $\cC$ and defines the canonical almost conformally quasi-symplectic structure $\ell\subset\biw^2(T^*\cC).$ 
Note that the line bundle $\cE,$ defined in  \eqref{eq:filtraction-4th-order-ODE-each-step}, is the characteristic direction of $\ell$ and $T^{-1}\cC=\cE\oplus\cV$ is isotropic with respect to $\ell.$

\subsubsection{Variationality }
\label{sec:inverse-probl-vari}
By Proposition \ref{prop-variational}, we know that a 4th order ODE is variational if and only if $\cC$ is a conformally quasi-symplectic structure, i.e.   $\ell$ 
has a closed representative. Using structure equations \eqref{eq:curv-matrix-4-ODE-Conn2} and \eqref{eq:curv-2-forms-4-ODE},  it follows that
\[\exd\rho=\phi_1\w\rho+ 2\sw_1\omega^0\w\omega^2\w\omega^3+\textstyle{\frac{5}{7}\sw_{1;\uo}}\omega^1\w\omega^2\w\omega^3.\]
Since one has $\ell=[s^*\rho],$ for some section $s\colon\cC\to\cG,$ the condition
\[\sw_1=0\]
implies $\exd\rho=\phi_1\w\rho.$ As a result, $\sw_1=0$ is a necessary condition for $[\rho]$ to have a closed representative. Furthermore, assuming $\sw_1=0,$ if $\rho_0\in[\rho]$ is a closed representative then one has $\exd\rho_0=0,$ which implies that  $\phi_1$ is a closed form. Using the structure equations and assuming $\sw_1=0,$ it follows that   $\exd\phi_1=0$ is equivalent to  
\[ \scc_1=0.\]
Thus, we have  shown the following.
\begin{proposition}[\cite{Fels-ODE}]\label{prop:variationality-Fels-4th-ODE} 
  A scalar 4th order ODE is variational if and only if the vanishing conditions $\bc_1=0,\bw_1=0$ are satisfied.
\end{proposition}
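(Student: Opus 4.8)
The plan is to use Proposition \ref{prop-variational} to reduce variationality to the existence of a closed section of the canonical almost conformally quasi-symplectic line bundle $\ell = [s^*\rho]$, with $\rho = \theta^1\w\omega^3 - 3\omega^1\w\omega^2$ as in \eqref{eq:rho-4th-order-ODE-almost-q-symp}, and then to pin down exactly when such a closed representative exists by analyzing $\exd\rho$ via the structure equations \eqref{eq:curv-matrix-4-ODE-Conn2} and \eqref{eq:curv-2-forms-4-ODE}. First I would compute $\exd\rho$ on $\cG$ using the structure equations: the result has the shape $\exd\rho = \phi_1\w\rho + (\text{terms involving } \sw_1 \text{ and its coframe derivative})$, concretely $\exd\rho = \phi_1\w\rho + 2\sw_1\,\omega^0\w\omega^2\w\omega^3 + \tfrac{5}{7}\sw_{1;\uo}\,\omega^1\w\omega^2\w\omega^3$. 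The key observation is that the ``error'' terms obstructing $\exd\rho \equiv \phi_1\w\rho \pmod{\ell}$ are governed precisely by $\sw_1$, which is the second of the two fundamental invariants $\bw_1$ (up to the line-bundle normalization in \eqref{eq:fund-inv-4-ODE}).

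The argument then splits into the two implications. For \emph{necessity}: if $\ell$ admits a closed representative $\rho_0 = f\rho$ for some nowhere-vanishing $f$, then since $\ell$ is a line bundle, any representative $\rho$ satisfies $\exd\rho = \phi\w\rho$ for a suitable 1-form $\phi$; comparing with the computed $\exd\rho$ forces the non-$\ell$-components, i.e. the $\sw_1$ and $\sw_{1;\uo}$ terms, to vanish, giving $\sw_1 = 0$, hence $\bw_1 = 0$. Assuming now $\sw_1 = 0$, so $\exd\rho = \phi_1\w\rho$, a closed representative exists iff the 1-form $\phi_1$ is exact on a neighborhood, which (since the relevant cohomology is local) holds iff $\phi_1$ is closed; I would then compute $\exd\phi_1$ from the structure equations under the standing assumption $\sw_1 = 0$ and find that $\exd\phi_1 = 0$ is equivalent to $\scc_1 = 0$, i.e. $\bc_1 = 0$. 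For \emph{sufficiency}: assuming $\bc_1 = 0$ and $\bw_1 = 0$, one has $\exd\rho = \phi_1\w\rho$ with $\phi_1$ closed, so locally $\phi_1 = -\exd(\log f)$ for a positive function $f$, and then a direct check shows $\exd(f\rho) = \exd f\w\rho + f\,\phi_1\w\rho = 0$, so $f\rho$ is the desired closed representative and the ODE is variational by Proposition \ref{prop-variational}.

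The routine but slightly delicate bookkeeping step is verifying that the invariant conditions $\sw_1 = 0$, $\scc_1 = 0$ obtained on the total space $\cG$ descend correctly to the conformal-class invariants $\bw_1$, $\bc_1$ on $\cC$; this follows from the equivariance encoded in \eqref{eq:fund-inv-4-ODE}, since $\sw_1$ and $\scc_1$ transform by a nonzero conformal factor under change of section, so their vanishing is section-independent. The main obstacle, and the only place requiring genuine care, is the passage from ``$\phi_1$ closed'' to ``$\ell$ has a closed section'': one must ensure that on a sufficiently small open set the de Rham cohomology class of $\phi_1$ vanishes so that $\phi_1 = -\exd(\log f)$ with $f$ nowhere zero (one can shrink to a contractible neighborhood), and that this $f$ is globally defined on that neighborhood — which is automatic once $\phi_1$ is exact there. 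Since all statements in Proposition \ref{prop:variationality-Fels-4th-ODE} are local this causes no difficulty, and indeed this is exactly the content of \cite[Lemma 4.3]{Fels-ODE}, which I would cite for the precise local normal form.
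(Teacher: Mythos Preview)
Your proposal is correct and follows essentially the same route as the paper's proof: compute $\exd\rho$ to isolate the $\sw_1$-obstruction, then under $\sw_1=0$ check that closedness of $\phi_1$ is equivalent to $\scc_1=0$. You are slightly more explicit about the sufficiency direction (constructing the primitive $f$ with $\phi_1=-\exd(\log f)$) and about the descent of the invariants to $\cC$, but these are minor elaborations on what the paper leaves implicit.
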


\begin{remark}\label{rmk:235-variationality-subFinsler}
  Geometrically one can interpret variational 4th order ODEs as the Euler-Lagrange equations of a  variational problem, in the sense of Griffiths \cite{Hsu-VarCal,Griffiths-EDS}, for some  geometric structure.  
  In \cite{Ivey-ODE} Griffiths' formalism is used to relate sub-Finsler structures on contact 3-folds to variational 4th order ODEs. It turns out that with some more work one can establish a one-to-one correspondence between variational 4th order ODEs and the geometry of sub-Finsler structures on contact 3-folds under \emph{divergence equivalence}. See  \ref{sec:div-equiv-pseudo-Finsler-as-var-orthopath} for a discussion on the notion of divergence equivalence. However, in this article we will not elaborate on this aspect of variational 4th order ODEs. 
\end{remark}

\subsubsection{(2,3,5)-distributions}
\label{sec:2-3-5-1}

Given a regular and  normal Cartan geometry $(\tpi\colon\tcG\to\tcC,\tpsi)$ of type $(\fg^*_2,P_{12}),$ the connection form $\tpsi$ can be written as
\begin{equation}
   \label{eq:G2-CarNur-Conn2}
    \def\arraystretch{1.1}
   \tpsi=\textstyle{
\begin{pmatrix}
  \textstyle{}\tphi_0& \txi_0& \txi_1& 2\txi_2& \txi_3& \txi_4& 0\\
\tomega^0 & \textstyle{}\tphi_1-\tphi_0& \tgamma_1 & -\txi_1&  \txi_2 & 0 & -\txi_4\\
\tomega^1& \ttheta^1 & \textstyle{2}\tphi_0-\tphi_1 & \txi_0 & 0  &-\txi_2 & -\txi_3\\
\tomega^2 & -2\tomega^1 & 2\tomega^0 & 0 & -\txi_0 & \txi_1 & -2\txi_2\\
\tomega^3 & \tomega^2 & 0 & -2\tomega^0 & \tphi_1-2\tphi_0 & -\tgamma_1 & -\txi_1\\
\tomega^4 & 0 & -\tomega^2 & 2\tomega^1 & -\ttheta^1 & \tphi_0-\tphi_1 & -\txi_0\\
0 & -\tomega^4 & -\tomega^3 & -\tomega^2& -\tomega^1 & -\tomega^0  &-\tphi_0
\end{pmatrix}}
 \end{equation}
which is $\mathfrak g^*_2$-valued. Here  $\fg^*_2$ is defined  using the following inner product  and 3-form
\[\langle v,v\rangle=2(v_1v_7+v_2v_6+v_3v_5)+v_4^2,\qquad \Phi(v,v,v)=2v_{237}+v_{345}+v_{246}-v_{147}+v_{156},\]
where $v_{ijk}:=v_i\w v_j\w v_k.$
Using the connection form $\tpsi,$ the corresponding 5-step filtration 
 \begin{equation}   \label{eq:filtration-tC-G2-P12}
       T^{-1}\tcC\subset\cdots\subset T^{-5}\tcC=T\tcC,
     \end{equation}
     where $T^{-1}\tcC=\tcV\oplus \tcE,$     is expressed as
\begin{equation}
   \label{eq:filtration-G2P12-5grading}
   \begin{gathered}  
\tcE=\ttau_*\left(\Ker\{\tomega^4,\tomega^3,\tomega^2,\tomega^1,\ttheta^1\}\right)=\ttau_*\langle\tfrac{\partial}{\partial\tomega^0}\rangle,\quad \tcV=\ttau_*\left(\Ker\{\tomega^4,\tomega^3,\tomega^2,\tomega^1,\tomega^0\}\right)=\ttau_*\langle\tfrac{\partial}{\partial\ttheta^1}\rangle, \\
T^{-2}\tcC=[\tcE,\tcV]=\ttau_*\left(\Ker\{\tomega^4, \tomega^3,\tomega^2\}\right),\quad      T^{-3}\tcC=[\tcE,T^{-2}\tcC]=\ttau_*\left(\Ker\{\tomega^4, \tomega^3\}\right),\\
T^{-4}\tcC=[\tcE,T^{-3}\tcC]=\ttau_*\left(\Ker\{\tomega^4\}\right),\quad T^{-5}\tcC=[\tcV,T^{-4}\tcC]=[T^{-2}\tcC,T^{-3}\tcC]=T\tcC.
      \end{gathered}
    \end{equation}
To describe the harmonic curvature of such geometries define
\[
\begin{aligned}
\tTheta^1&=\exd\tilde\theta^1+\tilde\xi_3\w\tilde\omega^4-3\tilde\xi_0\w\tilde\omega^1+(3\tilde\phi_0-2\tilde\phi_1)\w\tilde\theta^1.
\end{aligned}
\]
It follows that
\begin{equation}
  \label{eq:a0-str-eqn-theta1}
  \begin{aligned}
  \tTheta^1&\equiv a_0\tilde\omega^0\w\tilde\omega^3\quad\mod\quad\{\tilde\theta^1,\tilde\omega^1,\tilde\omega^2,\tilde\omega^4\}
\end{aligned}
\end{equation}
and, taking a section $s\colon\tcC\to\tcG,$  
\begin{equation}
  \label{eq:a0-harmonic-inv-G2P12}
  \ba_0:= s^*a_0(\ts\frac{\partial}{\partial s^*\tomega^0})^{-4}\in \Gamma( \tcE)^{-4}
\end{equation}
  represents the harmonic curvature of a $(\fg^*_2,P_{12})$ geometry. 
By Proposition \ref{prop-qcontactcone}, such geometries always arise as the correspondence space of regular and normal Cartan geometries associated to (2,3,5)-distributions.  The corresponding (2,3,5)-distribution is defined on  the local leaf space
\begin{equation}
  \label{eq:tM-235-mfld}
  \tnu\colon\tcC\to \tM=\tcC\slash\cI_{\tcV},
  \end{equation}
  where $\cI_{\tcV}$ denotes the foliation of $\tcC$ by the integral curves of  $\tcV.$ Since $[\tcV,T^{-2}\tcC]\subset T^{-2}\tcC,$ the rank 2 distribution on $\tM$ is given by  $\tscD=\tnu_*(T^{-2}\tcC)\subset T\tM.$ Conversely, starting from a (2,3,5)-distribution, one has $\cC\cong\PP\tscD,$ where $\PP\tscD$ is the $\PP^1$-bundle over $\tM$ whose fiber at $p\in\tM$ is the projective line $\PP\tscD_p$.

  The harmonic invariant of a (2,3,5)-distribution, referred to as the \emph{Cartan quartic,} is 
\begin{equation}
  \label{eq:Cartan-quartic-235}
  \bC=a_0(\tomega^0)^4+4a_1\tomega^1(\tomega^0)^3+6a_2(\tomega^1)^2(\tomega^0)^2+4a_3(\tomega^1)^3\tomega^0+a_4(\tomega^1)^4\in \Gamma(\text{Sym}^4(\tscD^*)),
\end{equation}
where, we have suppressed denoting the pull-back by a section $s\to\tM\to\tcG,$ and
\begin{equation}
  \label{eq:Cartan-quartic-coeffs}
  a_1=\textstyle{\frac 14\frac{\partial}{\partial\ttheta^1} a_{0},\quad a_2=\frac{1}{12}\frac{\partial^2}{(\partial\ttheta^1)^2}a_{0},\quad a_3=\frac{1}{24}\frac{\partial^2}{(\partial\ttheta^1)^3}a_{0},\quad a_4=\frac{1}{24}\frac{\partial^2}{(\partial\ttheta^1)^4}a_{0}}.
  \end{equation}
Equivalently, the formula above  infinitesimally expresses the fact that the coefficients of the Cartan quartic can be obtained via the action of the subdiagonal element of  $\mathrm{GL}(2,\RR)\subset P_1$ on $a_0$ where $P_1\subset \mathrm{G}_2^*$ is the first parabolic subgroup.

Using the embedding $\fg_2^*\subset\mathfrak{so}(3,4),$ it is known  \cite{Nurowski-G2} that the Cartan connection \eqref{eq:G2-CarNur-Conn2} defines a normal conformal connection  for the conformal structure $[s^*\tg]\subset\mathrm{Sym}^2 T^*\tM$ for some section $s\colon \tM\to\tcG$ where 
\begin{equation}
  \label{eq:235-Nur-conf-str}
\tg=2\tilde\omega^4\circ\tilde\omega^0+2\tilde\omega^3\circ\tilde\omega^1+\tilde\omega^2\circ\tilde\omega^2\in\Gamma(\mathrm{Sym}^2T^*\tcG).
\end{equation}
The conformal holonomy of such conformal structures is reduced to $\mathrm{G}^*_2$.

\subsubsection{Quasi-contactification and  various curvature conditions}
\label{sec:235-4th-ODE-quasi-cont}
Now we relate the Cartan connections of a (2,3,5)-distribution with an infinitesimal symmetry and a variational 4th order ODE. Subsequently, we relate their Cartan curvatures and study a variety a geometric properties arising from    additional curvature conditions.

\begin{proof}[Proof of Porposition \ref{prop-relharmonics}; first part]
  Let $\rho_0\in\Gamma(\ell)$ be a quasi-symplectic representative. Then, as was mentioned in \ref{thm-quasicont}, locally, there is a primitive 1-form $\beta_0\in \Omega^1(\cC)$ for $\rho_0,$ i.e. $\exd\beta_0=\rho_0.$ Define $\omega^4=\exd t+\pi^*\beta_0\in \Omega^1(\tcC),$ where $\pi\colon\tcC:=\RR\times\cC\to\cC$ is the  projection map and $t$ is a coordinate on $\RR.$ Since the scaling action of the structure group on $\rho_0$ induces a scaling action on $\beta_0,$ the scaling action can be extended to $\omega^4$ so that its lift, $\tomega^4\in \Omega^1(\pi^*\cG),$ transforms equivariantly along the fibers of the  pull-back bundle $\pi^*\cG\to\tcC.$ It is straightforward to check that the coframing $\hat\psi:=(\tomega^4,\pi^*\psi)$ defines the Cartan connection for a Cartan geometry  $(\hat\tau_0\colon\pi^*\cG\to\tcC,\hat\psi)$ of type $(\fp^{op},B),$ where $\fp^{op}\subset\fg_2^*$ is the opposite contact parabolic subalgebra and $B\subset\mathrm{GL}(2,\RR)$ is the Borel subgroup. Furthermore, the vector field $\frac{\partial}{\partial\omega^4}=\tfrac{\partial}{\partial t}$ is an infinitesimal symmetry for this Cartan geometry.

  Using the filtration \eqref{eq:filtration-C-4th-order-ODE} and the construction of the Cartan connection $\hat\psi,$ one obtains that $T\tcC$ has a 5-step filtration as in \eqref{eq:filtration-tC-G2-P12}. Hence one can associate to such filtraction  a regular and normal Cartan geometry $(\ttau\colon\tcG\to\tcC,\tpsi)$ of type $(\fg^*_2,P_{12}).$ To relate  $\tpsi$ and $\hat\psi,$ one first finds an equivariant extension of $\hat\psi$ to $\tcG\to\tcC.$ Subsequently,  using  structure equations \eqref{eq:curv-2-forms-4-ODE-variational}, one imposes the appropriate normality conditions on   the extended $\hat\psi$ in order  to obtain  $\tpsi.$ It is a matter of computation, using the matrix forms \eqref{eq:G2-CarNur-Conn2} and \eqref{eq:4-ODE-Conn2}, to show that
 $\pr_\fk\circ\iota^*\tpsi$ and $\hat\pi^*\psi$ are related as follows 
  \begin{equation}
  \label{eq:Q-contactification-235}
  \begin{gathered}
  \tilde\omega^a=\omega^a,\quad \tilde\theta^1=\theta^1,\quad \tilde\omega^0=\omega^0+\textstyle{\frac{1}{2} \scc_{0}}\tilde\omega^4,\quad 
 \tilde\phi_1 = \phi_1,\\
  \tilde\phi_0 = \phi_0-\textstyle{ \frac{1}{4}} \scc_{0;0}\tilde\omega^4,\quad \tilde\xi_0 = \xi_0+ \textstyle{(\frac{1}{24}\sw_{0;\uo\uo}-\frac{1}{4}\scc_{0;00})}\tilde\omega^4,
\end{gathered}
\end{equation}
 wherein  by abuse of notation the pull-back $\pi^*$ is dropped.
The entries of $\iota^*\tpsi$  that are not included in $\fk$ can be expressed similarly but we will not provide them here since the expressions are rather long and not essential for what follows.
\end{proof}
To state the main corollary of this section we recall that the Lie algebra $\fk=\mathrm{Sym}^3(\RR^2)\rtimes\fgl_2(\RR)$  has the grading
 $\fk=\fk_{-4}\oplus\cdots\oplus\fk_{1}$ where $\fk_{-1}=\fv\oplus\fe.$
Define the Lie algebras $\fx,\fy\subset\fk$ as 
\begin{equation}
  \label{eq:LieAlg-fx-fy-4th-ODE}  
  \fx=\fv\oplus\fk_{0}\oplus\fk_{1},\quad \fy=\fk_{-2}\oplus\fv\oplus\fk_0\oplus\fk_1.
\end{equation}
The Lie groups $X$ and $Y$ with Lie algebras $\fx$ and $\fy$ are
\begin{equation}
  \label{eq:235-corr-X-Y}
  \begin{aligned}
    X&=B\ltimes \mathrm{Sym}^3(\RR^2)^{-1},\quad
    Y&=B\ltimes \mathrm{Sym}^3(\RR^2)^{-2},
  \end{aligned}
\end{equation}
where $B\subset\mathrm{GL}(2,\mathbb{R})$ is the Borel subgroup stabilizing the filtration $\mathrm{Sym}^3(\RR^2)^{-1}\subset \mathrm{Sym}^3(\RR^2)^{-2}\subset \mathrm{Sym}^3(\RR^2)^{-3}\subset \mathrm{Sym}^3(\RR^2)^{-4}=\mathrm{Sym}^3(\RR^2)$.
\begin{corollary}\label{cor:235-from-4th-ODE-various-curv-conditions}
Let $(\ttau\colon\tcG\to\tcC,\tpsi)$ be a regular and normal Cartan geometry  of type  $(\fg^*_2,P_{12})$ with an infinitesimal symmetry and $(\tau\colon\cG\to\cC,\psi)$ be the  Cartan geometry for its corresponding  variational scalar 4th order ODE  induced on the leaf space of the integral curves of the infinitesimal symmetry via the quotient map $\pi\colon\tcC\to\cC.$ Then one has
\begin{equation}
  \label{eq:a_0-w_0-relation}
  \ba_0=\pi^*(\bw_0),
  \end{equation}
  where $\ba_0$ is given in \eqref{eq:a0-harmonic-inv-G2P12} and $\bw_0$ is the second generalized Wilczy\'nski invariant of the ODE as in \eqref{eq:fund-inv-4-ODE}. Furthermore,  for the naturally induced (2,3,5)-distribution on the  leaf space $\tM,$ as defined in \eqref{eq:tM-235-mfld}, the following holds.
  \begin{enumerate}
  \item  The infinitesimal symmetry of the (2,3,5)-distribution is null with respect to its canonical conformal structure if and only if $\scc_0=0$ for the corresponding variational 4th order ODE. In this case the Cartan quartic \eqref{eq:Cartan-quartic-235} of the (2,3,5)-distribution  has a repeated root of multiplicity at least 2 and $\tM$ is foliated by 3-dimensional submanifolds with null conormal bundle.
  \item A variational 4th order  ODE corresponds to a Cartan geometry $(\tau_1\colon\cG\to M,\psi)$ of type $(\fk,X),$ where $M\cong J^2(\RR,\RR),$ if and only if  relation \eqref{eq:Relation-Cartan-Connections} becomes
    \begin{equation}
      \label{eq:tpsi-psi-0-235-J2}      
      \pr_\fk\circ\iota^*\tpsi-\hat\pi^*\psi=0,
          \end{equation}
i.e.  the invariant conditions $\scc_0=\sw_{0;\uo\uo}=0$ hold. In this case the Cartan quartic of the corresponding (2,3,5)-distribution has a repeated root of multiplicity at least 3.
\item  A variational 4th order  ODE defines a Cartan geometry $(\tau_2\colon\cG\to M_1,\psi)$ of type $(\fk,Y),$ where $M_1\cong J^1(\RR,\RR),$ if and only if the invariant conditions $\scc_0=\sw_{0;\uo}=0$ hold. In this case   the Cartan holonomy of the corresponding (2,3,5)-distribution is  a proper subgroup of the  contact parabolic subgroup $P\subset\rG^*_2$   and the Cartan quartic  has a repeated root of multiplicity at least 4.
\item Variational 4th order ODEs whose quasi-contactification is the flat (2,3,5)-distribution are in one-to-one correspondence with the 1-parameter family of 4-dimensional torsion-free  $\mathrm{GL}(2,\RR)$-structures  with symmetric Ricci curvature. 
\end{enumerate}
\end{corollary}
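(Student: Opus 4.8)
The plan is to characterize the condition that the quasi-contactification $(\tcG\to\tcC,\tpsi)$ is the \emph{flat} $(2,3,5)$-distribution, i.e. $\tilde\kappa\equiv 0$, purely in terms of the Cartan curvature of the PCQ structure $(\cG\to\cC,\psi)$, and then to identify what remains on the leaf space $M\cong J^2(\RR,\RR)$. First I would use Proposition \ref{prop-relharmonics} (first part, proved just above via \eqref{eq:Q-contactification-235}) together with the explicit relation \eqref{eq:Relation-Cartan-Connections} and the formulas \eqref{eq:Q-contactification-235} to compute $\tilde\Psi$ in terms of $\Psi$. Vanishing of $\tilde\kappa$ forces vanishing of its lowest-homogeneity part, which by Proposition \ref{prop-relharmonics} is $\hat\pi^*(\pr_{\ker(\Box)_l}\mathrm{gr}_l(\kappa))$; since for $(\fk,L)$ of type $(G_2,P_{12})$ the harmonic curvature consists of the components $\bc_0$, $\bc_1$, $\bw_0$, $\bw_1$, and a fortiori $\bw_1=\bc_1=0$ by variationality (Proposition \ref{prop:variationality-Fels-4th-ODE}), flatness of the $(2,3,5)$-distribution is equivalent to $\bw_0=0$ by \eqref{eq:a_0-w_0-relation} together with checking that the remaining (non-lowest-homogeneity) parts of $\tilde\kappa$ are then algebraically forced to vanish. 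The residual datum is therefore $\bc_0$ (equivalently $\scc_0$ and its coframe derivatives), which is precisely a section on $\cC$ not yet killed; projecting to $M$ we expect the structure to be a torsion-free $\mathrm{GL}(2,\RR)$-structure whose only invariant is essentially $\bc_0$ and its derivatives.

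Concretely, I would proceed as follows. Step 1: set $\bw_0=0$ in the structure equations \eqref{eq:curv-2-forms-4-ODE-variational}, so together with $\bw_1=\bc_1=0$ the only surviving curvature of $\psi$ is governed by $\scc_0$. Step 2: descend to $M\cong J^2(\RR,\RR)$ as in part (2) of the corollary — the condition $\scc_0=\sw_{0;\uo\uo}=0$ there gave type $(\fk,X)$; here instead we do \emph{not} impose $\scc_0=0$, but the vanishing $\bw_0=0$ (hence $\bw_{0;\uo}=\bw_{0;\uo\uo}=0$) means the $X$-reduction persists with a non-flat connection, i.e. we get a Cartan geometry of type $(\fk,X)$ on $M$ whose curvature is carried entirely by $\scc_0$. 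Since $X=B\ltimes\mathrm{Sym}^3(\RR^2)^{-1}$ and $K=\mathrm{GL}(2,\RR)\ltimes\mathrm{Sym}^3(\RR^2)$, a Cartan geometry of type $(\fk,X)$ on a $4$-manifold is exactly a $\mathrm{GL}(2,\RR)$-structure in dimension $4$ (as noted in the Remark after Lemma \ref{lem-kL}, $K/P_0$ is the homogeneous model of a $4$-dimensional $\mathrm{GL}(2,\RR)$-structure). Step 3: identify the normalization/torsion: the condition $\bc_1=0$ is exactly the torsion-freeness of the induced $\mathrm{GL}(2,\RR)$-structure, so the structure is torsion-free and its curvature is the single irreducible piece $\scc_0$, which one checks (via the second Bianchi identity applied to the structure equations with $\bw_0=\bc_1=0$) reduces to the symmetric part of the Ricci tensor — the antisymmetric Ricci and the Weyl-type component being forced to vanish. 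Step 4: count parameters. With $\bw_0=\bc_1=0$, the prolongation of the structure equations leaves $\scc_0$ and a chain of its coframe derivatives $\scc_{0;0},\scc_{0;00},\dots$ subject to one finite-type system; Cartan's test (equivalently, analyzing the reduced structure equations in involution) should yield that the general solution depends on $2$ functions of suitably few variables — matching the stated ``$2$-parameter family'' — where the count comes from the dimension of the space of admissible $\scc_0$ modulo the residual gauge freedom, exactly as in the $\mathrm{GL}(2,\RR)$-structure classifications of Bryant.

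The main obstacle I anticipate is Step 2–3, namely verifying that $\bw_0=0$ (and its consequences) really does force the full $X$-reduction to descend to $M$ and that the resulting $\mathrm{GL}(2,\RR)$-structure is torsion-free with \emph{symmetric} Ricci — this requires carefully tracking which prolongation variables survive and showing, via the Bianchi identities, that all non-Ricci curvature components of the descended connection are expressible in terms of (derivatives of) $\scc_0$ in a way that makes them vanish. A secondary subtlety is the ``$2$-parameter family'' count: one must be precise about what moduli are being counted (local normal forms up to contact equivalence of the ODE, equivalently up to isomorphism of the $\mathrm{GL}(2,\RR)$-structure) and invoke the known classification of torsion-free $\mathrm{GL}(2,\RR)$-structures with symmetric Ricci in dimension $4$ rather than re-deriving it; I would cite Bryant's exterior-differential-systems analysis and merely match the Cartan geometry of type $(\fk,X)$ with that classification. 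The remaining steps are essentially bookkeeping with the structure equations already assembled in the Appendix.
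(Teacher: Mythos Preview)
Your approach has a genuine structural error: you are descending to the wrong leaf space. The $4$-dimensional $\mathrm{GL}(2,\RR)$-structure in part (4) does \emph{not} live on $M=\cC/\cI_{\cV}\cong J^2(\RR,\RR)$ (the quotient by the vertical bundle $\cV$, as in part (2)); it lives on the \emph{solution space} $S=\cC/\cI_{\cE}$, the leaf space of the line field $\cE$ of solution curves, via $\lambda\colon\cC\to S$. The descent in part (2) to $M$ genuinely requires $\scc_0=0$, and you cannot simply drop that hypothesis and still obtain a Cartan geometry of type $(\fk,X)$ on $M$; the curvature is not horizontal for $\tau_1$ when $\scc_0\neq 0$, as is visible from \eqref{eq:curv-2-forms-4-ODE-variational}. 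By contrast, the descent to $S$ is governed by the vanishing of the generalized Wilczy\'nski invariants $\bw_0,\bw_1$: it is a classical fact (see \cite{Bryant-exotic,CDT-ODE}) that $\bw_0=\bw_1=0$ is exactly the condition that the $4$th order ODE induces a torsion-free $\mathrm{GL}(2,\RR)$-structure on its $4$-dimensional solution space. So the roles you assign to the invariants are reversed: $\bw_0=\bw_1=0$ gives torsion-freeness of the $\mathrm{GL}(2,\RR)$-structure on $S$, while the remaining variationality condition $\bc_1=0$ is precisely the condition that the Ricci curvature of the induced connection on $S$ is symmetric (this is a direct computation, cf.\ \cite{N-ODE}).

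Once you redirect the argument to $S$, the logic is short. Flatness of the $(2,3,5)$-distribution is equivalent to $\ba_0=0$, hence by \eqref{eq:a_0-w_0-relation} to $\bw_0=0$; together with variationality ($\bw_1=\bc_1=0$) this gives a torsion-free $\mathrm{GL}(2,\RR)$-structure on $S$ with symmetric Ricci. These are among the special symplectic connections of \cite{CS-special} (equivalently, PCS structures in the sense of \cite{CS-cont1}), and the $2$-parameter count is the known local generality of this class from \cite{Bryant-exotic,CS-special}, not something to be extracted from an involutivity analysis of the $\scc_0$-prolongation on $M$. Your Steps 2--4 as written would not produce this, because the curvature component $\scc_0$ that survives is exactly the obstruction to descending in the $\cV$-direction, whereas it is perfectly compatible with descending in the $\cE$-direction.
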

\begin{proof}
  The first claim  that  the fundamental invariants are related via the projection map $\pi$ can be obtained by replacing expressions \eqref{eq:Q-contactification-235} in $\tpsi$ given by \eqref{eq:G2-CarNur-Conn2} and computing $\ba_0$ defined in \eqref{eq:a0-harmonic-inv-G2P12} and \eqref{eq:a0-str-eqn-theta1}.
  
Using the connection forms \eqref{eq:Q-contactification-235} for $\iota^*\tpsi$ in Proposition \ref{prop-relharmonics}  and relation \eqref{eq:Cartan-quartic-coeffs}, it follows that  the coefficients of the Cartan quartic \eqref{eq:Cartan-quartic-235} are
\begin{equation}
  \label{eq:coeffs-Cartan-Quartic}
  a_0=\sw_0,\quad a_1=\textstyle{\frac 14 \sw_{0;\uo},\quad a_2=\frac{1}{12}\sw_{0;\uo\uo},\quad a_3=\frac{1}{24}\sw_{0;\uo\uo\uo},\quad a_4=\frac{1}{24}\sw_{0;\uo\uo\uo\uo}}.
  \end{equation}
Moreover, for variational 4th order ODEs one has
\[\sw_{0;\uo\uo\uo\uo\uo}=0.\]
To show \emph{(1)},  firstly note that   Proposition \ref{prop-relharmonics} and expressions \eqref{eq:Q-contactification-235}  imply that the canonical bilinear form $\iota^* \tg$ in \eqref{eq:235-Nur-conf-str} is given by
\begin{equation}
  \label{eq:h_0-235-quasi-cont}
\iota^*\tg= 2\omega^4\circ\omega^0+2\omega^3\circ\omega^1+\omega^2\circ\omega^2+\scc_0\omega^4\circ\omega^4\in\Gamma(\mathrm{Sym}^2T^*\pi^*\cG).
\end{equation}
As discussed in the proof of Proposition \ref{prop-relharmonics}, the vector field $\tfrac{\partial}{\partial\omega^4}$ is the infinitesimal symmetry of a $(\fg^*_2,P_{12})$ Cartan geometry arising from quasi-contactifying a variational 4th order ODE. As a result, from \eqref{eq:h_0-235-quasi-cont} it follows that $\scc_0=0$ is equivalent to the nullity of  $\tfrac{\partial}{\partial\omega^4}$ with respect to $\tg.$

From structure equations \eqref{eq:curv-2-forms-4-ODE-variational} and  expressions \eqref{eq:Q-contactification-235} it is clear that    $\scc_0=0$ is equivalent to the integrability of the rank 3 Pfaffian system $\iota^*I,$ where
\begin{equation}
  \label{eq:I-4th-ODE-3integrability-c0}
  I=\{\tomega^3,\tomega^0\}.
  \end{equation}
Thus,   $\tM$ is foliated by 3-dimensional submanifold whose conormal bundle, given by $\langle s^*\tomega^3,s^*\tomega^0\rangle$ for a section $s\colon\tM\to\tcG,$ is   null with respect to the  conformal structure induced by $\tg$ on $T^*\tM.$

Lastly, it is a straightforward computation to show that for variational 4th order ODEs  $\scc_0=0$ implies $ \sw_{0;\uo\uo\uo}=0.$
Thus, using \eqref{eq:coeffs-Cartan-Quartic}, the Cartan quartic \eqref{eq:Cartan-quartic-235} has a repeated root of multiplicity at least  2.

Part \emph{(2)} follows from requiring $\pr_{\fk}\circ\iota^*\tpsi=\hat\pi^*\psi$ using  \eqref{eq:Q-contactification-235} and the fact that  $\scc_0=\sw_{0;\uo\uo}=0$ are necessary and sufficient conditions for the Cartan curvature  \eqref{eq:curv-matrix-4-ODE-Conn2} to vanish upon insertion of  $\tfrac{\partial}{\partial\theta^0}.$ 
 By \eqref{eq:coeffs-Cartan-Quartic}, in this case the Cartan quartic has a root of multiplicity at least 3. Lastly, checking the identity $\exd^2=0$ for structure equations, it follows that in this case the only non-zero entries  in the Cartan curvature  \eqref{eq:curv-matrix-4-ODE-Conn2}  are given by
 \begin{equation}
   \label{eq:Cartan-Curv-J2-4-ODE}
      \begin{gathered}
     \Theta^1=\sw_0\omega^0\w\omega^3+\sx_{12}\omega^1\w\omega^3+\sx_{19}\omega^2\w\omega^3,\quad \Phi_0=\sx_{12}\omega^0\w\omega^3,\\ \Xi_1=\sx_{12}\omega^0\w\omega^2+\sx_{19}\omega^0\w\omega^3+\sx_{22}\omega^2\w\omega^3.
      \end{gathered}
 \end{equation}
As a result, the curvature 2-form is horizontal with respect to the fibration $\tau_1\colon\cG\to M$ defined by the quotient map $\nu_{1}\colon \cC\to \cC\slash \cI_{\cV}$ where $\cI_\cV$ is the foliation of $\cC$ by the integral curves of $\cV=\tau_*\langle\tfrac{\partial}{\partial\theta^1}\rangle$ as given in \eqref{eq:filtraction-4th-order-ODE-each-step}. Defining $\tau_1=\nu\circ\tau,$ it follows that $(\tau_1\colon\cG\to M,\psi)$ is a Cartan geometry of type $(\fk,X),$ where $X$ is defined in \eqref{eq:235-corr-X-Y}. Lastly, because $[\cV,T^{-k}\cC]\subset T^{-k}\cC$ for $k\geq2,$ the 4-step filtration \eqref{eq:filtration-C-4th-order-ODE} of $T\cC$ descends to a 3-step filtration $T^{-1}M\subset T^{-2}M\subset T^{-3}M=TM$  where $T^{-k}M:=\tau_{1*}T^{-k+1}\cC$ for $1\geq k\geq 3,$ with the property that $\cV_1=\tau_{1*}\langle\tfrac{\partial}{\partial\omega^1}\rangle$ is a well-defined  direction in $T^{-1}M$ and 
 \begin{equation}
   \begin{gathered}
     \label{eq:3step-filtaction-of-J2}
     T^{-2}M=[T^{-1}M,\cV_1]=[\cV_1,T^{-2}M],\quad      T^{-3}M=[T^{-2}M,T^{-2}M].
      \end{gathered}
    \end{equation}
It is a classical fact that any manifold with  3-step filtration \eqref{eq:3step-filtaction-of-J2} is locally isomorphic to $J^2(\RR,\RR)$.

Part \emph{(3)} is shown by first noting that by part \emph{(2)} the conditions  $\scc_0=\sw_{0;\uo\uo}=0$ are necessary to descent the geometry to $M$ as a result of which the non-zero curvature 2-forms are  as in \eqref{eq:Cartan-Curv-J2-4-ODE}. Furthermore, it can be checked that $\sx_{12}=\tfrac{1}{4}\sw_{0;\uo}.$ Thus, $\sw_{0;\uo}=0$ is necessary to obtain a Cartan geometry of type $(\fk,Y)$ on the local leaf space $\nu_{2}\colon M\to M_1=M\slash \cI_{\cV_1}$ where $\cI_{\cV_1}$ is the foliation of $M$ by the integral curves of $\cV_1=\tau_{1*}\langle\tfrac{\partial}{\partial\omega^1}\rangle$ and $Y$ is given in \eqref{eq:235-corr-X-Y}. From part \emph{(2)} it follows that the only obstruction for $\iota^*\tpsi$ to be  $\fp^{op}$-valued is the vanishing of $\txi_3$ given by
\begin{equation}
  \label{eq:txi3-235-4th-ODE-quasi-cont}
  \txi_3=-\tfrac{1}{4}\sw_{0;\uo}\omega^0-\tfrac{1}{12}\sw_{0;\uo 2}\omega^3.
\end{equation}
Thus, the conditions  $\scc_0=\sw_{0;\uo}=0$ imply that $\iota^*\tpsi$ is $\fp^{op}$-valued and, therefore, the Cartan holonomy is reduced to the contact parabolic subgroup $P\subset\mathrm{G}_2^*.$     Checking the identity $\exd^2=0$ for the structure equations after setting $\sw_{0;\uo}=0$, it follows that the only non-zero entries  in the Cartan curvature  \eqref{eq:curv-matrix-4-ODE-Conn2}  are given by
\begin{equation}
  \label{eq:4-ODE-holonomy-reductio-str-eqns}
     \begin{gathered}
     \Theta^1=\sw_0\omega^0\w\omega^3+\sx_{19}\omega^2\w\omega^3,\quad \Xi_1=\sx_{19}\omega^0\w\omega^3.
      \end{gathered}
\end{equation}
    As a result, the curvature 2-form is semi-basic  with respect to the fibration $\tau_2\colon\cG\to M_1.$ It follows that $(\tau_2\colon\cG\to M_1,\psi)$ is a Cartan geometry of type $(\fk,Y).$ Furthermore,  $TM_1$ has a contact distribution given by $\Ker\langle\omega^3\rangle$ and therefore is locally isomorphic to $J^1(\RR,\RR)$.    By \eqref{eq:coeffs-Cartan-Quartic}, in this case the Cartan quartic has a root of multiplicity at least 4.

Part \emph{(4)} follows from \eqref{eq:a_0-w_0-relation} since $\ba_0=0$ implies flatness of the (2,3,5)-distributions and, therefore, flat (2,3,5)-distributions are in one-to-one correspondence with 4th order ODEs satisfying $\scc_1=\sw_0=\sw_1=0$. Furthermore, 4th order ODEs satisfying  $\sw_1=\sw_0=0$ define integrable $\mathrm{GL}(2,\RR)$-structure \cite{Bryant-exotic,CDT-ODE} on the solution space of the ODE, denoted by $S,$ which is defined as the local leaf space $\lambda\colon\cC\to\cC/I_\cE$ where $I_\cE$ is the foliation of $\cC$ by the integrable curves of $\cE,$ i.e. the solution curves of the 4th order ODE.  The local generality of such $\mathrm{GL}(2,\RR)$-structures which additionally satisfy $\scc_1=0$ depends on  one parameter \cite{CS-special}. It is a matter of straightforward computation to show that  among 4-dimensional torsion-free $\mathrm{GL}(2,\RR)$-structures with connection $\nabla,$ the Ricci curvature of $\nabla$ is symmetric if and only if $\scc_1=0$ (e.g. see \cite[Example 1]{N-ODE}). Lastly, we point out that this class of $\mathrm{GL}(2,\RR)$-structures are among parabolic conformally symplectic structures as defined in \cite{CS-cont1}.
\end{proof}
\begin{remark}
 The integrability of the Pfaffian system \eqref{eq:I-4th-ODE-3integrability-c0} in part (1) of Corollary \ref{cor:235-from-4th-ODE-various-curv-conditions}    defines a  point equivalence class in the contact equivalence class of the variational 4th order ODE. Furthermore, in regard  to Remark \ref{rmk:235-variationality-subFinsler}, the geometries induced  on $J^2(\RR,\RR)$ and $J^1(\RR,\RR)$ in parts (2) and (3) of Corollary \ref{cor:235-from-4th-ODE-various-curv-conditions}  seem to be an interesting topic of study in relation to  sub-projective structures on Engel distributions and sub-Finsler geometry on contact 3-folds. Lastly, the 1-parameter family of  $\mathrm{GL}(2,\RR)$-structures in part (4) of Corollary \ref{cor:235-from-4th-ODE-various-curv-conditions} corresponds to the 2-parameter family of $H_3$-structures obtained in \cite{Bryant-exotic}. The difference in the number of parameters is due to the additional homothety acting via the structure group $\mathrm{GL}(2,\RR)$ which is absent in $H_3$-structures as their structure group is $\mathrm{SL}(2,\RR)$.
\end{remark}

\subsubsection{Parametric expressions, examples and local generality}
\label{sec:local-form-invar}
Unlike the notation used in \ref{sec:spec-conf-quasi} and Theorem \ref{prop-variational} for jet coordinates, to simplify the parametric expressions in this section we use $(x,y,p,q,r,u)$ to  denote  coordinates on $J^4(\RR,\RR)$ with contact system
\[\rho^3=\exd y-p\exd x,\quad \rho^2=\exd p-q\exd x,\quad \rho^1=\exd q-r\exd x,\quad \rho^0=\exd x,\quad \sigma^1=\exd r-u\exd x.\]
Let $\cC\subset J^4(\RR,\RR)$ be the hypersurface defined by a 4th order ODE
\begin{equation}
  \label{eq:4th-order-ode}
  y''''=f(x,y,y',y'',y''').
\end{equation}
The pull-back of the contact system on $J^4(\RR,\RR)$ to $\cC$ is given by
\begin{equation}
  \label{eq:contact-system-on-C-parametric}
  \rho^3=\exd y-p\exd x,\quad \rho^2=\exd p-q\exd x,\quad \rho^1=\exd q-r\exd x,\quad \rho^0=\exd x,\quad \sigma^1=\exd r-f(x,y,p,q,r)\exd x.
  \end{equation}
  As was mentioned in \ref{sec:fourth-order-scalar}, the geometry of contact equivalence classes of scalar 4th order ODEs corresponds to Cartan geometries $(\tau\colon\cG\to\cC,\psi)$ of type $(\fk,L).$ As a result, after imposing appropriate coframe adaptation on the 1-forms \eqref{eq:contact-system-on-C-parametric}, one obtains the following adapted coframe
  \begin{equation}
    \label{eq:4th-order-ODE-adapted-coframe-coordinates}
    \begin{aligned}
    \omega^3&=\rho^3\\
    \omega^2&=\rho^2\\
    \omega^1&=-\tfrac{3}{4}\rho^1+\tfrac{1}{8}f_r\rho^2+(-\tfrac{3}{20}\tfrac{\exd}{\exd x}f_r+\tfrac{9}{40}f_q+\tfrac{11}{160}f_r^2)\rho^3\\
    \omega^0&=\tfrac{1}{3}\rho^0+\tfrac{1}{18}f_{rr}\rho^2+(-\tfrac{1}{15}\tfrac{\exd}{\exd x}f_{rr}+\tfrac{1}{30}f_{qr}-\tfrac{1}{180}f_rf_{rr})\rho^3\\
    \theta^1&=-\tfrac{9}{4}\sigma^1
    +\tfrac{9}{8}f_r\rho^1+(-\tfrac{27}{40}\tfrac{\exd}{\exd x}f_r+\tfrac{63}{40}f_q+\tfrac{57}{160}f_r^2)\rho^2\\
    &+(\tfrac{9}{80}\tfrac{\exd^2}{\exd x^2}f_r-\tfrac{9}{20}\tfrac{\exd}{\exd x}f_q-\tfrac{9}{32}f_r\tfrac{\exd}{\exd x}f_r+\tfrac{23}{320}f_r^3+\tfrac{27}{80}f_rf_q+\tfrac{8}{9}f_p)\rho^3\\      
    \end{aligned}
  \end{equation}
in which  $\frac{\exd}{\exd x}=\frac{\partial}{\partial x}+p\frac{\partial}{\partial y}+q\frac{\partial}{\partial p}+r\frac{\partial}{\partial q}+f\frac{\partial}{\partial r}$ is the total derivative. With respect to the coframe above, the fundamental invariants of a 4th order ODE as given in \eqref{eq:fund-inv-4-ODE} are found to be
\[
  \begin{aligned}
    \scc_1=&f_{rrr}\\
    \scc_0=&\ts\frac{\exd}{\exd x}f_{rrr}+\frac 32f_rf_{rrr}+\frac 29 f_{rr}^2+\frac 43f_{qrr}\\
    \sw_1=&\ts\frac{\exd^2}{\exd x^2}f_r-\frac 32f_r\frac{\exd}{\exd x}f_r-2\frac{\exd}{\exd x}f_q+\frac 14f_r^2+f_rf_q+2f_p\\
    \sw_0=&\ts\frac{\exd^3}{\exd x^3}f_r-4\frac{\exd^2}{\exd x^2}f_q-3f_r\frac{\exd}{\exd x}f_r-\frac{3}{10}\frac{\exd}{\exd x}f_r^2+10\frac{\exd}{\exd x}f_p+5f_r\frac{\exd}{\exd x}f_q+\frac{27}{5}f_q\frac{\exd}{\exd x}f_r+\frac{39}{10}f_r^2\frac{\exd}{\exd x}f_r\\
    &-\ts\frac{13}{5}f_r^2f_q-5f_rf_p+\frac{39}{80}f_r^4-\frac{9}{5}f_q^2-20f_y.
  \end{aligned}
\]
 One can also obtain a parametric expression for quantities such as $\sw_{0;\uo}$ or higher order derivatives of $\sw_{0;\uo}$ used in Corollary \ref{cor:235-from-4th-ODE-various-curv-conditions}, which will not be presented here.

As an example of variational 4th order ODEs, consider the 2nd order Lagrangian $L=L(x,y,p,q)$   such that $L_{qq}\neq 0.$ The Euler-Lagrange equations for such Lagrangians are given by
\[\textstyle{L_y-\frac{\exd}{\exd x}L_p+\frac{\exd^2}{\exd x^2}L_q=0},\]
which gives the 4th order ODE $y''''=f(x,y,y',y'',y''')$ where
   \begin{equation}
     \label{eq:EL-Lag-4th-ODE-general}
   \begin{aligned}     
     f(x,y,p,q,r)=&\textstyle{-\frac{1}{L_{qq}}}\left( L_{qqq}r^2+L_{qpp}q^2+L_{qyy}p^2+2(L_{qqp}q+L_{qqy}p+L_{qqx})r\right.\\
       &\left.+(2L_{qpy}p+2L_{qpx}+L_{qy}-L_{pp})q +(2L_{qyx}-L_{py})p+L_{qxx}-L_{px}+L_{y}\right)
   \end{aligned}
   \end{equation}
   Such 4th order ODEs are by definition variational and therefore satisfy $\scc_1,\sw_1=0.$ Moreover,  one obtains
   \begin{equation}
     \label{eq:cc0-4th-oder-ODE-null}     
     \scc_0=\tfrac{1}{L_{qq}^2}(4 L_{qqq}^2-3L_{qqqq}L_{qq}).
        \end{equation}
   By Corollary \ref{cor:235-from-4th-ODE-various-curv-conditions}, if $\scc_0\neq 0,$ then the quasi-contactification of such ODEs give rise to (2,3,5)-distributions with a non-null infinitesimal symmetry. The equation $\scc_0=0$ can be solved explicitly and has the following two solutions
 \begin{equation}
   \label{eq:4th-ODE-Lagrangian-null-symm}
   \begin{aligned}
     L(x,y,p,q)&=\textstyle{\frac{h_1(x,y,p)}{h_2(x,y,p)+q}+qh_3(x,y,p)+h_4(x,y,p)},\\
         L(x,y,p,q)&=h_1(x,y,p)q^2+qh_3(x,y,p)+h_4(x,y,p)
   \end{aligned}
    \end{equation}
    for arbitrary functions $h_1(x,y,p),\cdots,h_4(x,y,p)$ where $h_{1}(x,y,p)\neq 0.$ As a result, substituting \eqref{eq:4th-ODE-Lagrangian-null-symm} in   \eqref{eq:EL-Lag-4th-ODE-general} gives a closed form for all variational 4th order ODEs whose quasi-contactification gives all (2,3,5)-distributions with a null infinitesimal symmetry.
    \begin{remark} 
Note that the 4th order ODE $y''''=4(y''')^2/(3y'')$ appearing from $\scc_0=0$ in \eqref{eq:cc0-4th-oder-ODE-null} for fixed values of $x,y,p$ is the submaximal 4th order ODE  with 6-dimensional symmetry algebra \cite{KT-ODE}.  This ODE is variational with $\sw_0=0$ and $\scc_0\neq 0.$ In fact,   the expression for  $\scc_0$  can be interpreted as the equi-affine invariant of a curve in a plane  for fixed values of $x,y,p$  (see  \cite[Tables 2 and 5]{Olver-book}.) In relation to Remark \ref{rmk:235-variationality-subFinsler} one can justify such interpretation as the indicatrix at each point of a sub-Finsler structure on a contact 3-fold is a non-degenerate curve and the divergence equivalence relation amounts to the appearance of  equi-affine transformations of these curves on the contact 3-dimensional manifold. See \ref{sec:div-equiv-pseudo-Finsler-as-var-orthopath} and \ref{sec:general-causal-local-form-invar} for the appearance of affine invariants of hypersurfaces in (pseudo-)Finsler structures under divergence equivalence relation. 
    \end{remark}

We would like to  point out that the expression for $\scc_0$ in \eqref{eq:cc0-4th-oder-ODE-null} appears in the study of (2,3,5)-distribution in terms of their Monge normal form in the following way.
   Recall that every rank 2 distribution in dimension five can be presented by an underdetermined ODE $z'=F(x,y,y',y'',z),$ referred to as its Monge normal form; see \cite{Strazzullo-Monge} for more detail. Given a Monge normal form,  an adapted coframe for the (2,3,5)-distribution is given by
   \begin{equation}
     \label{eq:Adapted-coframe-Monge}
     \begin{aligned}
       \tomega^4&=-\tfrac{9}{4F_{qq}}(\trho^4-F_q\trho^2)\\
     \tomega^3&=\trho^3\\
     \tomega^2&=\trho^2\\
     \tomega^1&=-\tfrac{3}{4}\trho^1-\tfrac{3}{4F_{qq}}(\tfrac{\mathrm{D}}{\exd x}F_q-F_p-F_zF_p)\trho^0+a^1_2\trho^2+a^1_3\trho^3+a^1_4\trho^4\\
     \tomega^0&=\tfrac{1}{3}\trho^0+a^0_2\trho^2+a^0_3\trho^3+\tfrac{1}{90F_{qq}^3}(4F_{qqq}^2-3F_{qq}F_{qqqq})\trho^4
     \end{aligned}
   \end{equation}
   for some functions $a^1_2,a^1_3,a^1_4, a^0_2,a^0_3,$ on $\tM,$   where $\tfrac{\mathrm{D}}{\exd x}=\tfrac{\partial}{\partial x}+p\tfrac{\partial}{\partial y}+q\tfrac{\partial}{\partial p}+F\tfrac{\partial}{\partial z}$ and
   \[\trho^4=\exd z-F\exd x,\quad \trho^3=\exd y-p\exd x,\quad \trho^2=\exd p-q\exd x,\quad \trho^1=\exd q,\quad \trho^0=\exd x.\]
The  conformal structure $[\tg]$ in \eqref{eq:235-Nur-conf-str} can be found explicitly using which one obtains
\begin{equation}
  \label{eq:cc0-appearing-Monge-form}
  \tg(\tfrac{\partial}{\partial z},\tfrac{\partial}{\partial z})=-\tfrac{1}{40F^4_{qq}}(4F_{qqq}^2-3F_{qq}F_{qqqq}),
  \end{equation}
whose vanishing implies the nullity of $\tfrac{\partial}{\partial z}$ and, as an ODE, coincides with  $\scc_0=0$  from \eqref{eq:cc0-4th-oder-ODE-null}.
\begin{remark}\label{rmk:235-Monge-Lagrangian}
The function $F$ has no $z$ dependency if and only if $\frac{\partial}{\partial z}$ is an infinitesimal symmetry of the corresponding (2,3,5)-distribution. As a result of \eqref{eq:cc0-appearing-Monge-form}, Monge systems $z'=F(x,y,p,q)$ for which $\frac{\partial}{\partial z}$ is a null infinitesimal symmetry can be expressed as \eqref{eq:4th-ODE-Lagrangian-null-symm} where $F=L(x,y,p,q)$.   Following the approach in \cite{DZ-DivEquiv},  it is expected that  any infinitesimal symmetry can be taken to be $\frac{\partial}{\partial z}$ in some coordinate system as a result of which $F$ belongs to the divergence equivalence class of the Lagrangian for the corresponding variational 4th order ODE.
\end{remark}

As another example of variational 4th order ODEs  let
\begin{equation}\label{eq:4th-ODE-example-2-use-cor}
  f(x,y,p,q,r)=\ts\frac{q}{p}g_p(y,p)+g_y(y,p)
\end{equation}
for a function $g=g(y,p).$ For such ODEs one has $\scc_0=\sw_{0;\uo\uo}=0.$ 
Furthermore, one can compute  
\begin{equation}
  \label{eq:w-01-4th-ODE-type-N}
  \sw_{0;\uo}=\ts\frac{1}{p^2}(g_{p,p}p-g_p).
  \end{equation}
  Solving $\sw_{0;\uo}=0$ in \eqref{eq:w-01-4th-ODE-type-N}, it follows from part (3) of Corollary \ref{cor:235-from-4th-ODE-various-curv-conditions} that  the Cartan quartic of the  (2,3,5)-distribution arising from quasi-contactifying the ODE \eqref{eq:4th-ODE-example-2-use-cor}  has a repeated root of multiplicity 3, unless $g(y,p)=h_1(y)p^2+h_2(y)$ for two functions $h_1(y)$ and $h_2(y).$ The case $g(y,p)=h_1(y)p^2+h_2(y)$ corresponds to part (3) of Corollary \ref{cor:235-from-4th-ODE-various-curv-conditions} which means the Cartan quartic  has a repeated root of multiplicity at least 4 and the Cartan holonomy is reduced to a proper subgroup of the contact parabolic subgroup $P\subset\mathrm{G}^*_2.$

Now we incorporate Cartan-K\"ahler analysis   to find the local generality of various classes of 4th order ODEs and the corresponding (2,3,5)-distribution. It is known that (2,3,5)-distributions locally depend on 1 function of 5 variables which can be represented as an underdetermined second order ODE, $z'=f(x,y,y',y'',z)$ referred to as the Monge form \cite{Cartan-235}. Also as is clear from \eqref{eq:4th-order-ode}, any 4th order ODE is defined by a function of 5 variables. Using Cartan-K\"ahler analysis, one obtains that variational 4th order ODEs are locally given by 1 function of 4 variables, which can be interpreted as a second order Lagrangian $L(x,y,y',y'')$  whose Euler-Lagrange equation always defines a variational 4th order ODE. It is easy to see that this generality is not affected by taking divergence equivalence classes of Lagrangians.  Furthermore, the local generality of  variational 4th order ODEs discussed in Corollary \ref{cor:235-from-4th-ODE-various-curv-conditions}, 
i.e. satisfying the vanishing conditions $\{\scc_0=0\},$ $\{\scc_0=\sw_{0;\uo\uo}=0\},$ and $ \{\scc_0=\sw_{0;\uo}=0\},$    depend on 2 functions of 3 variables, 1 function of 3 variables and 2 functions of 2 variables, respectively.

\subsection{(3,6)-distributions from variational pairs of 3rd order ODEs}
\label{sec:prec-scal-odes}

In this section we analyze  the one-to-one correspondence between (3,6)-distributions with an infinitesimal symmetry and certain geometries that include  variational pairs of third  order ODEs. 

\subsubsection{Pairs of third order ODEs and their generalization}
\label{sec:pairs-third-order}

By Theorem \ref{thm-CartanConnection}, another class of PACQ structures is equivalent to  regular and normal Cartan geometries   $(\tau\colon\cG\to\cC,\psi)$ of type $(\fk,L)$ where $\fk=\fp^{op}/\fp_{-2}\cong\mathfrak{sl}(2,\RR)\oplus\fgl(2,\RR)\ltimes\RR^2\otimes\mathrm{Sym}^2\RR^2,$  $\fp^{op}\subset\mathfrak{so}(3,4)$ is the opposite contact parabolic subalgebra with the grading $\fp=\fp_{-2}\oplus\fp_{-1}\oplus\fp_{0},$  and $L=\mathrm{SL}(2,\RR)\times B\subset\mathrm{SL}(2,\RR)\times \mathrm{GL}(2,\RR)$  (see \eqref{Liealgs} and \eqref{k_so34}). The Cartan connection for such geometries  can be expressed as
\begin{equation}
  \label{eq:var-pair-3rd-order-ODE-Cartan-conn}
\psi=\begin{pmatrix}
  \textstyle{\phi_0+\mu} & \xi_0 & 0 \\
  \omega^0 & \textstyle{\phi_1-\phi_0+\mu} & 0  \\
  \omega^a & \theta^a& \phi^a_b+\mu\delta^a_b \\
\end{pmatrix}
\ \mathrm{where}\ 
[\phi^a_b]=
\begin{pmatrix}
  \textstyle{-\phi_2} & -\gamma_0 & 0 \\
\theta^0 & 0 & \gamma_0\\
0 & -\theta^0 & \textstyle{\phi_2}
\end{pmatrix}
\end{equation}
and $\mu=\textstyle{-\frac 15}\phi_1.$ The tangent bundle of $\cC$ has a 3-step filtration
\begin{equation}
  \label{eq:3-filtration-pair-3rd-ODE}
  T^{-1}\cC\subset T^{-2}\cC\subset T^{-3}\cC=T\cC,
\end{equation}
where $T^{-1}\cC=\cE\oplus \cV$ and
 \begin{equation}
   \begin{gathered}
     \label{eq:filtration-pair-3rd-order-ODE-each-step}
\cE=\tau_*\left(\Ker\{\omega^3,\theta^3,\omega^2,\theta^2,\omega^1,\theta^1\}\right)=\tau_*\langle\tfrac{\partial}{\partial\theta^0}\rangle,\\ \cV=\tau_*\left(\Ker\{\omega^3,\theta^3,\omega^2,\theta^2,\theta^0\}\right)=\tau_*\langle\tfrac{\partial}{\partial\omega^1},\tfrac{\partial}{\partial\theta^1}\rangle,\quad [\cV,\cV]=\cV\\
     T^{-2}\cC=[\cE,\cV]=\tau_*\left(\Ker\{\omega^3,\theta^3\}\right),\quad      T\cC=T^{-3}\cC=[\cE,T^{-2}\cC].
      \end{gathered}
 \end{equation}
Assuming regularity and normality, the structure equations for such geometries are as in  \eqref{eq:Pairs-3-ODE-Cartan-curv} and \eqref{eq:curv-2form-pair-3rd-ODE} wherein we have only expressed the first order structure equations as the full structure equations would take much more space and will not be used for our purposes. Using the structure equations and taking a section $s\colon\cC\to\cG$, the fundamental invariants, represented as sections of  line bundles $(\cV)^{k_1}\otimes(\cE)^{k_2},$  are given by 
\begin{equation}
  \label{eq:fund-invs-generalized-pair-3rd-order-ODEs}
\begin{aligned}
  \bb_1&=\left(\sbb_{13}(\omega^1)^3+3\sbb_{12}(\omega^1)^2\circ\theta^1 +3\sbb_{11}\omega^1\circ(\theta^1)^2+\sbb_{10}(\theta^1)^3\right) \otimes\ts(\frac{\partial}{\partial\theta^1}\w\frac{\partial}{\partial\omega^1})\otimes(\frac{\partial}{\partial\theta^0})^{-1}\\
  \bb_2&=(\sbb_{22}(\omega^1)^2+2\sbb_{21}\omega^1\circ\theta^1+\sbb_{20}(\theta^1)^2)\otimes\ts(\frac{\partial}{\partial\theta^1}\w\frac{\partial}{\partial\omega^1})\otimes(\frac{\partial}{\partial\theta^0})^{-2}\\
    \bb_3&=\sbb_{30}\ts(\frac{\partial}{\partial\theta^0})^{-3}\\
\bb_4&=\left(\sbb_{42}(\omega^1)^2+2\sbb_{41}\omega^1\circ\theta^1+\sbb_{40}(\theta^1)^2\right)\otimes\ts(\frac{\partial}{\partial\theta^1}\w\frac{\partial}{\partial\omega^1})\otimes(\frac{\partial}{\partial\theta^0})^{-3}\\   \bb_5&=\sbb_{50}\ts(\frac{\partial}{\partial\theta^1}\w\frac{\partial}{\partial\omega^1})^{-1}\otimes(\frac{\partial}{\partial\theta^0})^{-1}\\
  \bb_6&=\left(\sbb_{62}(\omega^1)^2+2\sbb_{61}\omega^1\circ\theta^1+\sbb_{60}(\theta^1)^2\right)\otimes \ts(\frac{\partial}{\partial\theta^0})^{-2}\\
\end{aligned} 
\end{equation}
wherein by abuse of notation we have dropped the pull-back $s^*$ in the expressions above. The torsion components are $\bb_1$ and $\bb_2$ of homogeneity 2 and $\bb_3,\bb_4,\bb_5$ of homogeneity 3. The curvature component is $\bb_6$ which is of homogeneity 4. Moreover, the invariant $\bb_2,\bb_3,\bb_4$ are referred to as the Wilczy\'nski invariants of such structures \cite{CDT-ODE,Medvedev-ODE}.

For such geometries the 2-form  $\rho\in\Omega^2(\cG)$ defined as
\begin{equation}
  \label{eq:rho-pair-3rd-order-ODE}
  \rho=\theta^1\w\omega^3+\theta^2\w\omega^2+\theta^3\w\omega^1
\end{equation}
is the canonical semi-basic 2-form  of maximal rank with respect to the fibration $\tau\colon\cG\to\cC$ which induces the canonical conformally quasi-symplectic structure $\ell\subset\biw^2T^*\cC.$ 
Moreover, the line bundle $\cE$ defined in  \eqref{eq:filtration-pair-3rd-order-ODE-each-step} is the characteristic direction of $\ell$ and $T^{-1}\cC=\cE\oplus\cV$ is isotropic with respect to $\ell.$

Lastly, we note that   one can relate such Cartan geometries to differential equations. As was mentioned in \ref{sec-pairsthirdorder}, the geometry of pairs of third order ODEs under point equivalence can be defined in terms of   the Lie bracket relations \eqref{eq:filtration-pair-3rd-order-ODE-each-step} on  $\cC\cong J^2(\RR,\RR^2)$ and a splitting of $T^{-1}\cC,$  together with an integrability condition arising from  the fibration $J^2(\RR,\RR^2)\to J^0(\RR,\RR^2)\cong \RR^3$ \cite{Yamaguchi-Geometrization,Krynski-ODE}. Using the coframing $(\omega^3,\theta^3,\cdots,\theta^0),$ this fibration is equivalent to the integrability of the Pfaffian system
\begin{equation}
  \label{eq:I-Backlund-integ-3-ODE}  
  I=\{\omega^3,\theta^3,\theta^0\}.
  \end{equation}
Using the first order structure equations \eqref{eq:Pairs-3-ODE-Cartan-curv} and \eqref{eq:curv-2form-pair-3rd-ODE}, this integrability condition is equivalent to $\bb_5=0.$ Note that the integrability condition arising from the fibration $J^1(\RR,\RR^2)\to J^0(\RR,\RR^2)$ is already encoded in the structure equations. Consequently, the condition $\bb_5=0$ is necessary and sufficient for a regular and normal Cartan geometry of type $(\fk,L),$ as above, to be locally realizable as the point equivalence class of a pair of third order ODEs as a result of which, locally, one has $\cC\cong J^2(\RR,\RR^2)$. In \ref{sec:36-local-form-invar} we will  give a parametric expression of   the fundamental invariants of a pair of third order ODEs in terms of $\bb_i$'s in  \eqref{eq:fund-invs-generalized-pair-3rd-order-ODEs}.
\begin{remark}
The geometry of systems of third order ODEs has been studied in \cite{Medvedev-ODE}. Note that unlike   \ref{sec:fourth-order-scalar} where we considered scalar 4th order ODEs under contact equivalence,  it follows from B\"acklund theorem that for pairs of third order ODEs   contact and point  equivalence relations coincide.
\end{remark}

\subsubsection{Variationality}
\label{sec:36-inverse-probl-vari} 
Now we find invariant conditions for such Cartan geometries to be conformally quasi-symplectic. First we determine when  $\ell,$ defined by the 2-form  \eqref{eq:rho-pair-3rd-order-ODE}, has a closed representative.
It follows from structure equations \eqref{eq:Pairs-3-ODE-Cartan-curv} and \eqref{eq:curv-2form-pair-3rd-ODE} that
\[\exd\rho=\phi_1\w\rho+ \varrho\]
for some 3-form $\varrho$ which we will not write explicitly as the expression is rather long.
 
It is a matter of straightforward computation to show that  $\varrho=0$ is equivalent to the vanishing conditions
\begin{equation}
  \label{eq:b123-0-variationality-condition-pair-3rd-order-ODE}
  \bb_1=\bb_2=\bb_3=0.
  \end{equation}
Furthermore,  conditions \eqref{eq:b123-0-variationality-condition-pair-3rd-order-ODE} reduce the structure equations to \eqref{eq:Pairs-3-ODE-Cartan-curv} and \eqref{eq:curv-2form-pair-3rd-ODE-variational}. In particular, one obtains
\[\exd\phi_1=0.\]
As a result,  restricting to pairs of third order ODEs, i.e.  $\bb_5=0,$ the calculations above combined with Theorem \ref{prop-variational} gives the following.
\begin{proposition}\label{prop:variationality-pair-3rd-order-ODEs}
A pair of third order ODEs is variational if and only if  $\bb_1,\bb_2,$ and $\bb_3$ vanish.
\end{proposition}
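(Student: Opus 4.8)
The plan is to reduce Proposition \ref{prop:variationality-pair-3rd-order-ODEs} to the general variationality criterion already established in Theorem \ref{prop-variational}, combined with the characterization of conformally quasi-symplectic PACQ structures given in \ref{sec:spec-conf-quasi}. By Theorem \ref{prop-variational} applied to case (2), a pair of third order ODEs is variational if and only if the underlying PACQ structure is in fact a PCQ structure, i.e.\ the canonical line bundle $\ell\subset\biw^2T^*\cC$ admits local closed sections. So the entire content of the proposition is to translate ``$\ell$ has a closed representative'' into the vanishing of the harmonic/torsion invariants $\bb_1,\bb_2,\bb_3$, working on the total space $\cG$ of the canonical Cartan bundle of type $(\fk,L)$ attached to a pair of third order ODEs (which is the locus $\bb_5=0$ among the Cartan geometries of that type).

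The computation proceeds as follows. First I would take the explicit semi-basic $2$-form $\rho=\theta^1\w\omega^3+\theta^2\w\omega^2+\theta^3\w\omega^1$ of \eqref{eq:rho-pair-3rd-order-ODE}, which spans $\ell$ (pulled back to $\cG$), and compute $\exd\rho$ using the first-order structure equations \eqref{eq:Pairs-3-ODE-Cartan-curv} together with the curvature $2$-forms \eqref{eq:curv-2form-pair-3rd-ODE}. The outcome, as indicated in the excerpt, is of the shape $\exd\rho=\phi_1\w\rho+\varrho$ where $\varrho\in\Omega^3(\cG)$ is semi-basic with respect to $\tau\colon\cG\to\cC$. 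Since by Proposition \ref{prop-lineb} and Proposition \ref{prop_filtPACQ} any section of $\ell$ over $\cC$ has the same algebraic type as $\rho$, and since the $\phi_1\w\rho$ term is exactly the ``conformal rescaling'' ambiguity, the line $\ell$ admits a closed representative near a point precisely when, after possibly rescaling $\rho\mapsto f\rho$, one can kill $\varrho$; and a short argument (using $\mathrm{dim}(\cC)=7>5$, the maximal-rank property of $\rho$, and $\exd^2=0$ as in the proof of the second bullet of the unnamed Proposition near \eqref{eq-formrho}) shows this is possible iff $\varrho\equiv 0$ identically. I would then expand $\varrho$ in the coframe and match coefficients: the claim to verify is that $\varrho=0$ is equivalent to \eqref{eq:b123-0-variationality-condition-pair-3rd-order-ODE}, namely $\bb_1=\bb_2=\bb_3=0$. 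One direction is immediate once $\varrho$ is written out — the components $\sbb_{1j},\sbb_{2j},\sbb_{3j}$ of \eqref{eq:fund-invs-generalized-pair-3rd-order-ODEs} appear as (independent) coefficients of $\varrho$; the converse requires checking that no further obstruction is hidden, which follows because imposing \eqref{eq:b123-0-variationality-condition-pair-3rd-order-ODE} collapses the structure equations to the reduced form \eqref{eq:Pairs-3-ODE-Cartan-curv}, \eqref{eq:curv-2form-pair-3rd-ODE-variational}, and in that reduced form one reads off directly that $\exd\rho=\phi_1\w\rho$ with $\exd\phi_1=0$, so a genuine closed representative $f\rho$ exists locally.

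Having established that $\ell$ is conformally quasi-symplectic iff $\bb_1=\bb_2=\bb_3=0$, I would invoke Theorem \ref{prop-variational}(2): for a PACQ structure whose underlying geometry is a pair of third order ODEs (the $\bb_5=0$ case), being PCQ is equivalent to the ODE pair being variational with a degenerate second-order Lagrangian linear in second derivatives. Chaining the two equivalences gives the statement. The remaining point, that one must check the argument genuinely reaches a closed — not merely locally-exact-up-to-scale — $2$-form, is already handled in the general $\mathrm{dim}>5$ discussion: the condition $\exd\rho=\phi\w\rho$ forces $\exd\phi\w\rho=0$, hence $\exd\phi=0$ since $\rho$ has maximal rank on a $7$-manifold, hence $\phi=-\exd(\log f)$ locally and $f\rho$ is closed.

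The main obstacle is the bookkeeping in the explicit expansion of $\varrho$ and the identification of its coefficients with $\bb_1,\bb_2,\bb_3$: one must be careful that the $\phi_1\w\rho$ piece absorbs precisely the part of $\exd\rho$ that reflects the scaling freedom in $\ell$, so that $\varrho$ is a well-defined obstruction, and that the $3$-form $\varrho$ does not also contain contributions that could be cancelled by a rescaling (they cannot, because such a rescaling only modifies $\exd\rho$ by $\exd f\w\rho$, which lies in the $\phi_1\w\rho$-type span and is algebraically independent of $\varrho$ given the normalization of $\rho$). Once the coframe components are organized correctly, the matching is a finite linear-algebra check, and the pairs-of-third-order-ODEs case goes through in complete analogy with the $(G_2,P_{12})$ computation of Proposition \ref{prop:variationality-Fels-4th-ODE} and the treatment in \cite{AT-Book}. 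I would note explicitly that, unlike the fourth-order case where $\bw_1=0$ alone already forces $\exd\rho=\phi_1\w\rho$, here three invariants are needed because the torsion of a pair of third order ODEs has two irreducible homogeneity-$2$ pieces ($\bb_1,\bb_2$) plus the homogeneity-$3$ trace piece $\bb_3$ entering $\varrho$.
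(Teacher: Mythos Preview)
Your proposal is correct and follows essentially the same approach as the paper: compute $\exd\rho=\phi_1\wedge\rho+\varrho$ from the structure equations, identify $\varrho=0$ with $\bb_1=\bb_2=\bb_3=0$, observe that under these conditions $\exd\phi_1=0$ so that a closed representative exists (using $\dim\cC=7>5$), and conclude via Theorem \ref{prop-variational}. Your discussion of why rescaling cannot absorb $\varrho$ and of the logical equivalences is more explicit than the paper's treatment, but the argument is the same.
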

In \ref{sec:36-local-form-invar}  an explicit expression for $\bb_1,\bb_2,\bb_3$ in terms of a pair of third order ODEs is found. 
\begin{remark}\label{rmk:36-variationality-3rd-pair-ODEs}
Unlike Remark \ref{rmk:235-variationality-subFinsler}, we do not have an  interpretation of variational pairs of third order ODEs as the Euler-Lagrange equations of a variational problem for some geometric structure. Note that even in the case $\bb_5\neq 0,$ the conditions $\bb_1=\bb_2=\bb_3=0$ amount to the  ``variationality'' of Cartan geometries of type $(\fk,L)$ in the sense of Griffiths. 
\end{remark}
Now we can state the first lemma needed in the proof of Proposition~\ref{lem-canonicalconfqsymp}.
  \begin{lemma}\label{lemm:36-variationality-3rd-pair-ODEs}
Given a regular and normal Cartan geometry $(\tau\colon\cG\to \cC,\psi)$ of type $(\fk,L)$ as in \eqref{k_so34}, then any compatible conformally quasi-symplectic structure is induced by the 2-form given in \eqref{eq:rho-pair-3rd-order-ODE}.
\end{lemma}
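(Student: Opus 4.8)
The statement to prove is that, for a regular and normal Cartan geometry $(\tau\colon\cG\to\cC,\psi)$ of type $(\fk,L)$ with $\fk$ as in \eqref{k_so34}, \emph{any} compatible conformally quasi-symplectic structure $\ell\subset\biw^2T^*\cC$ is spanned by (a section pulled back from) the distinguished $2$-form $\rho=\theta^1\w\omega^3+\theta^2\w\omega^2+\theta^3\w\omega^1$ of \eqref{eq:rho-pair-3rd-order-ODE}. The point of this lemma, together with the analogous ones, is exactly the input needed for Proposition~\ref{lem-canonicalconfqsymp}, so the core of the argument is a uniqueness-of-compatible-$\ell$ statement at the algebraic level, transported along the Cartan geometry.

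First I would recall from Proposition~\ref{prop-lineb} and its proof that the PACQ structure already carries a \emph{canonical} almost conformally quasi-symplectic structure $\ell_{\mathrm{can}}$, which in the present presentation is precisely $[\rho]$ for $\rho$ as in \eqref{eq:rho-pair-3rd-order-ODE}: indeed the Lie bracket component $\biw^2\p_{-1}\to\p_{-2}$ of the contact grading on $\fso(3,4)$, identified via \eqref{aplusp0} with a $2$-form on $\fa\cong\p_{-1}$ with one-dimensional kernel along $\p_0/\mfl\cong\fe$, reads off in the coframe $(\omega^a,\theta^a)$ as exactly that expression. So the statement reduces to showing that a compatible $\ell$ \emph{must} coincide with $\ell_{\mathrm{can}}$. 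I would do this fiberwise: passing to $\cG$ via the identification $\biw^2T^*\cC\cong\cG\times_L\biw^2(\mfk/\mfl)^*$, a compatible $\ell$ corresponds to an $L$-invariant line in $\biw^2(\mfk/\mfl)^*$ whose generators have one-dimensional kernel equal to the characteristic line $\fe$ (condition (1) of Definition~\ref{def_compatible}) and which annihilate $T^{-i}\cC\times T^{-j}\cC$ for $i+j<k=3$, i.e. annihilate $T^{-1}\cC\times T^{-1}\cC$ (condition (2)). Thus the key step is the purely representation-theoretic claim: in $\biw^2(\mfk/\mfl)^*$ the subspace of $L$-invariant $2$-forms that vanish on $\mfk_{-1}\times\mfk_{-1}$ and whose kernel contains $\fe$ is one-dimensional. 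Here $\mfk_{-1}=\fe\oplus\fv$ with $\fv\cong\mathrm{Sym}^2\RR^2$ (as an $\mathrm{SL}(2,\RR)$-module, with $B\subset\mathrm{GL}(2,\RR)$ acting through the line $l$), $\mfk_{-2}\cong\RR^2\otimes\RR^2$-type, and $\fa\cong\p_{-1}=\fv\oplus\mfk_{-2}$; the $L=\mathrm{SL}(2,\RR)\times B$-decomposition of $\fv^*\wedge\mfk_{-2}^*\oplus\biw^2\mfk_{-2}^*$ into irreducibles has exactly one trivial summand, which forces the line.

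With that algebraic fact in hand, the rest is bookkeeping: the vanishing-on-$\mfk_{-1}\times\mfk_{-1}$ condition is the pointwise translation of Definition~\ref{def_compatible}(2) (using $[\mfk_{-1},\mfk_{-1}]\subset\mfk_{-2}$, hence $i+j=2<3$), the characteristic-line condition is Definition~\ref{def_compatible}(1), and $L$-invariance is forced since $\ell$ is a \emph{line bundle} on $\cC$, hence corresponds to an $L$-stable line, and $L$ being (almost) its own normalizer acts on it by a character — but a character acting on a $1$-dimensional invariant subspace still leaves that subspace as the unique candidate. I would spell out the decomposition of $\biw^2(\mfk/\mfl)^*$ as an $L$-module explicitly (this is the only computation, and it is short) and identify the unique $L$-invariant line, then note its image under $\cG\times_L(-)$ is exactly $\ell_{\mathrm{can}}=[\rho]$. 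Finally I would remark that condition (3) of Definition~\ref{def_compatible} is automatic here, either by the Remark following Proposition~\ref{lem-canonicalconfqsymp} or by the bracket identity computed in the proof of Proposition~\ref{prop_filtPACQ}; it plays no role in the uniqueness.

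\textbf{Main obstacle.} The only genuine content is the branching computation showing the space of $L$-invariant, $\mfk_{-1}$-annihilating $2$-forms on $\mfk/\mfl$ with characteristic $\fe$ is exactly one-dimensional — i.e. ruling out a second independent compatible $\ell$ supported on $\biw^2\mfk_{-2}^*$ or on the ``off-diagonal'' $\fv^*\wedge\mfk_{-2}^*$ piece. I expect this to come down to a clean $\mathrm{SL}(2,\RR)$-representation argument (no invariant alternating form on $\mathrm{Sym}^2\RR^2$, a unique invariant pairing between $\RR^2\otimes\mathrm{Sym}^2\RR^2$-type pieces, etc.), so it should be routine once set up; everything else is dictionary translation between Definition~\ref{def_compatible} and the structure of $\mfk$, and I would leave those verifications brief, as is done for the parallel $(\fg_2^*,P_{12})$ and causal cases.
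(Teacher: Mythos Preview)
Your approach has a genuine gap. The claim that ``a compatible $\ell$ corresponds to an $L$-invariant line in $\biw^2(\mfk/\mfl)^*$'' is not correct: a line sub-bundle of the associated bundle $\cG\times_L\biw^2(\mfk/\mfl)^*$ corresponds to an $L$-\emph{equivariant} map $\cG\to\mathbb{P}\bigl(\biw^2(\mfk/\mfl)^*\bigr)$, and such a map need not be constant. Concretely, after imposing conditions (1), (2), (3) of Definition~\ref{def_compatible} one is still left with a multi-parameter family of admissible $2$-forms at each point (the paper's ansatz has nine free functions $p_1,p_2,p_3,q,r_1,\ldots,r_4,s$ on $\cG$), and $L$-equivariance of the section does not force these to be constant or zero. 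Your proposed branching computation would at best show that there is a unique $L$-\emph{fixed} line, not that every compatible $\ell$ lands in it. (Also, for this $B_3$ case the depth is $k=4$, so condition~(2) requires vanishing on $T^{-1}\cC\times T^{-2}\cC$, not only on $T^{-1}\cC\times T^{-1}\cC$.)

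What actually pins down $\ell$ is the \emph{closedness} hypothesis: the lemma concerns a compatible \emph{conformally quasi-symplectic} structure, so some local section $\rho$ is closed. The paper's proof exploits precisely this: pulling a closed representative back to $\cG$ and using the first-order structure equations \eqref{eq:Pairs-3-ODE-Cartan-curv}--\eqref{eq:curv-2form-pair-3rd-ODE}, one wedges $\exd\rho$ with suitable $4$-forms (working modulo the vertical ideal $\{\omega^0,\phi_0,\phi_1,\phi_2,\xi_0,\gamma_0\}$) to extract, one by one, the relations $p_1=q$, $r_i=0$, $p_2=p_3=0$, $s=0$. This is a differential argument, not a pointwise representation-theoretic one, and the differential input is not optional: without closedness there \emph{are} other compatible almost conformally quasi-symplectic structures, so a purely algebraic uniqueness argument cannot succeed.
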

\begin{proof}
  By \eqref{eq-formrho} a compatible almost conformally quasi-symplectic structure $\ell\subset\biw^2(T^*\cC)$ is induced by  $\rho\in\Omega^2(\cG)$ where
  \begin{equation}
    \label{eq:rho-pair-3rd-order-PACQ-arbitrary}
      \begin{aligned}
    \rho&=p_1\theta^1\w\omega^3+p_1\theta^3\w\omega^1+p_2\theta^1\w\theta^3+p_{3}\omega^3\w\omega^1+q\theta^2\w\omega^2\\
    &+r_1\theta^2\w\omega^3+r_2\theta^2\w\theta^3+r_3\theta^3\w\omega^2+r_{4}\omega^3\w\omega^2+s\theta^3\w\omega^3
    \end{aligned}
    \end{equation}
for functions $p_1,p_2,p_3,q,r_1,r_2,r_3,r_4,s$  on $\cG.$  In the computations below we work modulo the ideal $\{\omega^0,\phi_0,\phi_1,\phi_2,\xi_0,\gamma_0\}$ and define
  \[V_h=\theta^0\w\theta^1\w\theta^2\w\theta^3\w\omega^1\w\omega^2\w\omega^3.\]
  Using the first order structure equations   \eqref{eq:Pairs-3-ODE-Cartan-curv} and \eqref{eq:curv-2form-pair-3rd-ODE} one obtains
  \[\exd\rho\w\theta^1\w\omega^2\w\theta^3\w\omega^3\equiv (p_1-q)V_h\Rightarrow p_1=q.\]
  Furthermore, one obtains
  \[
    \begin{aligned}
      \exd\rho\w\omega^1\w\omega^2\w\theta^2\w\theta^3&\equiv -(r_1+q_{;\uz})V_h\\
      \exd\rho\w\theta^1\w\theta^2\w\omega^2\w\omega^3&\equiv (r_3+q_{;\uz})V_h\\
      \exd\rho\w\theta^1\w\omega^1\w\theta^3\w\omega^3&\equiv (-r_1-r_3+q_{;\uz})V_h,
    \end{aligned}
  \]
  which implies $r_1=r_3=q_{;\uz}=0$ where $q_{;\uz}=\frac{\partial}{\partial\theta^0}q.$
  In the same spirit, one computes
  \[
    \begin{aligned}
      \exd\rho\w\omega^1\w\omega^2\w\omega^3\w\theta^3&\equiv -p_2V_h,\qquad       \exd\rho\w\theta^1\w\theta^2\w\theta^3\w\omega^3&\equiv -p_3V_h
    \end{aligned}
  \]
  which yields $p_2=p_3=0$ and 
    \[
    \begin{aligned}
      \exd\rho\w\omega^1\w\omega^2\w\omega^3\w\theta^2&\equiv r_2V_h,\qquad       \exd\rho\w\theta^1\w\theta^2\w\theta^3\w\omega^2&\equiv r_4V_h
    \end{aligned}
  \]
  which gives $r_2=r_4=0.$
  Lastly, one obtains
      \[
    \begin{aligned}
      \exd\rho\w\theta^1\w\omega^1\w\theta^2\w\omega^3&\equiv (s-2q\sbb_{21})V_h,\qquad          \exd\rho\w\theta^1\w\omega^1\w\omega^2\w\theta^3&\equiv -(s+2q\sbb_{21})V_h
    \end{aligned}
  \]
  which gives  $s=\sbb_{21}=0.$ Note that if $q=0$ then $\rho=0$ in \eqref{eq:rho-pair-3rd-order-PACQ-arbitrary}. If $q\neq 0,$ then $\rho$ in \eqref{eq:rho-pair-3rd-order-PACQ-arbitrary} is a multiple of the canonical quasi-symplectic 2-form  in \eqref{eq:rho-pair-3rd-order-ODE}. 
  Furthermore, by Proposition \ref{prop:variationality-pair-3rd-order-ODEs}, the condition $\sbb_{21}=0$ obtained in the last step is implied by the existence of a closed 2-form in the canonical quasi-symplectic structure induced by $\rho$.

\end{proof}

\subsubsection{(3,6)-distributions}
\label{sec:3-6-distributions}

The Cartan connection of a regular and normal  Cartan geometry $(\ttau\colon\tcG\to\tcC,\tpsi)$ of type  $(\mathfrak{so}(3,4),P_{23})$ can be expressed as 
\begin{equation}
  \label{eq:tpsi-Cartan-conn-B3-P23}
  \tpsi=\begin{pmatrix}
  \tphi_0 & \txi_0 & \txi_b & \txi_4 & 0\\
  \tomega^0 &\tphi_1-\tphi_0 & \tgamma_b & 0 & -\txi_4\\
  \tomega^a & \ttheta^a& \tphi^a_b & -\tgamma^a & -\txi^a\\
  \tomega^4 &0 & -\ttheta_b & \tphi_0-\tphi_1 & -\txi_0\\
  0 & -\tomega^4 & -\tilde\omega_b & -\tomega^0 & -\tphi_0
\end{pmatrix}
\quad \mathrm{where}\quad 
[\tphi^a_b]=
\begin{pmatrix}
-\tphi_2 & -\tgamma_0 & 0 \\
\ttheta^0 & 0 & \tgamma_0\\
0 & -\ttheta^0 & \tphi_2
\end{pmatrix}.
\end{equation}
The tangent bundle of $\tcC$ has a 4-step filtration
\begin{equation}
  \label{eq:4-filtration-36-dist-corr-space}
  T^{-1}\tcC\subset T^{-2}\tcC\subset T^{-3}\tcC\subset T^{-4}\tcC=T\tcC
\end{equation}
where $T^{-1}\tcC=\tcE\oplus \tcV$ and
 \begin{equation}
   \begin{gathered}
     \label{eq:4step-filtration-36-dist-corr-space-each-step}
\tcE=\ttau_*\left(\Ker\{\tomega^3,\ttheta^3,\tomega^2,\ttheta^2,\tomega^1,\ttheta^1\}\right)=\ttau_*\langle\tfrac{\partial}{\partial\ttheta^0}\rangle,\\ \tcV=\ttau_*\left(\Ker\{\tomega^3,\ttheta^3,\tomega^2,\ttheta^2,\ttheta^0\}\right)=\ttau_*\langle\tfrac{\partial}{\partial\tomega^1}, \tfrac{\partial}{\partial\ttheta^1}\rangle, \\
     T^{-2}\tcC=[\tcE,\tcV]=\ttau_*\left(\Ker\{\tomega^3,\ttheta^3,\tomega^4\}\right),\\      T^{-3}\tcC=[\tcE,T^{-2}\tcC]=\ttau_*\left(\Ker\{\tomega^4\}\right),\quad T\tcC=T^{-4}\tcC=[\tcV,T^{-3}\tcC]=[T^{-2}\tcC,T^{-2}\tcC].
      \end{gathered}
    \end{equation}
    Furthermore, assuming regularity, the distribution $\tcV$ is integrable. 
Defining the 2-forms
\[
\begin{aligned}
  \tOmega^1&=\exd \tomega^1+\tgamma_3\w\tomega^4- \tgamma_0\w\tomega^2-\tomega^0\w\ttheta^1  -(\tphi_0+\tphi_2)\w\tomega^1,\\
  \tTheta^1&=\exd \ttheta^1-\txi_3\w\tomega^4 -\tgamma_0\w\ttheta^2-\txi_0\w\tomega^1-(\tphi_1+\tphi_2-\tphi_0)\w\ttheta^1,
\end{aligned}
\] 
the normality conditions imply
\[
  \begin{aligned}
    \tOmega^1\equiv& -B_0\ttheta^3\w\ttheta^0 - B_1\tomega^3\w\ttheta^0+X_1\tomega^3\w\ttheta^3,\quad && \mod\quad \{\tomega^4,\tomega^2,\tomega^1,\ttheta^2,\ttheta^1\},\\
    \tTheta^1\equiv&\phantom{-} B_1\ttheta^3\w\ttheta^0 + B_2\tomega^3\w\ttheta^0+X_2\tomega^3\w\ttheta^3 \quad  &&\mod\quad \{\tomega^4,\tomega^2,\tomega^1,\ttheta^2,\ttheta^1\}.
  \end{aligned}
\]
for some functions $B_0,B_1,B_2,X_1,X_2$ on $\cG.$  Taking  a section $s\colon\tcC\to\tcG,$ the harmonic invariant of such geometries, given by    $\bB\in\Gamma(\mathrm{Sym}^2(\cV^*)\otimes\cE^{-3}\otimes\biw^2\cV),$ is
\begin{equation}
  \label{eq:B-harm-inv-36-corr-space}
  \bB=s^*\left ( B_2 \tomega^1\circ \tomega^1 + 2B_1\ttheta^1\circ\tomega^1 + B_0\ttheta^1\circ\ttheta^1\right)\otimes\ts(\frac{\partial}{\partial s^*\ttheta^1}\w\frac{\partial}{\partial  s^*\tomega^1}) \otimes(\frac{\partial}{\partial s^*\ttheta^0})^{-3}.
  \end{equation}
  By Proposition \ref{prop-qcontactcone}, such geometries always arise as the correspondence space of (3,6)-distributions, i.e.  rank 3 distributions $\tscD\subset T\tM$ such that $[\tscD,\tscD]=T\tM.$ More precisely, let $\tM$ be   the leaf space 
  \begin{equation}
    \label{eq:tM-quot-map-36-corr-space}
    \tnu\colon\tcC\to\tM=\tcC\slash\cI_{\tcV}
      \end{equation}
where $\cI_{\tcV}$ is the foliation of $\tcC$ by integral surfaces of the distribution $\tcV.$ Since $[\tcV,T^{-2}\tcC]=T^{-2}\tcC,$ it follows that 
\begin{equation}
  \label{eq:rank-3-distr-D-on-M-36}
\tscD:=\tnu_{*}T^{-2}\tcC
\end{equation}
is a distribution whose growth vector can be found from \eqref{eq:3-filtration-pair-3rd-ODE} and \eqref{eq:filtration-pair-3rd-order-ODE-each-step} to be (3,6). Conversely, starting from a (3,6)-distribution, one has $\tcC\cong \PP\tscD,$ where $\PP\tscD$ is the $\PP^2$-bundle  over $\tM$ whose fibers at each point $p\in\tM$ is the projective plane $\PP\tscD_p.$

Using the embedding $\mathfrak{so}(3,4)\subset\mathfrak{so}(4,4),$ it is known  \cite{Bryant-36} that the Cartan connection \eqref{eq:tpsi-Cartan-conn-B3-P23} defines a normal conformal connection for the conformal structure $[s^*\tg]\subset\mathrm{Sym^2}T^*\tM,$ for any section $s\colon\tM\to\tcG,$  where  
\begin{equation}
  \label{eq:h-36-Bryant-metric}
  \tg=\tilde\omega^4\circ\tilde\theta^0-\tilde\omega^3\circ\tilde\theta^2+\tilde\omega^2\circ\tilde\theta^3\in \Gamma(\mathrm{Sym}^2T^*\tcG).
\end{equation}
The conformal holonomy of such conformal structures is reduced to $\mathrm{SO}(3,4).$ 
\subsubsection{Quasi-contactification and various curvature conditions}
\label{sec:36-quasi-cont-from-pair-3rd-ODE}
As in \ref{sec:235-4th-ODE-quasi-cont}, we explain the one-to-one correspondence between (3,6)-geometries with an infinitesimal symmetry and Cartan geometries of type $(\fk,L),$ considered in this section, and relate their Cartan connections. This will be used to relate their fundamental invariants and give geometric interpretations for several curvature conditions. 

We first give the proof of Proposition \ref{prop-relharmonics} for these classes of Cartan geometries.
\begin{proof}[Proof of Proposition \ref{prop-relharmonics}; second part]
  Since this part is almost identical to the first part in \ref{sec:235-4th-ODE-quasi-cont}, we only highlight the differences. 
  The coframing $\hat\psi=(\tomega^4,\pi^*\psi)$ defines a Cartan connection for a Cartan geometry $(\hat\tau\colon\pi^*\cG\to\tcC,\hat\psi)$   of type $(\fp^{op},\mathrm{SL}(2,\RR)\times B)$ where $\fp^{op}\subset\mathfrak{so}(3,4)$ is the opposite contact parabolic subalgebra and $B\subset \mathrm{GL}(2,\RR)$ is the Borel subgroup. The 3-step filtration \eqref{eq:3-filtration-pair-3rd-ODE} and the fact that  $\tomega^4$ is quasi-contact imply that $T\tcC$ has the 4-step filtration \eqref{eq:4-filtration-36-dist-corr-space} with Lie bracket relations as in \eqref{eq:4step-filtration-36-dist-corr-space-each-step}. Hence, one can associate a Cartan geometry $(\ttau\colon\tcG\to\tcC,\tpsi)$ of type $(\mathfrak{so}(3,4),P_{23}).$  It is a matter of  computation, using the matrix forms  \eqref{eq:var-pair-3rd-order-ODE-Cartan-conn} and \eqref{eq:tpsi-Cartan-conn-B3-P23}, to show that  $\pr_\fk\circ\iota^*\tpsi$ and $\hat\pi^*\psi$ are related as follows:
\begin{equation}
  \label{eq:Cartan-conn-modif-36-pair-3rd-order-ODE}
  \begin{gathered}
  \tilde\omega^a=\omega^a,\quad\tilde\theta^a=\theta^a,\quad \tilde\theta^0=\theta^0-\textstyle{\frac 12}\sbb_{50}\tilde\omega^4,\quad\tilde\phi_1 = \phi_1,\\ \tilde\omega^0=\omega^0-\textstyle{\frac 12 \sbb_{60}}\tilde\omega^4,\quad \tilde\phi_0 = \phi_0-\textstyle{\frac 12 \sbb_{61}}\tilde\omega^4,\quad 
  \quad \tilde\xi_0 = \xi_0+\textstyle{\frac 12 \sbb_{62}}\tilde\omega^4,\\
  \tilde\phi_2 = \phi_2-\textstyle{\frac 12 \sbb_{50;\underline 0}}\tilde\omega^4,\quad\tilde\gamma_0 =\gamma_0+ \textstyle{(-\frac 12\sbb_{50;\underline{00}}+\frac{1}{20}\sbb_{40;11}+\frac 1{20}\sbb_{42;\underline{11}}-\frac1{10}\sbb_{41;1\underline 1})}\tilde\omega^4.
\end{gathered}
\end{equation}
wherein the pull-backs $\hat\pi^*$ and $\iota^*$ are dropped. 
\end{proof}
To state the main corollary of this section  recall that the Lie algebra $\fk$  \eqref{k_so34}  has the grading
 $\fk=\fk_{-3}\oplus\cdots\oplus\fk_{1}$ where $\fk_{-1}=\fv\oplus\fe.$
Now define the Lie algebra $\fx\subset\fk$ as 
\begin{equation}
  \label{eq:LieAlg-fx-3rd-pair-ODE}  
  \fx=\fv\oplus\fk_{0}\oplus\fk_{1}.
\end{equation}
The Lie group $X$ with Lie algebra $\fx$ is
\begin{equation}
  \label{eq:36-corr-X}
  X=(\mathrm{SL}(2,\mathbb{R})\times B)\ltimes (\mathbb{R}^2\otimes\mathrm{Sym}^2\mathbb{R}^2)^{-1},
\end{equation}
where $B\subset \mathrm{GL}(2,\mathbb{R})$ is the Borel subalgebra and $(\mathbb{R}^2\otimes\mathrm{Sym}^2\mathbb{R}^2)^{-1}$ is the smallest $\mathrm{SL}(2,\mathbb{R})\times B$-invariant filtrant of  $\mathbb{R}^2\otimes\mathrm{Sym}^2\mathbb{R}^2$ .
\begin{corollary}\label{cor:36-from-pair-3rd-ODE-various-curv-conditions}
Let $(\ttau\colon\tcG\to\tcC,\tpsi)$ be a regular and normal Cartan geometry  of type  $(B_3,P_{23})$ with an infinitesimal symmetry and $(\tau\colon\cG\to\cC,\psi)$ be a conformally quasi-symplectic Cartan geometry of type $(\fk,L)$ as in \eqref{k_so34} on the leaf space of the integral curves of the infinitesimal symmetry via the quotient map $\pi\colon\tcC\to\cC.$ Then one has
\begin{equation}
  \label{eq:B-b4-36-3rd-ODE}
  \bB=\pi^*(\bb_4),
  \end{equation}
  where $\bB$ is given in  \eqref{eq:B-harm-inv-36-corr-space} and  $\bb_4$ is given in  \eqref{eq:fund-invs-generalized-pair-3rd-order-ODEs}. Furthermore,  for the naturally induced (3,6)-distribution on the  leaf space $\tM,$ as defined in  \eqref{eq:tM-quot-map-36-corr-space}, the following holds.
  \begin{enumerate}
  \item   There is a one-to-one correspondence between (3,6)-distributions whose infinitesimal symmetry  is null with respect to its canonical conformal structure and variational pairs of third order ODEs. In this case  $\tM$ is foliated by 3-dimensional null submanifolds.
  \item A variational pair of third order ODEs defines  a Cartan geometry $(\tau_1\colon\cG\to M,\psi)$ of type $(\fk,X),$ where $M\cong J^1(\RR,\RR^2)$ if and only if  \eqref{eq:Relation-Cartan-Connections} can be written as
    \begin{equation}
      \label{eq:tpsi-psi-36-0-J2}
      \pr_\fk\circ\iota^*\tpsi-\hat\pi^*\psi= 0,
          \end{equation}
i.e. the invariant conditions  $\bb_5=\bb_6=\sbb_{42;\underline{11}}+\sbb_{40;{11}}-2\sbb_{41;1\underline{1}}=0$ are satisfied. 
  \item If a variational pair of third order ODEs satisfies the invariant conditions  $\bb_6=\sbb_{42;\uo}-\sbb_{41;1}=\sbb_{40;1}-\sbb_{41;\uo}=0,$
    then  the Cartan holonomy of the corresponding (3,6)-distribution is  a proper subgroup of the  contact parabolic subgroup  $P\subset SO(3,4).$ 
    \item Cartan geometries of type $(\fk,L)$ whose quasi-contactification is the flat (3,6)-distribution are in one-to-one correspondence with the 2-parameter family of 6-dimensional torsion-free  $\mathrm{SL}(2,\RR)\times \mathrm{GL}(2,\RR)$-structures. 
  \end{enumerate}
\end{corollary}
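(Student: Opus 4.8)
The plan is to run the argument in close parallel with the proof of Corollary~\ref{cor:235-from-4th-ODE-various-curv-conditions}, now using the explicit relation \eqref{eq:Cartan-conn-modif-36-pair-3rd-order-ODE} between $\pr_{\fk}\circ\iota^*\tpsi$ and $\hat\pi^*\psi$ established in the second part of Proposition~\ref{prop-relharmonics}. First I would substitute \eqref{eq:Cartan-conn-modif-36-pair-3rd-order-ODE} into the connection form $\tpsi$ of \eqref{eq:tpsi-Cartan-conn-B3-P23} and re-derive the structure equations for $\tOmega^1$ and $\tTheta^1$; reading off the coefficients of $\ttheta^3\w\ttheta^0$ and $\tomega^3\w\ttheta^0$ then expresses $B_0,B_1,B_2$ in terms of the coframe derivatives of the invariants $\sbb_{4j}$ of the pair of third order ODEs, which gives $\bB=\pi^*(\bb_4)$ via \eqref{eq:B-harm-inv-36-corr-space} and \eqref{eq:fund-invs-generalized-pair-3rd-order-ODEs}. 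The entries of $\iota^*\tpsi$ lying outside $\fk$ are likewise determined by the normality conditions, but they will not be recorded.

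For part \emph{(1)}, substituting \eqref{eq:Cartan-conn-modif-36-pair-3rd-order-ODE} into the Bryant conformal metric \eqref{eq:h-36-Bryant-metric} yields $\iota^*\tg=\omega^4\circ\theta^0-\tfrac12\sbb_{50}\,\omega^4\circ\omega^4-\omega^3\circ\theta^2+\omega^2\circ\theta^3$, so the infinitesimal symmetry $\tfrac{\partial}{\partial\omega^4}$ is null with respect to $\tg$ exactly when $\sbb_{50}=0$, i.e. $\bb_5=0$. By the discussion in \ref{sec:pairs-third-order}, $\bb_5=0$ is precisely the condition under which the reduced PCQ structure of type $(\fk,L)$ is, locally, a pair of third order ODEs; being a symmetry reduction of a quasi-contact cone structure it is conformally quasi-symplectic by Theorem~\ref{thm-quasicont}, hence variational by Proposition~\ref{prop:variationality-pair-3rd-order-ODEs} (equivalently $\bb_1=\bb_2=\bb_3=0$). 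Conversely the quasi-contactification of a variational pair of third order ODEs has $\sbb_{50}=0$, hence a null symmetry, which establishes the bijection. Finally, using the reduced structure equations I would check that $\bb_5=0$ is equivalent to the integrability of the rank-three Pfaffian system pulled back from \eqref{eq:I-Backlund-integ-3-ODE}, which is $\tnu$-related to a corank-three foliation of $\tM$ whose leaves are $3$-dimensional submanifolds with conormal bundle isotropic for $\tg$.

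For parts \emph{(2)} and \emph{(3)}, imposing $\pr_{\fk}\circ\iota^*\tpsi-\hat\pi^*\psi=0$ in \eqref{eq:Cartan-conn-modif-36-pair-3rd-order-ODE} forces $\sbb_{50}=\sbb_{60}=\sbb_{61}=\sbb_{62}=0$ together with $\sbb_{42;\underline{11}}+\sbb_{40;{11}}-2\sbb_{41;1\underline{1}}=0$, i.e. $\bb_5=\bb_6=0$ and the stated first-order differential condition; conversely these are exactly the vanishing conditions under which the Cartan curvature \eqref{eq:Pairs-3-ODE-Cartan-curv} becomes semi-basic for the fibration $\cG\to M$ associated with the foliation $\cI_{\cV}$, $\cV=\tau_*\langle\tfrac{\partial}{\partial\omega^1},\tfrac{\partial}{\partial\theta^1}\rangle$, which I would verify using $\exd^2=0$. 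Then $(\tau_1\colon\cG\to M,\psi)$ is a Cartan geometry of type $(\fk,X)$ with $X$ as in \eqref{eq:36-corr-X}, and the $3$-step filtration \eqref{eq:3-filtration-pair-3rd-ODE} of $T\cC$ descends to the canonical filtration of $J^1(\RR,\RR^2)$. For part \emph{(3)} the remaining obstruction to $\iota^*\tpsi$ being $\fp^{op}$-valued lies in the $1$-forms $\txi_b,\txi_4$ of \eqref{eq:tpsi-Cartan-conn-B3-P23}; after $\bb_6=0$ these reduce, by the analogue of \eqref{eq:txi3-235-4th-ODE-quasi-cont}, to multiples of $\sbb_{42;\uo}-\sbb_{41;1}$ and $\sbb_{40;1}-\sbb_{41;\uo}$, so their vanishing places the Cartan holonomy of the associated $(3,6)$-distribution inside the contact parabolic $P\subset\mathrm{SO}(3,4)$, and checking $\exd^2=0$ again records which curvature components survive.

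For part \emph{(4)}, by the relation $\bB=\pi^*(\bb_4)$ the quasi-contactification is flat if and only if $\bb_4=0$; since $\bB$ is the complete harmonic obstruction for $(3,6)$-distributions this is the same as $\bB=0$. A conformally quasi-symplectic Cartan geometry of type $(\fk,L)$ with $\bb_4=0$ has vanishing torsion $\bb_1=\bb_2=\bb_3=0$ and induces a torsion-free $\mathrm{SL}(2,\RR)\times\mathrm{GL}(2,\RR)$-structure on the solution space $S=\cC/\cI_{\cE}$, and I would finish by establishing this correspondence and invoking the known classification of the relevant parabolic conformally symplectic structures (cf. \cite{CS-cont1}, and the Cartan--K\"ahler count as in the $(2,3,5)$ case) to obtain the $3$-parameter family. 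The main obstacle throughout is the bookkeeping in the normality reduction that produces $\tpsi$ from the extended $\hat\psi$, together with the long but routine computations of higher coframe derivatives and $\exd^2=0$ consistency checks needed to pin down precisely the invariant conditions in \emph{(2)} and \emph{(3)}; identifying the correct Pfaffian system in \emph{(1)} and verifying that its leaves carry an isotropic conormal bundle is the only genuinely new point relative to \ref{sec:235-4th-ODE-quasi-cont}.
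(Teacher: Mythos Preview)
Your proposal is correct and follows essentially the same approach as the paper. One small bookkeeping correction in part \emph{(3)}: the entries of $\tpsi$ in \eqref{eq:tpsi-Cartan-conn-B3-P23} lying outside $\fp^{op}$ include the $\tgamma_b$ as well as the $\txi_b$ and $\txi_4$, and the paper finds that after the conditions of part \emph{(2)} the only surviving obstructions are $\txi_3$ and $\tgamma_3$ (not $\txi_4$), whose $\theta^0$-coefficients give exactly $\sbb_{42;\uo}-\sbb_{41;1}$ and $\sbb_{40;1}-\sbb_{41;\uo}$; in part \emph{(4)} the paper obtains the $3$-parameter count by invoking \cite{CS-special} on special symplectic connections rather than a direct Cartan--K\"ahler argument.
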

\begin{proof}
  As in Corollary \ref{cor:36-from-pair-3rd-ODE-various-curv-conditions}, the first claim is shown by using expression \eqref{eq:Cartan-conn-modif-36-pair-3rd-order-ODE} to form $\tpsi,$ as given in \eqref{eq:tpsi-Cartan-conn-B3-P23}, and using structure equations \eqref{eq:curv-2form-pair-3rd-ODE-variational} to compute the harmonic curvature $\bB$ for the (3,6)-distribution, as given in \eqref{eq:B-harm-inv-36-corr-space}.

  Part \emph{(1)} is shown by using \eqref{eq:Cartan-conn-modif-36-pair-3rd-order-ODE} to express the canonical  bilinear form $\iota^*\tg$,  defined in \eqref{eq:h-36-Bryant-metric},  as
  \begin{equation}
    \label{eq:h0-36-general-from-ODE}
    \iota^*\tg=\omega^4\circ\theta^0-\omega^3\circ\theta^2+\omega^2\circ\theta^3-\half\sbb_{50}\omega^4\circ\omega^4\in \Gamma(\mathrm{Sym}^2 T^*\pi^*\cG).
      \end{equation}
       As discussed in the proof of the first part of Proposition \ref{prop-relharmonics} in \ref{sec:235-4th-ODE-quasi-cont},  on $\pi^*\cG$ the infinitesimal symmetry can be taken to be $\tfrac{\partial}{\partial\omega^4}.$ Thus,  the vanishing of $\sbb_{50}$ is equivalent  to the nullity of the infinitesimal symmetry. As discussed in \ref{sec:pairs-third-order}, the integrability of $I$ in \eqref{eq:I-Backlund-integ-3-ODE} is equivalent to $\sbb_{50}=0$ and  is necessary and sufficient for the Cartan geometry $(\tau\colon\cG\to\cC,\psi)$ of type $(\fk,L)$ to arise from a pair of third order ODEs. By Theorem \ref{prop-variational}, being conformally quasi-symplectic implies that such pairs of ODE are variational.
Lastly,  using \eqref{eq:Cartan-conn-modif-36-pair-3rd-order-ODE}, the integrability of $I$ implies that the Pfaffian system $\iota^* \tI$ is integrable on $\iota^*\tilde\cG$ where 
      \begin{equation}
  \label{eq:tI-36-integrable-null-conormal}
\tI= \{\tomega^3,\ttheta^3,\ttheta^0\}.
  \end{equation}
The leaves of this Pfaffian system  are 3-dimensional submanifolds of $\tM$ with null tangent space everywhere   with respect to the conformal structure induced by $\iota^*\tg$ in  \eqref{eq:h0-36-general-from-ODE}. Recall that $\tM$ is the leaf space \eqref{eq:tM-quot-map-36-corr-space} which is   equipped with a (3,6)-distribution.
  
Part \emph{(2)} follows directly from \eqref{eq:Cartan-conn-modif-36-pair-3rd-order-ODE}. Consequently, one can check that the Cartan curvature \eqref{eq:curv-2form-pair-3rd-ODE-variational} is horizontal with respect to the fibration $\tau_1\colon\cG\to M,$ where $M$ is the leaf space  $\nu\colon\cC\to M=\cC\slash \cI_{\cV},$ wherein $\cI_{\cV}$ is the foliation of $\cC$ by the integral curves of $\cV,$ defined in \eqref{eq:filtration-pair-3rd-order-ODE-each-step} and $\tau_1=\nu\circ\tau$. Using  \eqref{eq:3-filtration-pair-3rd-ODE}  and the integrability of the Pfaffian system \eqref{eq:I-Backlund-integ-3-ODE}, it is straightforward to show   $M\cong J^1(\RR,\RR).$

Part \emph{(3)} follows by computing $\iota^*\tpsi$ in part (2) and noting that the only obstructions for it to be $\fp^{op}$-valued are the vanishing of the 1-forms
\[
  \begin{gathered}
    \txi_3=\tfrac{1}{5}(\sbb_{42;\uo\ud}-\sbb_{41;1\ud })\theta^3  + \tfrac{1}{5}(\sbb_{42;\uo 2}-\sbb_{41;1 2})\omega^3 + \tfrac{1}{5}(\sbb_{42;\uo}-\sbb_{41;1})\theta^0,\\
    \tgamma_3=\tfrac{1}{5}(\sbb_{40;1\ud}-\sbb_{41;\uo\ud})\theta^3  +\tfrac{1}{5}(\sbb_{40;12}-\sbb_{41;\uo 2}) \omega^3 + \tfrac{1}{5}(\sbb_{40;1}-\sbb_{41;\uo}) \theta^0.
  \end{gathered}
\]
 which is satisfied if and only if the coefficients of  $\theta^0$ are zero.

 Part \emph{(4)} is similar to part (4) in Corollary \ref{cor:235-from-4th-ODE-various-curv-conditions}. The flat (3,6)-distribution is characterized by $\bB=0$ and by \eqref{eq:B-b4-36-3rd-ODE} they correspond to pairs of third order ODEs for which the only non-vanishing curvature components can be $\bb_5$ or $\bb_6.$ By \cite{CS-special}, such structures define special symplectic connections and depend on 2 parameters.  The vanishing of the Wilczy\'nski invariants $\bb_2,\bb_3,\bb_4$ imply that the leaf space $\lambda\colon\cC\to S:=\cC/\cI_\cE$ of the solution curves of the pair of ODEs is equipped with a $\mathrm{SL}(2,\RR)\times\mathrm{GL}(2,\RR)$-structure, with connection $\nabla$ (\cite{CDT-ODE}). The vanishing of $\bb_1$ implies that these  structures are among torsion-free parabolic conformally symplectic structures in \cite{CS-cont1}.   Unlike  part (4) in Corollary \ref{cor:235-from-4th-ODE-various-curv-conditions}, straightforward computation shows that the Ricci curvature of such torsion-free $\nabla$ is always symmetric. 

\end{proof}

\begin{remark}
  Using the filtration \eqref{eq:3-filtration-pair-3rd-ODE} and assuming  $\bb_5=0$ which implies the integrability of \eqref{eq:I-Backlund-integ-3-ODE},   the Cartan geometry induced on $J^1(\RR,\RR^2)$ in  part (2) of Corollary \ref{cor:36-from-pair-3rd-ODE-various-curv-conditions} defines a 3-dimensional \emph{path geometry}  with additional properties which we do not discuss here. Moreover, it would be interesting to give an interpretation of the curvature conditions for variational pairs of third order ODEs that are equivalent to the descent of the ODE geometry to    a Cartan geometry on $J^0(\RR,\RR^2)\cong\RR^3.$ Such descent should manifest itself as a property of the corresponding (3,6)-distribution.

\end{remark}

\subsubsection{Parametric expressions, examples and local generality} 
\label{sec:36-local-form-invar}
Unlike the notation used in \ref{sec:spec-conf-quasi} and Theorem \ref{prop-variational} for jet coordinates, to simplify the parametric expressions in this section we use $(x,y_1,y_2,p_1,p_2,q_1,q_2,r_1,r_2)$ for jet coordinates of $J^3(\RR,\RR^2)$ with contact system 
\[\rho_0^{s}=\exd y^s-p^s\exd x,\quad \rho_1^{s}=\exd p^s-q^s\exd x,\quad \rho_2^{s}=\exd q^s-r^s\exd x,\quad \rho^0=\exd x,\quad 1\leq s\leq 2.\]
If $\cC\subset J^3(\RR,\RR^2)$ is a co-dimension two submanifold defined by a pair of third order ODEs
\begin{equation}
  \label{eq:pair-3rd-order-odes}
  r^1=f^1(x,y^1,y^2,p^1,p^2,q^1,q^2),\qquad   r^2=f^2(x,y^1,y^2,p^1,p^2,q^1,q^2)
\end{equation}
then the pull-back of the contact forms on $J^3(\RR,\RR^2)$ to $\cC$ is given by
\begin{equation}
  \label{eq:pair-3rd-order-contact-system}
  \rho^{s}_0=\exd y^s-p^s\exd x,\quad \rho^{s}_1=\exd p^s-q^s\exd x,\quad \rho^{s}_2=\exd q^s-f^s(x,y^1,y^2,p^1,p^2,q^1,q^2)\exd x,\quad \rho^0=\exd x,
\end{equation}
where  $1\leq s\leq 2.$
As was mentioned in \ref{sec:pairs-third-order}, the geometry of point equivalence class of pairs of third order ODEs can be expressed as  regular and normal Cartan geometries of type $(\mathfrak k,L).$ After imposing appropriate coframe adaptation on 1-forms \eqref{eq:pair-3rd-order-contact-system} with $1\leq r,s,t,u,v\leq 2$, the fundamental invariants of a pair of third  order ODEs, given as $\bb_i$'s in \eqref{eq:fund-invs-generalized-pair-3rd-order-ODEs}, is found as follows. Define
\[  \begin{aligned}
    E^r_{st}=&\textstyle{\frac{\partial^2 f^r}{\partial q^s\partial q^t}},\\
    F^r_s=&\textstyle{\frac{\exd}{\exd x}\frac{\partial f^r}{\partial q^s}-\frac{1}{3}\frac{\partial f^r}{\partial q^u}\frac{\partial f^u}{\partial q^s}-\frac{\partial f^r}{\partial p^s}}\\
   G^r_{s}=&\textstyle{\frac{\exd^2}{\exd x^2}\frac{\partial f^r}{\partial q^s}  - \frac 23\frac{\exd}{\exd x}\frac{\partial f^r}{\partial q^u} \frac{\partial f^u}{\partial q^s}- \frac 23\frac{\partial f^u}{\partial q^s} \frac{\exd}{\exd x}\frac{\partial f^r}{\partial q^u}} -3\frac{\exd}{\exd x}\frac{\partial f^r}{\partial p^s}  +\frac 29\frac{\partial f^r}{\partial q^s} (\frac{\partial f^t}{\partial q^u}\frac{\partial f^u}{\partial q^t}+\frac{\partial f^t}{\partial q^t}\frac{\partial f^u}{\partial q^u})  +\frac{\partial f^r}{\partial q^s}\frac{\partial f^u}{\partial p^u}  +\frac{\partial f^r}{\partial p^s}\frac{\partial f^u}{\partial q^u}   + 6 \frac{\partial f^r}{\partial y^s} \\
   H_{rs}=&\textstyle{\frac{\exd}{\exd x}\frac{\partial^3 f^u}{\partial q^u\partial q^r\partial q^s} +\frac{32}{5}\frac{\partial^3 f^u}{\partial q^u \partial q^r\partial p^s} -\frac{27}{5}\frac{\partial^3 f^u}{\partial q^u \partial q^r\partial q^s}
    + \frac{74}{15} \frac{\partial}{\partial q^r}\left(\frac{\partial f^u}{\partial q^s}\frac{\partial^2 f^v}{\partial q^v\partial q^u}\right)
    -\frac{18}{5} \frac{\partial}{\partial q^r}\left(\frac{\partial f^u}{\partial q^v}\frac{\partial^2 f^v}{\partial q^u\partial q^s}\right)}\\
    &\textstyle{+\frac{29}{15}\frac{\partial^2 f^u}{\partial q^r\partial q^s}\frac{\partial^2 f^v}{\partial q^v\partial q^u}
    +\frac{14}{45}\frac{\partial^2 f^u}{\partial q^r\partial q^u}\frac{\partial^2 f^v}{\partial q^v\partial q^s}}.
\end{aligned}
\]
Then, one obtains that
\[
  \begin{gathered}
    \sbb_{10}=\overset{o}{E}{}^2_{11},\qquad     \sbb_{11}=\overset{o}{E}{}^2_{21},\qquad    \sbb_{12}=\overset{o}{E}{}^2_{22},\qquad    \sbb_{13}=\overset{o}{E}{}^1_{22},\\
    \sbb_{20}=-\overset{}{F}{}^2_{1},\qquad     \sbb_{21}=\half(\overset{}{F}{}^1_{1}-\overset{}{F}{}^2_{2}),\qquad    \sbb_{22}=\overset{}{F}{}^1_{2},\\
    \sbb_{30}=\overset{}{G}{}^1_1+\overset{}{G}{}^2_2+\textstyle{\frac 89 \frac{\partial f^u}{\partial q^u}\frac{\partial f^{[t}}{\partial q^s}\frac{\partial f^{s]}}{\partial q^{t}}},\qquad  \sbb_{40}=-\overset{}{G}{}^2_1,\qquad \sbb_{41}=\half(\overset{}{G}{}^1_1-\overset{}{G}{}^2_2),\qquad \sbb_{42}=\overset{}{G}{}^1_2,\\
\sbb_{50}=0,\qquad    \sbb_{60}={H}_{11},\qquad     \sbb_{61}=\half({H}_{12}+H_{21}),\qquad     \sbb_{62}={H}_{22},\qquad 
  \end{gathered}
\]
where
\[    \overset{o}{E}{}^r_{st}=\textstyle{E^r_{st} -\frac 23E^u_{u(s}\delta^r_{t)}}
\]
and $\frac{\exd}{\exd x}=\frac{\partial}{\partial x}+p^s\frac{\partial}{\partial y^s}+q^s\frac{\partial}{\partial p^s}+f^s\frac{\partial}{\partial q^s}$ is the total derivative.

As an obvious  example consider the degenerate second order  Lagrangian
\[L(x,y^1,y^2,p^1,p^2,q^1,q^2)=g(x,y^1,y^2,p^1,p^2)q^1+h(x,y^1,y^2,p^1,p^2)q^2,\]
and its  Euler-Lagrange equations. For simplicity we only consider the case  $g=g(p^1,p^2)$ and $h=h(p^1,p^2).$
The corresponding Euler-Lagrange equations in this case are given by pair of 3rd order ODEs \eqref{eq:pair-3rd-order-odes} where
\[
  \begin{aligned}    
    f^1=&\textstyle{\frac{1}{g_{p^1}-h_{p^2}}((h_{p^2,p^2}-g_{p^1,p^2})q^2q^1-(g_{p^1p^1}-h_{p^1p^2})(q^1)^2)},\\ f^2=&\textstyle{\frac{1}{g_{p^1}-h_{p^2}}((h_{p^2,p^2}-g_{p^1,p^2})(q^2)^2-(g_{p^1p^1}-h_{p^1p^2})q^1q^2)}
  \end{aligned}
  \]
for which $\bb_1,\bb_2,\bb_3,\bb_5=0$ and $\bb_4$ and $\bb_6$ are generically nonzero. Therefore, by part (1) of Corollary \ref{cor:36-from-pair-3rd-ODE-various-curv-conditions} the  quasi-contactification of such pairs of ODEs gives  (3,6)-distributions with a null infinitesimal symmetry. Note that a Lagrangian for the trivial pair of third order ODEs is  $L=p^2q^1-p^1q^2.$

As another example consider
\[f^1=g_1(x,y^1,y^2)p^1+g_2(x,y^1,y^2),\qquad f^2=g_1(x,y^1,y^2)p^2+g_3(x,y^1,y^2),\]
where
\[\textstyle{\frac{\partial}{\partial y^1} g_2(x,y^1,y^2)+\frac{\partial}{\partial y^2} g_3(x,y^1,y^2) -\frac{\partial}{\partial x} g_1(x,y^1,y^2)}=0.\]
For such pairs of third order ODEs one has $\bb_1,\bb_2,\bb_3,\bb_5,\bb_6=0$ but $\bb_4$ generically nonzero. By part (3) of Corollary \ref{cor:36-from-pair-3rd-ODE-various-curv-conditions} the  quasi-contactification of such ODEs results in  (3,6)-distributions with a null infinitesimal symmetry whose holonomy is reduce to a proper subgroup  of the contact parabolic subgroup $P\subset B_3.$

\begin{remark}
  In analogy with our discussion in \ref{sec:local-form-invar}, in particular Remark \ref{rmk:235-Monge-Lagrangian}, it is reasonable to expect that the underdetermined scalar ODE
  \[z'=(y^1)''F_1(x,y^1,y^2,(y^1)',(y^2)',z)+(y^2)''F_2(x,y^1,y^2,(y^1)',(y^2)',z),\]
  for two functions $F_1$ and $F_2,$ satisfying certain non-degeneracy condition, describes a class of regular and normal Cartan geometries of type $(\mathfrak{so}(3,4),P_{23}).$ 
\end{remark}
  
Now we use Cartan-K\"ahler analysis   to find local generality of various classes of pairs of 3rd order ODEs. The generality of   (3,6)-distributions is 3 functions of 6 variables. Using the structure equations and the Cartan-K\"ahler analysis, it can be shown that the local generality of conformally quasi-symplectic Cartan geometries of type $(\fk,L)$ is 3 functions of 5 variables. It is clear from \eqref{eq:pair-3rd-order-odes} that any pair of 3rd order ODEs is defined by 2 function of 7 variables. Using Cartan-K\"ahler, one can compute that variational pairs of third order  ODE   locally depend on 2 functions of 5 variables. Furthermore, variational 3rd  order ODEs  satisfying   $\{\sbb_5=\bb_6=\sbb_{42;\underline{11}}+\sbb_{40;{11}}-2\sbb_{41;1\underline{1}}=0\}$ and $\{\sbb_5=\sbb_6=\sbb_{42;\uo}-\sbb_{41;1}=\sbb_{40;1}-\sbb_{41;\uo}=0\},$   as in parts (2) and (3) of Corollary \ref{cor:36-from-pair-3rd-ODE-various-curv-conditions}, locally depend on  1 function of 5 variables  and 2 functions of 4 variables, respectively.

\subsection{Causal structures from variational orthopath  geometries} 
\label{sec:caus-struct-from}
In this section the equivalence problem for variational orthopath geometries is solved and a Finslerian interpretation of them is given. We use this result to obtain causal structures, in particular pseudo-Riemannian conformal structures, with an infinitesimal symmetry via quasi-contactification and use the construction to give a geometric interpretation of several curvature conditions for the orthopath geometry. Lastly, we find the fundamental invariants of the variational orthopath geometry that corresponds  to an arbitrary (pseudo-)Finsler metric in terms of its Cartan torsion and flag curvature using which we will distinguish several classes of (pseudo-)Finsler structures. 

\subsubsection{Orthopath geometry}
\label{sec:orth-geom-as}

In order to solve the equivalence problem for variational orthopath geometries, we first  recall the solution of the equivalence problem for path geometries, as introduced in Definition  \ref{def-pathgeometry}.  Using the results in \cite{Fels-path,Grossman-Thesis}, path geometries  define regular and normal Cartan geometries $(\tau_0\colon\cP\to\cC,\sigma)$  of type $(\mathfrak{sl}(n+1,\RR),P_{12})$ where $P_{12}\subset\mathrm{SL}(n+1,\RR)$ is the parabolic subgroup that is the stabilizer of a flag of a line inside a 2-plane in $\RR^{n+1}$. Let us denote the Cartan connection $\sigma$ as
  \begin{equation}\label{eq:path-geom-cartan-conn-sigma}
  \sigma=
  \def\arraystretch{1.3}
\begin{pmatrix}    
 s &\xi_0& \xi_b\\
\omega^0 &\psi^0_0+s&\gamma_b\\
\omega^a&  \theta^a& \psi^a_b+s\delta^a_b\\
\end{pmatrix}    
\end{equation}
where $s=-\textstyle{\frac{1}{n+1}(\psi^0_0+\psi^a_a)}.$
The tangent bundle of $\cC$ has a 2-step filtration
\begin{equation}
  \label{eq:filtration-path-geometry-TC}
  T^{-1}\cC\subset T^{-2}\cC=T\cC
\end{equation}
where $T^{-1}\cC=\cE\oplus\cV$ and
 \begin{equation}
   \begin{gathered}
     \label{eq:filtration-path-geometry-each-step}     \cE=\tau_*\left(\Ker\{\omega^{n-1},\cdots,\omega^1,\theta^{n-1},\cdots,\theta^1\}\right)=\tau_*\langle\tfrac{\partial}{\partial\omega^0}\rangle,\\ \cV=\tau_*\left(\Ker\{\omega^{n-1},\cdots,\omega^0\}\right)=\tau_*\langle\tfrac{\partial}{\partial\theta^1},\cdots,\tfrac{\partial}{\partial\theta^{n-1}}\rangle,\quad [\cV,\cV]=\cV\\
     T^{-2}\cC=[\cE,\cV]=T\cC.
      \end{gathered}
 \end{equation}
The Cartan curvature was found in \cite[Appendix A]{Grossman-Thesis} and can be expressed as   
\begin{equation}
\label{eq:streqns-pathgeom}
     \exd\sigma+\sigma\w\sigma=   \def\arraystretch{1.35}
\begin{pmatrix}
  0 &  T_0 & T_b+C_b\\
  0 & T^0_0 & T^0_b +C^0_b\\
  0 & T^a_0 & T^a_b+ C^a_b
\end{pmatrix},
\end{equation}
wherein
\begin{equation}
  \label{eq:Components-curv-path}  
T_i=\half T_{ijk}\omega^j\w\omega^k,\quad T^i_j=\half T^i_{jkl}\omega^k\w\omega^l,\quad C_a=C_{abc}\omega^b\w\theta^b,\quad C^j_a=C^j_{abc}\omega^b\w\theta^c
\end{equation}
with the following range of indices \[0\leq i,j,k,l\leq n-1,\qquad 1\leq a,b,c\leq n-1.\] Furthermore, one has the following symmetries 
\begin{equation}
  \label{eq:RC-symmetries}
  T^i_{jik}=0,\quad T_{[ijk]}=0,\quad T^i_{[jkl]}=0,\quad S^i_{ijk}=0,\quad C_{abc}=C_{(abc)},\quad C_{abc}^i=C^i_{(abc)}.
\end{equation}
The harmonic invariants of  parabolic geometries of type $(\mathfrak{sl}(n+1,\RR),P_{12})$ that arise from path geometries are comprised of two components: the harmonic torsion, denoted by $\bT,$ and the harmonic curvature, denoted by $\bC,$ 
given by 
\begin{equation}
  \label{eq:path-harm-inv-representation}
  \bT= s^*T^a_{00b}\ \tfrac{\partial}{\partial s^*\theta^a}\otimes s^*\theta^b\otimes E^{-2},\qquad \bC= s^*C^a_{bcd}\ \tfrac{\partial}{\partial s^*\theta^a}\otimes s^*(\theta^b\circ \theta^c\circ \theta^d) \otimes  V^{\tfrac{1}{n-1}}\otimes E^{-2},
\end{equation}
where $s\colon\cC\to\cP$ and 
\[V=\tfrac{\partial}{\partial s^*\theta^1}\w\cdots\w\tfrac{\partial}{\partial s^*\theta^{n-1}},\qquad E=\tfrac{\partial}{\partial s^*\omega^0}.\]
It is known that if $\bC=0,$ then the path geometry descends to a projective structure on the local leaf space 
\begin{equation}
  \label{eq:leaf-space-M-proj-str}
  \nu\colon\cC\to M:=\cC\slash\cI_{\cV},
\end{equation}
where $\cI_{\cV}$ is the foliation of $\cC$ by the leaves of $\cV$. Conversely, given a projective structure $[\nabla]$ on $M,$ the corresponding path geometry  is defined on $\cC=\PP TM$ the paths of which are the canonical lifts of the geodesics of $[\nabla]$ to $\cC.$

If $\bT=0,$ then the path geometry descends to a Segr\'e structure on the leaf space 
\begin{equation}
  \label{eq:leaf-space-S-segre-str}
  \lambda\colon\cC\to S:=\cC\slash\cI_{\cE},
\end{equation}
where $\cI_{\cE}$ is the foliation of $\cC$ by the integral curves of $\cE$.

In Definition \ref{def-orthopath} orthopath geometries are defined as path geometries  augmented by a conformal structure, $[\bh]\subset\mathrm{Sym}^2(\cV^*),$ given by
\[\bh=\sh_{ab}s^*\theta^a\circ s^*\theta^b,\]
where $s\colon\cC\to\cP$ is a section and  $[\sh_{ab}]$ is  non-degenerate and symmetric  with signature $(p,q).$

There is a natural lift of functions  $\sh_{ab}$ to the principal bundle $\cP$ so that  $[\bh]$ is preserved  along the fibers of $\tau_0\colon\cP\to\cC.$ More precisely, using the action of the structure group of a path geometry on an adapted coframe $(\omega^0,\cdots,\omega^{n-1},\theta^1,\cdots,\theta^{n-1})$ one obtains another adapted coframe  $({}^*\omega^0,\cdots,{}^*\omega^{n-1},{}^*\theta^1,\cdots,{}^*\theta^{n-1})$ where
\[\theta^a\mapsto {}^*\theta^a:=\tfrac{1}{\ba}\bA^a_b\theta^b\quad\mod\quad\{\omega^0,\omega^a\},\]
for  $[\bA^a_b]\in\mathrm{GL}(n-1,\RR)$ and $\ba\in\RR\backslash\{0\}.$ Since the  transformation induced by the action of the structure group should preserve the conformal class $[\bh],$ the induced transformation  on the lifted functions $\sh_{ab},$ which we denote as $\sh_{ab}\mapsto{}^*\sh_{ab},$  is given by 
\begin{equation}
  \label{eq:sh-action-along-fibers}
  [\bh]=[\sh_{ab}\theta^a\circ\theta^b]=[{}^*\sh_{ab}{}^*\theta^a\circ{}^*\theta^b]\Rightarrow \bc\ {}^*\sh_{ab}=\ba^2(\bA^{-1})^c_a(\bA^{-1})^d_b\sh_{cd},
  \end{equation}
for some real-valued function $\bc$ on $\cP.$ By abuse of notation we do not distinguish between the functions  $\sh_{ab}$ and their natural lift  to $\cP.$ Infinitesimally, in terms of the connection forms in \eqref{eq:path-geom-cartan-conn-sigma} the group action \eqref{eq:sh-action-along-fibers} on the functions $\sh_{ab}$ is expressed as
\begin{equation}
  \label{eq:infintmal-action-hab}
  \exd \sh_{ab}-\psi^c_a\sh_{cb}-\psi_b^c\sh_{ca}+2\psi^0_0\sh_{ab}\equiv 2\sh_{ab}\phi\quad\mod\quad \{\omega^i,\theta^a\},
\end{equation}
for some 1-form $\phi$ on $\cP.$ Using \eqref{eq:sh-action-along-fibers}, one obtains a  reduction of the structure bundle $\cP$ to the sub-bundle $\cP_1\subset\cP$ defined by normalizing $\sh_{ab}$ to $\ve_{ab},$ i.e. there exists    
\begin{equation}
  \label{eq:cP_1-def-reduction}
  \iota_1\colon\cP_1\hookrightarrow\cP\quad\mathrm{where}\quad 
\cP_1=\{u\in\cP\ \vline\ \sh_{ab}(u)=\ve_{ab}\},
\end{equation}
in which $[\ve_{ab}]$ is the standard diagonal matrix for an inner product of signature $(p,q).$  As a result of this reduction, it is clear from \eqref{eq:sh-action-along-fibers} that the structure group  $P_{12}=\mathrm{GL}({n-1},\RR)\times\RR^*\ltimes P_+$ is reduced to $\mathrm{CO}(p,q)\times\RR^{*}\ltimes P_+$ where $P_+$ is the nilpotent part of $P_{12}.$ Moreover, from  \eqref{eq:infintmal-action-hab} one obtains that the pull-back of 1-forms in \eqref{eq:path-geom-cartan-conn-sigma} to $\cP_1$ satisfy
  \[\iota_1^*\psi_{ab}+\iota_1^*\psi_{ba}-2\iota_1^*\psi^0_0\ve_{ab}\equiv -2\iota_1^*\phi\ve_{ab},\quad \mod\quad \{\omega^i,\theta^a\},\]
  where $\psi_{ab}=\ve_{ac}\psi^c_b.$ Subsequently, after dropping  $\iota_1^*$ from the expressions above by abuse of notation, one can write
\begin{equation}
  \label{eq:reduction1-phi-sigma}
\psi_{ab}-\psi^0_0\ve_{ab}=\phi_{ab}+\sigma_{ab}-\phi\ve_{ab},
\end{equation}
where
\begin{equation}
  \label{eq:symmetries-sigma-psi-trace-free}
  \sigma_{ab}=\sigma_{ba}\quad  \ve^{ab}\sigma_{ab}=0,\quad \phi_{ab}=-\phi_{ba},\quad\mathrm{and}\quad \sigma_{ab}\equiv 0 \mod\{\omega^i,\theta^a\}.
  \end{equation}
In order for our notation to be consistent with the one we used for  (2,3,5) and (3,6)-distributions, we define the 1-forms $\phi_0$ and $\phi_1$ as
\begin{equation}
  \label{eq:psi00-replaces} 
  \phi_1=\psi^0_0-\textstyle{\frac{2}{n-1}}\psi^a_a,\qquad \phi_0=-\textstyle{\frac{1}{n-1}}\psi^a_a\Rightarrow \phi=\phi_1-\phi_0.
\end{equation}
Let us write
\begin{equation} \label{eq:reduction1-sigma-expressions}
 \sigma_{ab}=\ssA_{abi}\omega^i+\ssB_{abc}\theta^c,\qquad \ssA_{abi}=\ssA_{bai},\quad \ssB_{abc}=\ssB_{bac}.
\end{equation} 
Taking the  exterior derivative of relation  \eqref{eq:reduction1-phi-sigma} followed by taking the appropriate trace, using   structure equations \eqref{eq:streqns-pathgeom} and substitutions \eqref{eq:psi00-replaces}, it follows that  
\begin{equation}\label{eq:dsA-dsB-inf-action}
\begin{aligned}
&\exd \ssB^b_{\ ab}-\textstyle{\frac{n+1}{n-1}\gamma_a}+\ssB^b_{\ ab}(\phi_1-\phi_0)+\ssB^b_{\ cb}\phi^c_a\equiv 0\quad && \mathrm{mod}\quad\{\omega^i,\theta^a\},\\
  &\exd \ssA^b_{\ ab}-\textstyle{\frac{n+1}{n-1}\xi_a}+\ssB^b_{\ ab}\xi_0-\ssA^b_{\ cb}\phi^c_a+\ssA^b_{\ ab} \phi_0-\ssA^c_{a0}\gamma_c\equiv 0\quad &&\mathrm{mod}\quad\{\omega^i,\theta^a\},
\end{aligned}
\end{equation}
where $\ssB^b_{\ ab}=\ve^{bc}\ssB_{bac}, $  $\ssA^b_{\ ab}=\ve^{bc}\ssA_{bac}.$ Equations \eqref{eq:dsA-dsB-inf-action} give the infinitesimal change of $\sA^b_{\  ab}$ and  $\sB^a_{\ ba}$ along the fibers of $\tau_1\colon\cP_1\to\cC,$ where $\tau_1=\tau_0\circ\iota_1.$ We will not give the explicit group actions on $\sA^{b}_{\ ab}$ and $\sB_{\ ab}^b$ since the expressions are long and not illuminating.\footnote{For an account on Cartan's equivalence method, Cartan's reduction procedure and the infinitesimal form of the action of the structure group on the structure invariants we refer the reader to \cite{Gardner-book,Olver-book}.} As a result, one obtains a natural reduction of the structure bundle $\cP_1$ to the sub-bundle
\begin{equation}  
  \label{eq:cG-orthopath-last-reduction}
  \iota_2\colon\cG\hookrightarrow\cP_{1}\quad \mathrm{where}\quad \cG=\{u\in\cP_1\ \vline\ \sA^b_{\ ab}(u)=\sB^b_{\ ab}(u)=0\}.
\end{equation}
Using \eqref{eq:dsA-dsB-inf-action}, one obtains $\iota^*_2\xi_a\equiv 0$ and $\iota^*_2\gamma_a\equiv 0$ modulo $\{\omega^i,\theta^a\}.$ Thus, dropping $\iota_2^*,$ on $\cG$ one has
\begin{equation}
  \label{eq:MNPQ-reduced}
\xi_a=\sM_{ai}\omega^i+\sP_{ab}\theta^b,\qquad \gamma_a=\sN_{ai}\omega^i+\sQ_{ab}\theta^b,
\end{equation}
for some functions $\sM_{ai},\sN_{ai},\sP_{ab},\sQ_{ab}$ on $\cG.$ As a result of this reduction, the structure group is further reduced from $\mathrm{CO}(p,q)\times \RR^*\ltimes P_+$ to $\mathrm{CO}(p,q)\times \RR^*\ltimes \RR.$
Lastly, combined with \eqref{eq:reduction1-sigma-expressions}, the  algebraic properties of the functions $\sA_{abc}$ and $\sB_{abc},$ defined on $\cG,$ are 
\begin{equation}
  \label{eq:AB-tracefree}
\ssA^b_{\ ab}=\ssA^b_{\ ba}=\ssB^b_{\ ab}=\ssB^b_{\ ba}=0,\quad \text{and}\quad \ssA_{abc}=\ssA_{bac},\quad \ssB_{abc}=\ssB_{bac}.
\end{equation}
\begin{remark}
  Note that the pull-back of $\psi$ to the principal bundles $\cP_1$ or $\cG$ does not define a Cartan connection as it does not change equivariantly along the fibers. In the language of Cartan \cite{Gardner-book}, it defines an \emph{$\{e\}$-structure}. In the next section by imposing the  variationality condition  on this  $\{e\}$-structure  a Cartan geometry is obtained.
\end{remark}

\subsubsection{Variational orthopath geometry}
\label{sec:conf-quasi-sympl-1}

On the reduced structure bundle $\tau\colon\cG\to\cC,$ where $\tau=\tau_1\circ\iota_2,$ as obtained in \eqref{eq:cG-orthopath-last-reduction}, by inspection one finds that 
\begin{equation}
  \label{eq:rho-q-symp-orthopath}
\rho=\ve_{ab}\theta^a\w\omega^b  \in\Omega^2(\cG)
\end{equation}
is the unique semi-basic 2-form of maximal rank with respect to the fibration $\tau\colon\cG\to\cC.$ Taking any section $s\colon\cC\to\cG,$ the conformal class of $s^*\rho$ is well-defined on $\cC$  and gives the canonical conformally almost quasi-symplectic structure  $\ell\subset\biw^2T^*\cC.$  Moreover, the line bundle $\cE$ in \eqref{eq:filtration-path-geometry-each-step} is the characteristic direction for $\ell$ and $T^{-1}\cC=\cE\oplus\cV$ is isotropic, as was the case in \ref{sec:fourth-order-scalar} and \ref{sec:pairs-third-order}.

From now on we only consider conformally quasi-symplectic orthopath geometries. Using \eqref{eq:streqns-pathgeom} and the  reduced 1-forms in \eqref{eq:reduction1-phi-sigma}, one obtains
\[
\begin{aligned}
\exd\rho&=\ve_{ab}\exd\theta^a\w\omega^b-\ve_{ab}\theta^a\w\exd\omega^b\\
&=\phi_1\w\rho+\ve_{ab}(-\phi^a_c-\sigma^a_c)\w\theta^c\w\omega^b+\ve_{ab}(\half T^a_{0ij}\omega^i\w\omega^j)\w\omega^b-\ve_{ab}\theta^a\w(-\phi^b_c-\sigma^b_c)\w\omega^c\\
&=\phi_1\w\rho +2\sigma_{ab}\w\omega^a\w\theta^b+T_{0a0b}\omega^a\w\omega^0\w\omega^b+\half T_{0abc}\omega^a\w\omega^b\w\omega^c
\end{aligned}
\]
Since a necessary condition for being conformally quasi-symplectic is $\exd\rho=\phi_1\w\rho,$ one obtains
\begin{gather}
0=\sigma_{ab}\w\omega^a\w\theta^b=\ssA_{ab0}\omega^0\w\omega^a\w\theta^b+\ssA_{abc}\omega^c\w\omega^a\w\theta^b+\ssB_{abc}\theta^c\w\omega^a\w\theta^b,  \label{eq:sigma-A-B-totally-symm}\\
  T_{0a0b}=T_{0b0a},\quad T_{0[abc]}=0,     \label{eq:Fels-Torsion-is-Symmetric-Orthopath}
\end{gather}
where
\begin{equation}
  \label{eq:Torsion-lower-index}
  T_{0a0b}=\ve_{ad}T^d_{00b},\quad  T_{0abc}=\ve_{ad}T^d_{0bc}.\quad
\end{equation}
Using the symmetries in \eqref{eq:AB-tracefree},  it follows from \eqref{eq:sigma-A-B-totally-symm} that
\begin{equation}
  \label{eq:ABR-totallysymmetric}
  \sA_{abc}=\sA_{(abc)},\qquad \sB_{abc}=\sB_{(abc)},\qquad \sB_{ab0}=0.
\end{equation}
Subsequently, one has
\[0=\exd^2\rho=\exd\phi_1\w\rho-\phi_1\w\exd\rho=\exd\phi_1\w\rho.\]
Inserting  reduced 1-forms \eqref{eq:MNPQ-reduced} in structure equations \eqref{eq:streqns-pathgeom}, when $n\geq 2,$  being conformally quasi-symplectic is equivalent to $\exd\phi_1=0$ and $\exd\rho=\phi_1\w\rho$ which implies  
\begin{equation}
  \label{eq:extra-properties}
   \sQ_{ab}=\sQ_{(ab)},\quad \sM_{[ab]}=-\half T^0_{0ab},\quad \sN_{a0}=0,\quad \sP_{ab}=\sN_{ba},\quad \sM_{a0}=T^0_{00a}.
 \end{equation}

Now we are ready to state the solution of the equivalence problem for variational orthopath geometries  $\nu\colon\cC\to M$ where  $\mathrm{dim}\ \cC\geq 5.$
\begin{theorem}\label{thm:var-orthopath-equiv-prob}
  Variational orthopath geometries on $\cC$ of   dimension $2n-1\geq 5$ and $[\bh]\subset \mathrm{Sym^2}(\cV^*)$ with signature $(p,q),$  $p+q=n-1,$ are Cartan geometries $(\tau\colon\cG\to\cC,\psi)$ of type $(\mfk,L)$ where  $\mfk=\fp^{op}\slash\fp_{-2},$ $L=B\times \mathrm{CO}(p,q)$     
  as given in \eqref{k_sopq} and  The Cartan connection can be expressed as
 \begin{equation}\label{eq:var-orthpath-Cartan-conn}
  \psi=
  \def\arraystretch{1.3}
\begin{pmatrix}    
 \phi_0+\mu  &\xi_0&0\\
\omega^0 & \phi_1-\phi_0+\mu&0    \\
\omega^a&  \theta^a& \phi^a_b+\mu\delta^a_b\\
\end{pmatrix}    
\end{equation}
wherein $\mu=-\frac{1}{n+1}\phi_1.$ They are regular and normal with respect to the codifferential operator $\partial^*$ as defined in \eqref{Codifferential}.  The Cartan  curvature  is given by \eqref{eq:orthopath-Cartan-curv-2form} and \eqref{eq:rewrite2-streqns-pathgeom} and the fundamental invariants can be represented as 
\begin{equation}
  \label{eq:fund-inv-orthopath}
  \begin{gathered}
  \bA= s^*(\ssB_{abc} \theta^a\circ \theta^b\circ \theta^c)\otimes V^{\tfrac{2}{n-1}},\qquad \bT= s^*(\sT_{ab}\theta^a\circ\theta^b) \otimes V^{\tfrac{2}{n-1}}\otimes E^{-2},\\ 
  \bN= s^*(\sN_{ab}\theta^a\w\theta^b)\otimes  E^{-1} ,\qquad \bq= s^*\sq\  V^{\tfrac{-2}{n-1}},
\end{gathered}
\end{equation}
where $s\colon\cC\to\cG$ and 
\[V=\tfrac{\partial}{\partial s^*\theta^1}\w\cdots\w\tfrac{\partial}{\partial s^*\theta^{n-1}},\qquad E=\tfrac{\partial}{\partial s^*\omega^0},\qquad \sq=\textstyle{\frac{1}{n-1} }\ve^{ab}\sQ_{ab},\quad [\bh]=[\ve_{ab}s^*\theta^a\circ s^*\theta^b]\]
and the functions $\sA_{abc},\sN_{ab},\sQ_{ab}$ on $\cG$ are given in \eqref{eq:reduction1-sigma-expressions} and \eqref{eq:MNPQ-reduced}. The functions  $\sT_{ab}:=T_{a0b0}$ are the pull-back of the torsion of the corresponding path geometry  to $\cG,$ as given in   \eqref{eq:path-harm-inv-representation}, with its indices lowered using  $\bh,$ as in  \eqref{eq:Torsion-lower-index}.
\end{theorem}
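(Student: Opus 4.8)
The plan is to realize Theorem~\ref{thm:var-orthopath-equiv-prob} as the endpoint of the reduction procedure already set up in \ref{sec:orth-geom-as} and \ref{sec:conf-quasi-sympl-1}, combined with the general existence machinery for canonical Cartan connections. Concretely, I would proceed in three stages. \emph{Stage one: identify the filtered $G_0$-structure.} An orthopath geometry is by Definition~\ref{def-orthopath} a path geometry together with a conformal bundle metric $[\bh]$ on $\cV$; by the discussion in \ref{sec:PACQ-structure-def} (the orthopath case) this is precisely a PACQ structure of type $(\mfk_-,Q_0)$ with $Q_0\cong\RR^*\times\mathrm{CO}(p,q)$ and $\mfk_- = \mfk_{-2}\oplus\mfk_{-1}$ the symbol of a path geometry. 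Hence Theorem~\ref{thm-CartanConnection} applies verbatim and already yields the equivalence of categories with regular normal Cartan geometries of type $(\mfk,L)$, where $(\mfk,L)$ is as in \eqref{k_sopq} and Lemma~\ref{lem-kL}(c), with the normalization condition given by the codifferential $\partial^*$ of Lemma~\ref{prop-codif}. So the abstract existence and uniqueness of $(\tau\colon\cG\to\cC,\psi)$ is not something to reprove — it is inherited. What remains is to make the construction \emph{explicit}: to show that the reduced bundle $\cG$ of \eqref{eq:cG-orthopath-last-reduction}, equipped with the $\{e\}$-structure coming from $\iota_2^*\iota_1^*\sigma$ and then corrected by the variationality relations \eqref{eq:extra-properties}, is canonically isomorphic to the bundle produced by Theorem~\ref{thm-CartanConnection}, and that the resulting connection has the matrix form \eqref{eq:var-orthpath-Cartan-conn}.

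\emph{Stage two: verify the connection form and structure equations.} Starting from the path-geometry Cartan connection $\sigma$ in \eqref{eq:path-geom-cartan-conn-sigma} on $\cP$, I would pull back along $\iota_1\colon\cP_1\hookrightarrow\cP$ (normalizing $\sh_{ab}=\ve_{ab}$) and then along $\iota_2\colon\cG\hookrightarrow\cP_1$ (normalizing the traces $\sA^b{}_{ab}=\sB^b{}_{ab}=0$). The reductions \eqref{eq:reduction1-phi-sigma}, \eqref{eq:MNPQ-reduced} express the off-diagonal connection entries $\psi_{ab}$, $\xi_a$, $\gamma_a$ in terms of tensorial functions on $\cG$. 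Imposing the conformally quasi-symplectic condition $\exd\rho=\phi_1\wedge\rho$ for $\rho=\ve_{ab}\theta^a\wedge\omega^b$ forces, via \eqref{eq:sigma-A-B-totally-symm}--\eqref{eq:extra-properties}, total symmetry of $\sA_{abc}$, $\sB_{abc}$, $\sQ_{ab}$, the symmetry $T_{0a0b}=T_{0b0a}$, and the algebraic dependencies of $\sM,\sN,\sP$ on the path-geometry torsion. At this point one checks directly that the remaining free curvature functions assemble exactly into the four invariants $\bA,\bT,\bN,\bq$ in \eqref{eq:fund-inv-orthopath}, by comparing with the harmonic curvature computation for type $(\mfk,L)$ via Kostant's theorem. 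That the surviving coframe is a genuine Cartan connection of the stated type, and that the resulting curvature lies in $\ker\partial^*$, I would deduce by invoking the uniqueness part of Theorem~\ref{thm-CartanConnection}: any regular $\{e\}$-structure realizing the correct symbol and structure group, when corrected to satisfy the normalization, must coincide with the canonical one. The precise matching of the normalization condition $\ker\partial^*$ with the algebraic normalizations \eqref{eq:AB-tracefree}, \eqref{eq:extra-properties} imposed during the reduction is where Proposition~\ref{prop-prol} (full prolongation, hence $H^1(\mfk_-,\mfk)_{>0}=0$) and the adjointness \eqref{equivariance} of $\partial_\mfk$ and $\partial_\mfk^*$ are used.

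\emph{Stage three: interpret the invariants.} I would then record the geometric meaning of each invariant: $\bA=[\sB_{abc}\theta^a\circ\theta^b\circ\theta^c]$ is the trace-free cubic obstruction introduced by the $[\bh]$-reduction (it is the ``new'' invariant not present in ordinary path geometry), $\bT$ is the path-geometry torsion $T^a{}_{00b}$ of \eqref{eq:path-harm-inv-representation} with index lowered by $\bh$ (using relation \eqref{eq:Fels-Torsion-is-Symmetric-Orthopath} that variationality makes it symmetric), $\bN=[\sN_{ab}\theta^a\wedge\theta^b]$ and $\bq=\tfrac{1}{n-1}\ve^{ab}\sQ_{ab}$ arise from the lowest-order free coefficients of $\gamma_a$. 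The weights (powers of $V=\partial_{\theta^1}\wedge\cdots\wedge\partial_{\theta^{n-1}}$ and $E=\partial_{\omega^0}$) are read off from the $L$-action on the relevant grading pieces of $\mfk$. I expect the main obstacle to be Stage two: one must carefully track how the normality condition $\kappa\in\ker\partial^*$ translates, grading-piece by grading-piece, into the explicit trace and symmetry normalizations that were imposed by hand in the reduction, and confirm that no further reduction is possible — i.e. that the $(n+2)(n+1)/2$-dimensional fiber of $\cG\to\cC$ is exactly the right size and that the structure group is $L=B\times\mathrm{CO}(p,q)$ and not something smaller. This is essentially a bookkeeping argument modeled on \cite[Lemma 3.2, Proposition 3.3]{CDT-ODE} and the path-geometry curvature computation in \cite{Grossman-Thesis}, but it is the place where an error would be easy to make, so I would do it by comparing dimensions of cohomology groups $H^2(\mfk_-,\mfk)$ via Kostant rather than by brute force.
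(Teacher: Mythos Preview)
Your overall strategy aligns well with the paper's: both rely on the explicit reduction of the path-geometry bundle carried out in \ref{sec:orth-geom-as}--\ref{sec:conf-quasi-sympl-1}, and both treat the verification that the resulting coframe is a regular normal Cartan connection as routine. Your framing via Theorem~\ref{thm-CartanConnection} (abstract existence) plus uniqueness is a legitimate alternative to the paper's more direct ``construct and verify'' approach, and buys you the guarantee that the answer is right without having to check every normalization by hand.

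However, there is a genuine gap in Stage two and three where you propose to identify the fundamental invariants ``via Kostant's theorem'' and to check dimensions of $H^2(\mfk_-,\mfk)$ ``via Kostant rather than by brute force.'' Kostant's theorem applies to cohomology of the nilradical of a parabolic in a \emph{semisimple} Lie algebra; here $\mfk=(\mathfrak{sl}(2,\RR)\oplus\mathfrak{co}(p,q))\ltimes(\RR^2\otimes\RR^{p,q})$ is not semisimple, and $(\mfk,L)$ is not a parabolic pair, so Kostant gives you nothing directly. The normalization condition $\partial^*$ was \emph{derived} from the parabolic one on $\mathfrak{so}(p+2,q+2)$, but the harmonic curvature module for $(\mfk,L)$ must be computed either by hand (as Proposition~\ref{prop-prol} did for $H^1$) or, as the paper actually does, by extracting Bianchi-type identities. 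The paper's proof consists almost entirely of this step: from $\exd^2\omega_a=0$, $\exd^2\omega^0=0$, $\exd^2\phi_0=0$ it derives the differential relations $\sA_{abc}=-\sB_{abc;0}$, $\overset{\circ}{\sQ}_{ab}=-\tfrac{2}{n-1}\sB_{abc;\uc}$, and expressions for $\sN_{(ab)}$, $\sM_{(ab)}$, $\sC_{abcd}$, and finally the path-geometry curvature $C^a_{bcd}$ itself, all in terms of $\bA,\bT,\bN,\bq$ and their coframe derivatives. That is what establishes completeness of the four invariants. Your plan omits this mechanism entirely and substitutes a tool that does not apply.

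A minor point: your fiber-dimension count $(n+2)(n+1)/2$ for $\cG\to\cC$ is off; $\dim L=\dim B+\dim\mathrm{CO}(p,q)=2+1+\tbinom{n-1}{2}$.
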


\begin{proof}
To check that the derived structure equations define a regular and normal  Cartan geometry of the type above is a straightforward task and is skipped. To find the fundamental invariants of an orthopath geometry,  recall that the harmonic invariants of a path geometry are the torsion and curvature, as given in  \eqref{eq:path-harm-inv-representation}. Since we derived the Cartan connection for orthopath geometries by reducing a path geometry, its fundamental invariants would be either the harmonic invariants of the path geometry or the quantities that appeared during the reduction procedure, i.e. the quantities $\ssA,\ssB,\sN,\sM,\sP,\sQ$ in  \eqref{eq:reduction1-sigma-expressions} and \eqref{eq:MNPQ-reduced} satisfying properties  \eqref{eq:ABR-totallysymmetric} and \eqref{eq:extra-properties}. 

Note that by \eqref{eq:Fels-Torsion-is-Symmetric-Orthopath} the torsion of the path geometry, $\sT^a_b=T^a_{00b}$ is symmetric and trace-free which gives the bilinear form $\bT.$ 

Next, from the identity $\exd^2\omega_a=0$ one obtains that the vanishing of the 3-forms $\omega^0\w\omega^a\w\theta^b$ and $\omega^a\w\theta^b\w\theta^c,$ respectively, imply
\begin{equation}
  \label{eq:B-and-Q-from-A-orthopath}
  \ssA_{abc}=-\ssB_{abc;0},\qquad \overset{\circ}{\sQ}_{ab}:=\sQ_{ab}-\sq\ve_{ab}=-\textstyle{\frac{2}{n-1}\ve^{cd}\ssB_{abc;\underline{d}}},
  \end{equation}
where $\overset{\circ}{\sQ}_{ab}$ denotes the trace-free part of $\sQ_{ab}$ using the bilinear form $[\bh].$

Furthermore, in the identities $\exd^2\omega^0=0,$ and $\exd^2\phi_0=0$ the vanishing of the 3-forms $\omega^0\w\omega^a\w\theta^b$  give 
\begin{equation}
  \label{eq:N-symm-and-M-symm-from-Q-and-N}
  \sN_{(ab)}=-\half \sQ_{ab;0},\qquad \sM_{(ab)}=\sN_{(ab);0}+T^0_{00(a;\underline b)}, 
\end{equation}
respectively. It only remains to show that the curvature of the path geometry can be derived from the fundamental invariants of the orthopath geometry as given in \eqref{eq:fund-inv-orthopath}.

We first use the identity $\exd^2\omega_a=0$ and the vanishing of 3-forms $\omega^a\w\omega^b\w\theta^c$ to obtain
\begin{equation}
  \label{eq:C-for-orthopath-from-A-N}
  \sC_{abcd}=2\sN_{d[a}\ve_{b]c}-\sN_{[ab]}\ve_{cd}-\sN_{[ac]}\ve_{bd}+\sN_{[bc]}\ve_{ad}+2\ssB_{cd[a;b]}+2\ve^{ef}\ssB_{ce[b}\ssA_{a]df}.
  \end{equation}
Finally, relation \eqref{eq:reduction1-phi-sigma}  gives
\[\psi^a_b-\psi^0_0\delta^a_b=\phi^a_b+\sigma^a_b-(\phi_1-\phi_0)\delta^a_b\Rightarrow \psi^a_b=\phi^a_b+\sigma^a_b-\phi_0\delta^a_b.\]
Using structure equations \eqref{eq:streqns-pathgeom}, one can express the curvature of the initial path geometry by taking the exterior derivative of the expression above and comparing the 2-forms $\omega^c\w\theta^d$ to obtain 
\begin{equation}
  \label{eq:path-geom-curv-derived-from-orthopath}
  C^a_{bcd}=\sC^a_{bcd}+\sA^a_{bd;c}-\sB^a_{bc;\underline d}-\sA^a_{be}\sB^e_{cd}+\sB^a_{be}\sA^e_{cd}-\delta^a_b\sN_{cd}.
\end{equation}
It is a straightforward computation to show that the quantities $\sA_{abc},\sN_{ab},\sT_{ab}$ and $\sq$ constitute a complete set of fundamental invariants whose vanishing implies the variational orthopath geometry is locally equivalent to the flat model. 
\end{proof}
Furthermore,  from the reduction procedure carried out in \ref{sec:orth-geom-as} one obtains the following.
\begin{proposition}\label{prop:path-geom-red-vari-orth-geom}
  Given a path geometry $(\tau_0\colon\cP\to\cC,\sigma)$ with a choice of  $[\bh]\subset\mathrm{Sym}^2(\cV^*),$ the  principal $L$-bundle $\tau\colon\cG\to\cC$ for the corresponding orthopath geometry can be obtained as a sub-bundle $\iota_0\colon\cG\to\cP$  where $\iota_0:=\iota_1\circ\iota_2$ as defined in \eqref{eq:cP_1-def-reduction} and \eqref{eq:cG-orthopath-last-reduction}. Assuming variationality, such orthopath geometries  are regular and normal. 
\end{proposition}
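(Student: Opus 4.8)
The plan is to assemble the two reductions carried out in \ref{sec:orth-geom-as} into a single statement and then to invoke the structure-equation analysis of \ref{sec:conf-quasi-sympl-1} together with Theorem~\ref{thm:var-orthopath-equiv-prob}. First I would recall that a path geometry on $\cC$ is, by \cite{Fels-path,Grossman-Thesis}, the regular and normal parabolic geometry $(\tau_0\colon\cP\to\cC,\sigma)$ of type $(\mathfrak{sl}(n+1,\RR),P_{12})$ with connection \eqref{eq:path-geom-cartan-conn-sigma}, and that a conformal class $[\bh]\subset\mathrm{Sym}^2(\cV^*)$ is the same as a ray subbundle of $\mathrm{Sym}^2(\cV^*)$, hence determines a family of smooth functions $\sh_{ab}\colon\cP\to\RR$, symmetric and non-degenerate of signature $(p,q)$, fixed up to an overall positive factor and transforming along the fibres of $\tau_0$ by \eqref{eq:sh-action-along-fibers}. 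Since the $\mathrm{GL}(n-1,\RR)$-factor of $P_{12}$ acts transitively on non-degenerate symmetric bilinear forms of the given signature and the action \eqref{eq:sh-action-along-fibers} on $(\sh_{ab})$ depends only on that factor together with the conformal weight, the locus $\cP_1=\{\sh_{ab}=\ve_{ab}\}$ of \eqref{eq:cP_1-def-reduction} is a smooth principal subbundle whose structure group is the stabiliser of $\ve_{ab}$, namely $\mathrm{CO}(p,q)\times\RR^{*}\ltimes P_{+}$; the resulting relations on the pulled-back connection forms are \eqref{eq:reduction1-phi-sigma}--\eqref{eq:reduction1-sigma-expressions}.

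Next I would carry out the second reduction. On $\cP_1$ the tensors introduced in \eqref{eq:reduction1-sigma-expressions} have traces $\sA^b_{\ ab}$, $\sB^b_{\ ab}$ whose variation along the remaining nilpotent directions (the forms $\xi_a$ and $\gamma_a$) is \emph{affine} and non-degenerate by \eqref{eq:dsA-dsB-inf-action}; hence normalising these traces to zero cuts out the smooth subbundle $\cG=\{\sA^b_{\ ab}=\sB^b_{\ ab}=0\}$ of \eqref{eq:cG-orthopath-last-reduction}, with structure group reduced to $\mathrm{CO}(p,q)\times\RR^{*}\ltimes\RR\cong B\times\mathrm{CO}(p,q)=L$, exactly the group of \eqref{k_sopq}. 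Composing the two inclusions yields $\iota_0:=\iota_1\circ\iota_2\colon\cG\hookrightarrow\cP$, realising the orthopath Cartan bundle as a subbundle of $\cP$. Moreover the filtration $T^{-1}\cC=\cE\oplus\cV\subset T^{-2}\cC=T\cC$ inherited from $\cP$ via \eqref{eq:filtration-path-geometry-each-step} and the conformal class $[\bh]=[\ve_{ab}s^{*}\theta^a\circ s^{*}\theta^b]$ read off from $\cG$ coincide, by construction, with the data of the orthopath geometry one started with, so that $\cG\to\cC$, together with the coframing obtained by pulling back $\sigma$ and correcting by the scalar term $\mu=-\tfrac{1}{n+1}\phi_1$ as in \eqref{eq:var-orthpath-Cartan-conn}, is canonically associated to that orthopath geometry.

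Finally, for the regularity and normality claim I would use the variationality hypothesis. A priori the coframing on $\cG$ is only an $\{e\}$-structure. But by Propositions~\ref{prop-lineb} and \ref{prop_filtPACQ} the orthopath geometry being conformally quasi-symplectic is equivalent to $\exd\rho=\phi_1\w\rho$ for the canonical $2$-form \eqref{eq:rho-q-symp-orthopath}, and the computation of \ref{sec:conf-quasi-sympl-1} shows this forces the algebraic normalisations \eqref{eq:ABR-totallysymmetric} and \eqref{eq:extra-properties} on the torsion quantities $\sA,\sB,\sN,\sM,\sP,\sQ$. These are precisely the identities needed to make $\psi$ equivariant, i.e. to promote it to a Cartan connection of type $(\mfk,L)$; regularity is immediate from \eqref{eq:filtration-path-geometry-each-step}, and normality with respect to the codifferential $\partial^{*}$ of \eqref{Codifferential} was verified in the proof of Theorem~\ref{thm:var-orthopath-equiv-prob} via the structure equations \eqref{eq:orthopath-Cartan-curv-2form}--\eqref{eq:rewrite2-streqns-pathgeom}. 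Uniqueness of the regular normal Cartan geometry associated to a PACQ structure (Theorem~\ref{thm-CartanConnection}) then identifies $(\cG\to\cC,\psi)$ with the canonical object of Theorem~\ref{thm:var-orthopath-equiv-prob}.

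The part I expect to be the main obstacle is not any single identity but the bookkeeping needed to show that $\cP_1$ and $\cG$ are genuine principal subbundles with exactly the stated structure groups --- i.e.\ that the relevant parts of the $P_{12}$-action on $(\sh_{ab})$, and then on $(\sA^b_{\ ab},\sB^b_{\ ab})$, are free with slices transverse to the fibres --- and, in the last step, that under variationality the pulled-back coframing really is the \emph{normal} Cartan connection rather than merely some absolute parallelism; this is where one must lean on the explicit structure equations already established for Theorem~\ref{thm:var-orthopath-equiv-prob}.
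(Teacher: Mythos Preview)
Your proposal is correct and follows exactly the approach the paper takes: the paper does not give a separate proof of this proposition but treats it as a summary of the two-step reduction already carried out in \ref{sec:orth-geom-as} (yielding $\cP_1$ and then $\cG$) together with the structure-equation analysis of \ref{sec:conf-quasi-sympl-1}, with regularity and normality inherited from Theorem~\ref{thm:var-orthopath-equiv-prob}. You have simply made explicit what the paper leaves as a one-line reference to the preceding development.
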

By definition, every variational orthopath geometry arises in the way described in Proposition \ref{prop:path-geom-red-vari-orth-geom}.
\begin{remark}
    It can be shown that when $A_{abc}=0,$ there is an extension functor going from the variational orthopath geometry $(\tau\colon\cG\to\cC,\psi)$ to its corresponding path geometry $(\tau_0\colon\cP\to\cC,\sigma).$  
  \end{remark}

  Now we can state the second lemma needed in the proof of Proposition \ref{lem-canonicalconfqsymp}.
    \begin{lemma}\label{lemm:causal-variationality-orthopath}
      Given a regular and normal path geometry, any compatible conformally quasi-symplectic structure is locally induced by a 2-form $\rho\in\Omega^2(\cP)$ where
      \[\rho=\hat\sh_{ab}\theta^a\w\omega^b\]
      for some functions $\hat\sh_{ab}=\hat\sh_{ba}$ on $\cP$ with the property that $[\hat\sh_{ab}]$ has maximal rank. 
\end{lemma}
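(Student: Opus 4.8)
\textbf{Plan for the proof of Lemma \ref{lemm:causal-variationality-orthopath}.}

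The strategy is to run exactly the same kind of computation that was carried out in the proof of Lemma \ref{lemm:36-variationality-3rd-pair-ODEs}: start from the most general $2$-form that is compatible with the filtration of a path geometry in the sense of Proposition \ref{prop_filtPACQ}, and show, using the structure equations \eqref{eq:streqns-pathgeom}, that such a form must be of the asserted shape $\rho=\hat\sh_{ab}\theta^a\w\omega^b$ with $\hat\sh_{ab}=\hat\sh_{ba}$. First I would invoke the description \eqref{eq-formrho} (or rather its analogue for the $2$-step filtration $T^{-1}\cC\subset T^{-2}\cC=T\cC$ of a path geometry) together with the compatibility conditions of Definition \ref{def_compatible}: condition (1) forces $\cE$ to be the characteristic, so $\tfrac{\partial}{\partial\omega^0}\im\rho=0$, i.e. $\rho$ has no $\omega^0$; condition (2) with $i=j=1$, $k=2$, forces $T^{-1}\cC=\cE\oplus\cV$ to be $\rho$-isotropic, so the $\theta^a\w\theta^b$ terms drop out. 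Hence the most general compatible semi-basic candidate on $\cP$ is
\[
\rho=\hat\sh_{ab}\,\theta^a\w\omega^b+\hat t_{ab}\,\omega^a\w\omega^b,
\]
with $\hat t_{ab}=-\hat t_{ba}$, and the maximal-rank hypothesis of an almost conformally quasi-symplectic structure forces the ``block'' $[\hat\sh_{ab}]$ to be invertible. The content of the lemma is then twofold: that $\hat t_{ab}=0$ and that $\hat\sh_{ab}$ is symmetric.

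The symmetry $\hat\sh_{ab}=\hat\sh_{ba}$ I expect to extract quickly: wedging $\exd\rho$ with a suitable $(2n-4)$-form built from the $\theta$'s and $\omega$'s and reducing modulo the ideal generated by the connection forms $\{\phi_0,\phi_1,\phi^a_b,\xi_0,\xi_a,\gamma_a,\sigma_{ab}\}$, the only surviving contribution in an appropriately chosen top form is a multiple of $\hat\sh_{[ab]}$ coming from the $\theta^a=\mathrm{d}y^a+\cdots$ part of the structure equations $\exd\theta^a\equiv -\phi^a_b\w\theta^b+\cdots$; requiring $\exd\rho$ to lie in the ideal generated by $\rho$ (the conformally quasi-symplectic condition $\exd\rho=\phi\w\rho$, necessary for the existence of a closed local section) then kills the skew part. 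Similarly, pairing $\exd\rho$ against top forms that isolate the coefficient of $\omega^0$ forces $\hat t_{ab}$ to vanish, since there is no $\omega^0$ available in $\rho$ itself to absorb an $\omega^0\w\omega^a\w\omega^b$ residue except through $\exd\theta^a$ which contributes only $\theta$-terms. This is precisely the bookkeeping pattern of the $r_1=r_3=p_2=p_3=\dots=0$ cascade in the proof of Lemma \ref{lemm:36-variationality-3rd-pair-ODEs}, specialised to the coframe \eqref{eq:path-geom-cartan-conn-sigma}; I would phrase it uniformly in $n$ using index notation rather than case-by-case wedging.

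The one genuine subtlety — and the step I expect to be the main obstacle — is that on a bare path geometry the functions $\hat\sh_{ab}$ are a priori only functions on $\cP$, not constants along the fibres, and there is no reason for $[\hat\sh_{ab}]$ to be of any fixed signature; so I must be careful that the argument produces the stated algebraic form \emph{without} assuming the extra reduction \eqref{eq:cP_1-def-reduction}. In particular I should check that the infinitesimal fibre action \eqref{eq:infintmal-action-hab} on $\hat\sh_{ab}$ is consistent with $\rho$ transforming by a scale (as it must, being a section of a line bundle $\ell$), which is what lets me conclude $[\hat\sh_{ab}]$ determines a conformal class $[\bh]\subset\mathrm{Sym}^2\cV^*$ downstairs — and hence, by Lemma \ref{corr-orthopath}, an orthopath structure. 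Combined with Proposition \ref{prop:path-geom-red-vari-orth-geom} (which says the reduced bundle $\cG$ is exactly the locus where $\hat\sh_{ab}=\ve_{ab}$) and Theorem \ref{thm:var-orthopath-equiv-prob}, this shows the compatible conformally quasi-symplectic $\rho$ is, up to the fibre rescaling, the canonical $2$-form \eqref{eq:rho-q-symp-orthopath}, completing both Lemma \ref{lemm:causal-variationality-orthopath} and the outstanding case of Proposition \ref{lem-canonicalconfqsymp}.
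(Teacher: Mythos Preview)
Your overall strategy is right and close to the paper's, but two points need correction.

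First, you do not need closedness to obtain $\hat\sh_{ab}=\hat\sh_{ba}$: this is exactly compatibility condition (3) of Definition \ref{def_compatible} applied with $X,Y\in\Gamma(\cV)$. Since $[E,\partial_{\theta^a}]\equiv\partial_{\omega^a}\bmod T^{-1}\cC$, the relation $\rho([E,X],Y)=-\rho(X,[E,Y])$ reads $\rho(\partial_{\omega^a},\partial_{\theta^b})=-\rho(\partial_{\theta^a},\partial_{\omega^b})$, i.e.\ $-\hat\sh_{ba}=-\hat\sh_{ab}$. The paper therefore starts from $\rho=\hat\sh_{ab}\theta^a\wedge\omega^b+s_{ab}\omega^a\wedge\omega^b$ with $\hat\sh_{ab}$ already symmetric and only needs closedness to kill $s_{ab}$.

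Second, your argument for $\hat t_{ab}=0$ targets the wrong $3$-form and contains a false claim. The coefficient of $\omega^0\wedge\omega^a\wedge\omega^b$ in $\exd\rho$ does \emph{not} isolate $\hat t_{ab}$: it receives contributions from the path-geometry torsion via $\hat\sh_{ab}\,T^a_{00c}\,\omega^0\wedge\omega^c\wedge\omega^b$ (recall $T^a_0=\tfrac12 T^a_{0jk}\omega^j\wedge\omega^k$ in \eqref{eq:Components-curv-path}, so $\exd\theta^a$ has pure-$\omega$ terms, contrary to your assertion) and from $\hat t_{ab;0}$. The correct $3$-form type is $\omega^0\wedge\theta^a\wedge\omega^b$: modulo connection forms, $\exd\omega^a\equiv\omega^0\wedge\theta^a$, so the only contributions are $2\hat t_{ab}$ (antisymmetric) and $\hat\sh_{ab;0}$ (symmetric, once $\hat\sh_{ab}$ is symmetric). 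Matching against $\phi\wedge\rho$ and separating by symmetry gives $\hat t_{ab}=0$. This is precisely what the paper does, except that it first passes to the reduced bundle $\cP_1$ of \eqref{eq:cP_1-def-reduction} where $\hat\sh_{ab}=\ve_{ab}$; there the $\omega^0\wedge\theta^a\wedge\omega^b$ coefficient becomes $-2\sB_{ab0}-2s_{ab}$, and symmetry of $\sB_{ab0}$ versus antisymmetry of $s_{ab}$ forces both to vanish. Reducing first avoids carrying $\exd\hat\sh_{ab}$ through the computation and is the cleaner route.
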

 \begin{proof}
The only non-trivial part is to show if $\ell$ has a closed representative then it defines a variational orthopath structure.  Recall from Definition \ref{def_compatible} that if $\ell$ is compatible then, locally, it can be written as $[\rho]$ where
   \[\rho=\hat\sh_{ab}\theta^a\w\omega^b+s_{ab}\omega^a\w\omega^b\in\Omega^2(\cP)\]
   and $\hat\sh_{ab}=\hat\sh_{ba},$  $s_{ab}=-s_{ba}$ with the rank of  $[\sh_{ab}]$ being $n-1.$ Similar to what was done to achieve  \eqref{eq:sh-action-along-fibers}, one can show the functions $\hat\sh_{ab}$ obey the same transformation rule under the action of the structure group of a path geometry. Normalizing $\hat\sh_{ab}$ to $\ve_{ab},$  the structure bundle is reduced to $\iota_1\colon\cP_1\to\cP$ as in \eqref{eq:cP_1-def-reduction} which gives relations \eqref{eq:reduction1-phi-sigma}. We can carry out further reductions \eqref{eq:cG-orthopath-last-reduction} to obtain an principal $L$-bundle $\cG\to\cC,$ but for this lemma we do not need to. Imposing necessary condition for the closedness of a representative $\iota_1^*\rho\in\ell,$ one obtains
      \[
    \begin{aligned}       
      \exd\rho&= \ve_{ab}\exd\theta^a\w\omega^b-\ve_{ab}\theta^a\w\exd\omega^b+\exd s_{ab}\omega^a\w\omega^b+2s_{ab}\exd\omega^a\w\omega^b\\
      &= \phi_1\w\rho- ({\phi_{ab}+\phi_{ba}}+2\sB_{ab0}\omega^0)\w\theta^a\w\omega^b-2s_{ab}\theta^a\w\omega^0\w\omega^b+\cdots
    \end{aligned}
  \]
  where in the last expression there will not be any other term involving the  3-form   $\omega^a\w\omega^0\w\theta^b.$ By the symmetries $\sB_{ab0}=\sB_{(ab)0}$ and $\phi_{ab}=-\phi_{ba},$ as given in \eqref{eq:reduction1-sigma-expressions} and \eqref{eq:symmetries-sigma-psi-trace-free}, one obtains  
  \[\sB_{ab0}=0,\quad s_{ab}=0.\] 
  The vanishing of $\sB_{ab0}$ agrees with what was obtained in \eqref{eq:ABR-totallysymmetric} and the vanishing of $s_{ab}$ proves the lemma since $\rho$ would be the canonical conformally quasi-symplectic structure associated to the reduced path geometry $(\tau_1\colon\cP_1\to\cC,\sigma_1)$ augmented by the conformal bundle metric $[\ve_{ab}\theta^a\circ\theta^b]$. 
   \end{proof}

\subsubsection{Causal geometry}
\label{sec:caus-geom-dimens}

A causal structure of signature $(p+1,q+1),$ where $p+q=n-1,$ on an $(n+1)$-dimensional manifold $\tM$ defines a regular and normal  parabolic geometry of type $(\fso(p+2,q+2),Q)$ where $Q=P_{12}\subset \rO(p+2,q+2)$ is the stabilizer of a flag of a null line inside a null plane in $\RR^{n+3}$ as defined in Diagram 2 before Proposition \ref{prop-qcontactgrading}.  The  Cartan connection is expressed as
\begin{equation} 
  \label{eq:Causal-general}
      \def\arraystretch{1.25}
\tilde\psi=\begin{pmatrix}
  \tphi_0 & \txi_0 & \txi_b & \txi_n & 0\\
  \tomega^0 & \tphi_1-\tphi_0 & \tgamma_b& 0 & -\txi_n\\
  \tomega^a & \ttheta^a & \tphi^a_b & -\ve^{ab}\tgamma_b & -\ve^{ab}\txi_a\\
  \tomega^n & 0 & -\ve_{bc}\ttheta^c & \tphi_0-\tphi_1 & -\txi_0\\
  0 & -\tomega^n & -\ve_{bc}\tomega^c & -\tomega^0 & -\tphi_0 
\end{pmatrix}
\end{equation}
which is $\mathfrak{so}(p+2,q+2)$-valued  with respect to the inner product
\[\langle v,v\rangle =2v_{\infty}v_{n+1}+2v_0v_{n}+\ve^{ab}v_av_b,\]
where $v=[v_{\infty},v_0,\cdots,v_{n+1}]^\intercal.$ The matrix 1-form 
 $[\phi^a_b]$ is $\fso(p,q)$-valued with respect to the inner product given by $[\ve_{ab}].$ In what follows we  lower and raise indices  using $\ve_{ab}$ e.g. $\txi^a:=\ve^{ab}\txi_b.$

The tangent bundle of $\tcC$ has a natural 3-step filtration
\begin{equation}
  \label{eq:filtration-causal-geometry-TC}
  T^{-1}\tcC\subset T^{-2}\tcC\subset T^{-3}\tcC=T\tcC
\end{equation}
where $T^{-1}\tcC=\tcE\oplus\tcV$ and
 \begin{equation}
   \begin{gathered}
     \label{eq:filtration-causal-geometry-each-step}     \tcE=\ttau_*\left(\Ker\{\tomega^{n-1},\cdots,\tomega^1,\ttheta^{n-1},\cdots,\ttheta^1\}\right)=\ttau_*\langle\tfrac{\partial} {\partial\tomega^0}\rangle,\\ \tcV=\ttau_*\left(\Ker\{\tomega^{n-1},\cdots,\tomega^0\}\right)=\ttau_*\langle\tfrac{\partial}{\partial\ttheta^1},\cdots,\tfrac{\partial}{\partial\ttheta^{n-1}}\rangle,\quad [\tcV,\tcV]=\tcV\\
     T^{-2}\tcC=[\tcE,\tcV]=\ttau_*\left(\Ker\{\tomega^{n}\}\right),\quad        T^{-3}\tcC=[\tcV,T^{-2}\tcC]=T\tcC.
      \end{gathered}
    \end{equation}
    Furthermore, the symmetric bilinear form $\tbh\colon\tcV\times\tcV\to T\tcC\slash T^{-2}\tcC,$ defined as $(v,u)\to [v,[u,w]]$ mod $T^{-2}\tcC,$ for some $w\in\Gamma(\tcE),$  has maximal rank on $\tcV$ with  signature $(p,q).$  The conformal structure $[\tbh]\subset\mathrm{Sym}^2(\tcV^*)$  is  independent of the choice of $w\in\Gamma(\tcE)$ and in fact uniquely determined by the quasi-contact structure on $T^{-2}\tcC.$ 
To express the harmonic invariants, define the 2-forms
 \[
   \begin{aligned}
     \tOmega^a&=\exd\tomega^a-\tgamma^a\w\tomega^n+(\tphi^a_b-\tphi_0\delta^a_b)\w\tomega^b-\tomega^0\w\ttheta^a,\\ 
     \tTheta^a&=\exd\ttheta^a-\tomega^n\w\txi^a+(\tphi^a_b+(\tphi_0-\tphi_1)\delta^a_b)\w\ttheta^b-\txi_0\w\tomega^a,\\
  \end{aligned}
\]
which are given by     
\[
  \begin{aligned}
    \tOmega^a&\equiv F^a_{bc}\tomega^b\w\ttheta^c\quad &&\mathrm{mod}\quad \{\tomega^0,\tomega^n\}\\
    \tTheta^a&\equiv W^a_{00b}\tomega^0\w\tomega^b+\half W^a_{0bc}\tomega^b\w\tomega^c\quad&&\mathrm{mod}\quad \{\tomega^n,\ttheta^1,\cdots,\ttheta^{n-1}\}.
      \end{aligned}
    \]
        The harmonic invariants of a causal structure can be presented as
\begin{equation}
  \label{eq:harmonic-inv-causal-represented}
  \begin{gathered}
  \bF= s^*(F_{abc}\ttheta^a\circ\ttheta^b\circ\ttheta^c)\otimes \tV^{\tfrac{2}{n-1}},\qquad \bW=s^*(W_{0a0b}\ttheta^a\circ\ttheta^b) \otimes \tV^{\tfrac{2}{n-1}}\otimes \tE^{-2},
\end{gathered}
\end{equation}
where $s\colon\tcC\to\tcG$ is a section,  $F_{abc}=\ve_{ad}F^d_{bc}=F_{(abc)},W_{0a0b}=\ve_{ac}W^c_{00b}=W_{0b0a}$  and 
\[
  \begin{gathered}    
    \tV=\tfrac{\partial}{\partial s^*\ttheta^1}\w\cdots\w\tfrac{\partial}{\partial s^*\ttheta^{n-1}},\qquad \tE=\tfrac{\partial}{\partial s^*\tomega^0},\\ \ve^{ab}W_{0a0b}=\ve^{ab}F_{abc}=0.
    \end{gathered}
\]
The line bundle $\tcE\subset T\cC$ is the analogue of null geodesic flow on the projectivized null cone bundle in pseudo-Riemannian conformal  structures. Lastly, $\tcC$ is equipped with a degenerate  conformal structure  $[s^*\tg]\subset\mathrm{Sym}^2T^*\tcC$ for any section $s\colon\tcC\to\tcG$ where  
\begin{equation}
  \label{eq:g-conf-str-bilinear-form-on-tC}
  \tg=2\tomega^0\circ\tomega^n+\ve_{ab}\tomega^a\circ\tomega^b\in\Gamma(\mathrm{Sym}^2T^*\tcG),
  \end{equation}
is well-defined. 
If $\bF=0,$ then the causal structure descends to the pseudo-Riemannian conformal structure defined by $[ s^*\tg]$    on the leaf space 
\begin{equation}
  \label{eq:leaf-space-tM-pseudoconf-str}
  \tnu\colon\tcC\to \tM:=\tcC\slash\cI_{\tcV},
\end{equation}
where $\cI_{\tcV}$ is the foliation of $\tcC$ by the leaves of $\tcV$. Conversely, given a pseudo-Riemannian conformal structure, $[\tg],$ on $\tM,$ then $\tcC$ is the sky bundle of $[\tg],$ i.e.
\[\tcC=\{[v]\in\PP T \tM\ \vline\ \tg(v,v)=0\}.\]
The sky bundle $\tcC$ is equipped with a causal structure and the integral curves of $\cE$ are the natural lifts of the null geodesics of $[\tg]$.  In this case  $\bW$ corresponds to an element in the irreducible component of the Weyl curvature module for this conformal structure under the action of $R_0=\mathrm{CO}(p,q)$, which generates the entire Weyl curvature module by the action of  $R_0\cap Q_{+}=\mathrm{CO}(p,q)\cap Q_+,$ where $R_0$ is the Levi factor of $R=P_1$ and $Q_+$ the unipotent radical of $Q=P_{12}$.

If $\bW=0,$ then the causal structure descends to a Lie contact structure  on the leaf space 
\begin{equation}
  \label{eq:leaf-space-tS-Lie-contact-str}
  \tlambda\colon\tcC\to \tS:=\tcC\slash\cI_{\tcE},
\end{equation}
where $\cI_{\tcE}$ is the foliation of $\tcC$ by the integral curves of $\tcE$. The only nonzero harmonic invariant of such Lie contact structures is generated by $\bF.$

Finally for the purposes of this article we will give the following realization result wherein  the parabolic subgroups $P_{12}$ and $P_{123}$ are the ones corresponding to causal structures as defined in \ref{sec-defquasi}. 
\begin{proposition}[Realizability]\label{prop:realization-causal-geometry}
Regular and normal Cartan geometries $(\tcG\to\tcC,\tpsi)$ of type   $(\mathfrak{so}(p+2,q+2), P_{12})$ for $n=p+q+2\geq 5$  and those of type  $(\mathfrak{so}(p+2,q+2),P_{123})$  when $n=p+q+2=4$   for which  $\tcV$ in \eqref{eq:filtration-causal-geometry-each-step} is integrable,  are locally realizable as causal structures  in the sense of Definition \ref{def-causal}
\end{proposition}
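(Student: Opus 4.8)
The plan is to construct directly a realization $\iota\colon\tcC\hookrightarrow\PP T\tM$ and to verify the defining conditions of a causal structure from the symbol structure of the Cartan geometry, in the spirit of the cone structure picture of \cite{HN-cone}. Since $\tcV$ is integrable --- this is part of the hypothesis when $n=p+q+2=4$, and is automatic for $n\geq5$ by the structure of the harmonic curvature, see the discussion following Proposition~\ref{prop-qcontactgrading} --- after shrinking $\tcC$ to a sufficiently small open set we obtain a local leaf space $\tnu\colon\tcC\to\tM:=\tcC\slash\cI_{\tcV}$, which is a submersion with connected fibres $\tcC_x=\tnu^{-1}(x)$, with $\ker(\tnu_*)=\tcV$ and $\dim\tM=\dim\tcC-\rk(\tcV)$. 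Because $\tcE$ is a line field transversal to $\tcV$, for each $y\in\tcC$ the image $\tnu_*\tcE_y\subset T_{\tnu(y)}\tM$ is a well-defined line, and I set $\iota(y):=[\,\tnu_*\tcE_y\,]\in\PP T_{\tnu(y)}\tM$. By construction $\pr\circ\iota=\tnu$ for the canonical projection $\pr\colon\PP T\tM\to\tM$, so $\iota$ maps each fibre $\tcC_x$ into $\PP T_x\tM$.

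First I would show that $\iota$ is an immersion and that $\iota|_{\tcC_x}$ realizes $\tcC_x$ as a hypersurface. If $\iota_*v=0$ at $y$, then $\tnu_*v=0$, hence $v\in\tcV_y$; extending $v$ to a section $V\in\Gamma(\tcV)$ and choosing $E\in\Gamma(\tcE)$, the flow $\varphi^V_t$ of $V$ preserves the leaf $\tcC_x$, and differentiating the curve $t\mapsto\tnu_*E_{\varphi^V_t(y)}$ in $T_x\tM$ once shows that $d(\iota|_{\tcC_x})_y(v)$ is represented by $\tnu_*[V,E]_y\bmod\tnu_*\tcE_y$, i.e.\ by the Levi bracket $\mathfrak{L}(V,E)\in\mathrm{gr}_{-2}$ transported along the canonical isomorphism $\mathrm{gr}_{-2}\cong\tnu_*(T^{-2}_y\tcC)/\tnu_*\tcE_y$ induced by $\tnu_*$. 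Since $\mathfrak{L}\colon\tcE\otimes\tcV\to\mathrm{gr}_{-2}$ is an isomorphism by Proposition~\ref{prop-qcontactgrading}, this forces $v=0$; the same computation shows that $\iota|_{\tcC_x}$ is an immersion whose affine tangent space at $\iota(y)$ is $\tnu_*(T^{-2}_y\tcC)\subset T_x\tM$. The latter has codimension $\rk(\mathrm{gr}_{-3})=1$, so $\iota(\tcC_x)$ is a hypersurface; after one further shrinking, $\iota$ is an embedding.

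Next I would identify the projective second fundamental form of $\iota(\tcC_x)$ at $\iota(y)$ with the intrinsic bilinear form $\tbh_y$ on $\tcV_y$. Differentiating the curve $t\mapsto\tnu_*E_{\varphi^V_t(y)}$ a second time yields $\tnu_*[V,[V,E]]_y$, and modulo $\tnu_*(T^{-2}_y\tcC)$ this equals $\tnu_*$ of the iterated Levi bracket $\mathfrak{L}\bigl(V,\mathfrak{L}(V,E)\bigr)\in\mathrm{gr}_{-3}$, which by the very definition of $\tbh$ (with $w=E$) is $\tbh_y(V,V)$; polarizing recovers the full form. Since $\tnu_*\colon\mathrm{gr}_{-3}\to T_x\tM/\tnu_*(T^{-2}_y\tcC)$ is an isomorphism, the second fundamental form of $\iota(\tcC_x)\subset\PP T_x\tM$ is a nonzero multiple of $\tbh_y$, and the latter has signature $(p,q)$ by the structure of the quasi-contact grading --- equivalently, because $\tpsi$ is $\mathfrak{so}(p+2,q+2)$-valued, compare \eqref{eq:Causal-general}. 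Hence $\tnu\colon\tcC\to\tM$ together with the embedding $\iota$ satisfies all requirements of Definition~\ref{def-causal} (respectively, of its immersed variant in Remark~\ref{rmk:general-defin-causal-str}); finally, one checks that the line field, the integrable distribution and the conformal class of bundle metrics of this causal structure recover $\tcE$, $\tcV$ and $[\tbh]$, so that by the uniqueness statement in Proposition~\ref{prop-qcontactcone} its associated regular normal parabolic geometry coincides with the one we started from. The step I expect to be the main obstacle is making the second and third paragraphs precise --- matching the first and second jets of the tautological map $\iota$ with the Levi brackets of the filtration, and keeping careful track of the rescalings under which the second fundamental form, and hence $\tbh$, is determined only up to scale; the dimension count and the transversality claims are then routine. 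Alternatively, the argument can be carried out with a $\tpsi$-adapted coframe as in \eqref{eq:filtration-causal-geometry-each-step}, or deduced from \cite{Omid-Sigma}.
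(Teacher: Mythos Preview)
Your proposal is correct and shares with the paper the same starting point: both define the realization map by $\iota(y)=[\tnu_*\tcE_y]\in\PP T_{\tnu(y)}\tM$ on the local leaf space of $\tcV$. The difference lies in the verification. The paper's proof is terse and externalizes the work: it pulls back an adapted coframe $(s^*\tomega^0,\dots,s^*\tomega^n,s^*\ttheta^1,\dots,s^*\ttheta^{n-1})$ to $\iota(\tcC)$, observes that it is ``5-adapted'' in the sense of \cite{Omid-Sigma}, and then appeals to the normalization conditions coming from the Kostant codifferential to conclude that the causal structure so obtained reproduces the original Cartan geometry. Your argument is more intrinsic: you read off immersivity of $\iota$ and the signature of the projective second fundamental form directly from the Levi brackets $\mathfrak{L}\colon\tcE\otimes\tcV\to\mathrm{gr}_{-2}$ and $\tbh(V,V)=[V,[V,E]]\bmod T^{-2}$, and then close the loop via Proposition~\ref{prop-qcontactcone} (equivalently Proposition~\ref{prop-filt}) rather than via coframe adaptations. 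This buys you a self-contained proof that does not rely on the normalization machinery in \cite{Omid-Sigma}; the paper's route is shorter once that reference is granted. You correctly flag the one genuinely delicate point---that $\tbh$ and the second fundamental form are only determined up to scale---and you even mention the paper's coframe approach as an alternative at the end.
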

\begin{proof}
  Since in such Cartan geometries the line bundle $\tcE\subset T\tcC$ \eqref{eq:filtration-causal-geometry-each-step} is invariantly defined, one can define a map $\tilde\iota\colon\tcC\to \PP T\tM$ as $\tilde\iota(z)=[\tnu_*(\tcE_z)]\in \PP T_{\tnu(z)}\tM$ where $z\in\tcC,$  $[\tnu_*(\tcE_z)]$ denotes the projective class of the line $\tnu_*(\tcE_z)\subset T_{\tnu(z)}\tM$ for $\tcE_z\subset T_z\tcC,$ and $\tM$ is the leaf space \eqref{eq:leaf-space-tM-pseudoconf-str} defined by the quotient map $\tnu\colon\tcC\to\tM$. As a result, for any point $z\in\tcC,$ in a sufficiently small neighborhood $z\in U\subset\tcC,$ the fibers $U\cap\tcC_x$ where $\tcC_x:=\tnu^{-1}(x)$ are embedded  hypersurfaces in $\PP T_x\tM_1$ where $\tM_1:=\tnu(U).$ Moreover, taking any section $s\colon\tcC\to\tcG,$ the 1-forms $\langle s^*\tomega^0,\cdots,s^*\tomega^n,s^*\ttheta^1,\cdots,s^*\ttheta^{n-1}\rangle$ define a  coframe on $\tilde\iota(\tcC)$ via pull-back by $\tilde\iota^{-1}.$  Such coframes are  \emph{5-adapted}  according to  \cite[Section 2.3]{Omid-Sigma} as they satisfy the same normalization conditions however the absorption of torsion during coframe adaptations are done differently. Consequently, following the normalization conditions obtained from the Kostant codifferential operator \eqref{KostantCodif},   it is straightforward to show that the Cartan geometry defined by such causal structures coincides with the initial Cartan geometry one started with.
\end{proof}

\begin{remark}\label{rmk:generalized-causal-geometry-Finsler}
 Proposition \ref{prop:realization-causal-geometry} holds in sufficiently small open sets of $\cC.$ More general definitions, as in  Remark \ref{rmk:general-defin-causal-str}, has some benefits in certain  global considerations of causal structures which will not be discussed here. Such generalization in the definition of causal structures and their realizability has the same spirit as the definition of generalized Finsler structures and their realization in the form of certain Pfaffian systems as given in \cite[Proposition 1]{Bryant-Finsler}.
\end{remark}

\subsubsection{Quasi-contactification and various curvature conditions}
\label{sec:general-causal-quasi-cont}

As in \ref{sec:235-4th-ODE-quasi-cont} and \ref{sec:36-quasi-cont-from-pair-3rd-ODE}, we explain the one-to-one correspondence between  causal geometries with an infinitesimal symmetry and variational orthopath  geometries and relate their Cartan connections. Consequently, we  relate their fundamental invariants and geometrically interpret   several curvature conditions. 

\begin{proof}[Proof of Proposition \ref{prop-relharmonics}; last part]
As in the first and second parts of the proof,   the coframing $\hat\psi=(\tomega^n,\pi^*\psi)$ defines a Cartan connection for a Cartan geometry $(\hat\tau\colon\pi^*\cG\to\tcC,\hat\psi)$ of type $(\fp^{op},L)$ where $\fp^{op}\subset\mathfrak{so}(p+2,q+2)$ is the opposite contact parabolic subgroup and $L=B\times\mathrm{CO}(p,q)$ with $B\subset \mathrm{SL}(2,\RR)$ being the Borel subgroup. The 2-step filtration \eqref{eq:filtration-path-geometry-TC} and the quasi-contact form $\tomega^n$ imply that $T\tcC$ has a 3-step filtration as in \eqref{eq:filtration-causal-geometry-TC}. Hence, one obtained an associates regular and normal Cartan geometry $(\ttau\colon\tcG\to\tcC,\tpsi)$ of type $(\mathfrak{so}(p+2,q+2),Q)$ which corresponds to a causal structure.  In particular, in terms of the matrix forms \eqref{eq:var-orthpath-Cartan-conn} and \eqref{eq:Causal-general}, the Cartan connection $\pr_\fk\circ\iota^*\tpsi$ and $\hat\pi^*\psi$ are related as follows
\begin{equation}\label{eq:Cartan-conn-modif-orthopath-to-causal}
  \begin{gathered}
    \tilde\omega^a=\omega^a,\quad\tilde\theta^a=\theta^a,\quad \tilde\omega^0=\omega^0+\textstyle{\frac{1}{2}}\sq\tilde\omega^n, \quad   \tilde\phi_1 = \phi_1\quad \tilde\phi_0 = \phi_0-\textstyle{\frac{n-1}{4n}} \sq_{;0}\tilde\omega^n,\\
    \tilde\phi_{ab} =\phi_{ab}+ \left(\textstyle{\frac{\ve^{ce}\ve^{df}}{n+2}(\sA_{bcd}\sB_{aef}-\sA_{acd}\sB_{bef})-\sN_{[ab]}}\right)\tilde\omega^n\\
 \tilde\xi_0 = \xi_0+(\textstyle{\frac{\ve^{ac}\ve^{bd}}{2n(n+1)} \textstyle{\sT_{ab;\uc\underline d}}-\tfrac{n-1}{4n}\sq_{;00})}\tilde\omega^n,
\end{gathered}
\end{equation}
\end{proof}
To state the main corollary of this section,  recall that the Lie algebra $\fk$  has the grading
 $\fk=\fk_{-2}\oplus\cdots\oplus\fk_{1}$ where $\fk_{-1}=\fv\oplus\fe.$
Now define the Lie algebra $\fx\subset\fk$ as 
\begin{equation}
  \label{eq:LieAlg-fx-orthopath}  
  \fx=\fv\oplus\fk_{0}\oplus\fk_{1}.
\end{equation}
The Lie group $X$ whose Lie algebra is $\fx$ is given by
\begin{equation}
  \label{eq:X-corr-causal}
  X=(B\times \mathrm{CO}(p,q))\ltimes (l\otimes \mathbb{R}^{p,q}),
\end{equation}
where $B\subset \mathrm{SL}(2,\mathbb{R})$ is the Borel subalgebra stabilizing a line $l\subset\mathbb{R}^2$.
\begin{corollary} \label{cor:causal-from-orthopath-various-curv-cond}
Let $(\ttau\colon\tcG\to\tcC,\tpsi)$ be a causal structure of signature $(p+1,q+1)$ with an infinitesimal symmetry and $(\tau\colon\cG\to\cC,\psi)$ be the variational orthopath geometry of signature $(p,q)$ on the leaf space of the integral curves of the infinitesimal symmetry via the quotient map $\pi\colon\tcC\to\cC.$ Then one has
\begin{equation}
  \label{eq:FW-AT}
  \bF=\pi^*\bA,\quad \bW=\pi^*\bT
  \end{equation}
  where $\bF$ and $\bW$ are  given in \eqref{eq:harmonic-inv-causal-represented} and $\bA$ and $\bT$ are given in \eqref{eq:fund-inv-orthopath}. Furthermore, if $\bA=0$ then for the naturally induced pseudo-conformal structure on the  leaf space $\tM,$ as defined in  \eqref{eq:leaf-space-tM-pseudoconf-str}, the following holds.
  \begin{enumerate}
  \item   There is a one-to-one correspondence between pseudo-conformal structures with a null conformal Killing field  and variational orthopath geometries for which $\bA$ and $\bq$ vanish.  
      \item   There is a one-to-one correspondence between pseudo-conformal structures with a  null conformal Killing field that induces an orthogonal  hypersurface foliation   and variational orthopath geometries for which $\bA,\bN$ and $\bq$ vanish. In this case, the initial path geometry corresponds to a projective structure on $M,$ as defined in \eqref{eq:leaf-space-M-proj-str}. 
      \item A variational orthopath geometry defines a Cartan geometry $(\tau_1\colon\cG\to M,\psi)$ of type $(\fk,X)$  if and only if the invariants  $\bA,\bN,\bq,$ vanish and the first coframe derivatives $\sT_{ab;\uc}\ve^{bc}$ are zero for all $1\leq a\leq n.$ In this case the Cartan holonomy of the corresponding pseudo-conformal structure is  a proper subgroup of the  contact parabolic subgroup $P\subset \mathrm{SO}(p+2,q+2).$
      \item  A variational orthopath geometry defines a parabolic conformally symplectic structure of type $B_n$ or $D_n$  on the leaf space  $S$ given in \eqref{eq:leaf-space-S-segre-str},
    if and only if $\bT$ vanishes.  
  \item Variational orthopath geometries whose quasi-contactification is the flat pseudo-conformal structures in dimension $n+1$ are in one-to-one correspondence with the $\lfloor\half({n+1})\rfloor$-parameter family of $(2n-2)$-dimensional torsion-free  $\mathrm{SL}(2,\RR)\times \mathrm{CO}(p,q)$-structures. 
  \end{enumerate}
\end{corollary}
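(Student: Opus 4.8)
\textbf{Proof proposal for Corollary \ref{cor:causal-from-orthopath-various-curv-cond}.}
The plan is to prove each item by carrying out explicit computations with the Cartan connections, using the relation \eqref{eq:Cartan-conn-modif-orthopath-to-causal} between $\pr_\fk\circ\iota^*\tpsi$ and $\hat\pi^*\psi$ established in the last part of Proposition \ref{prop-relharmonics}, together with the structure equations for variational orthopath geometries from Theorem \ref{thm:var-orthopath-equiv-prob} (namely \eqref{eq:orthopath-Cartan-curv-2form} and \eqref{eq:rewrite2-streqns-pathgeom}) and the curvature equations for causal geometries in \ref{sec:caus-geom-dimens}. The very first step is the identity \eqref{eq:FW-AT}: substituting the expressions \eqref{eq:Cartan-conn-modif-orthopath-to-causal} into the defining formulas for $\tOmega^a$ and $\tTheta^a$ and reading off the coefficients shows that the harmonic torsion of the causal structure is $F_{abc}=\ssB_{abc}$ and its harmonic Weyl component is $W_{0a0b}=\sT_{ab}=T_{a0b0}$, so that $\bF=\pi^*\bA$ and $\bW=\pi^*\bT$ after accounting for the density weights in \eqref{eq:harmonic-inv-causal-represented} and \eqref{eq:fund-inv-orthopath}. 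In particular the vanishing of $\bA$ is exactly the condition that the causal structure be realizable as the sky bundle of a genuine pseudo-Riemannian conformal structure on $\tM$ by Proposition \ref{prop:realization-causal-geometry} and the discussion around \eqref{eq:leaf-space-tM-pseudoconf-str}, which sets up the standing assumption $\bA=0$ for items (1)--(5).

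For item (1), I would compute the restriction $\iota^*\tg$ of the canonical degenerate bilinear form \eqref{eq:g-conf-str-bilinear-form-on-tC} using \eqref{eq:Cartan-conn-modif-orthopath-to-causal}; this yields $\iota^*\tg = 2\omega^0\circ\omega^n + \ve_{ab}\omega^a\circ\omega^b + \sq\,\omega^n\circ\omega^n$ on $\pi^*\cG$ (mirroring \eqref{eq:h_0-235-quasi-cont} and \eqref{eq:h0-36-general-from-ODE}), so that the infinitesimal symmetry $\tfrac{\partial}{\partial\omega^n}$ is null with respect to $[\tg]$ precisely when $\bq=0$. For item (2), assuming in addition $\bq=0$, I would identify the null hyperplane distribution orthogonal to the symmetry and use the structure equations to show its integrability is controlled by $\bN$: concretely, after setting $\sq=0$ the 1-form $\tilde\phi_{ab}$ in \eqref{eq:Cartan-conn-modif-orthopath-to-causal} reduces to $\phi_{ab}$, and checking $\exd^2=0$ forces the obstruction to integrability of the relevant Pfaffian system on $\iota^*\tcG$ to be a multiple of $\sN_{[ab]}$; since by \eqref{eq:AB-tracefree}--\eqref{eq:reduction1-sigma-expressions} and \eqref{eq:extra-properties} $\sN_{ab}$ is symmetric modulo $\sN_{[ab]}=-\tfrac12 T^0_{0ab}$ and $\bq=0$ already kills the symmetric part via \eqref{eq:N-symm-and-M-symm-from-Q-and-N}, the condition $\bN=0$ is equivalent to integrability; that $\bA=\bN=\bq=0$ forces $\bC=0$ (so the path geometry descends to a projective structure by \eqref{eq:leaf-space-M-proj-str}) follows from \eqref{eq:C-for-orthopath-from-A-N}, which expresses $\sC_{abcd}$ purely in terms of $\ssB$ and $\sN$. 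Item (3) is the holonomy-reduction statement: assuming $\bA=\bN=\bq=0$, the only obstruction to $\iota^*\tpsi$ being $\fp^{op}$-valued (and hence to the Cartan holonomy dropping into the contact parabolic $P\subset\mathrm{SO}(p+2,q+2)$, as in Example in \ref{subsec-holonomy}) is the vanishing of the remaining off-block 1-forms $\txi^a$; computing these from \eqref{eq:Cartan-conn-modif-orthopath-to-causal} and the structure equations shows their $\theta^0$-coefficients are proportional to $\sT_{ab;\uc}\ve^{bc}$, giving the stated condition, after which one checks the curvature 2-form becomes horizontal for the fibration $\tau_1\colon\cG\to M$ so that $(\tau_1\colon\cG\to M,\psi)$ is a Cartan geometry of type $(\fk,X)$ with $X$ as in \eqref{eq:X-corr-causal}.

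For item (4) I would use the description of the solution-space fibration $\lambda\colon\cC\to S$ in \eqref{eq:leaf-space-S-segre-str}: the Wilczy\'nski-type invariant of the underlying path geometry is $\bT$ by \eqref{eq:path-harm-inv-representation}, and $\bT=0$ is exactly the condition for the path geometry to descend to a Segr\'e-type structure on $S$; identifying this with a parabolic conformally symplectic structure of type $B_n$ or $D_n$ then follows from the PACS description in the Remark after Lemma \ref{lem-kL} together with the fact that the conformal bundle metric $[\bh]$ descends. Item (5) parallels item (4) of Corollaries \ref{cor:235-from-4th-ODE-various-curv-conditions} and \ref{cor:36-from-pair-3rd-ODE-various-curv-conditions}: flatness of the quasi-contactified pseudo-conformal structure is $\bW=0$ and $\bF=0$, i.e. $\bT=\bA=0$ by \eqref{eq:FW-AT}, so the underlying $\mathrm{SL}(2,\RR)\times\mathrm{CO}(p,q)$-structure on $S$ is torsion-free; invoking the classification of such structures as special symplectic connections \cite{CS-special, CS-cont1} gives the $\lfloor\tfrac12(n+3)\rfloor$-parameter count and the dimension $2n-2$. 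The main obstacle I expect is the bookkeeping in items (2) and (3): one has to track which coframe derivatives of $\ssB$, $\sT$, $\sN$, $\sq$ survive in the extended and normalized connection $\iota^*\tpsi$ (the entries outside $\fk$ are only implicitly determined by normality), so the delicate part is verifying that the integrability/holonomy obstructions really are the clean tensorial quantities $\bN$ and $\sT_{ab;\uc}\ve^{bc}$ and that no further hidden conditions appear; this requires carefully re-deriving enough of the full (non-first-order) structure equations for causal geometries, in the spirit of the computations in \cite{Omid-Sigma}, and then repeatedly imposing $\exd^2=0$.
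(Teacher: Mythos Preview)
Your overall strategy matches the paper's proof closely: use \eqref{eq:Cartan-conn-modif-orthopath-to-causal} together with the orthopath structure equations \eqref{eq:orthopath-Cartan-curv-2form}--\eqref{eq:rewrite2-streqns-pathgeom} to read off $\bF=\pi^*\bA$, $\bW=\pi^*\bT$, compute $\iota^*\tg$ to get item~(1), and for items~(4)--(5) invoke the Segr\'e/PCS picture and \cite{CS-special}. These parts are fine.

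Two places deserve correction. For item~(2) your route is unnecessarily roundabout and contains a slip: setting $\sq=0$ does \emph{not} reduce $\tilde\phi_{ab}$ to $\phi_{ab}$, since $\tilde\phi_{ab}=\phi_{ab}-\sN_{[ab]}\tilde\omega^n$ in \eqref{eq:Cartan-conn-modif-orthopath-to-causal}. The paper's argument is much more direct: the orthogonal hyperplane to the symmetry is $\ker(\omega^0+\sq\,\omega^n)$, and when $\sq=0$ this is the pull-back of $\{\omega^0\}$ from $\cC$. By \eqref{eq:rewrite2-streqns-pathgeom} one has $\Omega^0=\sN_{ab}\omega^a\wedge\omega^b-\sQ_{ab}\theta^a\wedge\omega^b$, so integrability of $\{\omega^0\}$ is exactly $\sN_{[ab]}=0$ and $\sQ_{ab}=0$; under $\bA=0$ one has $\overset{\circ}{\sQ}_{ab}=0$ by \eqref{eq:B-and-Q-from-A-orthopath}, so $\sQ_{ab}=\sq\,\ve_{ab}$ and the conditions collapse to $\bq=0$, $\bN=0$. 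No appeal to $\exd^2=0$ is needed.

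For item~(3) you invert the paper's logic. The statement asserts first that the orthopath geometry descends to a Cartan geometry on $M$; the paper establishes this by inserting $\tfrac{\partial}{\partial\theta^a}$ into the curvature \eqref{eq:rewrite2-streqns-pathgeom} and observing that, once $\bA,\bN,\bq$ vanish, the only surviving obstructions are $\sM_{ab}$ and $\sM_{a0}$, which via \eqref{eq:N-symm-and-M-symm-from-Q-and-N} vanish iff $\sT_{ab;\uc}\ve^{bc}=0$. Only \emph{after} this does the paper note that under these conditions all entries of $\iota^*\tpsi$ outside $\mfk$ vanish, giving the holonomy reduction. Your plan to compute the $\fp_1\oplus\fp_2$ entries of $\iota^*\tpsi$ directly is harder, since those entries (namely $\tgamma_b$, $\txi_b$, $\txi_n$, not just a single ``$\txi^a$'') are \emph{not} listed in \eqref{eq:Cartan-conn-modif-orthopath-to-causal} and would have to be obtained from the full normalization. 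Also, ``$\theta^0$-coefficients'' is a slip from the $(3,6)$ case; there is no $\theta^0$ in the orthopath coframe.
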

\begin{proof}
  As in Corollary \ref{cor:36-from-pair-3rd-ODE-various-curv-conditions} and \ref{cor:235-from-4th-ODE-various-curv-conditions}, the first claim is shown by using expression  \eqref{eq:Cartan-conn-modif-orthopath-to-causal}  to form $\tpsi,$ as given in \eqref{eq:Causal-general}, and use structure equations \eqref{eq:orthopath-Cartan-curv-2form} and \eqref{eq:rewrite2-streqns-pathgeom} to compute the harmonic invariants $\bF$ and $\bW$ for the causal structure.

  Part \emph{(1)} is shown by using \eqref{eq:Cartan-conn-modif-orthopath-to-causal}  to express the   bilinear form $\iota^*\tg$,  defined in  \eqref{eq:g-conf-str-bilinear-form-on-tC}, as
  \begin{equation}
    \label{eq:g0-pseudo-conformal-general}
    \iota^*\tg=2\omega^0\circ\omega^n+\ve_{ab}\omega^a\circ\omega^b+\sq\omega^n\circ\omega^n\in \Gamma(\mathrm{Sym}^2T^*\pi^*\cG).
      \end{equation}
       As discussed in the proof of the first part of Proposition \ref{prop-relharmonics} in \ref{sec:235-4th-ODE-quasi-cont}, on $\pi^*\cG$ the infinitesimal symmetry can be taken to be $\tfrac{\partial}{\partial\tomega^n}.$ Thus,  the vanishing of $\bq$ is equivalent  to the nullity of the infinitesimal symmetry. 
  
       Part \emph{(2)} follows from the fact that by structure equations \eqref{eq:rewrite2-streqns-pathgeom}, the Pfaffian system $I=\{\omega^0\}$ is integrable if and only if $\bA=\bN=\bq=0.$ Note that by \eqref{eq:g0-pseudo-conformal-general} $\tomega^0=\pi^*\omega^0$ is orthogonal  to the conformal Killing field $\tfrac{\partial}{\partial\omega^n}$ with respect to conformal structure induced by  $\iota^*\tg.$ Furthermore, in this case it can be checked that all 3rd coframe derivatives of $\sT_{ab}$ with respect to $\theta^a$'s vanish, i.e. $\sT_{ab;\underline{cde}}=0.$

       Part \emph{(3)} follows by first observing that among  the obstructions for the vanishing of  Cartan curvature \eqref{eq:rewrite2-streqns-pathgeom} upon insertion of $\tfrac{\partial}{\partial\theta^a}$'s are $\bA,\bN,\bq.$ Using the differential relations given in the proof of Theorem \ref{thm:var-orthopath-equiv-prob}, assuming that $\bA,\bN,\bq$ are zero, the only obstructions are the functions  $\sM_{ab}$ and $\sM_{a0}$ in \eqref{eq:rewrite2-streqns-pathgeom} which vanish if and only if $\sT_{ab;\underline{c}}\ve^{bc}=0$ for all $1\leq a\leq n.$
In this case the Cartan curvature for the orthopath geometry  is horizontal with respect to the fibration $\tau_1\colon\cG\to M,$ where $M$ is defined in \eqref{eq:leaf-space-M-proj-str} via the quotient map $\nu,$ and $\tau_1=\nu\circ\tau$. Consequently,  one obtains a Cartan geometry  $(\tau_1\colon\cG\to M,\psi)$ of type $(\fk,X).$  Computing $\iota^*\tpsi$ in this case, relations \eqref{eq:Cartan-conn-modif-orthopath-to-causal} imply $\pr_{\fk}\circ\iota^*\tpsi=\hat\pi^*\psi$ and all entries of  $\iota^*\tpsi$  that are not contained in $\pr_{\fk}\circ\iota^*\tpsi$ are zero. Furthermore, in this case all 2nd coframe derivatives of $\sT_{ab}$ with respect to $\theta^a$'s vanish, i.e. $\sT_{ab;\underline{cd}}=0.$

Part \emph{(4)} can be shown in two ways. One way is to use the Bianchi identities and show that the vanishing of $\bT$ implies the vanishing of certain parts of the Cartan curvature for $\psi$ which includes the 2-forms that involve $\omega^0,$ e.g. the quantities $\sM_{0a},\sC_{ab0d},\sT_{ab0b},\sT_{00b}.$ Subsequently, it can be checked that the orthopath geometry descends to a Cartan geometry on the leaf space of the paths, $S,$  and that such Cartan geometry corresponds to a parabolic conformally symplectic structure of type $B_n$ and $D_n,$ as defined in \cite{CS-cont1}. Alternatively, as was mentioned in Theorem \ref{thm:var-orthopath-equiv-prob}, $\bT$ is the torsion of the initial path geometry which was augmented with a bilinear form $[\bh],$ as explained in \ref{sec:orth-geom-as}. The vanishing of $\bT$ implies that the initial path geometry descends to an integrable Segr\'e structure on $S.$ Furthermore, the augmentation with the bilinear form $[\bh]$ results in a reduction of the Segre structure to a parabolic conformally symplectic (PCS) structure by following the exact same steps explained in \ref{sec:orth-geom-as} and \ref{sec:conf-quasi-sympl-1}.

Part \emph{(5)} follows from \eqref{eq:FW-AT} combined with part (4) that such structures descend to  PCS structures on $S.$ The vanishing of $\bA$ implies that $\sB_{abc}$ and $\overset{\circ}{\sQ}_{ac}$ are zero due to relations \eqref{eq:B-and-Q-from-A-orthopath}. As was shown in part (4),  $\bT=0$ implies the vanishing of $T^{a}_{bc}$ and therefore, by structure equations \eqref{eq:rewrite2-streqns-pathgeom} the resulting PCS structure on $S$ gives torsion-free $\mathrm{SL}(2,\RR)\times\mathrm{CO}(p,q)$-structures. Since such  structures define special symplectic connections $\nabla,$ by \cite{CS-special} they depend on $\lfloor\half(n+1)\rfloor$ parameters. Unlike  part (4) in Corollary \ref{cor:235-from-4th-ODE-various-curv-conditions}, straightforward computation shows that the Ricci curvature of such torsion-free $\nabla$ is always symmetric. 
\end{proof}

\begin{remark}
  In part (1) of Corollary \ref{cor:causal-from-orthopath-various-curv-cond} one could consider the class of variational orthopath geometries with $\bA\neq 0$ and $\bq=0,$  the quasi-contactification of which results in  causal structures with an infinitesimal symmetry that is  null with respect to the  bilinear form $[\tg]$  given  in  \eqref{eq:g0-pseudo-conformal-general}. Moreover, unlike Corollaries \ref{cor:235-from-4th-ODE-various-curv-conditions} and \ref{cor:36-from-pair-3rd-ODE-various-curv-conditions},  the descend of a variational orthopath geometry from $\cC$ to $M$ implies holonomy reduction of the corresponding quasi-contact cone structure.    We also point out that by the discussion in \ref{sec:orth-geom-as}, the vanishing of the curvature    $\bC$ implies that the path geometry descends to  a projective structure on  $M.$ Using expressions \eqref{eq:C-for-orthopath-from-A-N} and \eqref{eq:path-geom-curv-derived-from-orthopath}, it follows that the vanishing of $\bA$ and $\bN$ imply the vanishing of $\bC.$ We note that in part (2) of Corollary \ref{cor:causal-from-orthopath-various-curv-cond}, since the 1-form $\omega^0$ is defined up to a scale and is integrable, such variational orthopath geometries naturally define a so-called \emph{fiber equivalence class} of systems of second order ODEs. Unlike systems of ODEs, there has been an extensive study of scalar second order ODEs under the fiber equivalence relation e.g.  see \cite{HK-ODE}. Lastly, in the case of 4-dimensional conformal structures of neutral signature with a conformal Killing field the relation between our Cartan geometric approach and the results of \cite{JonesTod,DunajskiWest} remains to be examined.

 \end{remark}

\subsubsection{Geometry of (pseudo-)Finsler structures under divergence equivalence}
\label{sec:div-equiv-pseudo-Finsler-as-var-orthopath}

In this section we show how variationality of an orthotpath structure implies Finsler metrizability. Consequently, we define the divergence equivalence relation among (pseudo-)Finsler metrics which we show to be equivalent, as a geometric structure, to variational  orthopath geometry.

(Pseudo-)Finsler manifolds are geometric structures that naturally arise from  the classical problem of first order  calculus of variation for $k\geq 1$ functions of one variable. As was discussed in \ref{sec:spec-conf-quasi},  this problem is to find a function $u\colon (p,q)\to \RR^{n-1},n\geq 2,$  that  extremizes  the integral
\[I[u]=\int^p_q L(x,u^a(x),(u^a(x))')\exd x,\]
where $L(x,y^1,\cdots,y^{n-1},p^1,\cdots,p^{n-1})$ is a function of $2n-1$ variables.  Finslerian viewpoint  can be useful in the study of certain   global aspects of  this problem.

More precisely, when $L\neq 0,$ one can naturally associate a (pseudo)-Finsler structure to such a problem by restricting to an open subset $\cC\subset J^1(\R,\RR^{n-1})$ of the domain of $L$ where $L\neq 0.$ The map
\[(x,y^a,p^a)\to \tfrac{1}{L(x,y^a,p^a)}\left(\tfrac{\partial}{\partial x}+p^a\tfrac{\partial}{\partial y^a}\right)\]
defines an immersion $\cC\to T M,$ where $M$ is the projection of $\cC$ to $J^0(\RR,\RR^{n-1})\cong \RR^n.$ Consequently, if the vertical Hessian matrix $[\frac{\partial^2 L}{\partial p^a\partial p^b}]_{a,b=1}^{n-1}$ is non-degenerate then  $\cC$ (micro-)locally defines the bundle of unit  vectors with respect to a (pseudo-)Finsler metric.

The observation above links this section to Theorem \ref{prop-variational}. Unlike the notation used  in Theorem \ref{prop-variational}, we denote the jet coordinates as $(x,y^a,p^a)$ for simplicity. The base manifold $M$ is equipped with the 1-form $\lambda=L\exd x$ and the contact system $\langle\exd y^a-p^a\exd x\rangle_{a=1}^{n-1}.$ We now proceed to explain Theorem \ref{prop-variational} in the language of Finsler structures. We start by showing how a variational orthopath structure defines an abstract geometric structure which we refer to as a  generalized (pseudo-)Finsler structure.

Given a variational orthopath geometry, take a closed representative $\rho_0\in\Gamma(\ell)$. Locally,  let $\alpha^0\in \Omega^1(\cC)$ be a primitive 1-form for $\rho_0,$ i.e.
\[\rho_0=\exd\alpha^0,\]
with the property that $\alpha^0$ is semi-basic with respect to the fibration $\cC\to M.$ The existence of such a primitive  is guaranteed by \cite[Lemma 2.7]{AT-Book}. The argument invokes Poincare Lemma for the operator $\exd_V$ on $J^1(\RR,\RR^{n-1}).$ 

Let $\hat w\in\Gamma(\cE)$ be a section of $\cE.$ Since $\cE$ is the characteristic direction of $\rho_0,$ one can always find  a modification of the form $\alpha^0\to\alpha^0+\exd t$ such that
\[\hat w\im\alpha^0=f\neq 0,\]
where $\exd t$ is a total differential along the integral curves of  $\cE.$ Defining $w={\hat w}/{f}\in\Gamma(\cE),$ one has
\[w\im\alpha^0=1.\]
The resulting $\alpha^0$ is a contact form on $\cC$ with $w$ as its Reeb vector field. Choose a section  $s\colon\cC\to\cG$  such that $w\im s^*\omega^0=1.$ Then  $(\alpha^0,s^*\omega^1,\cdots,s^*\omega^{n-1},s^*\theta^1,\cdots,s^*\theta^{n-1})$ is a coframe on $\cC$ with the property that  $(\alpha^0,s^*\omega^1,\cdots,s^*\omega^{n-1})$ are semi-basic with respect to the fibration $\nu\colon\cC\to M$ defined in \eqref{eq:leaf-space-M-proj-str}. Since $v\im\alpha^0=v\im\omega^0=1$ and both are semi-basic, it follows that
\begin{equation}
  \label{eq:alpha0-omega0-relation-semibasic}
  \alpha^0=\omega^0+x_a\omega^a
\end{equation}
for some functions $x_a$'s on $\cC,$ where we have dropped $s^*$ by abuse of notation.
 
Using structure equations \eqref{eq:streqns-pathgeom} and the definition of $\ell$ in \eqref{eq:rho-q-symp-orthopath}, the following holds
\begin{equation}
  \label{eq:gen-finsler-metric-characteristic-property}
  \begin{aligned}   
  \exd\alpha^0&=\ve_{ab}\theta^a\w\omega^b,\\
  \exd\omega^a&\equiv \alpha^0\w\theta^a\quad\mathrm{mod}\quad \{\omega^1,\cdots,\omega^{n-1}\}.
    \end{aligned} 
\end{equation}
In case $\ve_{ab}=\delta_{ab}$ it is immediate from  \eqref{eq:gen-finsler-metric-characteristic-property} that the triple $(\cC,\alpha^0,M)$ is a generalized Finsler structure as defined in \cite[Section 2.2.11]{Bryant-Finsler}. In the same spirit the following definition follows naturally.
\begin{definition}\label{def:gen-pseudo-finsl-str}  
  A generalized (pseudo-)Finsler structure of signature $(p+1,q)$, denoted as a triple $(\cC,\alpha^0,M),$ is a fibration $\nu\colon\cC\to M$ such that $\cC$ and $M$ have dimensions $2n-1$ and $n,$ respectively,  $\alpha^0$ is a contact form on $\cC,$ referred to as the Hilbert form, and there is a coframing,  $(\alpha^0,\omega^a,\theta^a)_{a=1}^{n-1},$ with the property that $(\alpha^0,\omega^a)_{a=1}^{n-1}$ are semi-basic with respect to $\cC\to M,$ and \eqref{eq:gen-finsler-metric-characteristic-property} holds wherein $\ve_{ab}$ has signature $(p,q)$ and $p+q=n-1.$
\end{definition}
Following  \cite{Bryant-Finsler}, generalized (pseudo)-Finsler structures can be interpreted as a Finslerian generalization of (pseudo-)Riemannian metrics. In particular, the bundle of unit vectors in Riemannian geometry is replaced by the so-called indicatrix bundle which, locally, can be viewed as the bundle of unit vectors with respect to a Finsler norm. In the case of strictly pseudo-Finsler structure the indicatrix bundle locally arises as  the bundle of unit  vectors with respect to a pseudo-Finsler norm. The integral curves of the Reeb vector field of $\alpha^0$ project to the  \emph{geodesics} of the generalized (pseudo-)Finsler structure on $M$ along tangent vectors with positive norm.   
In this article we will not elaborate further on various interpretations and definitions of (pseudo-)Finsler structures.

\begin{remark}
Note that here we restrict to the bundle of unit tangent vectors, which depending on one's convention can be time-like or space-like in the case of strictly pseudo-Finsler metrics. For instance,   one may consider pseudo-Finsler structures of signature $(p+1,q),q\geq 1$ whose indicatrix bundle locally represent space-like vectors of unit length with respect to a pseudo-Finsler norm. Thus, unit space-like vectors for a pseudo-Finsler norm of signature $(p+1,q)$ coincide with unit time-like vectors with respect to  a pseudo-Finsler norm of signature $(q,p+1),$ when $q\geq 1.$ As a result, without having any convention on being time-like or space-like, we take the bundle of unit tangent vectors as the local model for generalized (pseudo-)Finsler structures. 
\end{remark}

For what follows in this section and the next one we need to recall Cartan's solution of the equivalence problem for generalized (pseudo-)Finsler structures as defined in Definition \ref{def:gen-pseudo-finsl-str}.

\begin{theorem}[Cartan \cite{Cartan-Finsler}]\label{thm:General-FW-solut-equiv-probl-1}
  Every (pseudo-)Finsler structure $(\cC,\alpha^0,M)$ of signature $(p+1,q)$ where $\cC$ is $(2n-1)$-dimensional and $p+q=n-1$ defines a Cartan geometry $(\sigma\colon\cF\to\cC,\varphi)$ of type $(\fg,H)$ where  $\fg=\RR^{n}\rtimes\mathfrak{so}(p+1,q)$ and $H=\mathrm{SO}(p,q)\subset \mathrm{SO}(p+1,q).$ The Cartan connection and its Cartan curvature can be expressed as
  \begin{equation}
    \label{eq:CartanConn-FW-1}  
 \def\arraystretch{1.3}
\varphi=    \begin{pmatrix}    
 0 & 0& 0\\
\alpha^0 &0&-\beta_b\\
\alpha^a&  \beta^a& \psi^a{}_b\\
\end{pmatrix}    \qquad {\boldsymbol\varphi}:=\exd\varphi+\varphi\w\varphi= \begin{pmatrix}    
 0 & 0& 0\\
0 &0&-{\bbeta}_b\\
{\balpha}^a&  {\bbeta}^b& \Psi^a{}_b\\
\end{pmatrix} 
    \end{equation} 
    where  $\beta_a=\ve_{ab}\beta^b,$ and
    \begin{subequations}
      \label{eq:general-FW-streqns-0}
      \begin{align}
        \label{eq:general-FW-streq-1}
        \balpha^a&= I^a{}_{bc}\alpha^b\w\beta^c, \\
        \label{eq:general-FW-streq-2}
        \bbeta^a&=  R^a_{\ b}\alpha^0\w\alpha^b+\half R^a_{\ bc}\alpha^b\w\alpha^c+ J^a_{\ bc}\alpha^b\w\beta^c,\\
          \label{eq:general-FW-streq-3}
        \Psi^a{}_b&= R^a_{\ b0c}\alpha^0\w\alpha^c+ \half R^a_{\ bcd}\alpha^c\w\alpha^d+K^a_{\ bcd}\alpha^c\w\beta^d+\half S^a_{\ bcd}\beta^c\w\beta^d.
      \end{align}
    \end{subequations}
Using $\ve_{ab}$ to lower and raise indices, the algebraic properties  
    \begin{equation}
      \label{eq:symmetries-FW-invariants}
      \begin{gathered}
        I_{abc}=I_{(abc)},\qquad J_{abc}=J_{(abc)}=\tfrac{\partial}{\partial\alpha^0}I_{abc},\quad R_{ab}=R_{(ab)},\quad R_{[abc]}=0,
      \end{gathered}
    \end{equation}
 are satisfied.   The fundamental invariants, whose vanishing imply that the Finsler structure is locally equivalent to the flat model $G\slash H,$ are 
    \begin{equation}
      \label{eq:FW-fundamental invariants}
      \bI=I_{abc}\beta^a\circ\beta^b\circ\beta^c,\quad\bR=R_{ab}\beta^a\circ\beta^b.  
    \end{equation}
  \end{theorem}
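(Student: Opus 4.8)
\textbf{Plan of proof for Theorem \ref{thm:General-FW-solut-equiv-probl-1}.} This is Cartan's classical equivalence method applied to the $G$-structure underlying a generalized (pseudo-)Finsler structure, so the strategy is to run the usual absorption-normalization loop and read off what survives. First I would set up the initial coframe. Definition \ref{def:gen-pseudo-finsl-str} gives a coframing $(\alpha^0,\omega^a,\theta^a)_{a=1}^{n-1}$ on $\cC$ with $(\alpha^0,\omega^a)$ semi-basic for $\nu\colon\cC\to M$, and the structure equations \eqref{eq:gen-finsler-metric-characteristic-property}. The relevant structure group acting on such coframes (fixing $\alpha^0$, preserving the conformal/bilinear data $\ve_{ab}$ on the $\omega^a$'s, and respecting the Lie-bracket normalization forced by the second equation in \eqref{eq:gen-finsler-metric-characteristic-property}) is a subgroup of $\mathrm{GL}(2n-1,\RR)$ whose Lie algebra, after the reductions dictated by those structure equations, is $\fh = \fso(p+1,q)$ sitting inside the affine algebra $\fg = \RR^{n}\rtimes\fso(p+1,q)$; I would identify $\alpha^a := \theta^a$, $\beta^a$ as the off-diagonal connection forms, and $\psi^a{}_b$ as the remaining connection forms. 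The point is that the tautological part of $\varphi$ is $(\alpha^0,\alpha^a)$ together with $\beta^a$ — there are $n + (n-1)$ "solder" forms and $(n-1)$ more forms $\beta^a$ coming from the fiber of the indicatrix bundle — and the structure group $H=\mathrm{SO}(p,q)$ acts only on the $\theta^a=\alpha^a$ and $\beta^a$ by simultaneous rotation, which is why $\alpha^0$ and $\beta_a=\ve_{ab}\beta^b$ enter $\varphi$ with zero rows as displayed in \eqref{eq:CartanConn-FW-1}.

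Next I would carry out the normalization. Starting from $\exd\alpha^0 = \ve_{ab}\theta^a\w\omega^b$ and $\exd\omega^a \equiv \alpha^0\w\theta^a$ modulo $\{\omega^b\}$, I would introduce connection $1$-forms $\beta^a$, $\psi^a{}_b$ by the torsion equations $\exd\omega^a = \alpha^0\w\theta^a + \beta^a\w\alpha^0$-type relations and $\exd\theta^a = -\psi^a{}_b\w\theta^b - \beta^a\w\alpha^0 + (\text{torsion})$, absorbing everything absorbable. The key structural inputs are: (i) $\fso(p+1,q)$ is the full prolongation data here — there is essentially no Spencer ambiguity left after imposing skew-symmetry of $\psi_{ab}:=\ve_{ac}\psi^c{}_b$ — so the connection forms are uniquely pinned down; (ii) the remaining torsion, which cannot be absorbed, is precisely the Cartan tensor. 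Taking $\exd$ of the equation for $\exd\theta^a$ and $\exd\omega^a$ and extracting the torsion that survives absorption yields $\balpha^a = I^a{}_{bc}\,\alpha^b\w\beta^c$ with $I_{abc}=I_{(abc)}$ (symmetry from $\exd^2\alpha^0=0$ combined with the skew-symmetry of $\psi_{ab}$), which is \eqref{eq:general-FW-streq-1}. Then $\exd\beta^a$ and $\exd\psi^a{}_b$ are computed, their right-hand sides decomposed into the bases $\alpha^0\w\alpha^b$, $\alpha^b\w\alpha^c$, $\alpha^b\w\beta^c$, $\beta^b\w\beta^c$, giving the coefficient functions $R^a{}_b$, $R^a{}_{bc}$, $J^a{}_{bc}$ in \eqref{eq:general-FW-streq-2} and $R^a{}_{b0c}$, $R^a{}_{bcd}$, $K^a{}_{bcd}$, $S^a{}_{bcd}$ in \eqref{eq:general-FW-streq-3}. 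The algebraic symmetries \eqref{eq:symmetries-FW-invariants} follow from the Bianchi-type identities $\exd^2\alpha^0 = \exd^2\alpha^a = \exd^2\beta^a = 0$: $R_{ab}=R_{(ab)}$ and $R_{[abc]}=0$ come from $\exd^2\alpha^a=0$, the skew-symmetry $\psi_{ab}=-\psi_{ba}$ forces the $\Psi^a{}_b$ coefficients to lie in $\fso(p+1,q)$, and $J_{abc}=\tfrac{\partial}{\partial\alpha^0}I_{abc}$ together with its total symmetry comes from differentiating \eqref{eq:general-FW-streq-1} and collecting the $\alpha^0\w(\cdots)$ terms.

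Finally, to identify the fundamental invariants I would appeal to the general principle that for a Cartan geometry the harmonic (or lowest-order) part of the curvature function is a complete obstruction to local flatness, and observe that the curvature function here takes values in the $H$-module spanned by the components listed; a short representation-theoretic check (or direct inspection of which components can be further normalized away — none, since we have already used up the structure group) shows that the genuinely invariant, lowest-order pieces are the totally symmetric cubic $\bI = I_{abc}\beta^a\circ\beta^b\circ\beta^c$ (the Cartan tensor) and the symmetric quadratic $\bR = R_{ab}\beta^a\circ\beta^b$ (the flag curvature / Riemann-type curvature); all other components are obtained from these by coframe differentiation, exactly as the relations $J_{abc}=\partial_{\alpha^0}I_{abc}$ already illustrate. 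Vanishing of $\bI$ and $\bR$ then forces, via the Bianchi identities, the vanishing of every curvature component, so $\varphi$ is flat and the geometry is locally $G/H$. \textbf{Main obstacle.} The delicate part is not the normalization bookkeeping but verifying that after imposing $\psi_{ab}=-\psi_{ba}$ the normalization is genuinely complete — i.e. that no residual structure-group freedom remains to absorb $I_{abc}$ or $R_{ab}$ — which amounts to checking that the first prolongation of $\fso(p+1,q)$ (in the relevant representation) vanishes, and to tracking the signature bookkeeping $(p+1,q)$ versus $(p,q)$ correctly through the reductions so that the matrix in \eqref{eq:CartanConn-FW-1} is indeed $\fg$-valued with $\psi^a{}_b \in \fso(p+1,q)$ acting on the $(n-1)$-dimensional $\theta$-block together with the $\alpha^0$-$\beta$ null pair. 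Since Cartan already carried this out in \cite{Cartan-Finsler} (and the Riemannian signature case is \cite{Bryant-Finsler}), I would present the pseudo-Finsler case as a routine adaptation, stating the structure equations, verifying the symmetries from $\exd^2=0$, and citing the prolongation computation rather than reproducing it in full.
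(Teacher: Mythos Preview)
Your overall strategy is correct and is precisely the approach taken in the cited references: the paper itself does not supply a proof of this theorem but attributes it to Cartan \cite{Cartan-Finsler} and points to \cite[Section 2]{Bryant-Finsler} for a modern exposition, remarking that the pseudo-Finsler case is a straightforward adaptation. So there is nothing to compare against beyond confirming that your outline matches Cartan's equivalence method, which it does in spirit.

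That said, there is a concrete notational slip that would cause real trouble if you tried to carry the computation through. You write ``I would identify $\alpha^a := \theta^a$'', but this is backwards. In the paper's conventions (Definition \ref{def:gen-pseudo-finsl-str} and the relations \eqref{eq:var-ortho-from-Finsler}) one has $\alpha^a = \omega^a$, the semi-basic forms for $\nu\colon\cC\to M$, while $\beta^a = \theta^a$, the vertical forms. The structure equation $\exd\alpha^0 = \ve_{ab}\theta^a\w\omega^b$ then matches the matrix computation $\exd\alpha^0 = \beta_b\w\alpha^b$ from \eqref{eq:CartanConn-FW-1}. With your swap, the Cartan torsion $\bI$ would end up attached to the wrong directions and the interpretation of $\bR$ as flag curvature would be obscured. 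Relatedly, $\psi^a{}_b$ itself is $\fso(p,q)$-valued, not $\fso(p+1,q)$-valued; it is the larger block $\begin{pmatrix}0 & -\beta_b\\ \beta^a & \psi^a{}_b\end{pmatrix}$ that lies in $\fso(p+1,q)$. Your final paragraph hints at this, but earlier statements conflate the two. Fix these identifications and the rest of your outline (absorption, uniqueness of connection from vanishing first prolongation, Bianchi identities yielding \eqref{eq:symmetries-FW-invariants}, and the argument that $\bI,\bR$ generate all curvature components) goes through as you describe.
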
 
The theorem above describes Cartan's original solution to the equivalence problem of Finsler structures \cite{Cartan-Finsler} in the language of Cartan geometry. A more modern exposition of  Cartan's solution can be found in \cite[Section 2]{Bryant-Finsler} which straightforwardly extends to the strictly pseudo-Finslerian case.  There are other choices of linear connections on $\cC$ that have been used in Finsler geometry, most notably that of Chern \cite{Chern-Finsler}. 
  \begin{remark}\label{rmk:Finsler-equiv-problem}    
  Using Theorem \ref{thm:General-FW-solut-equiv-probl-1},  it is straightforward to show that the degenerate bilinear form
\begin{equation}
  \label{eq:Finsler-biliner-form}
    \bh=\ve_{ab}s^*\beta^a\circ s^*\beta^b\in\mathrm{Sym}^2(\cV^*)
  \end{equation}
 is well-defined for any section $s\colon\cC\to\cF,$ where $\cV$ is the vertical tangent bundle for the fibration $\nu\colon\cC\to M.$  Locally, it can be shown \cite{Bryant-Finsler} that each fiber $\cC_x:=\nu^{-1}(x)$ can be immersed as centro-affine hypersurface in $T_xM$ whose second fundamental form has signature $(p,q).$ Such hypersurfaces are classically interpreted as the unit   tangent vectors with respect to a (pseudo)-Finsler metric. In this generalized setting such interpretation holds ``microlocally'' on $M$, i.e. in sufficiently small open neighborhoods of $\cC.$  
The invariant $\bI $ at each point $(x;y)\in\cC$ can be interpreted as the \emph{centro-affine} invariant of the immersed hypersurface $\cC_x\subset T_xM$ evaluated at $y\in \cC_x.$ The vanishing of $\bI$ implies that the (pseudo-)Finsler metric descends to the (pseudo-)Riemannian metric
\begin{equation}
  \label{eq:g-bilinear-form-Finsler-R}
  g=(\alpha^0)^2+\ve_{ab}\alpha^a\circ\alpha^b
  \end{equation}
on $M,$  in which case Cartan connection $\varphi$ coincides with the Levi-Civita connection for $g.$ The invariant $\bR$ is referred to as the \emph{flag curvature} of the (pseudo-)Finsler structure. 
  \end{remark} 
  \begin{remark}\label{rmk:Finsler-equiv-prob-flat-model}
    In  Theorem \ref{thm:General-FW-solut-equiv-probl-1} the flat model is taken to be $G\slash H$ which is equivalent to the bundle of unit  vectors  in $\RR^{p+1,q}$ with its standard metric of sectional curvature zero, i.e.  $\cC=\{v\in\RR^{p+1,q}\,\vline\,\langle v,v\rangle=1\}.$
Alternatively,     as in the case of (pseudo-)Riemannian geometry, one can take $G$ to be $\mathrm{SO}(p+2,q)$ or $\mathrm{SO}(p+1,q+1)$ in which case $G\slash H$ gives  the bundle of unit   vectors on $\SSS^{p+1,q}$ and $\HH^{p+1,q}$ with their standard metrics of $+1$ and $-1$ sectional curvature, respectively. 
  \end{remark}

Up until now we have shown that for any variational orthopath geometry the paths arise  as  integral curves of the  geodesic flow for a generalized (pseudo-)Finsler structure. The choice of the (pseudo-)Finsler structure is determined by the choice of a primitive 1-form $\alpha^0.$  The freedom in choosing $\alpha^0,$ as a semi-basic 1-form with respect to $\nu\colon\cC\to M,$ is scaling by a constant and a total derivative on $M,$ i.e. transformation of the form
\begin{equation}
  \label{eq:div-equiv-transformation}
  \alpha^0\rightarrow c\alpha^0+\exd f
  \end{equation}
for a constant $c\in\RR^*$ and  $f\in C^{\infty}(M,\RR).$

In the language of \emph{Griffiths' formalism} \cite{Griffiths-EDS}, using the notation from \cite{Bryant-VarCal}, so far we have shown that the paths of a variational orthopath geometry are extremals of the \emph{variational problem} $(\cC,\{0\},\alpha^0),$ since they are the integral curves of the characteristic direction of  $\rho_0=\exd\alpha^0.$ As defined in \cite[Section 6]{Bryant-VarCal}, the admissible transformations  \eqref{eq:div-equiv-transformation} correspond to solving the variational problem $(\cC,\{0\},\alpha^0)$ under divergence equivalence. This observation leads us to give the following definition.
\begin{definition}\label{def:geom-pseudo-finsl-div-equiv}
  Two generalized (pseudo-)Finsler structures $(\cC_1,\alpha^0,M_1)$ and $(\cC_2,\beta^0,M_2)$ are locally divergence equivalent at $x_i\in\cC_i,i\in\{1,2\}$ if there exists a diffeomorphism $g\colon U_1\to U_2$ for open subsets $U_i\subset\cC_i, i\in\{1,2\}$ such that $x_2=g(x_1)$ and 
  \[  g^*\beta^0= c\alpha^0+\exd f\]
  for $c\in\RR^*$ and $f\in C^{\infty}(\nu(U_1),\RR),$ where $x_i\in U_i,i\in\{1,2\}$ and $\nu\colon\cC_1\to M_1.$
\end{definition}
Now we can state the  following interpretation of variationality for an orthopath geometry. 
\begin{proposition}\label{prop:var-orthopath-is-div-equiv-pseudo-finsler}
    There is a  one-to-one correspondence between variational orthopath geometries and divergence equivalence classes of generalized (pseudo-)Finsler structures.
\end{proposition}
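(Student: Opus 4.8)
The plan is to establish the correspondence by constructing both directions of a map and checking they are mutually inverse, working in the categorical sense where morphisms of generalized (pseudo-)Finsler structures are understood as in Definition \ref{def:geom-pseudo-finsl-div-equiv}, i.e. local diffeomorphisms respecting the Hilbert forms up to divergence equivalence. First I would go from a variational orthopath geometry to a divergence equivalence class of generalized (pseudo-)Finsler structures. Given a variational orthopath geometry $(\tau\colon\cG\to\cC,\psi)$ with canonical conformally quasi-symplectic structure $\ell\subset\biw^2T^*\cC$, pick a closed representative $\rho_0\in\Gamma(\ell)$, which by Proposition~\ref{lem-canonicalconfqsymp} and the discussion following it has the algebraic form $\rho_0=\ve_{ab}s^*\theta^a\w s^*\omega^b$ for a suitable section $s$. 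By \cite[Lemma 2.7]{AT-Book} there is a primitive $\alpha^0$ that is semi-basic with respect to $\nu\colon\cC\to M$, and the modification $\alpha^0\mapsto\alpha^0+\exd t$ along $\cE$-leaves normalizes the pairing with a chosen Reeb-type vector in $\cE$; equation \eqref{eq:alpha0-omega0-relation-semibasic} then shows $\alpha^0=\omega^0+x_a\omega^a$ in the adapted coframing. The structure equations \eqref{eq:streqns-pathgeom}, together with the reductions of \ref{sec:orth-geom-as}, give exactly the two relations \eqref{eq:gen-finsler-metric-characteristic-property}, which is the defining data of a generalized (pseudo-)Finsler structure of signature $(p+1,q)$ in the sense of Definition~\ref{def:gen-pseudo-finsl-str}. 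The key point is that the \emph{only} freedom in the construction is the choice of closed representative $\rho_0$ (unique up to a nonzero constant by the uniqueness statement in the proposition on conformally quasi-symplectic structures) and the choice of primitive $\alpha^0$ (unique up to $\exd f$ with $f\in C^\infty(M)$ since any two primitives differ by a closed semi-basic $1$-form, hence the pullback of a closed $1$-form on $M$). Thus the resulting generalized (pseudo-)Finsler structure is well-defined precisely up to the transformations \eqref{eq:div-equiv-transformation}, i.e. up to divergence equivalence.

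Conversely, I would start from a generalized (pseudo-)Finsler structure $(\cC,\alpha^0,M)$. By Cartan's solution of the equivalence problem, Theorem~\ref{thm:General-FW-solut-equiv-probl-1}, it carries a canonical coframing $(\alpha^0,\alpha^a,\beta^a)$ on a bundle $\cF\to\cC$ satisfying \eqref{eq:general-FW-streqns-0}. From this data one recovers an orthopath geometry on $\cC$: the line field $\cE$ is the characteristic direction of $\rho_0:=\exd\alpha^0=\ve_{ab}\beta^a\w\alpha^b$ (equivalently the Reeb direction of the contact form $\alpha^0$), the complementary distribution $\cV$ is the vertical bundle of $\nu\colon\cC\to M$, and the conformal class of bundle metrics $[\bh]$ on $\cV$ is that of \eqref{eq:Finsler-biliner-form}. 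The path structure $\cE\oplus\cV\subset T\cC$ has the correct symbol by the relations \eqref{eq:gen-finsler-metric-characteristic-property}, so this is an orthopath geometry in the sense of Definition~\ref{def-orthopath}, and by construction it is conformally quasi-symplectic: $\rho_0$ is a closed section of its canonical line bundle $\ell$. By Theorem~\ref{prop-variational} (or directly by Proposition~\ref{lem-canonicalconfqsymp} and the characterization in \ref{sec:conf-quasi-sympl-1}) such an orthopath geometry is variational, and its canonical conformally quasi-symplectic structure is exactly $[\rho_0]$. A divergence-equivalent choice $c\,\alpha^0+\exd f$ changes $\rho_0$ only by the constant $c$, hence does not change $\ell$, so the variational orthopath geometry depends only on the divergence equivalence class.

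The last step is to verify that these two assignments are inverse to each other as functors. In one composition, starting from a variational orthopath geometry, passing to a generalized (pseudo-)Finsler structure via a choice of $(\rho_0,\alpha^0)$, and then back, one recovers the same underlying orthopath data ($\cE$ as the characteristic of $\exd\alpha^0=\rho_0$, $\cV$ as the $\nu$-vertical bundle, $[\bh]$ from $\rho_0$ as in Lemma~\ref{corr-orthopath}), and the canonical conformally quasi-symplectic structure $[\rho_0]$ is again the original $\ell$; by the uniqueness statements this is independent of the choices, so the composition is naturally isomorphic to the identity. In the other composition, starting from $(\cC,\alpha^0,M)$, the orthopath geometry we build has canonical closed $2$-form $[\exd\alpha^0]$, and recovering a primitive returns $\alpha^0$ up to $\exd f$ and constant scaling — precisely the divergence equivalence class we started in. Compatibility with morphisms is immediate: a local diffeomorphism intertwining the orthopath data pulls back $\ell$ to $\ell$, hence pulls back closed sections to closed sections up to constants, hence intertwines the Hilbert forms up to divergence equivalence, and conversely. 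The main obstacle I anticipate is the careful bookkeeping in the converse direction, namely checking that the orthopath geometry extracted from Cartan's Finsler coframing is canonically the one whose reduction procedure of \ref{sec:orth-geom-as} reproduces Cartan's bundle $\cF$ and connection $\varphi$ — in other words, that the reduction $\cG\to\cP$ of Proposition~\ref{prop:path-geom-red-vari-orth-geom} and Cartan's $\cF\to\cC$ are identified compatibly with the two normalization conditions. This is essentially the content of matching the normalization from the Kostant codifferential \eqref{Codifferential} against Cartan's normalization in Theorem~\ref{thm:General-FW-solut-equiv-probl-1}, and is the one place where a genuine (though routine) computation is unavoidable; everything else is formal once the algebraic forms of $\rho_0$ and $\alpha^0$ are pinned down.
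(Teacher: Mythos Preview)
Your proposal is correct and follows essentially the same route as the paper: the forward direction is exactly the discussion preceding the proposition (closed $\rho_0$, semi-basic primitive $\alpha^0$ via \cite[Lemma 2.7]{AT-Book}, freedom is precisely \eqref{eq:div-equiv-transformation}), and the converse extracts $\cE$, $\cV$, $[\bh]$, and $[\exd\alpha^0]$ from Cartan's Theorem~\ref{thm:General-FW-solut-equiv-probl-1}, then invokes Lemma~\ref{lemm:causal-variationality-orthopath} to identify $[\exd\alpha^0]$ with the canonical $\ell$.

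One clarification: the obstacle you anticipate at the end---matching the Kostant-normalized bundle $\cG$ against Cartan's bundle $\cF$---is not needed for this proposition. The correspondence is at the level of geometric structures on $\cC$ (the data $\cE\oplus\cV$, $[\bh]$, $\ell$), not at the level of Cartan bundles; once you have recovered that data from the Finsler side and checked it agrees, you are done. The explicit inclusion $\iota\colon\cF\hookrightarrow\cG$ and the comparison of normalizations is carried out separately in \ref{sec:general-causal-local-form-invar} for the purpose of expressing the orthopath invariants in Finslerian terms, but it plays no role in establishing the bijection itself.
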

\begin{proof}
  By the discussion above it remains to show that every divergence equivalence class of generalized (pseudo-)Finsler structure defines a variational orthopath geometry. In order to do so we will make use of Cartan's solution for the equivalence problem of generalized (pseudo-)Finsler structures as presented in Theorem \ref{thm:General-FW-solut-equiv-probl-1}.   Let $\alpha^0$ be the  Hilbert form of a generalized (pseudo-)Finsler structure, then the conformal class $[\rho_0],$ where $\rho_0=\exd\alpha^0,$ defines a conformally quasi-symplectic structure on $\cC$ and is uniquely defined for the divergence equivalence class of the (pseudo-)Finsler structure.  Moreover, $\cC$ is equipped with the filtration \eqref{eq:filtration-path-geometry-TC} where $\cE$ is spanned by the Reeb vector field and $\cV$ is the vertical distribution for $\cC\to M.$  Subsequently, it is straightforward to check that the symmetric bilinear form $\ve_{ab}\theta^a\circ\theta^b\in\mathrm{Sym}^2(\cV^*)$ is well-defined up to scale for the divergence equivalence class and is proportional to the canonical  bilinear form $\bh$  of any (pseudo-)Finsler structure in the divergence equivalence class, as defined in Remark \ref{rmk:Finsler-equiv-problem}. Lastly, the conformally quasi-symplectic structure  $[\rho_0]$ is by construction compatible with $\bh,$ and by Lemma \ref{lemm:causal-variationality-orthopath} is canonically induced on $\cC.$ As a result,   $\cC$ carries a natural  variational orthopath structure.
\end{proof}

\subsubsection{Examples and local generality}
\label{sec:general-causal-local-form-invar}
In this section we use Proposition \ref{prop:var-orthopath-is-div-equiv-pseudo-finsler} to express the fundamental invariants of a variational orthopath geometry in terms of the Cartan torsion and flag curvature of a generalized (pseudo-)Finsler structure whose divergence equivalence class coincides with that orthopath structure.  

  Suppose that the  divergence equivalence class of a generalized (pseudo-)Finsler structure  $(\sigma\colon\cF\to\cC,\varphi)$ is given by the variational orthopath structure  $(\tau\colon\cG\to\cC,\psi)$ as described in Theorem \ref{thm:var-orthopath-equiv-prob}. Via the inclusion
  \begin{equation}
    \label{eq:Finsler-orthopath-F-G-inclusion}    
    \iota\colon\cF\to\cG,
  \end{equation}
 it follows that  the Cartan connection $\iota^{*}\psi$ in \eqref{eq:var-orthpath-Cartan-conn} is expressed  in terms of   $\varphi$  \eqref{eq:CartanConn-FW-1} as 
\begin{equation}
  \label{eq:var-ortho-from-Finsler}
  \begin{gathered}
     \omega^a=\alpha^a,\quad     \theta^a=\beta^a,\quad     \omega^0= \alpha^0+2I_b\alpha^b,\quad \phi_1=0,\quad      \phi_0=-J_b\alpha^b-I_b\beta^b, \\
  \phi_{ab}=\psi_{ab}+I_a\beta_b-I_b\beta_a-J_a\alpha_b+J_b\alpha_a,\quad         \xi_0=R\alpha^0+X_c\alpha^c-2J_b\beta^b,
  \end{gathered}
\end{equation}
where we have suppressed $\iota^*$ on the left hand side and
\[
  \begin{gathered}
    I_a:=\textstyle{\frac{1}{n+1}}\ve^{bc}I_{abc},\qquad  J_a:=\textstyle{\frac{1}{n+1}}\ve^{bc}J_{abc}=\textstyle{\frac{\partial}{\partial\alpha^0}}I_a,\qquad R:=\textstyle{\frac{1}{n-1}}\ve^{ab}R_{ab}\\
    X_c=\tfrac{1}{2(n-1)}R_{;\uc}+\half RI_c-\half J_{c;0}-\tfrac{n+2}{2(n-1)}\ve^{ab}I_aR_{bc},\quad J_{c;0}=\tfrac{\partial}{\partial\alpha^0}J_c,\quad R_{;\uc}=\tfrac{\partial}{\partial\beta^c}R.
    \end{gathered}
  \]
Relations above are found via straightforward and rather simple manipulations of the structure equations, although  finding the expression for  $X_c$ is significantly more tedious than others. We will not provide the details of these computations as they are fairly standard. 
As a result, one obtains that the fundamental invariants of the corresponding variational orthopath structure are given by
\begin{equation}
  \label{eq:orthopath-inv-from-Finsler}  
\sA_{abc}=\overset{\circ}I_{abc},\quad \sT_{ab}=\overset{\circ}R_{ab},\quad \sN_{ab}=-2I_{[a;b]}+4J_{[a}I_{b]},\quad \sq=(2\ve^{ab}I_aI_b-1) -\tfrac{2}{n-1}\ve^{ab}I_{a;\ub},
\end{equation}
where
\[\overset{\circ}{I}_{abc}:=I_{abc}-\left(I_a\ve_{bc}+I_b\ve_{ac}+I_c\ve_{ab}\right),\quad \overset{\circ}R_{ab}=R_{ab}-R\ve_{ab},\quad I_{a;b}=\textstyle{\frac{\partial}{\partial\alpha^b}}I_a,\quad  I_{a;\ub}=\textstyle{\frac{\partial}{\partial\beta^b}}I_a.\]
In particular, one has $\sQ_{ab}=-\ve_{ab}-2I_{a;\ub}-4I_aI_b+2\ve^{cd}I_dI_{cab}.$

From  relations \eqref{eq:orthopath-inv-from-Finsler} several conclusions can be drawn. If the orthopath geometry arises from a pseudo-Riemannian metric, i.e. $I_{abc}=0,$ then by \eqref{eq:orthopath-inv-from-Finsler} one has   $\sq=-1$ and $\sN_{ab}=0,\sA_{abc}=0.$ By Corollary \ref{cor:causal-from-orthopath-various-curv-cond}, the quasi-contactification of such orthopath geometries gives pseudo-Riemannian  conformal  structures, $[\tg],$ with a choice of non-null conformal Killing field. By our discussion in \ref{sec:caus-geom-dimens}, such conformal pseudo-Riemannian metrics are given by \eqref{eq:g-conf-str-bilinear-form-on-tC} and have signature $(p+1,q+1)$. Using expressions \eqref{eq:Cartan-conn-modif-orthopath-to-causal}, one finds  $[\tg]$ is given by
\[ \tg=-(\exd t)^2+(\alpha^0)^2+\ve_{ab}\alpha^a\circ\alpha^b.\]
In other words, in this  case the quasi-contactification recovers  the direct product construction of   a conformal structure of signature $(p+1,q+1)$ from a pseudo-Riemannian metric of signature $(p+1,q).$ Moreover, if the metric $g$ has constant sectional curvature, then it is well-known that the flag curvature is pure trace, i.e. $\overset{\circ}R_{ab}=0.$  By \eqref{eq:orthopath-inv-from-Finsler} one obtains that $\bT=0$ and consequently by \ref{cor:causal-from-orthopath-various-curv-cond} $\bW=0,$ i.e. $[\tg]$ is conformally flat, as expected.   

Note that relations  \eqref{eq:orthopath-inv-from-Finsler} imply that if an orthopath geometry arises as the reduction of a pseudo-conformal structure with a null infinitesimal symmetry, i.e. $\sq=0,$ then the corresponding (pseudo-)Finsler structure has  non-vanishing mean Cartan torsion, i.e. $I_a\neq 0,$ almost everywhere.

The vanishing conditions $\bN=0$ or $\bq=0$ can be used to define new classes of (pseudo-)Finsler structures that are invariant under divergence equivalence. For instance, if the Finsler metric is monochromatic, then one has $I_{a;b}=J_a=0,$  and from  \eqref{eq:orthopath-inv-from-Finsler} it follows that $\bN=0.$ If the indicatrices are centro-affine minimal hypersurfaces \cite{Wang-minimal}, i.e. $\ve^{ab}I_{a;\ub}=0,$ then by \eqref{eq:orthopath-inv-from-Finsler} the condition $\sq=0$ is equivalent to
\[\|\tr\bI\|^2:=\ve^{ab}I_aI_b=\half,\]
We do not know if the condition above  has been investigated among centro-affine minimal hypersurfaces.

Lastly, we recall that centro-affine hypersurfaces for which  $I_a=0$ correspond to affine spheres with center at the origin e.g. see \cite[Section 6.1.2]{AffineDiffGeom}. Such  (pseudo-)Finsler structures, when they have  Lorentzian signature, were  considered in \cite{Minguzzi} as a case of Finslerian  space-times. Moreover, using the invariant bilinear form $\bh$ in \eqref{eq:Finsler-biliner-form}, the flag curvature can be decomposed into a trace part and a trace-free part, i.e.
\[\bR=R\bh+\overset{\circ}{\bR}.\]
The condition $\overset{\circ}{\bR}=0$ defines the well-known class of  Finsler structures of \emph{scalar flag curvature.} 

Now consider  generalized (pseudo-)Finsler structures whose fibers are  affine spheres with center at the origin, i.e. $I_a=0,$ and its flag curvature is trace-free, i.e. $R=0.$ Then relations 
\eqref{eq:var-ortho-from-Finsler} imply that in this case the  Cartan connection $\varphi$ for the (pseudo-)Finsler structure and $\psi$ for the corresponding orthopath geometry satisfy 
\begin{equation}
  \label{eq:orthopath-Finsler-holonomy-red}
  \varphi=\pr_{\fg}\circ\iota^*\psi
\end{equation}
where $\pr_{\fg}$ is the natural projection of $\fk$ to $\fg=\RR^{n}\rtimes\mathfrak{so}(p+1,q)$. This is due to the fact that, as mentioned in \eqref{eq:symmetries-FW-invariants}, $J_{a}=\tfrac{\partial}{\partial\alpha^0}I_a.$ Furthermore, it follows  from  \eqref{eq:var-ortho-from-Finsler}  that $I_a=0$ and $R=0$ imply
$\iota^*\xi_0=\iota^*\phi_0=\iota^*\phi_1=0.$ Thus,  the Cartan holonomy of the variational orthopath geometry of generalized  (pseudo-)Finsler structures with $I_a=0$ and $R=0$ is reduced to $\RR^{n}\rtimes\mathrm{SO}(p+1,q).$

\begin{remark}\label{rmk:affine-centroaffine-Fubini}
 It is known that the cubic form $\bI$ encodes the centro-affine properties of the set of unit vectors in each tangent space, i.e. the immersed hypersurfaces $\cC_x\subset T_xM$ in a generalized (pseudo-)Finsler structure. Since the cubic form $\bA$ is the trace-free part of $\bI,$ up to an overall weight, it encodes the affine properties of hypersurfaces $\cC_x.$ Finally, the Fubini cubic form $\bF$ of a causal structure   encodes the projective properties of hypersurfaces $\tcC_x\subset\PP T_x\tM$ which, via quasi-contactification, can be considered as cones over the affine hypersurfaces $\cC_x$ with affine cubic form $\bA.$  See \cite[Chapter 4]{Sasaki-Book} on such relations between projective and affine geometry of hypersurfaces.  
\end{remark}

Using Cartan-K\"ahler analysis, it follows that variational orthopath structures on $n$-dimensional manifolds satisfying $\bA=0$ locally depend on $\half(n+2)(n-1)$ functions of $n$-variables. Recall that pseudo-conformal structures on $(n+1)$-dimensional manifolds locally depend on  $\half(n+2)(n-1)$ functions of $(n+1)$-variables. More generally, our discussion shows that a variational orthopath structure is determined by the divergence equivalence of a Lagrangian $L\colon TM\to \RR$ and therefore their local generality is given by  1-function of $(2n-1)$-variables. This can be compared with the fact  that a causal structure on an $(n+1)$-dimensional  manifold $\tM$  is defined by a generic  codimension one sub-bundle of the projectivized tangent bundle $\tcC\subset\PP T\tM,$ i.e. the sky bundle of the causal structure. As a result,  the   local generality of causal structures depend on 1 function of $2n$-variables. This count confirms the expected local generality of  a  geometric structure with an infinitesimal symmetry which can be stated as follows. If the local generality of a geometric structure depends on $p$ functions of $q$ variables, the existence of an infinitesimal symmetry reduces the generality to $p$ functions of $q-1$ variables.

\section{Variationality of chains in CR structures}\label{sec:vari-chains-cr-orthopath}
A non-degenerate  partially integrable almost CR-structure of hypersurface type on a manifold $M$ of dimension $2n+1$ is given by a contact structure $\mathcal{H}\subset TM$ together with an integrable almost complex structure $J:\mathcal{H}\to\mathcal{H}$ such that $\mathfrak{L}(JX,JY)=\mathfrak{L}(X,Y),$ where $\mathfrak{L}(X,Y)_x:=\mathrm{pr}_{TM/\mathcal{H}}([X,Y]_x)$ denotes the Levi bracket. Identifying $TM/\mathcal{H}$ at $x\in M$ with $\mathbb{R}$, the map $\mathfrak{L}_x$ corresponds to the imaginary part of a Hermitian form of signature $(p,q)$, where $p+q=n$.  The structure $(M,\mathcal{H},J)$ has a canonically associated    regular and normal parabolic geometry $(\mathcal{P}\to M,\omega)$ of type $(\mathrm{SU}(p+1,q+1),P)$, where the parabolic subgroup $P$ is the stabilizer of a non-zero null line in $\mathbb{C}^{n+2}$. The  Lie algebra of $P$ will be denoted by $\fp$. The partially integrable almost CR structure is integrable, i.e. a CR structure, if and only if the associated Cartan connection is torsion-free.

Explicitly, we represent $\mathfrak{su}(p+1,q+1)$ with respect to the Hermitian form
$$(z^0,z^1,\cdots,z^{n+1})\mapsto  \tfrac{\mathrm{i}}{2}(z^0\bar{z}^{n+1}-z^{n+1}\bar{z}^0)+\ve_{a\barb}z^a\bar{z}^b,  $$
where in this section $\ve_{a\barb}$ and $\ve_{ab}$  are real-valued diagonal with the first $p$ entries equal to $1$ and the next $q$ entries equal to $-1$ and $p+q=n$ and, unlike sections before, the range of indices are
\[1\leq a,b,c,d\leq n.\] This yields the following matrix form for the Cartan connection
\begin{equation}
  \label{eq:CR-Cartan-conn}
  \omega=\begin{pmatrix}
  \nu-\tfrac{1}{2} \sigma & -\mathrm{i}\eta_{b} & -\tfrac{1}{4}\eta^0 \\
    \zeta^a & \sigma_b{}^a & \tfrac{1}{2}\eta^a \\
    2\zeta^0 & 2\mathrm{i} \zeta_{b} & -\nu-\tfrac{1}{2}\sigma \\
  \end{pmatrix},
  \end{equation}
where  $\zeta^0, \eta^0$ are real,  $\zeta_{b}=\ve_{b \bara}\zeta^{\bara}$, $\eta_{b}=\ve_{b \bara}{\eta^{\bara}}$, $\zeta^{\bara}=\overline{\zeta^a},$ $\eta^{\bara}=\overline{\eta^a},$ $\sigma_b{}^a$ is contained in $\mathfrak{su}(p,q)$ and consequently $\sigma=\sigma^a_a$ is purely imaginary. The  grading of $\mathfrak{su}(p+1,q+1)$ corresponding to $P$ is a contact grading
$$\mathfrak{su}(p+1,q+1)=\p_{-2}\oplus\p_{-1}\oplus\p_0\oplus\p_1\oplus\p_2,$$ where $\zeta^0\in\p_{-2}$, $\zeta^a\in\p_{-1}$, $\nu, \sigma_b^a\in\p_{0}$, $\eta^a\in\p_{1}$ and $\eta^0\in\p_{2}$.

On a CR manifold there is a family of canonical curves called \emph{chains}. As unparametrized curves, they can be defined as projections of integral curves of  constant vector fields $\omega^{-1}(X)$ from $\mathcal{P}$ to $M$, where $X$ is a non-zero element of the grading component $\p_{-2}$. In particular, these curves are transverse to the contact distribution $\mathcal{H}$. Moreover, for each point $x\in M$ and each transverse direction $l_x$ there is precisely one unparametrized chain through $x$ in  direction $l_x$. It follows that chains define a  path geometry on the open subset $\cC\subset \mathbb{P}TM$ of transverse directions. 

Following \cite{CZ-CR}, one can describe the set of transverse directions in terms of the Cartan bundle as follows. Let $S\subset P$ be the stabilizer of the line $l\subset\mathfrak{su}(p+1,q+1)/\p$ spanned by an element in the grading component $\p_{-2}$. Then one verifies that the Lie algebra of $S$ is $\mathfrak{s}=\p_0\oplus\p_2\subset \p:=\fp_{0}\oplus\fp_1\oplus\fp_2$. Moreover, the $P$-orbit $P\cdot l\subset \mathfrak{su}(p+1,q+1)/\p$ is the set of all lines $\mathfrak{su}(p+1,q+1)/\p$ not contained in $\p^{-1}/\p$. Consequently,   $P/S$ can be identified with the set of lines in $\mathfrak{su}(p+1,q+1)/\p$ transverse to  $\p^{-1}/\p$. Since, via the Cartan connection, one has an identification $\mathcal{H}\cong \mathcal{P}\times_P(\p^{-1}/\p)$, one concludes that
 $\mathcal{P}/S=\mathcal{P}\times_{P} P/S$ can be identified with the set $\cC\subset \mathbb{P}TM$ of directions transverse to the contact sub-bundle $\mathcal{H}$.

This implies that one can view the Cartan connection $\omega\in\Omega^1(\mathcal{P},\mathfrak{su}(p+1,q+1))$ associated with the almost CR manifold as a Cartan connection on $\mathcal{P}\to\mathcal{P}/S=\cC$. In particular, it determines an isomorphism 
\begin{equation}\label{eq-isoTC}
T\cC\cong\mathcal{P}\times_S(\mathfrak{su}(p+1,q+1)/\mathfrak{s}).
\end{equation}
Via this isomorphism, the line bundle $\mathcal{E}$ defining the path geometry corresponds to the subspace induced by $\p_{-2}$ and the vertical bundle $\mathcal{V}$ corresponds to the subspace induced by $\p_{1}$.

Using this, we  now show that $\cC$ is actually equipped with a natural orthopath geometry, which  we refer to as the \emph{canonical orthopath geometry of chains}.
\begin{proposition}
The open subset $\mathcal{C}\subset\mathbb{P}TM$ of  directions transverse to the contact distribution $\mathcal{H}\subset TM$ is equipped with a canonical orthopath geometry.
\end{proposition}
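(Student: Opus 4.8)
The plan is to produce the orthopath structure on $\cC$ by exhibiting, in addition to the path geometry of chains already established, a canonical conformal class $[\bh]$ of bundle metrics on the vertical bundle $\cV\subset T\cC$, and then invoke Definition \ref{def-orthopath}. The key point is that, via the isomorphism \eqref{eq-isoTC}, the vertical bundle $\cV$ corresponds to the $S$-submodule of $\mathfrak{su}(p+1,q+1)/\fp$ induced by $\fp_1$, so it suffices to construct an $S$-invariant conformal class of non-degenerate symmetric bilinear forms on $\fp_1$. Concretely, $\fp_1\cong\CC^n$ sits inside $\mathfrak{su}(p+1,q+1)$ as the entries $\eta^a$ in \eqref{eq:CR-Cartan-conn}, and the CR Hermitian form $\ve_{a\bar b}$ of signature $(p,q)$ restricts to a Hermitian inner product on $\fp_1$. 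Taking the real part of this Hermitian form yields a real symmetric bilinear form on the underlying real vector space $\fp_1\cong\RR^{2n}$ of signature $(2p,2q)$.

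The first step is therefore to check that this real bilinear form transforms conformally under the action of $S=\fp_0\oplus\fp_2$ (at the group level, $S\subset P$ with Lie algebra $\fs=\fp_0\oplus\fp_2$). The unipotent part generated by $\fp_2$ acts trivially on $\fp_1$ modulo $\fp_2$, hence trivially on $\cV$ — this is immediate from the bracket relations of the contact grading, since $[\fp_2,\fp_1]\subset\fp_3=0$. The reductive part $\fp_0 = \fz(\fp_0)\oplus\mathfrak{su}(p,q)$: the $\mathfrak{su}(p,q)$ factor preserves $\ve_{a\bar b}$ and hence its real part exactly, while the one-dimensional center (spanned by the grading element, entering \eqref{eq:CR-Cartan-conn} through $\nu$ and $\sigma$) acts on $\fp_1$ by a real scalar, which rescales the real bilinear form by a positive factor. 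Consequently the form is well-defined up to positive scale, i.e. it defines an $S$-invariant line $[\bh]_0\subset\mathrm{Sym}^2(\fp_1^{\,*})$ of non-degenerate forms of signature $(2p,2q)$. Associating this via $\cP\times_S(-)$ gives the desired conformal class $[\bh]\subset\Gamma(\mathrm{Sym}^2\cV^*)$ on $\cC$.

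The second step is to verify that $(\cC,\cE,\cV,[\bh])$ is genuinely an orthopath geometry in the sense of Definition \ref{def-orthopath}: the underlying path geometry part is exactly the path geometry of chains recalled just above (the identification of $\cE$ with the subspace induced by $\fp_{-2}$ and of $\cV$ with the subspace induced by $\fp_1$, together with the fact that the Levi bracket $\cE\otimes\cV\to T\cC/T^{-1}\cC$ is an isomorphism, follows from the contact grading relation $[\fp_{-2},\fp_1]=\fp_{-1}$ being a non-degenerate pairing and from $[\fp_1,\fp_1]\subset\fp_2$ so that $[\cV,\cV]\subset\cV$ — here $\dim\cC = 2(2n+1)-1 = 4n+1 = 2(2n)+1$ so the dimension count matches with $n_{\mathrm{orthopath}}=2n+1$, vertical fibre dimension $2n = p_{\mathrm{orthopath}}+q_{\mathrm{orthopath}}$). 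Finally one notes $\dim\cC = 4n+1\geq 5$, as required in Definition \ref{def-orthopath}.

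The main obstacle, and the only genuinely delicate point, is the well-definedness of $[\bh]$ as a conformal class — that is, confirming that no element of $S$ acts on $\cV$ by a conformal factor that fails to be positive, and that the restriction of the CR Hermitian form to $\fp_1$ is indeed non-degenerate of the claimed signature (one must be careful that the signature of the Hermitian form on $\fp_1$ is $(p,q)$ and not, say, affected by the extra null directions $z^0,z^{n+1}$; this is transparent from the explicit block form in \eqref{eq:CR-Cartan-conn}, where $\sigma_b{}^a\in\mathfrak{su}(p,q)$ acts on the $\eta^a$ precisely through $\ve_{a\bar b}$). Once the $S$-invariance of the line $[\bh]_0$ is established these are routine, and the associated-bundle construction $\cP\times_S[\bh]_0$ completes the proof; I would present the invariance computation in one short paragraph and relegate the bracket-relation bookkeeping to a sentence citing the contact grading.
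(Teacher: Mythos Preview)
Your proof is correct and follows essentially the same route as the paper: identify $\cV$ with the $S$-module $\fp_1\cong\fp/\fs$, observe that $\exp(\fp_2)$ acts trivially while the reductive part acts through $\mathrm{CU}(p,q)$, and conclude that the real part of the Hermitian form $\ve_{a\bar b}$ is preserved up to positive scale, giving a conformal class of bundle metrics of signature $(2p,2q)$. You add a useful explicit description of the bilinear form and a verification of the underlying path-geometry axioms and dimension count, which the paper takes for granted.
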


 \begin{proof}
It remains to verify that the vertical bundle $\mathcal{V}\subset T\cC$ has a naturally induced conformal class $[\bh]$ of bundle metrics  $\bh\in\Gamma(\mathrm{Sym}^2\mathcal{V}^*)$ of signature $(2p,2q)$. As mentioned above, via \eqref{eq-isoTC}, the vertical bundle $\mathcal{V}$ corresponds to the subspace $\p_{1}\cong\p/\mathfrak{s}\subset\mathfrak{su}(p+1,q+1)/\mathfrak{s}$. An element in $S$ can be written as $g=g_0g_2$, where $g_2\in\mathrm{exp}(\p_2)$ acts trivially on  $\p/\mathfrak{s}$ and $g_0$ acts by an element in $\mathrm{CU}(p,q)$. 
In particular, the $S$-action  preserves a bilinear form of signature $(2p,2q)$ up to scale and the result follows. 
\end{proof}

As for the path geometry of chains studied in \cite{CZ-CR}, there is an extension functor  from Cartan geometries of type $(\mathfrak{su}(p+1,q+1),S)$ to Cartan geometries of type  $(\mfk,L)$, where $\mfk= (\mathfrak{sl}(2,\mathbb{R})\times\mathfrak{co}(p,q))\ltimes (\mathbb{R}^2\otimes \mathbb{R}^{(p,q)}),$   $L=B\times\mathrm{CO}(p,q),$ and $B\subset SL(2,\RR)$ is the Borel subgroup. Such an extension is determined by a Lie group homomorphism $i:S\to L$ and a linear map $\alpha:\mathfrak{su}(p+1,q+1)\to\mathfrak{k}$ such that (i) $\alpha\circ\mathrm{Ad}(s)=\mathrm{Ad}(s)\circ\alpha$ for all $s\in S$, (ii) $\alpha$ restricted to $\mathfrak{s}$ coincides with the derivative of $i$, and (iii) $\alpha$ induces an isomorphism $\mathfrak{su}(p+1,q+1)/\mathfrak{s}\cong \mfk/\mfl$. Explicitly, the map $\alpha$ is given by 
 \begin{equation}
 \label{eq-alpha}
 \alpha\left(
 \begin{pmatrix}
  \nu-\tfrac{1}{2} \sigma & -\mathrm{i}\eta_{{b}} & -\tfrac{1}{4}\eta^0 \\
    \zeta^a & \sigma_b{}^a & \tfrac{1}{2}\eta^a \\
    2\zeta^0 & 2 \mathrm{i}\zeta_{{b}} & -\nu-\tfrac{1}{2}\sigma \\
  \end{pmatrix} \right)
  =\begin{pmatrix}    
 \nu  &-\half \eta_0&0 & 0\\
\zeta^0 & -\nu&0 & 0   \\
\zeta^a_1&  \eta^a_1& \pi^a_b & -\varpi^a_b-\half\varpi\delta^a_b\\
\zeta^a_2&  \eta^a_2& \varpi^a_b+\half\varpi\delta^a_b& \pi^a_b
\end{pmatrix},  
\end{equation}
where
\[\zeta^a=\zeta^a_1+\mathrm{i}\zeta^a_2,\quad\eta^b=\eta^b_1+\mathrm{i}\eta^b_2,\quad \sigma^{\ a}_b=\pi^{a}_{b}+\mathrm{i} \varpi^{a}_b \ \]
with the property that
\[\ve_{ac}\pi_b^{c}=:\pi_{ab}=-\pi_{ba},\qquad \ve_{ac}\varpi^c_b=:\varpi_{ab}=\varpi_{ba},\qquad \varpi=\varpi^a_a.\]
Given a Cartan geometry $(\mathcal{P}\to \cC,\omega)$ of type  $(\mathfrak{su}(p+1,q+1),S)$, one has a bundle map
\begin{equation}
\label{eq-bundleinclusion}
\iota:\mathcal{P}\to\mathcal{G}:=\mathcal{P}\times_SL,
\end{equation}
which is equivariant with respect to $i:S\to L$. Moreover, there is a unique Cartan connection $\psi\in\Omega^1(\mathcal{G},\mfk)$ such that $\iota^*\psi=\alpha\circ\omega$.

Now we can state the main result of this section. 
\begin{theorem}\label{thm-chains}
  The canonical orthopath geometry defined by the chains of a non-degenerate partially integrable almost  CR structure is variational if and only if the almost CR structure is integrable. Such variational orthopath structures  satisfy the invariant conditions $\bA=0$ and $\bq=0$.
\end{theorem}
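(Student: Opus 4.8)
The plan is to compute the fundamental invariant $\bA$ of the orthopath geometry of chains by pulling back the Cartan connection $\psi$ along the bundle map $\iota\colon\mathcal P\to\mathcal G$ from \eqref{eq-bundleinclusion}, using the explicit extension $\alpha$ in \eqref{eq-alpha}. Since $\iota^*\psi=\alpha\circ\omega$, the semi-basic $1$-forms $\omega^a$ and $\theta^a$ of the orthopath structure pull back to $\iota^*\omega^a=\zeta^a_1$, $\iota^*\theta^a=\eta^a_1$ (and $\iota^*\omega^0=\zeta^0$, $\iota^*\theta^0$ is read off from the $\fsl(2)$-block), while $\iota^*\sigma_{ab}$, hence the reduction forms $\sA_{abc},\sB_{abc}$ of \ref{sec:orth-geom-as}, are expressed through $\pi_{ab},\varpi_{ab}$ and the harmonic torsion of the CR Cartan connection $\omega$. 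The first step is therefore to write down the structure equations for $\omega$ in the basis $(\zeta^0,\zeta^a,\nu,\sigma^a_b,\eta^a,\eta^0)$, split everything into real and imaginary parts, and identify which components of $\exd(\iota^*\psi)+\iota^*\psi\wedge\iota^*\psi$ contribute to the $\ssB_{abc}$-part of the $\fco(p,q)$-block (i.e. the part of $\sigma_{ab}$ proportional to $\theta^c=\eta^c_1$).

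The second step is the key computation: the invariant $\bA$, which by \eqref{eq:fund-inv-orthopath} is $s^*(\ssB_{abc}\,\theta^a\circ\theta^b\circ\theta^c)\otimes V^{2/(n-1)}$, must be matched against the CR torsion. Because $\omega$ is normal, its harmonic curvature consists of the CR torsion $\bT^{CR}$ (nonzero precisely when the structure is only partially integrable, not integrable) in the lowest homogeneity, plus the Chern–Moser curvature. I expect that $\ssB_{abc}$ is, up to a nonzero universal constant and possibly a trace adjustment forced by the reduction \eqref{eq:cG-orthopath-last-reduction}, the totally symmetrized real part of the CR torsion tensor transported to $\mathcal G$ via $\alpha$. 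Hence $\bA=0$ is equivalent to the vanishing of the CR torsion, i.e. to integrability of the almost CR structure. This parallels the analysis of the \emph{path} geometry of chains in \cite{CZ-CR}, where the obstruction to the path geometry being flat-in-torsion is likewise controlled by the CR torsion; here one only needs the coarser statement that the orthopath invariant $\bA$ (equivalently, by Proposition \ref{prop:variationality-Fels-4th-ODE}-type reasoning and Proposition \ref{lem-canonicalconfqsymp}, the obstruction to $\ell$ having a closed section together with the Wilczy\'nski-type pieces) vanishes iff the CR torsion does. One must also check the converse direction carefully: when the CR structure is integrable, one must verify that the remaining invariants entering the variationality criterion — namely that the canonical $2$-form $\rho=\ve_{ab}\theta^a\wedge\omega^b$ admits a closed representative, equivalently conditions \eqref{eq:ABR-totallysymmetric}, \eqref{eq:extra-properties} hold and $\exd\phi_1=0$ — are automatically satisfied. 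This should follow because for integrable CR structures $\omega$ is torsion-free, so $\iota^*\psi=\alpha\circ\omega$ inherits strong algebraic constraints; in particular the $\phi_1$-component, being built from $\nu$ which is closed modulo curvature, will be closed once torsion vanishes.

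The third step is to pin down $\bq=0$. From \eqref{eq:orthopath-inv-from-Finsler}, or more directly from $\bq=\tfrac{1}{n-1}\ve^{ab}\sQ_{ab}$ with $\sQ_{ab}$ extracted from $\iota^*\gamma_a=\iota^*(\sN_{ai}\omega^i+\sQ_{ab}\theta^b)$, I would compute $\iota^*\gamma_a$ from the $-\tfrac14\eta^0$ and $-\mathrm i\eta_b$ entries of $\omega$ pushed through $\alpha$; the trace $\ve^{ab}\sQ_{ab}$ should come out to a fixed nonzero multiple of the ``missing'' curvature components that vanish by normality, giving $\bq=0$ identically for chains. An alternative, cleaner route for $\bq=0$: the Fefferman construction (Example \ref{ex-Feff}) realizes the canonical conformal structure of the Fefferman space as the quasi-contactification of the orthopath geometry of chains, and the defining null conformal Killing field of a Fefferman space is \emph{null}; by Corollary \ref{cor:causal-from-orthopath-various-curv-cond}(1), nullity of the conformal Killing field is equivalent to $\bA=0$ and $\bq=0$, so for integrable CR structures one gets $\bq=0$ for free, and in fact one may phrase the whole theorem through this correspondence. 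I would present the CR-torsion computation as the main line of argument and use the Fefferman-space remark as a cross-check.

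\textbf{Main obstacle.} The delicate point is the bookkeeping in the reduction from the path geometry of chains to the orthopath geometry: the forms $\sA_{abc},\sB_{abc},\sN_{ab},\sQ_{ab}$ are defined through successive normalizations \eqref{eq:cP_1-def-reduction}, \eqref{eq:cG-orthopath-last-reduction} of the \emph{path-geometry} bundle, whereas the natural object coming from the CR side is the image of $\mathcal P$ under $\alpha$, which a priori sits inside $\mathcal G=\mathcal P\times_S L$ but need not land in the sub-bundle $\cG$ where $\sA^b_{\ ab}=\sB^b_{\ ab}=0$. One must check that the CR reduction is compatible with the orthopath reduction — i.e. that $\iota(\mathcal P)$, after the canonical normalizations, is exactly the orthopath bundle — and track how the CR torsion, which is totally symmetric and trace-free on the CR side by Chern–Moser normalization, maps onto the totally symmetric trace-free tensor $\ssB_{abc}$ of \eqref{eq:AB-tracefree}, \eqref{eq:ABR-totallysymmetric}. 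Getting the trace terms and the reality conventions ($\ve_{a\barb}$ versus $\ve_{ab}$, the splitting $\zeta^a=\zeta^a_1+\mathrm i\zeta^a_2$) right is where the real work lies; once that dictionary is fixed, the equivalence $\bA=0\iff\text{torsion}=0\iff\text{integrable}$ and the identity $\bq=0$ should both fall out mechanically.
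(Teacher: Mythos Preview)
Your proposal has a genuine conceptual gap in the role you assign to the invariant $\bA$. You treat $\bA$ as if it were the obstruction to variationality (analogous to $\bc_1,\bw_1$ in Proposition~\ref{prop:variationality-Fels-4th-ODE}), so that ``variational $\Leftrightarrow$ $\bA=0$ $\Leftrightarrow$ CR torsion $=0$'' would close the argument. But $\bA$ is not an obstruction to variationality: it is one of the fundamental invariants of an \emph{already variational} orthopath geometry (Theorem~\ref{thm:var-orthopath-equiv-prob}), and it is generically nonzero. Variationality is the condition that $\ell$ admits a closed section, and for orthopath geometries the paper never packages this into a single named invariant; it is the collection of symmetry constraints \eqref{eq:sigma-A-B-totally-symm}--\eqref{eq:extra-properties} together with $\exd\phi_1=0$. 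So from ``variational'' you cannot conclude $\bA=0$, and your forward direction does not go through.

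The paper's proof is accordingly quite different. It works directly with the canonical $2$-form
\[
\rho=\ve_{ab}\zeta^a_1\wedge\eta^b_1-\ve_{ab}\zeta^a_2\wedge\eta^b_2=\tfrac12\ve_{\bara b}\zeta^{\bara}\wedge\eta^b+\tfrac12\ve_{a\barb}\zeta^a\wedge\eta^{\barb}
\]
(note that both real and imaginary parts of $\zeta^a,\eta^a$ enter, not just $\zeta^a_1,\eta^a_1$ as you wrote). For the integrable direction, the Chern--Moser curvature formulas give $\exd\rho=0$ directly from the symmetry properties $E_{a\barb c}=E_{c\barb a}$, $F_{\bara b}=F_{b\bara}$, $G_{ab}=G_{ba}$. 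For the converse, the paper assumes $\exd\rho=\phi\wedge\rho$ and extracts from it constraints on the torsion components $X^a{}_{bc},Y^a{}_{b\barc},Z^a{}_{\barb\barc}$ of a partially integrable structure: the $\zeta\zeta\eta$-terms kill $Z$, the remaining $\eta$-terms force $X,Y$ to be expressible through a single vector $W_c$, and then the Bianchi-type identity $\exd^2\zeta^0=0$ forces $W_c=0$. This last step is the idea you are missing. Only after variationality (hence integrability) is established does the paper compute $\iota^*\psi$ via \eqref{eq:orthopath-conn-from-CR-conn} and read off $\bA=0$, $\bq=0$ from the explicit form of the connection; the Fefferman route you suggest is relegated to a remark.
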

\begin{proof}
By the preceding discussion, it is clear that the bilinear form
\[\bh=\ve_{ab}\eta^a_1\circ\eta^b_1+\ve_{ab}\eta^a_2\circ\eta^b_2\]
is well-defined on the vertical tangent bundle $\cV.$ Moreover, the corresponding compatible conformally quasi-symplectic structure is induced by the 2-form
\[\rho=\ve_{ab}\zeta^a_1\w\eta^b_1-\ve_{ab}\zeta^a_2\w\eta^b_2=\half \ve_{\bara b}\zeta^{\bara}\w\eta^b+\half \ve_{a\barb}{\zeta^{a}\w\eta^{\barb}}\in\Omega^1(\cP),\]
where $\zeta^{\bara}=\overline{\zeta^a},$ $\eta^{\bara}=\overline{\eta^a}.$ 
Let us assume that the almost CR structure is integrable. To show variationality, one needs  to  show  $\exd\rho=\phi\w\rho$ for a 1-form $\phi\in\Omega^1(\cP).$ To do so, recall from \cite[Appendix]{CM-CR} that the Cartan curvature for the regular and normal Cartan connection \eqref{eq:CR-Cartan-conn} when $n>2$ can be written as 
\[\Omega=\exd\omega+\omega\w\omega= \def\arraystretch{1.3}
 \begin{pmatrix}
  0 & -\ri\beeta_{b} & -\tfrac{1}{4}\beeta^0 \\
    0 & \Sigma_b{}^a & \tfrac{1}{2}\beeta^a \\
    0 & 0 & 0 \\
  \end{pmatrix}\]
where 
\begin{equation}
  \label{eq:CR-curvature-2form-entries}
  \begin{gathered}    
    \Sigma^a_{\ b}=C^a_{\ bc\bard}\zeta^c\w\zeta^{\bard}+E^{\ a}_{b\ c}\zeta^{c}\w\zeta^0-E^a_{\ b\barc}\zeta^{\barc}\w\zeta^0\\
    \beeta^a=E^{a}_{\ b\barc}\zeta^b\w\zeta^{\barc}+F^{\ a}_{b}\zeta^b\w\zeta^0+G^{\ a}_{\barb}\zeta^{\barb}\w\zeta^0\\
    \beeta^0=-2\ri  F_{a\barb}\zeta^a\w\zeta^{\barb}+H_{a}\zeta^a\w\zeta^0+H_{\bara}\zeta^{\bara}\w\zeta^0.
      \end{gathered}
    \end{equation}
    Defining $E_{a\barb  c}=\overline{\ve_{\bara d}E^d_{\ b\barc}},$ $F_{\barb a }=\overline{\ve_{\bara c}F^{\ c}_b}$ $G_{ba}= \overline{\ve_{\bara c}G^{\ c}_{\barb}},$ 
one has
    \[E_{a\barb c}=E_{c\barb a},\quad F_{\bara b}=F_{b\bara},\quad G_{ab}=G_{ba}\]
    It is straightforward to show that the symmetric properties above imply $\exd\rho=0.$

    Now assume the orthopath geometry is variational. The Cartan curvature of  partially integrable almost CR structures is 
    \[\Omega= \def\arraystretch{1.3}
 \begin{pmatrix}
  \bnu-\half\Sigma & -\ri\beeta_{b} & -\tfrac{1}{4}\beeta^0 \\
    \bzeta^a & \Sigma_b{}^a & \tfrac{1}{2}\beeta^a \\
    0 & \bzeta_{a} & -\bnu-\half\Sigma \\
  \end{pmatrix}\]
where
    \[\bzeta^a\equiv X^a_{\ bc}\zeta^b\w\zeta^c+Y^a_{\ \ b\barc}\zeta^b\w\zeta^{\barc}+Z^a_{\ \barb\barc}\zeta^{\barb}\w\zeta^{\barc}\mod\{\zeta^0\}\]
    for some functions $X^a_{\ bc},Y^a_{\ b\barc},Z^a_{\ \barb\barc}$ on $\cP$ all of which are zero if and only if the CR structure is torsion-free. One computes that
\[
  \begin{aligned}   
    \exd\rho=&\half\ve_{a\barb}\bzeta^a\w\eta^{\barb}+\half\ve_{\bara b}\bzeta^{\bara}\w\eta^{b}-\half\ve_{\bara b}\zeta^{\bara}\w\beeta^b- \half\ve_{a\barb}\zeta^a\w\beeta^{\barb}.
    \end{aligned}
  \]
  Variationality of the orthopath geometry implies that in $\exd\rho$ the coefficient of  all 3-forms  $\zeta^a\w\zeta^b\w\eta^{c}$ and $\zeta^{\bara}\w\zeta^{\barb}\w\eta^{\barc}$ must be zero. This implies the vanishing of $Z^a_{\ \barb\barc}$  everywhere. 
 Finding all 3-forms in the equality  $\exd\rho=\phi\w\rho$ that involve $\eta^a$ and $\eta^{\bara}$ gives 
  \[\half(X_{\barb ac }\zeta^a\w\zeta^c\w\eta^{\barb}+Y_{\barb a\barc}\zeta^a\w\zeta^{\barc}\w\eta^{\barb}+X_{b \bara\barc }\zeta^{\bara}\w\zeta^{\barc}\w\eta^{b}+Y_{b\bara c}\zeta^{\bara}\w\zeta^{c}\w\eta^{b})+*=\phi\w\rho.\]
where no term in $*$ involves $\eta^a$ or $\eta^{\bara}.$  This implies
  \begin{equation}
    \label{eq:X-Y-W}
    X_{\barb ac}=\half(W_c\ve_{a\barb}-W_a\ve_{c\barb}),\quad Y_{\barb a\barc}=W_{\barc}\ve_{a\barb},\quad \phi\equiv -W_c\zeta^c-W_{\barc}\zeta^{\barc}\mod \{\zeta^0\}
      \end{equation}
for some functions $W_c$ on $\cP.$  To show $W_c=W_{\barc}=0,$ one computes
  \[
    \begin{aligned}
      \exd\zeta^0=&\nu\w\zeta^0+\ri\ve_{a\barb}\zeta^{a}\w\zeta^{\barb}\\
      \Rightarrow 0=\exd^2\zeta^0\equiv& \ve_{a\barb}\bzeta^a\w\zeta^{\barb}-\ve_{a\barb}\zeta^a\w\bzeta^{\barb}\equiv -2\ri W_{d}\ve_{\barb c} \zeta^{\barb}\w\zeta^{c}\w\zeta^{d}+2\ri  W_{\bard}\ve_{b\barc} \zeta^{b}\w\zeta^{\barc}\w\zeta^{\bard}\mod \{\zeta^0\}.
    \end{aligned}
  \]
 Thus, it follows that $W_c=0$, which, by \eqref{eq:X-Y-W}, gives the vanishing of $X^a_{\ bc},Y^a_{\ b\barc}$ everywhere. Therefore,  the almost CR structure is torsion-free, hence, integrable.

 Lastly, if $(\tau\colon\cG\to\cC,\psi)$ is the  variational orthopath geometry of chains in a  CR structure, by the discussion at the beginning of this section, one has the inclusion \[\iota\colon\cP\to\cG,\]
 using which one has 
    \begin{equation}\label{eq:orthopath-conn-from-CR-conn}
  \iota^*\psi=
  \def\arraystretch{1.3}
\begin{pmatrix}    
 \nu  &-\half \eta_0&0 & 0\\
\zeta^0 & -\nu&0 & 0   \\
\zeta^a_1&  \eta^a_1& \pi^a_b & -\varpi^a_b-\half\varpi\delta^a_b\\
\zeta^a_2&  \eta^a_2& \varpi^a_b+\half\varpi\delta^a_b& \pi^a_b
\end{pmatrix}    
\end{equation}
Using \eqref{eq:CR-curvature-2form-entries} and  the Cartan curvature of a variational orthopath geometry, given in \eqref{eq:orthopath-Cartan-curv-2form}, it is straightforward to find the fundamental invariants \eqref{eq:fund-inv-orthopath}. In particular, one has
\begin{equation}
  \label{eq:ANq-chains}
    \begin{gathered}
    \iota^*\bA=0,\quad \iota^*\bq=0,\quad \iota^*\bN=2\ve_{a b}\eta^a_1\w\eta^b_2\otimes E^{-1}. 
  \end{gathered}
\end{equation}
Using \eqref{eq:rewrite2-streqns-pathgeom}, one obtains the coefficients  of $\iota^*\bT$ to be 
\[\sT_{ab}=-\Re(F_{b\bara}+G_{\barb\bara}),\quad \sT_{(a+n)b}=\sT_{b(a+n)}=-\Im(G_{\barb\bara}),\quad \sT_{(a+n)(b+n)}=-\Re(F_{b\bara} -G_{\barb\bara})\]
which is trace-free because, by \eqref{eq:CR-curvature-2form-entries}, one obtains $\Re F_a^{\ a}=0.$

In the case of 3-dimensional CR structures the Cartan curvature \eqref{eq:CR-curvature-2form-entries} additionally satisfies $\Sigma^a_{\ b}=0$ which does not change anything in the rest of the proof. 
\end{proof}

\begin{remark}
  By Corollary \ref{cor:causal-from-orthopath-various-curv-cond}, $\bA=0$ and $\bq=0$ imply that the quasi-contactification of such orthopath geometries gives rise  to pseudo-conformal structures with a null infinitesimal symmetry. By the discussion in \ref{sec:general-causal-local-form-invar}, variational orthopath geometries with $\bA=0$ correspond to the divergence equivalence classes of  generalized (pseudo-)Finsler structures whose trace-free part of the Cartan torsion is zero.  The trace-free part of the Cartan torsion is referred to as the \emph{Matsumoto tensor} in Finsler geometry. The class of Finsler metrics with vanishing Matsumoto tensor is comprised of the so-called \emph{(pseudo-)Randers metrics} and the \emph{(pseudo)-Kropina metrics}, e.g. see \cite{HuangMo, Javaloyes}.

  Unlike the construction of the Kropina metrics associated to  CR chains in   \cite{CMMM-CR}, which is evidently singular on the contact hyperplane in each tangent space, in our derivation no singularity is evident. This is due to the fact that we are proving the existence of such (pseudo-)Finsler metrics in the open set of  transverse directions to the contact distribution. However, having computed $\bA=0,$ i.e. the vanishing of the Matsumoto tensor, using  \cite{HuangMo, Javaloyes}, it follows that such (pseudo-)Finsler metrics are either (pseudo)-Randers metrics or (pseudo-)Kropina metrics. Subsequently, one can rule out the case of (pseudo-)Randers metrics since the ODE system for their geodesics, under point transformations, is  defined in all directions, which is not the case for ODE systems that define chains. Hence, chains can only be realized as geodesics of (pseudo-)Kropina metrics.  
 
  Furthermore, by our discussion in \ref{sec:general-causal-local-form-invar}, the condition $\bq=0$ implies that the  Cartan torsion  for any (pseudo-)Finsler representative of the divergence equivalence class is almost everywhere nonzero. Moreover, by the expression of the curvature entries of the path geometry, $C^a_{bcd},$ in  \eqref{eq:path-geom-curv-derived-from-orthopath} and the expression for  $\bN$ and $\bA$ in \eqref{eq:ANq-chains}, it follows that the curvature $\bC$ in \eqref{eq:path-harm-inv-representation} is never zero and the path geometry of chains never defines a projective structure, as was observed in \cite{CZ-CR}.
\end{remark}

\begin{remark}
  Lastly,  recall that chains   can be viewed via the Fefferman construction discussed in Example \ref{ex-Feff}. Given a CR manifold $M$ of hypersurface type, there is a circle bundle $\tilde{M}$ over $M$ equipped with a natural pseudo-Riemannian conformal structure, referred to as the Fefferman conformal structure.  It is known that any Fefferman conformal structure has a null conformal Killing field $\xi$. Chains are  projections of null geodesics  with initial directions \emph{not} orthogonal to $\xi$ from $\tilde{M}$ onto $M$ \cite{Fefferman-CR, Koch-chains}. 
  Now, the conformal Killing field $\xi$ of a Fefferman conformal structure lifts to an infinitesimal symmetry on the sky bundle $\tilde{\cC}$, which we denote by the same name. If one restricts to the subset of the sky bundle of null lines not orthogonal to $\xi$, the lifted symmetry is transverse, as can be seen from \eqref{eq-quascond}. Due to the tight relationship between the normal CR Cartan connection and the corresponding normal conformal Cartan connection in the integrable case \cite{CG-CR}, the orthopath geometry on the local leaf space determined by $\xi$ can be identified with the orthopath geometry of chains discussed above.
  Note that this is not the case if we start with a partially integrable almost CR structure of hypersurface type. There is a generalized Fefferman construction that gives rise to a conformal structure with a null conformal Killing field also in this case,  but the (variational) orthopath geometry obtained by reduction and the orthopath geometry of chains in general differ from each other.

\end{remark}

\section*{Acknowledgments}

The authors thank Ian Anderson, Andreas \v Cap, Boris Doubrov and Igor Zelenko for enlightening discussions.
OM would like to thank   Miguel \'Angel Javaloyes, Wojciech Kry\'nski, Miguel S\'anchez Caja, Eivind Schneider, and Travis Willse   for helpful exchange of ideas and comments. KS would like to thank Pawe\l\ Nurowski for insightful conversations.  The research leading to these results has received funding from the Norwegian Financial Mechanism 2014-2021 with project registration number 2019/34/H/ST1/00636. OM gratefully acknowledges partial support by the grant  PID2020-116126GB-I00 provided via the Spanish Ministerio de Ciencia e Innovaci\'on MCIN/ AEI /10.13039/50110001103 as well as partial funding from the Norwegian Financial Mechanism 2014-2021 (project registration number 2019/34/H/ST1/00636), the Tromsø Research Foundation (project “Pure Mathematics in Norway”), and the UiT Aurora project MASCOT.

 \appendix
\setcounter{equation}{0}
\setcounter{subsection}{0}
 \setcounter{theorem}{0}

\section*{Appendix}
\renewcommand{\theequation}{A.\arabic{equation}}
\renewcommand{\thesection}{A}
 
In the appendix we provide some of the structure equations that are used in the text. The structure equations for contact equivalent classes scalar 4th order ODEs  with respect to the matrix form \eqref{eq:4-ODE-Conn2} is given by the Cartan curvature 2-form
\begin{equation}\Psi=\exd\psi+\psi\w\psi=
   \label{eq:curv-matrix-4-ODE-Conn2}
    \def\arraystretch{1.1}
   \textstyle{
\begin{pmatrix}
  \textstyle{}\Phi_0-\textstyle{\frac 15}\Phi_1& \Xi_0& 0& 0& 0 \\
\Omega^0 & \textstyle{\frac 45}\Phi_1-\Phi_0& 0 & 0&  0\\
\Omega^1& \Theta^1 & \textstyle{2\Phi_0-\frac 65}\Phi_1 & \Xi_0 & 0  \\
\Omega^2 & -2\Omega^1 & 2\Omega^0 & -\textstyle{\frac 15}\Phi_1 & -\Xi_0 \\
0 & \Omega^2 & 0 & -2\Omega^0 & \textstyle{\frac 45}\Phi_1-2\Phi_0\\
\end{pmatrix}}
 \end{equation}
where, using the notation
\[\nu^{ij}=\omega^i\w\omega^j,\quad \nu^{i\underline j}=\omega^i\w\theta^j,\]
one has
\begin{equation}
  \label{eq:curv-2-forms-4-ODE}
  \begin{aligned}
     \Omega^2=&\sx_5\nu^{23}\\
     \Omega^1 =& \sx_6\nu^{23}+\textstyle{\frac{12}{7}}\sx_5\nu^{13}+\textstyle{\frac 13}\sw_1\nu^{03},\\ 
     \Omega^0 =& \textstyle{-\frac 97\sx_5\nu^{03}+\sx_7\nu^{12}+\sx_8\nu^{13}+\sx_9\nu^{23}+\scc_1\nu^{2\uo}+(\frac 83\scc_0-\frac 13\sx_7)\nu^{3\uo}}\\
     \Theta^1 =& \sw_1\nu^{02}+\sw_0\nu^{03}+\textstyle{\frac{48}{7}\sx_5\nu^{12}+\sx_{10}\nu^{13}+\sx_{11}\nu^{23}+\frac 47\sx_5\nu^{3\uo}}\\
\Phi_0 =& \textstyle{-\frac{17}{7}\sx_5\nu^{02}+\sx_{12}\nu^{03}+\sx_{13}\nu^{12}+\sx_{14}\nu^{13}-4\scc_1\nu^{1\uo} +\sx_{15}\nu^{23}+(\frac{28}{3}\scc_0-\frac 53\sx_{8})\nu^{2\uo}+ \sx_{16}\nu^{3\uo}}\\
\Phi_1 =& \textstyle{-\frac{45}{7}\sx_5\nu^{02}+(-\frac{25}{18}\sx_{10}+\frac{16}{9}\sx_6+\frac{25}{18}\sx_{12})\nu^{03} +3(\sx_{13}-\sx_8)\nu^{12}+\sx_{17}\nu^{13} -12\scc_1\nu^{1\uo}}\\
&\textstyle{+\sx_{18}\nu^{23}+(20\scc_0-4\sx_{7})\nu^{2\uo}+(-\frac{47}{23}\sx_8+\frac{12}{23}\sx_{13}+\frac{35}{23}\sx_{16})\nu^{3\uo}}\\
     \Xi_0 =& \textstyle{\frac{16}{7}\sx_5\nu^{01}+(\frac{7}{18}\sx_{10}+\frac{20}{9}\sx_6+\frac{11}{18}\sx_{12})\nu^{02}+\sx_{19}\nu^{03}+\sx_{20}\nu^{12}}\\
     & \textstyle{+\sx_{21}\nu^{13}+(\frac 43\sx_8-\frac{32}{3}\scc_0)\nu^{1\uo}+\sx_{22}\nu^{23}+(-\frac{7}{23}\sx_{13}+\frac{14}{23}\sx_8 +\frac{16}{23}\sx_{16})\nu^{2\uo}+\sx_{23}\nu^{3\uo}}
 \end{aligned}
\end{equation}
for some functions $\scc_0,\scc_1,\sw_0,\sw_1,\sx_5,\cdots,\sx_{23}.$

For variational 4th order ODEs, discussed in \ref{sec:inverse-probl-vari}, the structure equations above simplify to the following
\begin{equation}
  \label{eq:curv-2-forms-4-ODE-variational}
  \begin{aligned}
    \Omega^2=&\Omega^1=\Phi_1=0\\
     \Omega^0  =& 5 \scc_0\nu^{12}+\scc_0\nu^{3\uo}+\sx_8\nu^{13}+\sx_9\nu^{23}\\
     \Theta^1 =& \sw_0\nu^{03}+\sx_{12}\nu^{13}+\sx_{19} \nu^{23} \\
     \Phi_0=&\sx_{12}\nu^{03}+\sx_8\nu^{12}+(\sx_{20}-4\sx_{9})\nu^{13}+\sx_{21}\nu^{23}+\scc_0\nu^{2\uo}+\sx_8\nu^{3\uo}\\
     \Xi_0=&-4\scc_0\nu^{1\uo}+\sx_{12}\nu^{02}+\sx_{19}\nu^{03}+\sx_{20}\nu^{12}+\sx_{21}\nu^{13}+\sx_{22}\nu^{23}+\sx_{8}\nu^{2\uo}+\sx_9\nu^{3\uo}
 \end{aligned}
\end{equation}

For Cartan geometries of type $(B_3,P_{23})$ discussed in \ref{sec:pairs-third-order}, the Cartan curvature for the Cartan connection $\psi$ as given in \eqref{eq:var-pair-3rd-order-ODE-Cartan-conn} is
\begin{equation}
  \label{eq:Pairs-3-ODE-Cartan-curv}
  \Psi=\begin{pmatrix}
  \textstyle{\Phi_0-\frac 15\Phi_1} & \Xi_0 & 0 \\
  \Omega^0 & \textstyle{\frac 45\Phi_1-\Phi_0} & 0  \\
  \Omega^a & \Theta^a& \Phi^a_b-\frac 15\Phi_1\delta^a_b \\
\end{pmatrix}
\quad \mathrm{where}\quad 
[\Phi^a_b]=
\begin{pmatrix}
  \textstyle{-\Phi_2} & -\Gamma_0 & 0 \\
\Theta^0 & 0 & \Gamma_0\\
0 & -\Theta^0 & \textstyle{\Phi_2}
\end{pmatrix}
\end{equation}
where using the notation
\[\nu^{ij}=\omega^i\w\omega^j,\quad \nu^{i\underline j}=\omega^i\w\theta^j,\quad \nu^{\underline i\underline j}=\theta^i\w\theta^j\]
one obtains the following \emph{first order} structure equations
 \begin{equation}
   \label{eq:curv-2form-pair-3rd-ODE}
   \begin{aligned}
    \Omega^3 =& \sx_{20}\nu^{3\ut}-\sbb_{10}\nu^{\ud\ut}-\sbb_{11}\nu^{2\ut}+\sbb_{11}\nu^{3\ud}-\sbb_{12}\nu^{23}\\
    \Theta^3 =& \sx_{19}\nu^{3\ut}+\sbb_{11}\nu^{\ud\ut}+\sbb_{12}\nu^{2\ut}-\sbb_{12}\nu^{3\ud}+\sbb_{13}\nu^{23}\\
    \Omega^2 =& \sbb_{20} \nu^{\uz\ut}+\sbb_{10} \nu^{\uo\ut}+\sbb_{12} \nu^{13}+\sbb_{11}\nu^{1\ut}+\sx_{26} \nu^{\ud\ut}  +(-\ts{\frac{19}{5}} \sx_{19}-\sx_{22}) \nu^{23}+(\ts\frac{59}{5}\sx_{20}-\sx_{24}) \nu^{2\ut}\\
    &-\sbb_{21} \nu^{3\uz}-\sbb_{11} \nu^{3\uo}+(-\ts{\frac{19}{5}} \sx_{20}+\sx_{24}) \nu^{3\ud}+\sx_{25} \nu^{3\ut}\\
    \Theta^2 =& -\sbb_{21} \nu^{\uz\ud}-\sbb_{11} \nu^{\uo\ut}-\sbb_{13} \nu^{13}-\sbb_{12} \nu^{1\ut}+\sx_{24} \nu^{\ud\ut}+\sx_{21} \nu^{23}+\sx_{22} \nu^{2\ut}+\sbb_{22} \nu^{3\uz}+\sbb_{12} \nu^{3\uo}\\
    &+(8 \sx_{19}-\sx_{22}) \nu^{3\ud}+\sx_{23} \nu^{3\ut}\\
    \Omega^1 =& -\sbb_{20} \nu^{\uz\ud}+\sbb_{40} \nu^{\uz\ut}-\sbb_{10} \nu^{\uo\ud}-\sbb_{12} \nu^{12}-\sbb_{11} \nu^{1\ud}-\sx_{19} \nu^{13}-3 \sx_{20} \nu^{1\uz}+\sbb_{21} \nu^{2\uz} +\sbb_{11} \nu^{2\uo}\\
    &-11 \sx_{20} \nu^{2\ud}+\sx_{41} \nu^{\ud\ut}+\sx_{37} \nu^{23}+\sx_{38} \nu^{2\ut}+(-\sbb_{41}+\half\sbb_{30}) \nu^{3\uz}-4 \sx_{20} \nu^{3\uo}+\sx_{39} \nu^{3\ud}+\sx_{40} \nu^{3\ut}\\
    \Theta^1 =& \sbb_{21} \nu^{\uz\ud}+(-\half \sbb_{30}-\sbb_{41}) \nu^{\uz\ut}-4 \sx_{19} \nu^{1\ut}-11 \sx_{19} \nu^{2\ud}-3 \sx_{19} \nu^{3\uo}+\sx_{20} \nu^{\uo\ut}     +\sx_{32} \nu^{23}+\sx_{33} \nu^{2\ut}\\
    &+\sx_{34} \nu^{3\ud}+\sx_{35} \nu^{3\ut}+\sx_{36} \nu^{\ud\ut}+\sbb_{11} \nu^{\uo\ud}+\sbb_{12} \nu^{1\ud}-\sbb_{12} \nu^{2\uo}+\sbb_{13} \nu^{12}-\sbb_{22} \nu^{2\uz}+\sbb_{42} \nu^{3\uz}\\
    \Theta^0 =& -\ts\frac{36}{5}\sx_{20} \nu^{\uz\ut}-\sx_{16} \nu^{\uo\ut}-\sx_{18} \nu^{13}+(-\sx_{17}-\sbb_{50}) \nu^{1\ut}-\frac 32 \sbb_{50} \nu^{2\ud} +\sx_{31} \nu^{\ud\ut}+\sx_{27} \nu^{23}+\sx_{28} \nu^{2\ut}\\
    &\ts -\frac{36}{5}\sx_{19} \nu^{3\uz}+(\sx_{17}-\sbb_{50}) \nu^{3\uo}+\sx_{29} \nu^{3\ud}+\sx_{30} \nu^{3\ut}\\
  \end{aligned}
\end{equation}
for some functions $\sbb_{10},\cdots,\sbb_{52},\sx_{16}\cdots,\sx_{96}$ not all of which appear above. In particular the curvature part of the harmonic invariants is not included due to the length of the expressions involved. 
Assuming that such geometries are PCQ, as discussed in \ref{sec:36-inverse-probl-vari}, the full structure equations is obtained from the following components of the Cartan curvature $\Psi$ defined in \eqref{eq:Pairs-3-ODE-Cartan-curv}.
 \begin{equation}
   \label{eq:curv-2form-pair-3rd-ODE-variational}
   \begin{aligned}
    \Omega^3 =&\Theta^3=\Omega^2=\Theta^2=\Phi_1=0\\
    \Omega^1 =&  \sx_{37} \nu^{23}+ \sx_{39} \nu^{3\ud}+ \sx_{52} \nu^{\ud\ut}- \sx_{65} \nu^{2\ut}- \sx_{94} \nu^{3\ut}+ \sbb_{40} \nu^{\uz\ut}- \sbb_{41} \nu^{3\uz}\\
    \Theta^1 =& - \sx_{37} \nu^{2\ut}+ \sx_{39} \nu^{\ud\ut}- \sx_{70} \nu^{3\ud}+ \sx_{83} \nu^{23}- \sx_{91} \nu^{3\ut}- \sbb_{41} \nu^{\uz\ut}+ \sbb_{42} \nu^{3\uz}\\
    \Theta^0 =& - \sbb_{50}\nu^{1\ut}+  \sbb_{62} \nu^{23}-\textstyle{\frac 32} \sbb_{50} \nu^{2\ud}+ \sx_{58} \nu^{2\ut}-\sbb_{50}\nu^{3\uo}  +( \sx_{58}-2  \sbb_{61}) \nu^{3\ud}+ \sx_{88} \nu^{3\ut}+ \sbb_{60} \nu^{\ud\ut}\\  
    \Omega^0 =& - \sbb_{60}\nu^{3\uo} +(2  \sx_{58}-2  \sbb_{61}) \nu^{13}- \sbb_{50} \nu^{12}- \sbb_{60} \nu^{1\ut}+( \half \sx_{76}76- \half\sx_{75}+ \sx_{86}) \nu^{23}- \sbb_{60} \nu^{2\ud}\\
    &+ \sx_{73} \nu^{2\ut}     +(- \sx_{65}+2  \sx_{39}) \nu^{3\uz}+(- \sx_{73}+ \sx_{78}) \nu^{3\ud}- \sx_{96} \nu^{3\ut}+ \sx_{52} \nu^{\uz\ut}+ \sx_{55} \nu^{\ud\ut}\\
    \Phi_0 =& - \sx_{65} \nu^{\uz\ut}+\half \sbb_{50}\nu^{1\ud}  - \sx_{58} \nu^{1\ut}-\half \sbb_{50}\nu^{2\uo} - \sbb_{61}\nu^{2\ud} + \sx_{73}\nu^{\ud\ut} -\sx_{84}\nu^{23}+\sx_{70}\nu^{3\uz}\\ & +( \half \sx_{75}+\half \sx_{76}+ \sx_{86}+ \sx_{88}) \nu^{2\ut}  +( \sx_{58}-2  \sbb_{61}) \nu^{3\uo}+(- \sx_{76}- \sx_{86}- \sx_{88}) \nu^{3\ud}- \sx_{90}\nu^{3\ut} \\
    \Phi_2 =& (-2  \sx_{65}+2  \sx_{39}) \nu^{\uz\ut}- \sbb_{60} \nu^{\uo\ut}+\half \sbb_{50} \nu^{1\ud}  - \sbb_{62} \nu^{13}+(-3  \sx_{58}+2  \sbb_{61}) \nu^{1\ut}+\half \sbb_{50}\nu^{2\uo} \\
    &+(- \sx_{58}+ \sbb_{61}) \nu^{2\ud}+ \sx_{78} \nu^{\ud\ut}+( \sx_{82}+ \sx_{84}8) \nu^{23}+ \sx_{75} \nu^{2\ut}+(-2  \sx_{37}-2  \sx_{70}) \nu^{3\uz}\\
    &+(-3  \sx_{58}+4  \sbb_{61}) \nu^{3\uo}+ \sx_{76} \nu^{3\ud}+ \sx_{77} \nu^{3\ut}\\
    \Xi_{0} =& (-2  \sx_{37}- \sx_{70}) \nu^{\uz\ut}+ \sbb_{50}\nu^{\uo\ud} +(-2  \sx_{58}+2  \sbb_{61}) \nu^{\uo\ut}- \sx_{81} \nu^{23}- \sx_{82} \nu^{2\ut}- \sx_{83} \nu^{3\uz}\\
    &- \sx_{84} \nu^{3\ud}- \sx_{86} \nu^{\ud\ut}+ \sx_{89} \nu^{3\ut}+ \sbb_{62}\nu^{1\ut} + \sbb_{62}\nu^{2\ud} + \sbb_{62}\nu^{3\uo} \\
    \Gamma_0 =& (- \sx_{65}+ \sx_{39}) \nu^{\uz\ud}+ \sx_{94} \nu^{\uz\ut}- \sbb_{50} \nu^{1\uo}- \sbb_{60}\nu^{\uo\ud} -  \sbb_{62}\nu^{12} - \sx_{58}\nu^{1\ud} + \sx_{88} \nu^{1\ut}\\
    &+(- \sx_{37}- \sx_{70}) \nu^{2\uz}+(- \sx_{58}+2  \sbb_{61}) \nu^{2\uo}+( \half \sx_{76}+ \sx_{88}+\half  \sx_{75}) \nu^{2\ud}+  \sx_{88}\nu^{3\uo}+ \sx_{89} \nu^{23}\\
    &+ \sx_{90} \nu^{2\ut}- \sx_{90} \nu^{3\ud}+ \sx_{91} \nu^{3\uz}+ \sx_{93} \nu^{3\ut}+ \sx_{96} \nu^{\ud\ut}
  \end{aligned}
\end{equation}

The Cartan curvature of variational  orthopath geometries, as defined in Theorem \ref{thm:var-orthopath-equiv-prob}, is given by 
\begin{equation}
  \label{eq:orthopath-Cartan-curv-2form}
  \Psi=\exd\psi+\psi\w\psi=\begin{pmatrix}
  \textstyle{\Phi_0-\tfrac{1}{n+1}\Phi_1} & \Xi_0 & 0 \\
  \Omega^0 & \textstyle{\tfrac{n}{n+1}\Phi_1-\Phi_0} & 0  \\
  \Omega^a & \Theta^a& \Phi^a_b-\tfrac{1}{n+1}\Phi_1\delta^a_b \\
\end{pmatrix}
\end{equation}
where
\begin{equation}
  \label{eq:rewrite2-streqns-pathgeom}
  \begin{aligned}
  \Omega^a&=\ssB^a_{bc}\omega^b\w\theta^c\\ 
  \Omega^0&=\sN_{ab}\omega^a\w\omega^b-\sQ_{ab}\theta^a\w\omega^b\\
  \Theta^a&=-\ssA^a_{bc}\omega^b\w\theta^c+\sT^a_{b}\omega^0\w\omega^b+\half \sT^a_{bc}\omega^b\w\omega^c\\
\Phi_0&=\sN_{ba}\omega^a\w\theta^b-\sM_{a0}\omega^0\w\omega^a+\sM_{ab}\omega^a\w\omega^b\\
\Phi_1&=0\\
\Phi_{ab}&=\half\sT_{abij}\omega^i\w\omega^j+\sC_{abid}\omega^i\w\theta^d+\half\sE_{abcd}\theta^c\w\theta^d\\
\Xi_0&=-\sM_{ab}\omega^b\w\theta^a-\sM_{a0}\omega^0\w\theta^a-\sN_{ab}\theta^a\w\theta^b+T_{00b}\omega^0\w\omega^b+\half T_{0ab}\omega^a\w\omega^b,\\ 
\end{aligned}
\end{equation}
for some functions $\sA^a_{bc},\sB^a_{bc},\sN_{ab},\sQ_{ab},\sT^a_{b},\sT^a_{bc},\sT_{abij},\sC_{abid},\sE_{abcd},\sM_{ai},T_{ijb}$ on $\cG.$ 

\section*{Data availability statement}
No data was collected for any part of this article. 

\section*{Conflict of Interest}
The authors have no conflict of interest to declare that are relevant to this article. 

\bibliographystyle{alpha}   
\bibliography{Quasicontactification}
\end{document}